\documentclass[12pt]{article}

\usepackage[a4paper, top=3cm, left=3cm, right=2cm, bottom=2cm]{geometry}
\usepackage{amsmath, amsthm, pb-diagram}
\usepackage{amssymb}
\usepackage{multicol}
\usepackage{amsfonts}
\usepackage{graphics, color, enumerate, fancyhdr, dsfont}









    \newcommand{\dom}{\mbox{\rm dom}}














    \newcommand{\thzfc}{\mathrm{ZFC}}



    \newcommand{\Awf}{\mathcal{A}}
    \newcommand{\Bwf}{\mathcal{B}}
    \newcommand{\Cwf}{\mathcal{C}}
    \newcommand{\Dwf}{\mathcal{D}}
    \newcommand{\Ewf}{\mathcal{E}}
    \newcommand{\Fwf}{\mathcal{F}}
    \newcommand{\Gwf}{\mathcal{G}}
    \newcommand{\Hwf}{\mathcal{H}}
    \newcommand{\Iwf}{\mathcal{I}}
    \newcommand{\Jwf}{\mathcal{J}}
    \newcommand{\Mwf}{\mathcal{M}}
    \newcommand{\Nwf}{\mathcal{N}}
    
    \newcommand{\Pwf}{\mathcal{P}}

    \newcommand{\afrak}{\mathfrak{a}}
    \newcommand{\bfrak}{\mathfrak{b}}
    \newcommand{\cfrak}{\mathfrak{c}}
    \newcommand{\dfrak}{\mathfrak{d}}
    
    \newcommand{\gfrak}{\mathfrak{g}}
    
    \newcommand{\pfrak}{\mathfrak{p}}
    \newcommand{\rfrak}{\mathfrak{r}}
    \newcommand{\sfrak}{\mathfrak{s}}
    
    \newcommand{\ufrak}{\mathfrak{u}}

    \newcommand{\menos}{\smallsetminus}

    \newcommand{\pts}{\mathcal{P}}
    
    \newcommand{\frestr}{\!\!\upharpoonright\!\!}

    \newcommand{\add}{\mbox{\rm add}}
    \newcommand{\cov}{\mbox{\rm cov}}
    \newcommand{\non}{\mbox{\rm non}}
    \newcommand{\cof}{\mbox{\rm cof}}
    
    \newcommand{\limdir}{\mbox{\rm limdir}}

    
    \newcommand{\Bor}{\mathds{B}}
    \newcommand{\Cor}{\mathds{C}}
    \newcommand{\Dor}{\mathds{D}}
    \newcommand{\Eor}{\mathds{E}}
    \newcommand{\Loc}{\mathds{LOC}}
    \newcommand{\Mor}{\mathds{M}}
    \newcommand{\Por}{\mathds{P}}
    \newcommand{\Qor}{\mathds{Q}}
    \newcommand{\Ror}{\mathds{R}}
    \newcommand{\Sor}{\mathds{S}}
    
    \newcommand{\Pnm}{\dot{\mathds{P}}}
    \newcommand{\Qnm}{\dot{\mathds{Q}}}
    \newcommand{\Rnm}{\dot{\mathds{R}}}
    \newcommand{\Snm}{\dot{\mathds{S}}}

    \newcommand{\Bnm}{\dot{\mathds{B}}}
    \newcommand{\Cnm}{\dot{\mathds{C}}}
    \newcommand{\Dnm}{\dot{\mathds{D}}}
    
    \newcommand{\Locnm}{\dot{\mathds{LOC}}}

    \newcommand{\cf}{\mbox{\rm cf}}





    \newcommand{\sii}{{\ \mbox{$\Leftrightarrow$} \ }}









\title{Template iterations with non-definable ccc forcing notions}
\author{Diego Alejandro Mej\'ia\thanks{Supported by the Monbukagakusho (Ministry of Education, Culture, Sports, Science and Technology) Scholarship, Japan.}}
\date{\small Graduate School of System Informatics, Kobe University, Kobe, Japan.\\ \texttt{damejiag@kurt.scitec.kobe-u.ac.jp}}

\begin{document}

\makeatletter
\def\@roman#1{\romannumeral #1}
\makeatother

\theoremstyle{plain}
  \newtheorem{theorem}{Theorem}[section]
  \newtheorem{corollary}[theorem]{Corollary}
  \newtheorem{lemma}[theorem]{Lemma}
  \newtheorem{prop}[theorem]{Proposition}
  \newtheorem{claim}[theorem]{Claim}
  \newtheorem{conjecture}[theorem]{Conjecture}
  \newtheorem{exer}{Exercise}
\theoremstyle{definition}
  \newtheorem{definition}[theorem]{Definition}
  \newtheorem{example}[theorem]{Example}
  \newtheorem{remark}[theorem]{Remark}
  \newtheorem{context}[theorem]{Context}
  \newtheorem{question}[theorem]{Question}
  \newtheorem{problem}[theorem]{Problem}
  \newtheorem*{acknowledgements}{Acknowledgements}

\maketitle

\begin{abstract}
   We present a version with non-definable forcing notions of Shelah's theory of iterated forcing along a template. Our main result, as an application, is that, if $\kappa$ is a measurable cardinal and $\theta<\kappa<\mu<\lambda$ are uncountable regular cardinals, then there is a ccc poset forcing $\sfrak=\theta<\bfrak=\mu<\afrak=\lambda$. Another application is to get models with large continuum where the groupwise-density number $\gfrak$ assumes an arbitrary regular value.
\end{abstract}


\section{Introduction}\label{SecIntro}

The technique of template iterations was first introduced by Shelah in \cite{shelah} to prove the consistency
of $\dfrak<\afrak$ where $\dfrak$ is the dominating number and $\afrak$ is the almost disjointness number.
There are two approaches to construct the models for that consistency. Shelah first observed that, given a \emph{ccc (countable chain condition)} poset $\Por$ and measurable cardinal $\kappa$ witnessed by a $\kappa$-complete ultrafilter $\Dwf$, forcing with the ultrapower $\Por^\kappa/\Dwf$ destroys the maximality of any almost disjoint family of size $\geq\kappa$ in the $\Por$-extension (see Lemma \ref{DestrMad}), while it preserves all scales of length of cofinality different from $\kappa$ (this is an easy consequence of Lemma \ref{Sigma1-1andUltrapow}). Therefore, if $\Por$ is the \emph{fsi (finite support iteration)} of length $\mu>\kappa$ of Hechler forcing with $\mu$ regular then, by taking ultrapowers $\lambda$-many times for $\lambda>\mu$ regular with $\lambda^{\kappa}=\lambda$ (with special care in the limit steps), the obtained poset forces $\bfrak=\dfrak=\mu<\afrak=\cfrak=\lambda$, where $\bfrak$ is the bounding number and $\cfrak=2^{\aleph_0}$ is the size of the continuum. Although these ultrapowers can be represented by iterations along a template, to prove the consistency statement it is not necessary to look into their template structure, but it is enough to understand its forcing equivalence with a ccc fsi. This approach can be used to get the consistency of $\ufrak<\afrak$ modulo a measurable (see also \cite{brendle3}), where $\ufrak$ is the ultrafilter number, by starting with a fsi of Laver-Prikry type forcings with ultrafilters, but, as these forcing notions are not ($\boldsymbol{\Sigma}^1_1$) definable, it is not known whether the corresponding fsi's can be constructed by template iterations.

The second approach consists in defining a template iteration where the ultrapower argument to increase $\afrak$ is replaced by an isomorphism-of-names argument, so the consistency result can be obtained modulo $\thzfc$ alone.
Concretely, if $\aleph_1<\mu<\lambda$ are regular cardinals and $\lambda^{\aleph_0}=\lambda$, the statement $\bfrak=\dfrak=\mu<\afrak=\lambda$
is consistent by this method. However, it is still not known whether this approach can be applied to get the consistency of $\ufrak<\afrak$ on the basis of $\thzfc$, which is still an open problem. All details of this discussion can also be found in \cite{br} and \cite{brendle3}.

In this paper, we investigate to what extent it is possible to obtain, with the techniques discussed above, models where the values of $\bfrak$, $\afrak$ and the splitting number $\sfrak$ can be separated. Fix (only in this paragraph) $\theta\leq\mu<\lambda$ uncountable regular cardinals. The simplest of these results is the consistency of $\sfrak<\bfrak=\cfrak$ by a fsi of Hechler forcing (see \cite{baudor}), even more, using techniques from \cite{brendle}, there is a ccc fsi forcing $\sfrak=\theta\leq\bfrak=\cfrak=\mu$. Shelah \cite{sh84} proved, by countable support iteration techniques, the consistency of $\bfrak=\aleph_1<\afrak=\sfrak=\aleph_2$ and the consistency of $\bfrak=\afrak=\aleph_1<\sfrak=\aleph_2$. Extensions of these results are the consistency of $\bfrak=\mu<\afrak=\mu^+$ obtained by Brendle \cite{breMob} with fsi techniques and, using matrix iterations, Brendle and Fischer \cite{BF} proved the consistency of $\bfrak=\afrak=\theta\leq\sfrak=\mu$ with $\thzfc$ and the consistency of $\kappa<\bfrak=\mu<\afrak=\sfrak=\lambda$ where $\kappa$ is measurable in the ground model (here, the ultrapower technique explained above is also used). In Shelah's model for the consistency of $\ufrak<\afrak$ mentioned above, it is also true that $\kappa<\bfrak=\sfrak=\ufrak=\mu<\afrak=\lambda$ where $\kappa$ is measurable in the ground model. The consistency of $\bfrak=\sfrak=\aleph_1<\afrak=\aleph_2$ with $\thzfc$ is still and open problem (see \cite{BrRa}).

Concerning models where $\sfrak$, $\bfrak$ and $\afrak$ are different, these are the possibilities.
\begin{problem}\label{MainProb}
   Let $\theta<\mu<\lambda$ be uncountable regular cardinals. Is it consistent that
   \begin{enumerate}[(1)]
      \item $\bfrak=\theta<\afrak=\mu<\sfrak=\lambda$?
      \item $\bfrak=\theta<\sfrak=\mu<\afrak=\lambda$?
      \item $\sfrak=\theta<\bfrak=\mu<\afrak=\lambda$?
   \end{enumerate}
\end{problem}
As models for $\bfrak<\sfrak$ and $\bfrak<\afrak$ are hard to get, many difficulties arise to answer each question of this problem. In the case of (1) and (2), three dimensional iteration constructions may work, but it is not known how to guarantee complete embeddability between the intermediate stages. To answer (3), we may use the known techniques for obtaining posets that force $\bfrak<\afrak$ with large continuum and guarantee that these preserve splitting families of the ground model. This is not viable for the techniques of \cite{breMob}, so we are left with the elaborated technique of iterations along a template.

In the models constructed in both approaches explained at the beginning of this introduction, $\sfrak$ is preserved to be equal to $\aleph_1$ (see Remark \ref{ShTempMainThm} for details), so the consistency of (3) with $\thzfc$ is true for $\theta=\aleph_1$.

We obtain a partial answer to (3) for larger $\theta$, which is the main result of this text. By a forcing construction as in the first approach above, given a measurable cardinal $\kappa$ and regular uncountable cardinals $\theta<\kappa<\mu<\lambda$ with $\theta^{<\theta}=\theta$ and $\lambda^\kappa=\lambda$, we construct a ccc poset that forces $\sfrak=\theta<\bfrak=\mu<\afrak=\lambda$.  Here, it is needed that the resulting poset preserves $\sfrak\leq\theta$ and, moreover, we need to use posets with small filter subbases (of size $<\theta$) along the iteration, like  Mathias-Prikry type or Laver-Prikry type posets, to ensure that $\sfrak\geq\theta$ in the final extension. Although this construction can be done without using the template structure of the iterations, it seems that knowledge about the template is necessary to get an easy proof of the preservation of $\sfrak\leq\theta$ in the final extension.

As these posets with small filter subbases are non-definable and Shelah's theory of template iterations just applies for definable (Suslin) ccc posets, we need to expand this theory in order to be able to include non-definable posets. This is the main technical achievement of this paper and it is presented in such generality that it can be used for other purposes. For instance, we use this to obtain models where the groupwise-density number $\gfrak$ can assume an arbitrary regular value, even in models obtained by well known ccc fsi techniques. Concerning this, from results in \cite{Bl}, it is known how to force $\gfrak=\aleph_1$ by a fsi of Suslin ccc posets that adds new reals at many intermediate stages. Our application is an extension of this argument to force $\gfrak$ to be an arbitrary regular uncountable cardinal by quite arbitrary template iteration constructions.

This paper is structured as follows. Section \ref{SecPrel} contains definitions, notation and results that are considered preliminaries for this text (most of the concepts used in this introduction are defined there). Besides, special emphasis is made in \emph{correctness}, notion introduced by Brendle \cite{brendle2} which is essential to construct our template iterations with non-definable posets.  We introduce, in Section \ref{SecTemp}, the basic definitions of, and results about, the templates that are used as supports for the iterations in this paper. In Section \ref{SecItTemp} we present a version of Shelah's theory of iterated forcing along a template for non-definable forcing notions, plus some basic results about ccc-ness, regular contention and equivalence for posets constructed from template iterations. Most of the concepts and results of Sections \ref{SecTemp} and \ref{SecItTemp} are due to Shelah and many proofs of the extended results are not that different from the original proofs, which can be found in \cite{br} and \cite{brendle2}.

In Section \ref{SecPres} we extend some preservation results for fsi included in \cite[Sect. 6.4 and 6.5]{barju} to the context of template iterations. Sections \ref{SecApplg} and \ref{SecAppl} are devoted to our applications: in the former, we show how to obtain an arbitrary regular value for the groupwise-density number in models constructed by ccc fsi (but now in the context of templates), which are slight modifications of some models presented in \cite[Sect. 3]{mejia} and \cite[Sect. 4]{mejia02}; in the latter, we prove Theorem \ref{AppSplitting}, which is our main result. Section \ref{SecQ} contains questions and discussions about the material of this text.


\section{Preliminaries and elementary forcing}\label{SecPrel}

This section is divided intro three parts. In the first subsection, we define the classical cardinal invariants that we consider in this text. Next, we present basic facts about forcing theory with special emphasis in the notion of \emph{correctness} and its interplay with Suslin ccc posets, forcing quotients and iterations. Regarding this notion, we prove most of the stated results. In the last part, we introduce basic facts, due to Shelah \cite{shelah}, about forcing with ultrapowers.

Our notation is quite standard. Given a cardinal number $\mu$, $[X]^{<\mu}$ denotes the collection of all the subsets of $X$ of size $<\mu$. Likewise, define $[X]^{\leq\mu}$ and $[X]^\mu$, the latter being the collection of all the subsets of $X$ of size $\mu$. If we consider the product $\prod_{i\in I}X_i$, for a function $p\in\prod_{i\in J}X_i$ where $J\subseteq I$, denote by $[p]:=\{x\in\prod_{i\in I}X_i\ /\ p\subseteq x\}$. Also, if $k\in I\menos J$ and $z\in X_k$, let $p\widehat{\ \ }\langle z\rangle_{k}$ be the function that extends $p$ with domain $J\cup\{ k\}$ and whose $k$-th component is $z$. In the case where $I=\delta$ and $J=\alpha<\delta$ are ordinals, $p\widehat{\ \ }\langle z\rangle=p\widehat{\ \ }\langle z\rangle_\alpha$. Say that $\bar{J}=\langle J_n\rangle_{n<\omega}$ is an \emph{interval partition of $\omega$} if it is a partition of $\omega$ into non-empty finite intervals such that $\max(J_n)+1=\min(J_{n+1})$ for all $n<\omega$.

Given a formula $\varphi(x)$ of the language of $\thzfc$, $\forall^\infty_{n<\omega}\varphi(n)$ means that $\varphi(n)$ holds for all but finitely many $n<\omega$. $\exists^\infty_{n<\omega}\varphi(n)$ means that infinitely many $n<\omega$ satisfy $\varphi(n)$.

Throughout this text, we refer as a \emph{real} to any member of a fixed uncountable Polish space (like the Baire space $\omega^\omega$ or the Cantor space $2^\omega$).

\subsection{Cardinal invariants}\label{SubsecInv}

For proofs and further information about the classical cardinal invariants defined in this subsection, see \cite{barju}, \cite{bart} and \cite{blass}.

For $f,g\in\omega^\omega$, define $f\leq^* g$ as $\forall^\infty_{n<\omega}(f(n)\leq g(n))$, which is read \emph{$f$ is dominated by $g$}. $f\leq g$ means that $\forall_{n<\omega}(f(n)\leq g(n))$. $D\subseteq\omega^\omega$ is a \emph{dominating family} if any function in $\omega^\omega$ is dominated by some function in $D$. $\bfrak$, the \emph{(un)bounding number}, is the least size of a subset $Y\subseteq\omega^\omega$ such that there is no function in $\omega^\omega$ that dominates all the members of $Y$. Dually, $\dfrak$, the \emph{dominating number}, is the least size of a dominating family.

For $a,x\in[\omega]^\omega$, \emph{$a$ splits $x$} means that $a\cap x$ and $x\menos a$ are infinite. $S\subseteq[\omega]^\omega$ is a \emph{splitting family} if any infinite subset of $\omega$ is splitted by some member of $S$. For $x\in[\omega]^\omega$ and $F\subseteq[\omega]^\omega$, we say that \emph{$x$ reaps $F$} if $x$ splits all the sets in $F$. $\sfrak$, the \emph{splitting number}, is the least size of a splitting family. Dually, $\rfrak$, the \emph{reaping number}, is defined as the least size of a subset of $[\omega]^\omega$ that cannot be ripped by one infinite subset of $\omega$.

A family $\Awf\subseteq[\omega]^\omega$ is said to be \emph{almost disjoint (a.d.)} if the intersection of any two different members of $\Awf$ is finite. A maximal family of this kind is called \emph{maximal almost disjoint (mad)}. $\afrak$, the \emph{almost disjointness number}, is defined as the least size of an infinite mad family.

For $A$ and $B$ subsets of $\omega$, $A\subseteq^* B$ denotes that $A\menos B$ is finite. If $\Cwf\subseteq[\omega]^{\omega}$, say that $X\in[\omega]^\omega$ is a \emph{pseudo-intersection of $\Cwf$} if $X\subseteq^* A$ for any $A\in\Cwf$. Recall that $\Fwf\subseteq[\omega]^{\omega}$ is a \emph{filter subbase} if the intersection of any finite subfamily of $\Fwf$ is infinite. Denote $\hat{\Fwf}:=\{\bigcap F\ /\ F\in[\Fwf]^{<\omega}\}$ and $\mathrm{gen}(\Fwf)=\left\{X\in[\omega]^\omega\ /\ \exists_{A\in\hat{\Fwf}}(A\subseteq^* X)\right\}$ which is the filter generated by $\Fwf$. Say that $\Fwf$ is a \emph{filter base} if it is a filter subbase and $\hat{\Fwf}=\Fwf$. Define the following cardinal invariants.

\begin{description}
   \item[$\pfrak$,] the \emph{pseudo-intersection number}, which is the least size of a filter subbase that does not have a pseudo-intersection.
   \item[$\ufrak$,] the \emph{ultrafilter number}, which is the least size of a filter subbase that generates a non-principal ultrafilter on $\omega$
\end{description}

From now on, we only consider filter bases that contain all cofinite subsets of $\omega$. The cardinal numbers $\pfrak$ and $\ufrak$ are not altered when restricted to filter bases.

$\Gwf\subseteq[\omega]^\omega$ is \emph{groupwise-dense} if $\Gwf$ is downward closed under $\subseteq^*$ and, for any interval partition $\langle I_n\rangle_{n<\omega}$ of $\omega$, there exists an $A\in[\omega]^\omega$ such that $\bigcup_{n\in A}I_n\in\Gwf$. The \emph{groupwise-density number} $\gfrak$ is defined as the least size of a family of groupwise-dense sets whose intersection is empty.

For an uncountable Polish space with a Borel probability measure, let $\Mwf$ be the $\sigma$-ideal of meager sets and $\Nwf$ be the $\sigma$-ideal of null sets (from the context, it will be clear which Polish space corresponds to such an ideal). For $\Iwf$ being $\Mwf$ or $\Nwf$, the following cardinal invariants are defined, whose values do not depend on the underlying Polish space:
\begin{description}
   \item[$\add(\Iwf)$:] \emph{The additivity of $\Iwf$}, which is the least size of a family $\Fwf\subseteq\Iwf$ whose union is not in $\Iwf$.
   \item[$\cov(\Iwf)$:] \emph{The covering of $\Iwf$}, which is the least size of a family $\Fwf\subseteq\Iwf$ whose union covers all the reals.
   \item[$\non(\Iwf)$:] \emph{The uniformity of $\Iwf$}, which is the least size of a set of reals not in $\Iwf$.
   \item[$\cof(\Iwf)$:] \emph{The cofinality of $\Iwf$}, which is the least size of a cofinal subfamily of $\langle\Iwf,\subseteq\rangle$.
\end{description}
\begin{figure}
\begin{center}
  \includegraphics{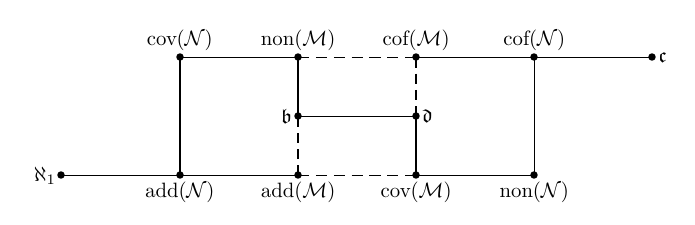}
\caption{Cicho\'n's diagram}
\label{fig:1}
\end{center}
\end{figure}

The cardinal invariants $\add(\Nwf)$, $\add(\Mwf)$, $\bfrak$, $\pfrak$ and $\gfrak$ are regular in $\thzfc$.

The following results correspond to characterizations of the additivity-cofinality of measure and the covering-uniformity of category. To fix some notation, functions $\psi:\omega\to[\omega]^{<\omega}$ are often called \emph{slaloms}. For $x\in\omega^\omega$ and an slalom $\psi$, define $x\in^*\psi$ by $\forall^\infty_{n<\omega}(x(n)\in\psi(n))$, which is read \emph{$\psi$ localizes $x$}. For a function $h:\omega\to\omega$ denote by $S(\omega,h)$ the set of all slaloms $\psi$ such that $\forall_{n<\omega}(|\psi(n)|\leq h(n))$.

\begin{theorem}[Bartoszy\'{n}ski's characterization {\cite[Thm. 2.3.9]{barju}}]\label{BartChar}
   Let $h\in\omega^\omega$ that converges to infinity. Then,
   \begin{enumerate}[(a)]
      \item $\add(\Nwf)$ is the least size of a set $Y\subseteq\omega^\omega$ such that no slalom in $S(\omega,h)$ localizes all the reals in $Y$.
      \item $\cof(\Nwf)$ is the least size of a family $\Fwf\subseteq S(\omega,h)$ with the property that every real in $\omega^\omega$ is localized by some slalom in $\Fwf$.
   \end{enumerate}
\end{theorem}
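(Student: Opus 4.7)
The plan is to prove both (a) and (b) simultaneously by establishing a Tukey-style equivalence between the localization relational system $(\omega^\omega, S(\omega,h), \in^*)$ and the null ideal $\Nwf$ ordered by inclusion. Recall that for a binary relation $\sqsubset$ between sets $X$ and $Y$, the \emph{unbounding number} is the least size of a subset of $X$ not $\sqsubset$-below any single $y\in Y$, and the \emph{dominating number} is the least size of a subset of $Y$ $\sqsubset$-above every $x\in X$. The right-hand sides of (a) and (b) are precisely these invariants for the relation ``$x$ is localized by $\psi$,'' while $\add(\Nwf)$ and $\cof(\Nwf)$ are the analogous invariants for $(\Nwf,\Nwf,\subseteq)$. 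A pair of morphisms in both directions between the two systems will yield the two cardinal equalities at once.

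First, I would reduce to a convenient choice of $h$, say $h(n)=(n+1)^2$ or $h(n)=2^n$. Since $h$ only needs to converge to infinity, given two such functions $h_1,h_2$ one can partition $\omega$ into consecutive finite intervals $\langle I_n\rangle_{n<\omega}$ on which $h_2$ eventually dominates the product $\prod_{k\in I_n}h_1(k)$ (or vice versa), and recode slaloms and reals by grouping coordinates along these intervals. This shows that the right-hand sides of (a) and (b) are the same for every $h\to\infty$, so it suffices to establish the characterization for one fixed, well-behaved $h$.

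Next, I would set up the Tukey maps using the standard combinatorial coding of null sets. By the Borel--Cantelli characterization, every null Borel set $N\subseteq 2^\omega$ is contained in some $\bigcap_k\bigcup_{n\geq k}J_n$ for a sequence $\langle J_n\rangle$ of clopen sets with $\sum_n\mu(J_n)<\infty$, and conversely any such set is null; moreover the $J_n$ can be coded as natural numbers by enumerating finite antichains of $2^{<\omega}$. Using this, to each real $x\in\omega^\omega$ I would assign a null set $N_x$ (viewing $x$ as a code for clopen sets), and to each null set $N$ a real $x_N$ (obtained from a rapidly shrinking clopen cover of $N$), such that: whenever $\psi\in S(\omega,h)$ localizes $x$, the set $N_x$ is included in a canonical null set $N_\psi$ depending only on $\psi$; and whenever $\psi$ localizes $x_N$, already $N\subseteq N_\psi$. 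The set $N_\psi$ is defined by taking, at coordinate $n$, the union of the clopen sets coded by the at most $h(n)$ values in $\psi(n)$, and then applying Borel--Cantelli to a tail obtained from a suitable rearrangement so that summability is automatic (here one uses that $h\to\infty$ lets the measures of the pieces decay fast enough).

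The main obstacle will be the combinatorial coding in the previous paragraph: arranging the slalom $\psi$ so that $N_\psi$ is genuinely null, and simultaneously ensuring that $N\subseteq N_\psi$ follows from localization of $x_N$, requires a careful choice of how reals encode summable clopen sequences and how slaloms aggregate them. Once this is done, both inequalities of (a) follow immediately: a family $Y$ of size less than $\add(\Nwf)$ gives a family $\{N_y:y\in Y\}$ whose union lies in a null set $N$, whose associated slalom localizes every $y\in Y$; conversely, from a null cover of size less than the right-hand side of (a) one extracts a family in $\omega^\omega$ (via the $x_N$) witnessing the bound. The dual argument yields (b). The rest is bookkeeping that can be found in \cite[Sect.~2.3]{barju}.
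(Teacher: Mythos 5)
The paper does not actually prove this statement: it is quoted as a known classical result, cited to Bartoszy\'nski--Judah \cite[Thm.\ 2.3.9]{barju}, and used as a black box in Section 2. So the only meaningful comparison is with the standard proof in the cited source, and your outline is exactly that proof's strategy (a Tukey-style equivalence between $(\omega^\omega,S(\omega,h),\in^*)$ and $(\Nwf,\Nwf,\subseteq)$, reduction to a single well-behaved $h$ by interval recoding, and Borel--Cantelli coding of null sets). As a plan it is sound; as a proof it has a genuine gap, and you in effect acknowledge it: the entire content of the theorem is the construction and verification of the two morphisms, which you defer (``the main obstacle will be the combinatorial coding \dots the rest is bookkeeping that can be found in \cite[Sect.~2.3]{barju}''). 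In particular, the hard direction --- that $\add(\Nwf)$ is \emph{at most} the localization number, equivalently that from any family of fewer than that many null sets one can produce a single slalom in $S(\omega,h)$ absorbing the associated reals --- rests on Bartoszy\'nski's lemma that every null set is contained in a member of a canonical combinatorially coded cofinal family of null sets with summable measure control; nothing in your sketch supplies this, and without it the maps $N\mapsto x_N$ and $\psi\mapsto N_\psi$ with the stated absorption properties do not exist.

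One concrete point is also stated backwards. You justify nullness of $N_\psi$ by saying ``$h\to\infty$ lets the measures decay fast enough,'' but a larger $h$ makes the slalom \emph{wider}, i.e.\ $N_\psi$ collects more clopen pieces per coordinate, which works against summability. Nullness of $N_\psi$ has to come from the decay built into the coding of the reals $x_N$ (e.g.\ forcing the piece coded at coordinate $n$ to have measure at most $2^{-n}/h(n)$ or similar), not from the growth of $h$; arranging this compatibly with the reverse implication (localization of $x_N$ yields $N\subseteq N_\psi$) is precisely the delicate part of \cite{barju}. The $h$-independence step via grouping coordinates is fine in one direction (widths multiply along the blocks) but the passage from a fast $h$ back to a slow one also needs the standard redistribution argument rather than the one-line remark given. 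None of this makes your approach wrong --- it is the right approach --- but the proposal is a citation-shaped sketch rather than a proof, which is fair enough only because the paper itself treats the theorem the same way.
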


\begin{theorem}[{\cite[Thm. 2.4.1 and 2.4.7]{barju}}]\label{Catchar}
   \begin{enumerate}[(a)]
     \item $\non(\Mwf)$ is the least size of a family $\Fwf\subseteq\omega^\omega$ such that, for any $x\in\omega^\omega$, there is an $f\in\Fwf$ such that $\exists^\infty_{n<\omega}(f(n)=x(n))$.
     \item $\cov(\Mwf)$ is the least size size of a family $\Ewf\subseteq\omega^\omega$ such that, for any $x\in\omega^\omega$, there is a $y\in\Ewf$ such that $\forall^\infty_{n<\omega}(x(n)\neq y(n))$.
   \end{enumerate}
\end{theorem}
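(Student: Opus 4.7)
The plan is to rely on Bartoszy\'{n}ski's block characterization of meager sets in $\omega^\omega$: a set $A\subseteq\omega^\omega$ is meager if and only if there exist an interval partition $\langle I_n\rangle_{n<\omega}$ of $\omega$ and patterns $\sigma_n\in\omega^{I_n}$ such that every $y\in A$ satisfies $y\frestr I_n\neq\sigma_n$ for all but finitely many $n$. The easy direction expresses the right-hand set as a countable union of closed nowhere dense pieces indexed by the exception threshold. The hard direction takes an increasing cover $A\subseteq\bigcup_k F_k$ by closed nowhere dense sets and constructs $\langle a_n,\sigma_n\rangle$ recursively, choosing each block $[a_n,a_{n+1})$ long enough that the cylinder of reals agreeing with $\sigma_n$ on it misses $F_0\cup\dots\cup F_n$ irrespective of completion elsewhere.

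The easy halves of the theorem fall out directly. For $\kappa_a\leq\non(\Mwf)$ in (a), a non-meager $\Fwf$ of size $\non(\Mwf)$ satisfies the matching property, since for each $x$ the meager set $\{y:\forall^\infty n\,(y(n)\neq x(n))\}$ cannot contain $\Fwf$, hence some $f\in\Fwf$ satisfies $f(n)=x(n)$ infinitely often. For $\cov(\Mwf)\leq\kappa_b$ in (b), any witness $\Ewf$ provides a cover of $\omega^\omega$ by the meager sets $\{x:\forall^\infty n\,(x(n)\neq y(n))\}$ as $y$ ranges over $\Ewf$.

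For the harder halves, one bridges pointwise and block matching through the homeomorphism $\Psi:\omega^\omega\to\omega^\omega$ induced by bijections $\phi_n:\omega\to\omega^{I_n}$, namely $\Psi(x)\frestr I_n=\phi_n(x(n))$. This $\Psi$ is meager-preserving and converts pointwise matching in the source into block matching (with respect to $\langle I_n\rangle$) in the target. Using $\Psi$ together with Bartoszy\'{n}ski's lemma, any $\Fwf$ with the pointwise matching property of (a) cannot be meager after passing through the appropriate $\Psi$, giving $\non(\Mwf)\leq\kappa_a$. For (b), applying the block characterization to each $M_\alpha$ in a covering family $\{M_\alpha:\alpha<\cov(\Mwf)\}$ produces block-eventually-different witnesses $(\sigma^\alpha,\langle I^\alpha_n\rangle)$; recoding through the appropriate $\Psi$ then yields reals $y_\alpha$ that are eventually different in the pointwise sense from every member of $M_\alpha$, giving $\kappa_b\leq\cov(\Mwf)$.

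The main obstacle is the hard direction of Bartoszy\'{n}ski's lemma. Since each $F_k$ is merely nowhere dense, its projection to a short coordinate window can be surjective, so the induction must pick $a_{n+1}-a_n$ large enough to execute a finite simultaneous diagonalization over all prefixes of length $a_n$ against the first $n+1$ closed nowhere dense sets. Once this lemma is in hand, the recoding through $\Psi$ and the elementary cardinal-invariant arithmetic complete the proof.
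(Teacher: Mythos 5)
The paper itself does not prove this statement (it simply cites Bartoszy\'{n}ski--Judah), so your proposal has to stand on its own, and as written it has a genuine gap at its core: the hard direction of the block characterization of meager sets, which you state for $\omega^\omega$ but prove with the Cantor-space argument. In $\omega^\omega$ the set of prefixes of length $a_n$ is $\omega^{a_n}$, which is countably infinite, so your ``finite simultaneous diagonalization over all prefixes of length $a_n$'' is not finite and cannot be completed inside one finite block. Worse, the property you are trying to secure --- a pattern $\sigma_n$ on $[a_n,a_{n+1})$ such that the set of all reals agreeing with $\sigma_n$ there misses $F_0\cup\dots\cup F_n$ ``irrespective of completion elsewhere'' --- is simply unattainable for some closed nowhere dense sets: take $F=\{y\in\omega^\omega\ /\ \forall_{n\geq 1}(y(n)\leq y(0))\}$, which is closed and nowhere dense; every cylinder determined by a finite pattern on coordinates $\geq 1$ meets $F$, since one can complete it with $y(0)$ larger than the maximum of the pattern and zeros elsewhere. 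So the recursion you describe cannot be carried out. The characterization is true in $\omega^\omega$, but its proof must let the finitely many exceptional blocks depend on the particular $y$ (for instance through growth of the pattern, as the example above already forces), or one proves the lemma in $2^\omega$ --- where levels are finite and your diagonalization is exactly the standard argument --- and then transfers to $\omega^\omega$, a step your sketch does not address.

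There is a second, related gap in the recoding step. The homeomorphism $\Psi$ is tied to one fixed interval partition, whereas the partitions produced by the block characterization vary with the meager set in (a) and with the index $\alpha$ in (b). In (a), what the pointwise matching property of $\Fwf$ gives you directly is only that $\Fwf$ is not contained in any set of the form $\{y\ /\ \forall^\infty_n(y(n)\neq x(n))\}$; that is strictly weaker than non-meagerness, since block-avoidance sets for blocks of length $\geq 2$ are not contained in pointwise-avoidance sets, and ``the appropriate $\Psi$'' changes with the meager set being tested, so no single set of size $\kappa_a$ is exhibited. In (b), the block disagreement you extract from $M_\alpha$ becomes pointwise eventual difference only after recoding both $\sigma^\alpha$ and the given $x$ through the $\alpha$-dependent $\Psi$, so it does not directly produce reals eventually different from the original members of $M_\alpha$. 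Handling these alignment issues (common refinements of interval partitions, blocks of one partition engulfing blocks of another, shifted copies) is precisely the combinatorial content of the proof in Bartoszy\'{n}ski--Judah, and it is missing from the proposal.
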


Recall the typical inequalities between these cardinal invariants that are true in $\thzfc$. Clearly, they are between $\aleph_1$ and $\cfrak$. The well known Cicho\'n's diagram (Figure \ref{fig:1}) illustrates the provable inequalities in $\thzfc$, where vertical lines from bottom to top and horizontal lines from left to right represent $\leq$. Also, the dotted lines mean $\add(\Mwf)=\min\{\bfrak,\cov(\Mwf)\}$ and $\cof(\Mwf)=\max\{\dfrak,\non(\Mwf)\}$. We also know that $\pfrak\leq\add(\Mwf)$, $\pfrak\leq\sfrak$, $\pfrak\leq\gfrak$, $\sfrak\leq\dfrak$, $\gfrak\leq\dfrak$, $\bfrak\leq\afrak$, $\bfrak\leq\rfrak$, $\sfrak\leq\non(\Iwf)$, $\cov(\Iwf)\leq\rfrak$ (where $\Iwf$ is $\Mwf$ or $\Nwf$) and $\rfrak\leq\ufrak$. No other inequalities can be proved in $\thzfc$ between these cardinal invariants.

\subsection{Forcing theory}\label{SubsecForcing}

Basic notation and knowledge about forcing can be found in \cite{jech} and \cite{kunen}. See also \cite[Ch. 3]{barju} for further information about Suslin ccc forcing.

Fix posets $\Por\subseteq\Qor$. Recall that $\Por$ is a \emph{regular subposet} of $\Qor$ if
\begin{enumerate}[(i)]
   \item for any $p,p'\in\Por$, $p\leq_{\Por} p'$ iff $p\leq_{\Qor}p'$,
   \item for any $p,p'\in\Por$, $p\perp_{\Por} p'$ implies $p\perp_{\Qor}p'$ and
   \item for every $q\in\Qor$ there is a $p\in\Por$ such that any condition in $\Por$ stronger than $p$ is compatible with $q$ in $\Qor$. Here, we call \emph{$p$ a reduction\footnote{Also known as \emph{pseudo-projection}.} of $q$ (with respect to $\Por,\Qor$)} (see, for example, \cite[Ch. VII Def. 7.1]{kunen}).
\end{enumerate}
Conditions (ii) and (iii) can be replaced by: any maximal antichain in $\Por$ is a maximal antichain in $\Qor$. When only (i) and (ii) hold and $\mathds{1}_\Qor$, \emph{the trivial condition in $\Qor$}, is in $\Por$, we say that \emph{$\Por$ is a subposet of $\Qor$}. Though notation $\Por\lessdot\Qor$ means, in general, that any completion of $\Por$ is completely embedded into any completion of $\Qor$, in this text we reserve this notation to mean that $\Por$ is a regular subposet of $\Qor$. On the other hand, $\Por\simeq\Qor$ means that $\Por$ and $\Qor$ are \emph{forcing equivalent}, that is, their completions are isomorphic.

We define a restricted version of regular subposet. Fix $M$ a transitive model of $\thzfc$ such that $\Por\in M$. Say that \emph{$\Por$ is a regular subposet of $\Qor$ with respect to $M$}, denoted by $\Por\lessdot_M\Qor$, if (i) holds and any maximal antichain in $\Por$ that belongs to $M$ is a maximal antichain in $\Qor$. One of the features of this notion is that, when $N\supseteq M$ is a transitive model of $\thzfc$ and $\Qor\in N$, if $H$ is $\Qor$-generic over $N$, then $\Por\cap H$ is $\Por$-generic over $M$ and $M[\Por\cap H]\subseteq N[H]$. Of course, any $\Por$-name $\dot{x}\in M$ is also a $\Qor$-name and $\mathrm{val}(\dot{x},\Por\cap H)=\mathrm{val}(\dot{x},H)\in M[\Por\cap H]$ (the generic sets $\Por\cap H$ and $H$ interpret the name $\dot{x}$ as the same object). Very often, we use $M^{\Por}$ to denote a generic extension of $M$ by $\Por$.

Recall the following stronger versions of the countable chain condition of a poset.

\begin{definition}\label{Defcenteredetc}
   Let $\mu$ be an infinite cardinal.
   \begin{enumerate}[(1)]
      \item For $n<\omega$, $B\subseteq\Por$ is \emph{$n$-linked} if, for every $F\subseteq B$ of size $\leq n$, $\exists_{p\in\Por}\forall_{q\in F}(p\leq q)$.
      \item $C\subseteq\Por$ is \emph{centered} if it is $n$-linked for every $n<\omega$.
      \item $\Por$ is \emph{$\mu$-linked} if it is the union of $\leq\mu$ many $2$-linked subsets of $\Por$. In the case $\mu=\aleph_0$, we say \emph{$\sigma$-linked}.
      \item $\Por$ is \emph{$\mu$-centered} if it is the union of $\leq\mu$ many centered subsets of $\Por$. In the case $\mu=\aleph_0$, we say \emph{$\sigma$-centered}.
      \item $\Por$ is \emph{$\mu$-Knaster} if, for every sequence $\{p_\alpha\}_{\alpha<\mu}$ of conditions in $\Por$, there is an $A\subseteq\mu$ of size $\mu$ such that $\{p_\alpha\ /\ \alpha\in A\}$ is $2$-linked. For $\mu=\aleph_1$, we just say \emph{Knaster}.
   \end{enumerate}
   Note that $\mu$-centered implies $\mu$-linked, and $\mu$-Knaster implies $\mu$-cc. Also, $\mu$-linked implies $\mu^+$-Knaster.
\end{definition}

\begin{definition}[Mathias-Prikry type forcing (see, for example, {\cite{blsh}})]\label{DefMatLavUf}
   Let $\Fwf$ be a filter base. \emph{Mathias-Prikry forcing with $\Fwf$} is the poset $\Mor_\Fwf=\{(s,A)\ /\ s\in[\omega]^{<\omega},\ A\in\Fwf,\ \sup(s+1)\leq\min(A)\}$ (where $s+1=\{k+1\ /\ k\in s\}$) ordered by $(t,B)\leq(s,A)$ iff $s\subseteq t$, $B\subseteq A$ and $t\menos s\subseteq A$.
\end{definition}

This forcing is $\sigma$-centered and it adds a pseudo-intersection of $\Fwf$, which is often referred as the \emph{Mathias-Prikry real added by $\Mor_{\Fwf}$}.

\begin{definition}[Suslin ccc poset]\label{DefSuslinposet}
   A \emph{Suslin ccc poset} $\Sor$ is a ccc poset, whose conditions are reals (in some fixed uncountable Polish space), such that the relations $\leq$ and $\perp$ are $\boldsymbol{\Sigma}_1^1$.
\end{definition}

Note that $\Sor$ itself is a $\boldsymbol{\Sigma}_1^1$-set because $x\in\Sor$ iff $x\leq x$. We even have `Suslin' definitions for $\sigma$-linked and $\sigma$-centered for Suslin ccc posets.

\begin{definition}[Brendle {\cite{brendle2}}]\label{DefSuslinLinked}
   Let $\Sor$ be a Suslin ccc poset.
   \begin{enumerate}[(1)]
      \item $\Sor$ is \emph{Suslin $\sigma$-linked} if there exists a sequence $\{S_n\}_{n<\omega}$ of 2-linked subsets of $\Sor$ such that the statement ``$x\in S_n$" is $\boldsymbol{\Sigma}^1_1$. Here, note that the statement ``$S_n$ is 2-linked" is $\boldsymbol{\Pi}_1^1$.
      \item $\Sor$ is \emph{Suslin $\sigma$-centered} if there exists a sequence $\{S_n\}_{n<\omega}$ of centered subsets of $\Sor$ such that the statement ``$x\in S_n$" is $\boldsymbol{\Sigma}^1_1$. Here, note that the statement ``$S_n$ is centered" is $\boldsymbol{\Pi}_2^1$, this because the statement ``$p_0,\ldots,p_l$ have a common stronger condition in $\Sor$" is $\boldsymbol{\Sigma}^1_1$.
   \end{enumerate}
\end{definition}

The following are well known Suslin ccc notions that are used in our applications. It is easy to note that, for each of them, the order relation and the incompatibility relation are Borel.
\begin{itemize}
   \item \emph{Cohen forcing $\Cor$}, which is equivalent to any atomless countable poset.
   \item \emph{Random forcing $\Bor$,} whose conditions are Borel non-null subsets\footnote{Any uncountable Polish space with a Borel probability measure that makes singletons null can be used instead.} of $2^\omega$ ordered by $\subseteq$.
   \item \emph{Hechler forcing $\Dor$}, whose conditions are of the form $(s,f)$ where $s\in\omega^{<\omega}$, $f\in\omega^\omega$ and $s\subseteq f$. The order is defined by $(t,g)\leq(s,f)$ iff $s\subseteq t$ and $f\leq g$.
   \item $\Eor=\omega^{<\omega}\times[\omega^\omega]^{<\omega}$, ordered by $(s',F')\leq(s,F)$ iff $s\subseteq s'$, $F\subseteq F'$ and $\forall_{i\in|s'|\menos|s|}(s'(i)\neq x(i))$ for any $x\in F$, is the \emph{standard ccc poset that adds an eventually different real}.
   \item Let $h:\omega\to\omega$ non-decreasing and converging to infinity. $\Loc^h$, the \emph{localization forcing at $h$}, consists of conditions of the form $(s,F)$ where $s\in\prod_{i<n}[\omega]^{\leq h(i)}$ and $F\in[\omega^\omega]^{\leq h(n)}$ for some $n<\omega$. The order is $(s',F')\leq(s,F)$ iff $s\subseteq s'$, $F\subseteq F'$ and $\{x(i)\ /\ x\in F\}\subseteq s'(i)$ for all $i\in|s'|\menos|s|$. $\Loc:=\Loc^{id}$ where $id:\omega\to\omega$ is the identity function.
\end{itemize}
Note that $\Cor$, $\Dor$ and $\Eor$ are Suslin $\sigma$-centered, while $\Loc^h$ and $\Bor$ are Suslin $\sigma$-linked. Moreover, for each of these posets, the statement ``$p_0,\ldots,p_l$ have a common stronger condition" is Borel. Then, ``$S$ is centered" is $\boldsymbol{\Pi}_1^1$ for any $\boldsymbol{\Sigma}^1_1$-subset $S$ of such a poset.

\begin{lemma}\label{Suslin2stepit}
Let $M\subseteq N$ be transitive models of $\thzfc$. If $\Sor$ is a Suslin ccc poset coded in $M$ then $\Sor^M\lessdot_M\Sor^N$.
\end{lemma}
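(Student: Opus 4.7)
My plan is to verify the two clauses of the definition of $\lessdot_M$ separately. For clause (i), I would show that the order $\leq$ and the incompatibility $\perp$ of $\Sor$ are absolute between $M$ and $N$. Since $\leq$ is $\boldsymbol{\Sigma}_1^1$, the compatibility relation $\parallel$, expressible as $\exists r(r \leq p \wedge r \leq q)$, is also $\boldsymbol{\Sigma}_1^1$; being complementary to $\perp$ on $\Sor \times \Sor$, both $\perp$ and $\parallel$ are $\boldsymbol{\Delta}_1^1$ and hence Borel. Replacing $\Sor$ by its separative quotient if necessary, the order $\leq$ is also $\boldsymbol{\Pi}_1^1$ via $p \leq q \iff \neg\exists r(r \leq p \wedge r \perp q)$, and thus Borel as well. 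All the relevant relations are then Borel with codes in $M$, so Mostowski absoluteness for Borel statements between transitive models yields clause (i).

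For the main clause, given $A \in M$ a maximal antichain in $\Sor^M$, I would first use the ccc of $\Sor^M$ in $M$ to enumerate $A = \langle a_n \rangle_{n<\omega}$ inside $M$, coded by a single real $\alpha \in M$. That $A$ remains an antichain in $\Sor^N$ is immediate from the absoluteness of $\perp$ established in the previous paragraph: for $m \neq n$, $a_m \perp a_n$ holds in $M$, hence also in $N$.

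The crux is to show that $A$ stays maximal in $\Sor^N$. I would recast this as the sentence
\[ \phi(\alpha) \;:=\; \forall p\, \bigl( p \in \Sor \to \exists n < \omega\, \exists r\, (r \leq p \wedge r \leq a_n) \bigr). \]
Expressing ``$p \in \Sor$'' and the inner compatibility witness each as a $\boldsymbol{\Sigma}_1^1$ formula, and then pulling the inner existential to the front by the elementary equivalence $\forall y\,(\neg Q(y) \vee \exists w\, R(w)) \iff \forall y\, \exists w\,(\neg Q(y) \vee R(w))$, one sees that $\phi(\alpha)$ can be written as $\forall p\, \forall y\, \exists w\, \psi$ with $\psi$ arithmetic in the parameters, so $\phi(\alpha)$ is $\boldsymbol{\Pi}_2^1$ with real parameter $\alpha$. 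Shoenfield's absoluteness theorem then transfers $\phi(\alpha)$ from $M$ to $N$, yielding maximality. The main obstacle I anticipate is this complexity count together with the careful application of Shoenfield in the two-model setting, which is routine once one observes that $\boldsymbol{\Pi}_2^1$-absoluteness between $M$ and $N$ follows from relative $\boldsymbol{\Sigma}_2^1$-absoluteness through $V$ in the typical situation where both models contain the relevant countable ordinals.
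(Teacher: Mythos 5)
Your overall skeleton—use the ccc of $\Sor$ in $M$ to code a maximal antichain $A=\langle a_n\rangle_{n<\omega}$ by a real $\alpha\in M$, then transfer maximality by an absoluteness argument—is the standard one (the paper states this lemma without proof, as a known fact about Suslin ccc forcing). But your complexity count misses the key point of Definition \ref{DefSuslinposet}: incompatibility $\perp$ is itself $\boldsymbol{\Sigma}^1_1$. By writing maximality positively as $\forall p\,\exists n\,\exists r(\dots)$ you land at $\boldsymbol{\Pi}^1_2$ and are forced to invoke Shoenfield, and Shoenfield absoluteness between $M$ and $N$ is not available under the lemma's hypotheses: $M\subseteq N$ are arbitrary transitive models of $\thzfc$, and nothing guarantees $\omega_1^N\subseteq M$ (you hedge on exactly this with ``in the typical situation where both models contain the relevant countable ordinals''). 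Upward transfer of a $\boldsymbol{\Pi}^1_2$ statement from $M$ to $N$ amounts to downward $\boldsymbol{\Sigma}^1_2$-absoluteness from $N$ to $M$, which can fail for transitive set models; so, as written, the crucial step is a genuine gap. The repair is to phrase failure of predensity as the $\boldsymbol{\Sigma}^1_1$ statement $\exists p\,\bigl(p\in\Sor\wedge\forall_{n<\omega}\,(p\perp a_n)\bigr)$: since $\boldsymbol{\Sigma}^1_1$ is closed under countable intersections, the matrix is $\boldsymbol{\Sigma}^1_1$, hence ``$\langle a_n\rangle_{n<\omega}$ is a maximal antichain of $\Sor$'' is $\boldsymbol{\Pi}^1_1$ in the code $\alpha$, and Mostowski ($\boldsymbol{\Sigma}^1_1/\boldsymbol{\Pi}^1_1$) absoluteness between arbitrary transitive models transfers it from $M$ to $N$ with no hypotheses on $\omega_1$ whatsoever.

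A smaller blemish: for clause (i) the detour through $\boldsymbol{\Delta}^1_1$, Borelness and the separative quotient is both unnecessary and shaky—$\perp$ and the compatibility relation are complementary only inside the analytic set $\Sor\times\Sor$, which does not make them Borel in the plane, and the lemma concerns $\Sor$ itself, not its separative quotient (which need not be Suslin in any obvious way). Since ``$x\in\Sor$'', ``$x\leq y$'' and ``$x\perp y$'' are all $\boldsymbol{\Sigma}^1_1$ with a code in $M$, two-way $\boldsymbol{\Sigma}^1_1$-absoluteness already gives $\Sor^M=\Sor^N\cap M$ and the agreement of $\leq$ (and of $\perp$) on $\Sor^M$, which is all that clause (i) and the antichain-preservation step require.
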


Now, we introduce the notion of \emph{correctness}. This was originally defined by Brendle \cite{brendle2} in the context of complete Boolean algebras, but here we translate it in the context of posets. We use this notion to describe our general template iteration construction in Section \ref{SecItTemp}.

For the rest of this section, fix $M$ a transitive model of $\thzfc$.

\begin{definition}[Correct diagram of posets]\label{DefCorr}
   For $i=0,1$, let $\Por_i$ and $\Qor_i$ be posets. In (1) and (2), $\langle\Por_0,\Por_1,\Qor_0,\Qor_1\rangle$ represents the diagram of Figure \ref{fig:2}.
   \begin{figure}
     \begin{center}
         \includegraphics{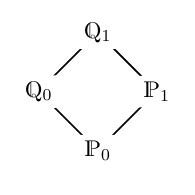}
         \caption{Diagram of posets}
         \label{fig:2}
     \end{center}
   \end{figure}
   \begin{enumerate}[(1)]
      \item When $\Por_i\lessdot\Qor_i$ for $i=0,1$, $\Por_0\lessdot\Por_1$ and $\Qor_0\lessdot\Qor_1$, say that the diagram $\langle\Por_0,\Por_1,\Qor_0,\Qor_1\rangle$  is \emph{correct} if, for each $q\in\Qor_0$ and $p\in\Por_1$, if both have a common reduction in $\Por_0$, then they are compatible in $\Qor_1$. An equivalent formulation is that, whenever $p_0\in\Por_0$ is a reduction of $p_1\in\Por_1$, then $p_0$ is a reduction of $p_1$ with respect to $\Qor_0,\Qor_1$.
      \item We consider a restriction of (1) to the model $M$. If $\Por_0,\Por_1\in M$, $\Por_0\lessdot\Por_1$, $\Qor_0\lessdot\Qor_1$ and $\Por_i\lessdot_M\Qor_i$ for $i=0,1$, the diagram $\langle\Por_0,\Por_1,\Qor_0,\Qor_1\rangle$ is \emph{correct with respect to $M$} iff, whenever $p_0\in\Por_0$ is a reduction of $p_1\in\Por_1$, then $p_0$ is a reduction of $p_1$ with respect to $\Qor_0,\Qor_1$.
   \end{enumerate}
\end{definition}

\begin{remark}
  (1) is a particular case of (2), in fact, in the context of (2), if $\langle\Por_0,\Por_1,\Qor_0,\Qor_1\rangle$ is correct with respect to $M$ and $\Por_i,\Qor_i\in M$ for $i=0,1$, then $\Por_i\lessdot\Qor_i$ and $\langle\Por_0,\Por_1,\Qor_0,\Qor_1\rangle$ is correct. Though the results in this section are stated for the notion (2), most of their applications are done in the context of (1).
\end{remark}

Note that $\langle\Por_0,\Por_1,\Qor_0,\Qor_1\rangle$ is correct iff $\langle\Por_0,\Qor_0,\Por_1,\Qor_1\rangle$ is too, but this symmetry is not true in general for the restricted notion of correctness.

In the context of (2), if $\Por_0=\Por_1$ then $\langle\Por_0,\Por_1,\Qor_0,\Qor_1\rangle$ is always correct with respect to $M$.

In the remaining of this subsection, we state and prove many general results about correctness in relation with forcing iterations and quotients. At the end, we present some results about preserving correct diagrams under two-step iterations, in particular, in relation to the notion of \emph{correctness preserving Suslin ccc poset} introduced by Brendle \cite{brendle2}. This material is important to guarantee that our examples of template iterations are well defined and to have tools to prove our preservation theorems in Section \ref{SecPres}.

\begin{lemma}\label{2stepitemb}
   Let $\Por\in M$, $\Por'$ be posets such that $\Por\lessdot_M\Por'$. If $\Qnm\in M$ is a $\Por$-name of a poset, $\Qnm'$ is a $\Por'$-name of a poset
   and $\Por'$ forces that $\Qnm\lessdot_{M^\Por}\Qnm'$, then $\Por\ast\Qnm\lessdot_M\Por'\ast\Qnm'$. Moreover, $\langle\Por,\Por\ast\Qnm,\Por',\Por'\ast\Qnm'\rangle$ is a correct diagram with respect to $M$.
\end{lemma}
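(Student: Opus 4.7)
My plan is to establish the correctness of the diagram and the $\lessdot_M$-embedding $\Por\ast\Qnm\lessdot_M\Por'\ast\Qnm'$ separately. Both rest on two elementary observations about reductions for two-step iterations, combined with a standard restriction-of-generics argument for the maximal antichain preservation.

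The first observation is that, for any $(p_1,\dot q_1)\in\Por\ast\Qnm$, a condition $p_0\in\Por$ reduces $(p_1,\dot q_1)$ with respect to $\Por,\Por\ast\Qnm$ if and only if $p_0$ reduces $p_1$ in $\Por$ alone; the non-trivial direction uses that any $s\le r,p_1$ in $\Por$ lifts to the common extension $(s,\dot q_1)\le r,(p_1,\dot q_1)$ in $\Por\ast\Qnm$, since $s\le p_1$ and $p_1\Vdash_\Por \dot q_1\in\Qnm$. The same observation applies to $\Por',\Por'\ast\Qnm'$. The second observation is that, under $\Por\lessdot_M\Por'$, reductions already transfer from $\Por$ to $\Por'$: working in $M$, I take a maximal antichain $A_0$ within $\{s\in\Por : s\le p_0,\ s\le p_1\}$ (predense below $p_0$ by the reduction hypothesis) and extend it with a maximal antichain $A_2$ of conditions of $\Por$ incompatible with $p_0$, obtaining a maximal antichain $A=A_0\cup A_2\in M$ of $\Por$. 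By $\Por\lessdot_M\Por'$, $A$ is maximal in $\Por'$; given $r'\le p_0$ in $\Por'$, the option $a\in A_2$ is excluded by the preservation of $\perp$ between elements of $\Por\cap M=\Por$ (which follows from $\Por\lessdot_M\Por'$ by extending $\{a,p_0\}$ to a maximal antichain of $\Por$ inside $M$), so $r'$ meets some $a\in A_0$, witnessing $r'\parallel p_1$ in $\Por'$. Chaining the two observations yields the correctness of $\langle\Por,\Por\ast\Qnm,\Por',\Por'\ast\Qnm'\rangle$ with respect to $M$.

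For the $\lessdot_M$-embedding, condition (i) of the definition is routine: the unfolding $p\le p'$ in $\Por$ and $p\Vdash_\Por\dot q\le\dot q'$ in $\Qnm$ transfers upward by $\Por\lessdot_M\Por'$ on the first coordinate and, on the second, by evaluating a $\Por'$-generic $G'$ and its restriction $G=G'\cap\Por$ and invoking the hypothesis $\Por'\Vdash\Qnm\lessdot_{M^\Por}\Qnm'$; the downward direction is symmetric. For maximal antichain preservation, given $A\in M$ a maximal antichain of $\Por\ast\Qnm$ and any $(p',\dot q')\in\Por'\ast\Qnm'$, I take (after passing to a countable elementary submodel if needed) a $\Por'$-generic $G'$ over $M$ with $p'\in G'$ and a $\Qnm'[G']$-generic $H'$ over $M[G']$ with $\dot q'[G']\in H'$. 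Then $G=G'\cap\Por$ is $\Por$-generic over $M$ by $\Por\lessdot_M\Por'$, and the forced $\Qnm\lessdot_{M^\Por}\Qnm'$ implies $H=H'\cap\Qnm[G]$ is $\Qnm[G]$-generic over $M[G]$; so $G\ast H$ is $\Por\ast\Qnm$-generic over $M$ and meets $A$ in some $(p,\dot q)$. Since $p\in G\subseteq G'$ and $\dot q[G]\in H\subseteq H'$, both $(p,\dot q)$ and $(p',\dot q')$ lie in the filter $G'\ast H'$ and are therefore compatible in $\Por'\ast\Qnm'$.

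The main subtlety I expect is the level-mismatch between $\Qnm$ (a $\Por$-name) and $\Qnm'$ (a $\Por'$-name): the hypothesis $\Qnm\lessdot_{M^\Por}\Qnm'$ is only meaningful once one has a $\Por'$-generic $G'$ and restricts it to $G=G'\cap\Por$. This is precisely what the hypothesis is designed for, and it is what allows a $\Qnm[G]$-generic $H$ to be recovered from $H'$ as $H'\cap\Qnm[G]$, so that the $\Por\ast\Qnm$-generic $G\ast H$ embeds into $G'\ast H'$; once this is in place, the rest of the argument is formal.
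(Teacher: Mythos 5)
Your argument is right in substance, and since the paper gives no proof of its own (it simply refers to \cite[Lemma 13]{BF}), the natural comparison is with the standard argument given there, which is the syntactic rendering of exactly your idea: for a maximal antichain $A\in M$ of $\Por\ast\Qnm$ one notes that $\Vdash_{\Por}$ ``$\{\dot{q}\ /\ (p,\dot{q})\in A,\ p\in\dot{G}\}$ is a maximal antichain of $\Qnm$ lying in $M^{\Por}$'', so by the hypothesis $\Por'$ forces this set to be predense in $\Qnm'$, and a density argument below an arbitrary $(p',\dot{q}')\in\Por'\ast\Qnm'$ yields a condition below $(p',\dot{q}')$ and some member of $A$. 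Your restricted-generics argument ($G=G'\cap\Por$ generic over $M$, $H=H'\cap\Qnm[G]$ generic over $M[G]$, hence $G\ast H$ is $\Por\ast\Qnm$-generic over $M$, meets $A$, and compatibility is read off inside the filter $G'\ast H'$) is the semantic counterpart of the same computation. Your treatment of correctness is complete and correct: reducing reductions in a two-step iteration to reductions of the first coordinates, and transferring $\Por$-reductions to $\Por'$ via the antichain $A_0\cup A_2\in M$, is in effect the paper's observation that diagrams with $\Por_0=\Por_1$ are automatically correct with respect to $M$, composed with your first observation.

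The one genuine soft spot is the choice of models for the generics. ``$\Por'$-generic over $M$'' and ``$\Qnm'[G']$-generic over $M[G']$'' are not the right notions: $\Por'$ and $\Qnm'$ are not in $M$, and, more importantly, the step where the forced statement $\Qnm\lessdot_{M^{\Por}}\Qnm'$ is invoked to conclude that $H'\cap\Qnm[G]$ is generic over $M[G]$ needs the forcing theorem, i.e.\ $G'$ and $H'$ generic over $V$; equivalently, run the whole argument inside $V^{\Por'\ast\Qnm'}$ below $(p',\dot{q}')$ and pull the (absolute) compatibility statement back to $V$. The parenthetical ``pass to a countable elementary submodel'' does not by itself repair this: a filter generic over a countable $N\prec H_\chi$ only meets the maximal antichains of $\Por$ lying in $M\cap N$, so $G'\cap\Por$ is not generic over $M$, and the claim that $G\ast H$ meets $A$ then becomes exactly as hard as the predensity of $A$ in $\Por'\ast\Qnm'$ that you are proving; unwinding what is actually available in $N$ leads you back to the name-level argument above. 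So either phrase the proof as a forcing-theorem argument over $V$, or give the syntactic version; both are routine, but as literally written the genericity claims are the step that would fail. (Minor bookkeeping you gloss over but handle correctly in spirit: one needs $p\Vdash_{\Por'}\dot{q}\in\Qnm'$ for $(p,\dot{q})\in\Por\ast\Qnm$, so that $\Por\ast\Qnm$ is literally a subset of $\Por'\ast\Qnm'$; this follows from the forced $\Qnm\subseteq\Qnm'$ exactly as in your verification of condition (i).)
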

\begin{proof}
   This is quite elementary. See, for example, \cite[Lemma 13]{BF}.
\end{proof}

Recall the notion of quotients of posets. Let $\Por$ and $\Qor$ be posets, $\Por\lessdot\Qor$. Define the quotient $\Qor/\Por:=\big\{q\in\Qor\ /\ \exists_{p\in\dot{G}}(p$ is a reduction of $q\big\}$, which is a $\Por$-name of a poset which inherits the same order as $\Qor$, where $\dot{G}$ is the canonical $\Por$-name for the $\Por$-generic set. Note that $p\in\Por$ is a reduction of $q\in\Qor$ iff $p\Vdash_{\Por}q\in\Qor/\Por$.

\begin{lemma}\label{QuotEqv}
   If $\Por\lessdot\Qor$ are posets then $\Qor\simeq\Por\ast(\Qor/\Por)$. Moreover, if $q\in\Qor$ and $\varphi$ is a formula in the forcing language, $q\Vdash_\Qor\varphi$ iff $(p,q)\Vdash_{\Por\ast(\Qor/\Por)}\varphi$ for any reduction $p\in\Por$ of $q$.
\end{lemma}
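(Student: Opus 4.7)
The plan is to prove the forcing equivalence by transferring generic filters between $\Qor$ and $\Por*(\Qor/\Por)$, and then deduce the ``moreover'' clause by interpreting the forcing relation on generic extensions (alternatively, the moreover part can be proved first and used to read off the equivalence via Boolean completions). First I would make $\Qor/\Por$ precise as a $\Por$-name: its elements are $\check{q}$ for $q\in\Qor$ having some reduction in $\dot{G}_\Por$, and the order is inherited from $\Qor$. The basic fact I would single out and use throughout is already recorded before the statement: $p\in\Por$ is a reduction of $q\in\Qor$ iff $p\Vdash_\Por\check{q}\in\Qor/\Por$.

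Next I would prove the equivalence at the level of generic filters. Given a $\Qor$-generic $G$ over the ground model, $H:=G\cap\Por$ is $\Por$-generic (this is exactly the definition of $\Por\lessdot\Qor$), and, since every $q\in\Qor$ has a reduction in $\Por$ and every dense open set of $(\Qor/\Por)^H$ in $V[H]$ comes from a $\Por$-name whose $H$-interpretation is met by $G$ (a density argument in $\Qor$), one shows that $G$ meets every dense open subset of $(\Qor/\Por)^H$ in $V[H]$. Hence $K:=\{(p,\dot{r})\in\Por*(\Qor/\Por):p\in H\text{ and }\dot{r}^H\in G\}$ is $\Por*(\Qor/\Por)$-generic. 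Conversely, from a $\Por*(\Qor/\Por)$-generic $K$ one recovers a $\Por$-generic $H=\{p:\exists\dot{r}((p,\dot{r})\in K)\}$, then $G:=\{q\in\Qor:\exists p\in H\,((p,\check{q})\in K)\}$ (closed downward under $\leq_\Qor$) is $\Qor$-generic, and $V[G]=V[H][G]=V[K]$. Since these two passages are mutually inverse, the complete Boolean algebras of $\Qor$ and $\Por*(\Qor/\Por)$ coincide, which is $\Qor\simeq\Por*(\Qor/\Por)$.

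Finally I would deduce the moreover clause. For the ``only if'' direction, assume $q\Vdash_\Qor\varphi$ and let $p$ be any reduction of $q$; given any $\Por*(\Qor/\Por)$-generic $K$ with $(p,\check{q})\in K$, the associated $\Qor$-generic $G$ from the previous paragraph contains $q$, so $\varphi$ holds in $V[G]=V[K]$, proving $(p,\check{q})\Vdash\varphi$. For the converse, suppose every reduction $p$ of $q$ satisfies $(p,\check{q})\Vdash\varphi$, yet some $q'\leq_\Qor q$ forces $\neg\varphi$ in $\Qor$. Pick a reduction $p'$ of $q'$; since $q'\leq q$, any $p''\leq p'$ is compatible with $q'\leq q$, so $p'$ is also a reduction of $q$. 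Then $(p',\check{q'})\leq(p',\check{q})$ in $\Por*(\Qor/\Por)$, so by hypothesis $(p',\check{q'})\Vdash\varphi$, while the forward direction applied to $q'$ gives $(p',\check{q'})\Vdash\neg\varphi$, a contradiction. The only delicate point, which I would treat carefully, is the density argument showing that a $\Qor$-generic $G$ meets every dense open subset of $(\Qor/\Por)^H$ that lives in $V[H]$; this uses that such a dense set has a $\Por$-name $\dot D\in V$ and that $\{q\in\Qor: q\Vdash_\Por\check q\in\dot D\text{ and }\exists p\text{ reducing }q\}$ is dense in $\Qor$, which in turn follows from the definition of reduction together with $\Por\lessdot\Qor$.
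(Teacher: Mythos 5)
The paper gives no proof of this lemma at all (it is quoted as a standard fact about quotients), so the only comparison available is with the standard argument, and that is essentially what you give: the two-way correspondence between $\Qor$-generic filters $G$ (with $H=G\cap\Por$ being $\Por$-generic and $G$ itself generic for $(\Qor/\Por)^H$ over $V[H]$) and $\Por\ast(\Qor/\Por)$-generic filters $K$ with $V[G]=V[K]$, from which both the equivalence and the ``moreover'' clause are read off. Your handling of the ``moreover'' part is correct; in particular the key observation that a reduction of $q'\leq_\Qor q$ is also a reduction of $q$, so that $(p',\check{q}')\leq(p',\check{q})$, is exactly right. (The final leap from ``mutually inverse generic correspondences'' to ``isomorphic completions'' is standard, though to be fully rigorous one usually makes it explicit via a dense or projection embedding into the Boolean completion.)

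The one step that does not work as written is precisely the one you flag as delicate. The set $\{q\in\Qor:\ q\Vdash_\Por\check{q}\in\dot{D}\ \text{and}\ \exists p\ \text{reducing}\ q\}$ is not well formed ($q$ need not lie in $\Por$), and even under the intended reading --- ``some reduction $p$ of $q$ satisfies $p\Vdash_\Por\check{q}\in\dot{D}$'' --- meeting this set with $G$ does not yield $G\cap\dot{D}^H\neq\varnothing$, because the witnessing reduction $p$ need not belong to $H=G\cap\Por$. The repair is to force the witness into $H$: fixing $p_0\in H$ forcing ``$\dot{D}$ is dense in $\Qor/\Por$'', show that the set of $q\in\Qor$ for which there exist $q'\geq_\Qor q$ and $p\in\Por$ with $q\leq_\Qor p$ and $p\Vdash_\Por\check{q}'\in\dot{D}$ is dense below $p_0$ in $\Qor$: given $q_1\leq_\Qor p_0$, take a reduction $p_1\leq p_0$ of $q_1$ (reductions are downward closed and compatible with $p_0$ in $\Por$), find $p_2\leq p_1$ and $q_2\leq_\Qor q_1$ with $p_2\Vdash_\Por\check{q}_2\in\dot{D}$, and let $q$ extend both $q_2$ and $p_2$ in $\Qor$. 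Then $q\in G$ puts $p_2$ into $H$ and $q_2$ into $\dot{D}^H\cap G$. A symmetric density check is also needed, and missing, in your converse direction: for $D\in V$ dense in $\Qor$ one must verify that $D\cap(\Qor/\Por)^H$ is dense in the quotient, so that the filter obtained from $K$ (whose upward --- not downward --- closure you should take) is $\Qor$-generic over $V$. These are routine repairs of a correct, standard approach rather than signs of a wrong route.
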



\begin{lemma}\label{2stepofQuot}
   Let $\Por\lessdot\Qor\lessdot\Ror$ be posets. Then, $\Por$ forces $\Ror/\Por\simeq(\Qor/\Por)\ast(\Ror/\Qor)$.
\end{lemma}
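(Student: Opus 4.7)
The plan is to reduce the claim to three applications of Lemma \ref{QuotEqv} combined with the standard associativity of iterated forcing. First, I would apply Lemma \ref{QuotEqv} to each of the three regular embeddings $\Por\lessdot\Qor$, $\Qor\lessdot\Ror$, and $\Por\lessdot\Ror$ (the last following from the transitivity of $\lessdot$) to obtain
\[
\Qor\simeq\Por\ast(\Qor/\Por),\qquad \Ror\simeq\Qor\ast(\Ror/\Qor),\qquad \Ror\simeq\Por\ast(\Ror/\Por).
\]
Substituting the first into the second yields $\Ror\simeq\bigl(\Por\ast(\Qor/\Por)\bigr)\ast(\Ror/\Qor)$, where $\Ror/\Qor$ is originally a $\Qor$-name and must be re-read as a $\Por\ast(\Qor/\Por)$-name via the canonical translation of names across the forcing equivalence $\Qor\simeq\Por\ast(\Qor/\Por)$.

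Second, I would invoke associativity of iterated forcing, namely $(\Por\ast\dot{\mathds{A}})\ast\dot{\mathds{B}}\simeq\Por\ast(\dot{\mathds{A}}\ast\dot{\mathds{B}})$, to rewrite the above as $\Ror\simeq\Por\ast\bigl((\Qor/\Por)\ast(\Ror/\Qor)\bigr)$, where the inner two-step iteration is interpreted in the $\Por$-extension. Combining this with $\Ror\simeq\Por\ast(\Ror/\Por)$ gives
\[
\Por\ast(\Ror/\Por)\simeq\Por\ast\bigl((\Qor/\Por)\ast(\Ror/\Qor)\bigr).
\]
Since all of the equivalences involved are canonical, they fix the embedded copy of $\Por$, so the standard cancellation principle (if $\Por\ast\dot{\mathds{A}}\simeq\Por\ast\dot{\mathds{B}}$ via an isomorphism over $\Por$, then $\Por\Vdash\dot{\mathds{A}}\simeq\dot{\mathds{B}}$) yields the desired conclusion.

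The main obstacle is ensuring that the isomorphisms genuinely respect the $\Por$-embedding, i.e., that the name-translation step really produces equivalences \emph{over} $\Por$ so that cancellation is legitimate. A safer, more self-contained alternative would be to fix a $\Por$-generic $G$ over the ground model and construct the dense embedding directly in $V[G]$: given $r\in(\Ror/\Por)_G$, choose a reduction $q\in\Qor$ of $r$ with respect to $\Qor\lessdot\Ror$; since any reduction in $\Por$ of $q$ is also a reduction of $r$, it follows that $q\in(\Qor/\Por)_G$, and the assignment $r\mapsto(q,\check r)$ embeds $(\Ror/\Por)_G$ densely into $(\Qor/\Por)_G\ast(\Ror/\Qor)$. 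Checking that this assignment preserves the order, preserves incompatibility, and has dense image is routine from the definition of quotient posets and the chain $\Por\lessdot\Qor\lessdot\Ror$.
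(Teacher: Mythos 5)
The paper states Lemma \ref{2stepofQuot} without proof, so there is no in-paper argument to compare against; judged on its own terms, your first route (three applications of Lemma \ref{QuotEqv}, associativity of two-step iteration, and cancellation of $\Por$ via an isomorphism of completions commuting with the $\Por$-embeddings) is the standard proof and is sound, modulo exactly the canonicity checks you flag, which are routine for the canonical equivalences. The problem is the second argument, which you offer as the safer, self-contained one: it has a genuine gap at the very first step. From $r\in(\Ror/\Por)_G$ you only know that \emph{some} $p\in G$ is a reduction of $r$ with respect to $\Por,\Ror$; if you then pick an \emph{arbitrary} reduction $q\in\Qor$ of $r$, there is no reason that $q\in(\Qor/\Por)_G$. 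Your justification --- ``any reduction in $\Por$ of $q$ is also a reduction of $r$'' --- is true but points the wrong way: it lets you pass from a reduction of $q$ lying in $G$ to a reduction of $r$ lying in $G$, whereas you need the converse, which fails. Concretely, let $\Por=\Cor$, $\Qor=\Cor\times\Cor$, $\Ror=\Cor\times\Cor\times\Cor$ (finite-support products of Cohen forcing, with the natural regular embeddings) and $r=\mathds{1}_\Ror$: every $q=(s,t)\in\Qor$ is a reduction of $r$, but $q\in(\Qor/\Por)_G$ iff $s\in G$, which fails for most $s$. The choice of $q$ must therefore be made using the genericity of $G$: for instance, show that the set of $p\in\Por$ which are reductions of some $q\in\Qor$ which is in turn a reduction of some $r'\leq_\Ror r$ is dense below every reduction of $r$ --- this is precisely the density set $D$ used in the paper's proof of Lemma \ref{CorrQuotEmb}(a) --- and then intersect with $G$.

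Even after that repair, the assignment $r\mapsto(q(r),\check r)$ is not a dense embedding: if $r'\leq r$ there is no reason the chosen $q(r')$ lies below $q(r)$ in $\Qor/\Por$, so order preservation fails (a coherent choice of $q(r)$ would amount to a $\Qor$-valued projection defined on $\Ror$, which exists only at the level of Boolean completions, not of the posets themselves). So the final verification you describe as routine is not available as stated. The standard ways to complete the argument are: (i) map into the completion, sending $r$ to the supremum in $\mathrm{RO}\big((\Qor/\Por)\ast(\Ror/\Qor)\big)$ of all conditions $(q,\check r)$ with $q\in\Qor/\Por$ a reduction of $r$, and verify this is a dense embedding; (ii) avoid embeddings and argue that, over $V[G]$, a generic for $\Ror/\Por$ and a generic for $(\Qor/\Por)\ast(\Ror/\Qor)$ are definably interchangeable and produce the same extension, which yields isomorphic completions; or (iii) simply carry out your first route carefully, using that an isomorphism of completions over the common copy of $\Por$ induces, in $V[G]$, an isomorphism of the quotient completions. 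Any of these closes the gap; the direct ``$r\mapsto(q,\check r)$'' map, as written, does not.
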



\begin{lemma}\label{QuotEmb}
   Let $\langle\Por_0,\Por_1,\Qor_0,\Qor_1\rangle$ be a correct diagram with respect to $M$ (where $\Por_0,\Por_1\in M$). Then $\Qor_0$ forces $\Por_1/\Por_0\lessdot_{M^{\Por_0}}\Qor_1/\Qor_0$.
\end{lemma}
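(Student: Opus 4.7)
The plan is to verify the two defining conditions of $\lessdot_{M[G_0]}$ directly. Let $H_0$ be $\Qor_0$-generic over $V$ and let $G_0 := H_0 \cap \Por_0$, which by $\Por_0 \lessdot_M \Qor_0$ is $\Por_0$-generic over $M$, so $(\Por_1/\Por_0)[G_0]$ is a poset in $M[G_0] \subseteq V[H_0]$.

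First, the subposet condition. Suppose $r \in (\Por_1/\Por_0)[G_0]$ in $V[H_0]$; then some $p_0 \in G_0$ is a reduction of $r$ with respect to $\Por_0,\Por_1$. By the correctness of $\langle\Por_0,\Por_1,\Qor_0,\Qor_1\rangle$ with respect to $M$, the same $p_0$ is a reduction of $r$ with respect to $\Qor_0,\Qor_1$, and since $p_0 \in H_0$ this gives $r \in (\Qor_1/\Qor_0)[H_0]$. The order and incompatibility on both quotients are inherited from $\Por_1$ and $\Qor_1$ respectively, and these already agree on $\Por_1$ since $\Por_1 \lessdot \Qor_1$.

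Second, the preservation of maximal antichains lying in $M[G_0]$. Let $A \in M[G_0]$ be a maximal antichain of $\Por_1/\Por_0$ and fix a $\Por_0$-name $\dot{A} \in M$ for $A$ together with $p_0 \in G_0$ forcing that $\dot{A}$ is a maximal antichain of $\Por_1/\Por_0$. The plan is to lift $\dot{A}$ to a maximal antichain in $\Por_1$ below $p_0$ via the isomorphism $\Por_1 \simeq \Por_0 \ast \dot{(\Por_1/\Por_0)}$ of Lemma \ref{QuotEqv}: concretely, consider
\[
\tilde{A}^\circ := \{(r_0,\check{a}) \in \Por_0 \ast \dot{(\Por_1/\Por_0)} : r_0 \leq p_0 \text{ in } \Por_0,\ a \in \Por_1,\ r_0 \Vdash_{\Por_0} \check{a} \in \dot{A}\},
\]
which is, below $(p_0,\mathds{1})$, a maximal antichain of $\Por_0 \ast \dot{(\Por_1/\Por_0)}$ by the maximality of $\dot{A}$ in the quotient. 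Transporting through the iso yields a maximal antichain $\tilde{A}\subseteq\Por_1$ below $p_0$ (pick, for each pair, a common extension in $\Por_1$); crucially, $\tilde{A} \in M$ since everything is definable from $\dot{A},p_0,\Por_0,\Por_1 \in M$. Because $\Por_1 \lessdot_M \Qor_1$, extending $\tilde{A}$ by elements of $\Por_1$ incompatible with $p_0$ to a full maximal antichain of $\Por_1$ in $M$, this lifts to a maximal antichain of $\Qor_1$ whose part below $p_0$ is $\tilde{A}$. Applying the symmetric isomorphism $\Qor_1 \simeq \Qor_0 \ast \dot{(\Qor_1/\Qor_0)}$ (again Lemma \ref{QuotEqv}), $\tilde{A}$ being maximal in $\Qor_1$ below $p_0$ translates back to: in $V[H_0]$ (where $p_0 \in G_0 \subseteq H_0$), $A$ is predense in $\Qor_1/\Qor_0$; maximality is then the reformulation via antichains.

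To complete this last step concretely: given $q \in (\Qor_1/\Qor_0)[H_0]$, pick a reduction $q_0 \in H_0$ of $q$ with $q_0 \leq p_0$, and a common extension $q' \leq q,\ q' \leq q_0$ in $\Qor_1$; by the maximality of $\tilde{A}$ in $\Qor_1$ below $p_0$, some $\tilde{a} \in \tilde{A}$ is compatible in $\Qor_1$ with $q'$, say via $q'' \in \Qor_1$, where $\tilde{a}$ comes from some pair $(r_0,\check{a}) \in \tilde{A}^\circ$. Then $q'' \leq q,\ q'' \leq a$, witnessing compatibility in $\Qor_1$, and a suitable reduction of $q''$ in $H_0$ (obtainable by strengthening within $H_0$, since reductions of $q''$ exist and are compatible with the filter $H_0$) places the witness in $\Qor_1/\Qor_0$ as required.

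The main technical obstacle is the careful bookkeeping in the two translations through the isomorphisms of Lemma \ref{QuotEqv}: verifying that $\tilde{A}^\circ$ really is maximal (not merely predense) below $(p_0,\mathds{1})$, that the chosen $\tilde{A}\subseteq\Por_1$ inherits this maximality, and that going back through $\Qor_1 \simeq \Qor_0 \ast \dot{(\Qor_1/\Qor_0)}$ recovers precisely the interpretation of $\dot{A}$ under $G_0$. The rest is routine manipulation of reductions, which is exactly where the hypothesis of correctness with respect to $M$ is used to keep $\Por_0$-reductions serviceable as $\Qor_0$-reductions.
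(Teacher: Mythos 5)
Your first step (the inclusion $\Por_1/\Por_0\subseteq\Qor_1/\Qor_0$ via correctness) and your predensity argument are essentially the paper's: the paper uses the dense set $D'=\{p\in\Por_1\ /\ \exists_{p_1}\exists_{p_0}(p\leq p_0,p_1$ and $p_0\Vdash p_1\in\dot{A})\}\in M$, which is exactly what your lift $\tilde{A}$ through $\Por_0\ast(\Por_1/\Por_0)$ amounts to, and then uses $\Por_1\lessdot_M\Qor_1$ and a reduction in $\Qor_0$ of the common extension; your last step ("a suitable reduction of $q''$ in $H_0$") should be phrased, as in the paper, as a density argument over conditions of $\Qor_0$ rather than an appeal to the generic filter, but that is routine.

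The genuine gap is the antichain half of $\lessdot_{M^{\Por_0}}$: you must show that a maximal antichain $A\in M[G_0]$ of $(\Por_1/\Por_0)[G_0]$ remains an \emph{antichain} in $(\Qor_1/\Qor_0)[H_0]$, i.e.\ that incompatibility in the small quotient is preserved in the large one, and your one-line justification ("incompatibility is inherited from $\Por_1$ and $\Qor_1$ and these agree since $\Por_1\lessdot\Qor_1$") is wrong on two counts. First, the hypothesis only gives $\Por_1\lessdot_M\Qor_1$, not $\Por_1\lessdot\Qor_1$. Second, and more importantly, incompatibility in $(\Por_1/\Por_0)[G_0]$ is \emph{not} the restriction of incompatibility in $\Por_1$: two conditions $p_1,p_1'$ may have common extensions in $\Por_1$, none of which has a reduction in $G_0$ (so they are incompatible in the quotient), while some common extension in $\Qor_1$ acquires a reduction in $H_0$ and witnesses compatibility in $(\Qor_1/\Qor_0)[H_0]$. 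Ruling this out is the content of the paper's first, longest paragraph: given $q_0$ forcing $q_1\in\Qor_1/\Qor_0$ with $q_1\leq p_1,p_1'$, one uses the dense set $D\in M$ of conditions $p_0\in\Por_0$ that either reduce some common extension of $p_1,p_1'$ in $\Por_1$ or are incompatible with all of them, together with $\Por_0\lessdot_M\Qor_0$ and $\Por_1\lessdot_M\Qor_1$ (to see that $p_0,p_1,p_1'$ having a common extension in $\Qor_1$ forces one in $\Por_1$), to find $q_0'\leq q_0$ forcing a common extension of $p_1,p_1'$ into $\Por_1/\Por_0$. Without this argument your verification of $\lessdot_{M^{\Por_0}}$ is incomplete.
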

\begin{proof}
   Correctness implies directly that $\Vdash_{\Qor_0}\Por_1/\Por_0\subseteq\Qor_1/\Qor_0$. We prove first that $\Qor_0$ forces that any pair of incompatible conditions in $\Por_1/\Por_0$ are incompatible in $\Qor_1/\Qor_0$. Let $q_0\in\Qor_0$, $p_1,p'_1\in\Por_1$ and $q_1\in\Qor_1$ be such that $q_0\Vdash_{\Qor_0}``p_1,p'_1\in\Por_1/\Por_0\textrm{,\ }q_1\in\Qor_1/\Qor_0\textrm{\ and\ }q_1\leq p_1,p'_1"$. We need to find a $q'_0\leq q_0$ in $\Qor_0$ which forces that $p_1$ and $p'_1$ are compatible in $\Por_1/\Por_0$. Work within $M$. Let $D\subseteq\Por_0$ be the set of conditions $p_0$ such that, either it is a reduction of some $p\leq p_1,p'_1$ in $\Por_1$, or $p_0$ is incompatible with all $p\leq p_1,p'_1$ in $\Por_1$. It is easy to see that $D$ is dense in $\Por_0$. Now (possibly outside $M$), $\Por_0\lessdot_M\Qor_0$ implies that $D$ is predense in $\Qor_0$, so there is a $p_0\in D$ compatible with $q_0$ in $\Qor_0$. Choose $q'_0\leq q_0,p_0$ in $\Qor_0$, which is compatible with $q_1$. This implies that there is a $q'_1\in\Qor_1$ which is stronger than $p_0$, $p_1$ and $p'_1$. As $\Por_1\lessdot_M\Qor_1$, $p_0$, $p_1$ and $p'_1$ have a common lower bound in $\Por_1$ (if not, $\{p\in\Por_1\ /\ \textrm{either }p\perp p_0\textrm{\ or }p\perp p_1\textrm{\ or }p\perp p'_1\}\in M$ would be dense in $\Por_1$, which contradicts $q'_1\leq_{\Qor_1} p_0,p_1,p'_1$), that is, $p_0$ is compatible with some condition in $\Por_1$ stronger than both $p_1$ and $p'_1$. Thus, $p_0\in D$ implies that there is a $p\leq p_1,p'_1$ such that $p_0$ is one of its reductions. As $q'_0\leq p_0$, it forces $p\in\Por_1/\Por_0$ and $p\leq p_1,p'_1$.

   Let $\dot{A}\in M$ be a $\Por_0$-name for a maximal antichain in $\Por_1/\Por_0$. Given $q_0\in\Qor_0$ and $q_1\in\Qor_1$ such that $q_0\Vdash_{\Qor_0}q_1\in\Qor_1/\Qor_0$, we need to find $q''_0\leq q_0$ in $\Qor_0$ and $p_1\in\Por_1$ such that $q''_0$ forces that $p_1\in\dot{A}$ and that $p_1$ is compatible with $q_1$ in $\Qor_1/\Qor_0$. First note that $D'=\{p\in\Por_1\ /\exists_{p_1\in\Por_1}\exists_{p_0\in\Por_0}(p\leq p_0,p_1\textrm{\ and }p_0\Vdash p_1\in\dot{A})\}\in M$ is dense in $\Por_1$, so it is predense in $\Qor_1$. Choose $q'_1\leq q_0,q_1$ in $\Qor_1$ and find $p'_1\in D$ which is compatible with $q'_1$, so let $q''_1\in\Qor_1$ be a common stronger condition. As $p'_1\in D$, there are $p_1\in\Por_1$ and $p_0\in\Por_0$ such that $p_0\Vdash p_1\in\dot{A}$ and $p'_1\leq p_0,p_1$. Then, there exists a reduction $q''_0\leq p_0,q_0$ in $\Qor_0$ of $q''_1$. $q''_0$ and $p_1$ are as desired.
\end{proof}

\begin{lemma}\label{CorrQuotEmb}
   Let $\langle\Por,\Qor,\Por',\Qor'\rangle$ and $\langle\Qor,\Ror,\Qor',\Ror'\rangle$ be correct diagrams with respect to $M$ where $\Por,\Qor,\Ror\in M$. Then,
   \begin{enumerate}[(a)]
    \item $\langle\Por,\Ror,\Por',\Ror'\rangle$ is a correct diagram with respect to $M$ and
    \item $\Por'$ forces that the diagram $\langle\Qor/\Por,\Ror/\Por,\Qor'/\Por',\Ror'/\Por'\rangle$ is correct with respect to $M^\Por$.
   \end{enumerate}
\end{lemma}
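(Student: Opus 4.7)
The plan for (a) is to first handle the four regular-embedding clauses, which are routine, and then address the reduction-compatibility clause, which is the content. We get $\Por\lessdot\Ror$ by composing $\Por\lessdot\Qor\lessdot\Ror$ from the two given diagrams, and analogously $\Por'\lessdot\Ror'$; the conditions $\Por\lessdot_M\Por'$ and $\Ror\lessdot_M\Ror'$ come directly from the two diagrams. The content is: given $p_0\in\Por$ a reduction of $p_1\in\Ror$ with respect to $\Por,\Ror$, and $p'\leq p_0$ in $\Por'$, I must exhibit $r'\in\Ror'$ with $r'\leq p',p_1$.

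The strategy is to factor the reduction through $\Qor$: produce $q\in\Qor$ such that $q$ is a reduction of $p_1$ with respect to $\Qor,\Ror$ and $p_0$ is a reduction of $q$ with respect to $\Por,\Qor$. Given such $q$, correctness of $\langle\Qor,\Ror,\Qor',\Ror'\rangle$ promotes the first reduction to $\Qor',\Ror'$, while correctness of $\langle\Por,\Qor,\Por',\Qor'\rangle$ promotes the second to $\Por',\Qor'$. Hence $p'\leq p_0$ is compatible with $q$ in $\Qor'$: pick $q'\leq p',q$ in $\Qor'$; then $q'\leq q$ is compatible with $p_1$ in $\Ror'$ via the promoted reduction, and any common extension $r'\leq q',p_1$ is the desired witness.

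The existence of $q$ is the principal technical point. Starting from a common extension $r\leq p_0,p_1$ in $\Ror$ (which exists since $p_0\leq p_0$ and $p_0$ reduces $p_1$), take any reduction $q_0\in\Qor$ of $r$; since $q_0$ is compatible with $r\leq p_0$ in $\Ror$, it is compatible with $p_0$ in $\Qor$ (as $\Qor\lessdot\Ror$), and one sharpens to $q\leq p_0,q_0$ in $\Qor$, which reduces $p_1$ in $\Qor,\Ror$ by inheritance from $q_0$. To verify that $p_0$ reduces $q$ in $\Por,\Qor$, one takes $p\leq p_0$ in $\Por$ and exhibits compatibility with $q$ using a further refinement through $\Ror$ and $\Qor$; Boolean-algebraically this is the transitivity identity $\pi_{B(\Por)}\!\circ\pi_{B(\Qor)}=\pi_{B(\Por)}$ on $B(\Ror)$, which combined with $p_0\leq\pi_{B(\Por)}(p_1)$ (the assumption that $p_0$ reduces $p_1$) yields $\pi_{B(\Por)}(q)=p_0$.

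For (b), the four $\lessdot$-clauses of the quotient diagram reduce to the original ones via Lemmas~\ref{QuotEqv}--\ref{QuotEmb}: $\Qor/\Por\lessdot\Ror/\Por$ in $M^\Por$ and $\Qor'/\Por'\lessdot\Ror'/\Por'$ in $M^{\Por'}$ come from Lemma~\ref{2stepofQuot}; the $\lessdot_{M^\Por}$-embeddings $\Qor/\Por\lessdot_{M^\Por}\Qor'/\Por'$ and $\Ror/\Por\lessdot_{M^\Por}\Ror'/\Por'$ come from the horizontal analog of Lemma~\ref{QuotEmb} applied to $\langle\Por,\Qor,\Por',\Qor'\rangle$ and to $\langle\Por,\Ror,\Por',\Ror'\rangle$ (the latter via part (a)). The reduction-compatibility in the quotient unwinds, using the forcing equivalences $\Qor\simeq\Por\ast(\Qor/\Por)$ and $\Ror\simeq\Por\ast(\Ror/\Por)$ from Lemma~\ref{QuotEqv}, to the reduction-compatibility of the diagram $\langle\Por,\Ror,\Por',\Ror'\rangle$, which is precisely part (a). The main obstacle throughout is the construction of $q$ in (a); once that is secured, (b) is bookkeeping with quotients and names.
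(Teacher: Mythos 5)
The decisive step in your proof of (a) --- producing a single $q\in\Qor$ such that $p_0$ is a reduction of $q$ and $q$ is a reduction of $r_0$ --- is where the argument breaks. After choosing $r\leq p_0,r_0$ in $\Ror$, a reduction $q_0\in\Qor$ of $r$, and sharpening to $q\leq p_0,q_0$ in $\Qor$, you assert that $p_0$ reduces $q$, justified by ``$\pi_{B(\Por)}(q)=p_0$''. But strengthening below $p_0$ and $q_0$ only yields $\pi_{B(\Por)}(q)\leq p_0$: the chosen $q$ may decide strictly more about the $\Por$-part than $p_0$ does, and then some $p\leq p_0$ in $\Por$ is incompatible with $q$. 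The projection identity you cite gives $\pi_{B(\Por)}\bigl(\pi_{B(\Qor)}(r_0)\wedge p_0\bigr)=p_0$, but $\pi_{B(\Qor)}(r_0)\wedge p_0$ lives in the completion and need not be realized by any member of $\Qor$; in fact no suitable $q$ need exist at all. For instance, let $\Por=2^{<\omega}$, $\Ror=2^{<\omega}\times2^{<\omega}\times2^{<\omega}$ (coordinatewise extension) and $\Qor=\{(s,t)\ /\ t=\emptyset\textrm{ or }0<|t|\leq|s|\}$ sitting between them in the obvious way; with $p_0=\emptyset$ and $r_0=(\emptyset,\langle0\rangle,\langle0\rangle)$, any $q=(s,t)$ reduced by $p_0$ must have $s=\emptyset$ (otherwise an incompatible $s'$ kills compatibility in $\Qor$), hence $t=\emptyset$, and $(\emptyset,\emptyset)$ is not a reduction of $r_0$ since its extension $(\langle1\rangle,\langle1\rangle)\in\Qor$ is incompatible with $r_0$. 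The paper avoids this by a density argument: the set $D\in M$ of those $p\in\Por$ that reduce some $q\in\Qor$ which in turn reduces some $r\leq_\Ror r_0$ is dense below $p_0$ in $\Por$; since $D\in M$ and $\Por\lessdot_M\Por'$, it is predense below $p_0$ in $\Por'$, and for $p'\leq p_0$ in $\Por'$ one picks $p_1\in D$ compatible with $p'$ and then promotes the two reductions exactly as in your second paragraph. Note that $D\in M$ is essential in the $\lessdot_M$-setting (only dense sets and antichains lying in $M$ transfer), a point your single-$q$ route silently bypasses; for the same reason the appeal to Boolean completions is delicate here, since $\Qor\lessdot_M\Qor'$ gives no embedding of completions.

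Your sketch of (b) also misroutes the key step: correctness of $\langle\Qor/\Por,\Ror/\Por,\Qor'/\Por',\Ror'/\Por'\rangle$ does not ``unwind to'' part (a), which concerns reductions between $\Por$ and $\Ror$, not between the quotients. The paper's route is: Lemma \ref{QuotEmb} (applied to the trivially correct diagrams $\langle\Por,\Qor,\Por,\Ror\rangle$ and $\langle\Por',\Qor',\Por',\Ror'\rangle$, and to the hypothesis diagrams together with (a)) yields the four regular containments --- so, in particular, $\Qor/\Por\lessdot\Ror/\Por$ comes from Lemma \ref{QuotEmb}, not from Lemma \ref{2stepofQuot} as you attribute it; then Lemma \ref{2stepofQuot} gives $\Ror/\Por\simeq(\Qor/\Por)\ast(\Ror/\Qor)$ and $\Ror'/\Por'\simeq(\Qor'/\Por')\ast(\Ror'/\Qor')$, Lemma \ref{QuotEmb} applied to $\langle\Qor,\Ror,\Qor',\Ror'\rangle$ gives that $\Qor'/\Por'$ forces $\Ror/\Qor\lessdot_{M^\Qor}\Ror'/\Qor'$, and the desired correctness is precisely the ``moreover'' clause of Lemma \ref{2stepitemb}. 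So the engine of (b) is the second hypothesis diagram pushed through this two-step decomposition, not part (a), and your sketch as stated does not supply that step.
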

\begin{proof}
 \begin{enumerate}[(a)]
  \item Let $p_0\in\Por$ be a reduction of $r_0\in\Ror$. The set
   \[D=\{p\in\Por\ /\ \textrm{$\exists_{r\leq_{\Ror} r_0}\exists_{q\in\Qor}(p$ is a reduction of $q$ and
       $q$ is a reduction of $r$})\}\]
   is in $M$ and it is dense below $p_0$ in $\Por$, so it is predense below $p_0$ in $\Por'$. To see that $p_0$ is a reduction of $r_0$ with respect to $\Por',\Ror'$, if $p'\in\Por'$ is stronger than $p_0$, then it is compatible with some $p_1\in D$, so there are $r_1\leq_\Ror r_0$ and $q_1\in\Qor$ a reduction of $r_1$ such that $p_1$ is a reduction of $q_1$. By correctness, $p_1$ is a reduction of $q_1$ with respect to $\Por',\Qor'$ and $q_1$ is a reduction of $r_1$ with respect to $\Qor',\Ror'$. It follows directly that $p'$ is compatible with $r_0$ in $\Ror'$.
  \item By Lemma \ref{QuotEmb}, $\Por$ forces $\Qor/\Por\lessdot\Ror/\Por$ (because $\langle\Por,\Qor,\Por,\Ror\rangle$ is correct) and $\Por'$ forces $\Qor'/\Por'\lessdot\Ror'/\Por'$, $\Qor/\Por\lessdot_{M^\Por}\Qor'/\Por'$ and $\Ror/\Por\lessdot_{M^\Por}\Ror'/\Por'$. In any $\Por'$-extension, we know that $\Ror/\Por\simeq(\Qor/\Por)\ast(\Ror/\Qor)$ and $\Ror'/\Por'\simeq(\Qor'/\Por')\ast(\Ror'/\Qor')$ by Lemma \ref{2stepofQuot}. As $\Qor/\Por\lessdot_{M^\Por}\Qor'/\Por'$ and $\Qor'/\Por'$ forces that $\Ror/\Qor\lessdot_{M^\Qor}\Ror'/\Qor'$ by Lemma \ref{QuotEmb}, we get the correctness we are looking for from Lemma \ref{2stepitemb}.
 \end{enumerate}
\end{proof}

A partial order $\langle I,\leq\rangle$ is \emph{directed} iff any two elements of $I$ have an upper bound in $I$. A sequence of posets $\langle\Por_i\rangle_{i\in I}$ is a \emph{directed system of posets} if, for any $i\leq j$ in $I$, $\Por_i\lessdot\Por_j$. Here, the \emph{direct limit of $\langle\Por_i\rangle_{i\in I}$} is defined as the partial order $\limdir_{i\in I}\Por_i:=\bigcup_{i\in I}\Por_i$. It is clear that, for any $i\in I$, $\Por_i\lessdot\limdir_{i\in I}\Por_i$. Throughout this text, ``$\Por$ is a direct limit" means that it is a direct limit of a directed system of posets.

\begin{lemma}[{\cite[Lemma 1.2]{brendle2}}]\label{dirlimEmb}
   Let $I\in M$ be a directed set, $\langle\Por_i\rangle_{i\in I}\in M$ and $\langle\Qor_i\rangle_{i\in I}$ directed systems of posets such that
   \begin{enumerate}[(i)]
      \item for each $i\in I$, $\Por_i\lessdot_M\Qor_i$ and
      \item whenever $i\leq j$, $\langle\Por_i,\Por_j,\Qor_i,\Qor_j\rangle$ is a correct diagram with respect to $M$
   \end{enumerate}
   Then, $\Por:=\limdir_{i\in I}\Por_i$ is a regular subposet of $\Qor:=\limdir_{i\in I}\Qor_i$ with respect to $M$ and, for any $i\in I$, $\langle\Por_i,\Por,\Qor_i,\Qor\rangle$ is correct with respect to $M$.
\end{lemma}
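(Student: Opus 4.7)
The plan is to verify two things: (a) that $\Por \lessdot_M \Qor$, i.e., the orders agree and every maximal antichain of $\Por$ lying in $M$ remains maximal in $\Qor$; and (b) that each diagram $\langle \Por_i, \Por, \Qor_i, \Qor \rangle$ is correct with respect to $M$. Since $\Por_i \lessdot \Por$ and $\Qor_i \lessdot \Qor$ for every $i \in I$ (as noted immediately before the statement), the order-agreement clause (i) in the definition of $\lessdot_M$ is routine: given $p, p' \in \Por$, use directedness of $I$ to find $\Por_i$ containing both and transfer the relation through $\Por_i \lessdot_M \Qor_i$ and $\Qor_i \lessdot \Qor$.

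The core of the argument is the maximal antichain condition. Let $A \in M$ be maximal in $\Por$ and take $q \in \Qor$, say $q \in \Qor_i$. Using $\Por_i \lessdot_M \Qor_i$, first pick a reduction $p_0 \in \Por_i$ of $q$. By maximality of $A$ in $\Por$, there exist $a \in A$ and $r \in \Por$ with $r \leq p_0, a$; choose $j \geq i$ so that $a, r \in \Por_j$. Applying $\Por_i \lessdot \Por_j$ to $r$, pick a reduction $p_1 \in \Por_i$ of $r$ with respect to $\Por_i, \Por_j$; by the hypothesized correctness of $\langle \Por_i, \Por_j, \Qor_i, \Qor_j \rangle$, this $p_1$ is also a reduction of $r$ with respect to $\Qor_i, \Qor_j$. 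I next check that $p_0$ and $p_1$ are compatible in $\Por_i$: any $p' \leq p_1$ in $\Por_i$ is compatible with $r$ in $\Por_j$, hence with $p_0$ in $\Por_j$ (via $r \leq p_0$), and then with $p_0$ in $\Por_i$ by $\Por_i \lessdot \Por_j$. Take $p_2 \leq p_0, p_1$ in $\Por_i$. Since $p_2 \leq p_0$, it is still a reduction of $q$ with respect to $\Por_i, \Qor_i$; since $p_2 \leq p_1$, it is still a reduction of $r$ with respect to $\Qor_i, \Qor_j$. Choose $q' \leq p_2, q$ in $\Qor_i$; then $q'$ is compatible with $r$ in $\Qor_j$, and any $q'' \leq q', r$ in $\Qor_j$ satisfies $q'' \leq q$ and $q'' \leq a$, witnessing that $q$ and $a$ are compatible in $\Qor$.

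For (b), given a reduction $p_0 \in \Por_i$ of some $p \in \Por$, pick $j \geq i$ with $p \in \Por_j$. Then $p_0$ is automatically a reduction of $p$ with respect to $\Por_i, \Por_j$, since compatibility with $p$ in $\Por$ among elements of $\Por_j$ coincides with compatibility in $\Por_j$ by $\Por_j \lessdot \Por$. The hypothesized correctness of $\langle \Por_i, \Por_j, \Qor_i, \Qor_j \rangle$ upgrades $p_0$ to a reduction of $p$ with respect to $\Qor_i, \Qor_j$, and $\Qor_j \lessdot \Qor$ turns this into a reduction with respect to $\Qor_i, \Qor$, as required.

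The main obstacle is the coordination in part (a): we need a single $p_2 \in \Por_i$ that serves simultaneously as a reduction of $q$ towards $\Qor_i$ and, through $r$, as a witness of compatibility with $a$ in $\Qor$. Showing that the two candidates $p_0$ and $p_1$, arising naturally at different levels $i$ and $j$, are compatible in $\Por_i$ is where the directed-system property and the correctness hypothesis interact, and it is essentially the only non-trivial use of the correctness assumption in the proof.
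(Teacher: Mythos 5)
Your argument for part (b) (correctness of $\langle\Por_i,\Por,\Qor_i,\Qor\rangle$) is fine and is essentially what the paper does. The problem is in part (a), at the very first move: ``using $\Por_i\lessdot_M\Qor_i$, pick a reduction $p_0\in\Por_i$ of $q$.'' The restricted relation $\lessdot_M$ only asserts order agreement plus that maximal antichains of $\Por_i$ \emph{belonging to $M$} stay maximal in $\Qor_i$; it does \emph{not} include clause (iii) of the definition of regular subposet, and one cannot derive it, because the usual derivation needs the dense set $\{p\in\Por_i\ /\ p\perp_{\Qor_i}q\}\cup\{p\in\Por_i\ /\ p\textrm{ is a reduction of }q\}$ to lie in $M$, which it need not since $q$ and $\perp_{\Qor_i}$ live outside $M$. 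A concrete failure: let $\Por$ be Cohen forcing in a countable transitive $M$, let $G$ be an $M$-generic filter in $V$, and let $\Qor=\Por\cup\{q\}$ with $q$ below exactly the members of $G$; then $\Por\lessdot_M\Qor$, but $q$ has no reduction in $\Por$. A tell-tale symptom of the gap is that your argument for (a) never uses the hypothesis $A\in M$: if it worked, it would show that \emph{every} maximal antichain of $\Por$ stays maximal in $\Qor$, which is stronger than the hypotheses can give.

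The paper's proof avoids this by working inside $M$: enumerating $A=\{p_\alpha\ /\ \alpha<\delta\}$ with $p_\alpha\in\Por_{j_\alpha}$, $j_\alpha\geq i$, it shows (in $M$) that the set $D$ of conditions $p\in\Por_i$ that are reductions of some $p_\alpha$ with respect to $\Por_i,\Por_{j_\alpha}$ is dense in $\Por_i$ (this is essentially your $r\leq p_0,a$ argument, but applied below an arbitrary $p\in\Por_i$ rather than below a reduction of $q$). Since $D\in M$, refining it to a maximal antichain inside $M$ and applying $\Por_i\lessdot_M\Qor_i$ makes $D$ predense in $\Qor_i$, so $q$ is compatible with some $p\in D$; correctness of $\langle\Por_i,\Por_{j_\alpha},\Qor_i,\Qor_{j_\alpha}\rangle$ then upgrades $p$ to a reduction of $p_\alpha$ with respect to $\Qor_i,\Qor_{j_\alpha}$, and a common extension of $q$ and $p$ in $\Qor_i$ witnesses that $q$ is compatible with $p_\alpha$. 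Your chain of compatibility transfers (downward closure of reductions, passing through $r\leq p_0,a$, and the final step in $\Qor_{j}$) is sound, so the fix is to reorganize the argument around this $M$-definable dense set instead of around a reduction of $q$, which the hypotheses do not provide.
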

\begin{proof}
   Let $A\in M$ be a maximal antichain of $\Por$. Let $q\in\Qor$, so there is some $i\in I$ such that $q\in\Qor_i$. Work within $M$. Enumerate $A:=\{p_\alpha\ /\ \alpha<\delta\}$ for some ordinal $\delta$ and, for each $\alpha<\delta$, choose $j_\alpha\geq i$ in $I$ such that $p_\alpha\in\Por_{j_\alpha}$. Now, if $p\in\Por_i$, there is some $\alpha<\delta$ such that $p$ is compatible with $p_\alpha$ in $\Por_{j_\alpha}$, so there exists $p'\leq p$ which is a reduction of $p_\alpha$ with respect to $\Por_i,\Por_{j_\alpha}$.

   The previous density argument implies that $q$ is compatible with some $p\in\Por_i$ which is a reduction of $p_\alpha$ for some $\alpha<\delta$. By (ii), $p$ is a reduction of $p_\alpha$ with respect to $\Qor_i,\Qor_{j_\alpha}$, which implies that $q$ is compatible with $p_\alpha$.

   Correctness follows straightforward.
\end{proof}

\begin{lemma}\label{dirlimquot}
   Let $\langle\Por_i\rangle_{i\in I}$ be a directed system of posets, $\Por$ its direct limit. Assume that $\Qor\lessdot\Por_i$ for all $i\in I$. Then, $\Qor$ forces that $\Por/\Qor=\limdir_{i\in I}\Por_i/\Qor$.
\end{lemma}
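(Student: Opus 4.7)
My plan is to unpack the definition of the quotient on both sides and show set-theoretic equality; the orders cause no trouble since they are all inherited from $\Por$. First I would note that the hypothesis $\Qor\lessdot\Por_i$ for every $i\in I$, together with $\Por_i\lessdot\Por$ (which comes from $\Por$ being the direct limit), yields $\Qor\lessdot\Por$ by transitivity, so all quotients involved are well defined. Moreover, for $i\leq j$ in $I$, the diagram $\langle\Qor,\Por_i,\Qor,\Por_j\rangle$ is correct, and likewise $\langle\Qor,\Por_i,\Qor,\Por\rangle$ is correct; applying Lemma \ref{QuotEmb} one gets $\Vdash_\Qor\Por_i/\Qor\lessdot\Por_j/\Qor$ and $\Vdash_\Qor\Por_i/\Qor\lessdot\Por/\Qor$, so $\langle\Por_i/\Qor\rangle_{i\in I}$ is (forced to be) a directed system of posets with $\limdir_{i\in I}\Por_i/\Qor$ naturally sitting inside $\Por/\Qor$.

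The correctness checks are immediate: if $q\in\Qor$ is a reduction of $p\in\Por_i$ with respect to $\Qor,\Por_i$, then any $q'\leq q$ in $\Qor$ is compatible with $p$ inside $\Por_i$, and any witness of this compatibility in $\Por_i$ is also a witness in $\Por_j$ (respectively in $\Por$), because the orders coincide on $\Por_i$. Thus $q$ remains a reduction of $p$ with respect to $\Qor,\Por_j$ (respectively $\Qor,\Por$).

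It remains to verify the reverse inclusion $\Vdash_\Qor\Por/\Qor\subseteq\bigcup_{i\in I}\Por_i/\Qor$. Take any $p\in\Por/\Qor$ in a $\Qor$-generic extension. Since $\Por=\bigcup_{i\in I}\Por_i$, there is $i\in I$ with $p\in\Por_i$, and by definition of the quotient there is some $q$ in the $\Qor$-generic filter which is a reduction of $p$ with respect to $\Qor,\Por$. I claim that this same $q$ is a reduction of $p$ with respect to $\Qor,\Por_i$: for any $q'\leq q$ in $\Qor$, $q'$ and $p$ are compatible in $\Por$, but both lie in $\Por_i$ and $\Por_i\lessdot\Por$ implies incompatibility in $\Por_i$ coincides with incompatibility in $\Por$, so $q'$ and $p$ are compatible in $\Por_i$ as required. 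Hence $p\in\Por_i/\Qor$, proving the inclusion and the lemma. The only subtlety is the symmetry-of-correctness issue, but here it is handled for free because $\Por_0=\Qor_0=\Qor$ in the diagrams considered, and regularity $\Por_i\lessdot\Por$ transports compatibility both ways between elements of $\Por_i$.
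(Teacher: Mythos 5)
Your proof is correct and follows essentially the same route as the paper: observe that the diagrams $\langle\Qor,\Por_i,\Qor,\Por\rangle$ are correct, apply Lemma \ref{QuotEmb} to get $\Vdash_\Qor\Por_i/\Qor\lessdot\Por/\Qor$, and then check the set equality $\Por/\Qor=\bigcup_{i\in I}\Por_i/\Qor$, which the paper leaves as ``easy to see'' and you verify via the fact that compatibility in $\Por$ of two elements of $\Por_i$ implies compatibility in $\Por_i$ (regularity of $\Por_i\lessdot\Por$). Nothing further is needed.
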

\begin{proof}
   For $i\in I$, as $\langle\Qor,\Por_i,\Qor,\Por\rangle$ is a correct diagram, by Lemma \ref{QuotEmb} $\Qor$ forces $\Por_i/\Qor\lessdot\Por/\Qor$. It is easy to see that $\Qor$ forces $\Por/\Qor=\bigcup_{i\in I}\Por_i/\Qor$.
\end{proof}

\begin{definition}[Brendle {\cite{brendle2}}]\label{DefSuslincorr}
 A Suslin ccc poset $\Sor$ is \emph{correctness preserving} if, given a correct diagram $\langle\Por_0,\Por_1,\Qor_0,\Qor_1\rangle$, the diagram $\langle\Por_0\ast\Snm^{V^{\Por_0}},\Por_1\ast\Snm^{V^{\Por_1}},\Qor_0\ast\Snm^{V^{\Qor_0}},
 \Qor_1\ast\Snm^{V^{\Qor_1}}\rangle$ is also correct.
\end{definition}

According to the rules of construction of template iterations, the Suslin ccc posets that are correctness preserving are the definable posets that can be used to perform such an iteration. We prove that the examples of Suslin ccc posets presented after Definition \ref{DefSuslinLinked} are correctness preserving. Nevertheless, it is not known an example of a Suslin ccc poset that is not correctness preserving.

\begin{conjecture}[Brendle]\label{ConjecSuslinCorr}
   Every Suslin ccc poset is correctness preserving.
\end{conjecture}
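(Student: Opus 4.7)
The plan is to fix a correct diagram $\langle\Por_0,\Por_1,\Qor_0,\Qor_1\rangle$ and a Suslin ccc poset $\Sor$, set $\Por'_i:=\Por_i\ast\Snm$ and $\Qor'_i:=\Qor_i\ast\Snm$, and verify the conditions for $\langle\Por'_0,\Por'_1,\Qor'_0,\Qor'_1\rangle$ to be correct. The four regular embeddings follow directly from the tools already developed: $\Por_i\ast\Snm\lessdot\Qor_i\ast\Snm$ comes from Lemmas \ref{Suslin2stepit} and \ref{2stepitemb} applied with $M:=V^{\Por_i}$ and $N:=V^{\Qor_i}$, and the horizontal embeddings $\Por_0\ast\Snm\lessdot\Por_1\ast\Snm$ and $\Qor_0\ast\Snm\lessdot\Qor_1\ast\Snm$ are analogous. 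The substance of the conjecture is therefore the correctness clause itself.

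Next, I would take $(p_0,\dot{s}_0)\in\Por'_0$ a reduction of $(p_1,\dot{s}_1)\in\Por'_1$ and any $(q,\dot{t})\leq(p_0,\dot{s}_0)$ in $\Qor'_0$, and try to produce a common extension of $(q,\dot{t})$ and $(p_1,\dot{s}_1)$ in $\Qor'_1$. The correctness of the ground diagram immediately yields some $q_\ast\leq q,p_1$ in $\Qor_1$, so the only remaining task is to exhibit a $\Qor_1$-name $\dot{t}'$ with $q_\ast\Vdash_{\Qor_1}\dot{t}'\leq\dot{t},\dot{s}_1$. Since the conditions of $\Sor$ are reals and the statement ``$\dot{t}$ and $\dot{s}_1$ have a common extension'' is $\boldsymbol{\Sigma}^1_1$, I would strengthen $q_\ast$ inside $\Qor_1$ (using a density argument over $\Por_0\lessdot\Qor_0$ and $\Por_1\lessdot\Qor_1$) to decide canonical $\Por_0$- and $\Por_1$-values $u_0,u_1$ of $\dot{t}$ and $\dot{s}_1$, lifting the hypothesis on the $\Por'$-side to concrete compatibility in $\Sor^{V^{\Por_1}}$, and then invoke Shoenfield absoluteness to transfer the existence of a common extension to $\Sor^{V^{\Qor_1}}$.

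The hard part will be that the hypothesis ``$(p_0,\dot{s}_0)$ is a reduction of $(p_1,\dot{s}_1)$'' is phrased in terms of the two-step quotients $\Por'_1/\Por'_0$, which by Lemmas \ref{QuotEmb} and \ref{2stepofQuot} are controlled by the interaction of $\Por_1/\Por_0$ with $\Snm$, not by the $\Qor$-side directly; to extract useful information one must analyse extensions of $(p_0,\dot{s}_0)$ in $\Por'_0$ and match them against arbitrary extensions of $(q,\dot{t})$ in $\Qor'_0$, and the name-calculus here is genuinely asymmetric. The obstruction is exactly this: while points of $\Sor$ are absolute, $\Por$-names and $\Qor$-names for conditions of $\Sor$ live in different Boolean algebras, and a reduction on the $\Por'$-side does not obviously propagate through the quotient $\Qor'_1/\Por'_0$ in an absolute way. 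For the concrete examples listed after Definition \ref{DefSuslinLinked} one can bypass this by exploiting that compatibility in $\Cor$, $\Dor$, $\Eor$, $\Bor$ and $\Loc^h$ is actually Borel (often decided by a finite piece of the condition), so that names can be refined to absolute witnesses; but for a general Suslin ccc $\Sor$ no such finitary witness is available, and I would expect that a full proof requires either an additional structural hypothesis on $\Sor$ (beyond Suslinness and ccc) or a genuinely new absoluteness argument for quotient forcings, explaining why the conjecture remains open.
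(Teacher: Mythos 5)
The statement you were asked about is Brendle's conjecture, and the paper does not prove it: it is stated explicitly as an open problem, with the remark that no example of a Suslin ccc poset failing correctness preservation is known. Your proposal, appropriately, does not claim a proof either, so there is nothing to check against a paper argument; what can be compared is your diagnosis of the difficulty, and it is consistent with how the paper proceeds. The paper establishes correctness preservation only for the concrete posets $\Cor$, $\Bor$, $\Dor$, $\Eor$ and $\Loc^h$, in each case by an ad hoc argument exploiting exactly the feature you single out: compatibility is Borel and essentially finitary (a finite stem decides it for $\Dor$, $\Eor$, $\Loc^h$; a measure-density argument via Lemma \ref{closenm} and the Lebesgue density theorem handles $\Bor$; $\Por\ast\Cnm\simeq\Por\times\Cor$ handles $\Cor$), so that one can refine conditions to absolute witnesses of compatibility and then push them through the ground correct diagram. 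Your description of the obstruction in the general case --- that the reduction hypothesis is phrased on the $\Por$-side via the quotients of Lemmas \ref{QuotEmb} and \ref{2stepofQuot}, and that although points of $\Sor$ are absolute, names for conditions of $\Sor$ over the four posets need not cohere, so a mere $\boldsymbol{\Sigma}^1_1$ compatibility statement plus Shoenfield absoluteness does not by itself produce the required name over $\Qor_1$ --- is a fair account of why the general conjecture resists the method that works for the examples. Two small caveats: your reduction of the regular embeddings $\Por_i\ast\Snm\lessdot\Qor_i\ast\Snm$ to Lemmas \ref{Suslin2stepit} and \ref{2stepitemb} is correct, but note that this part is not in question (it is built into Definition \ref{DefSuslincorr}); and "Suslin $\sigma$-linked with $\boldsymbol{\Sigma}^1_1$ pieces" is the kind of additional structural hypothesis you allude to, yet even under it no general proof is known, so your concluding assessment matches the status of the problem in the paper.
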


First, we consider the following facts about preserving correctness.

\begin{lemma}\label{lemmacorrpres}
  Let $\langle\Por_0,\Por_1,\Qor_0,\Qor_1\rangle$ be a correct diagram of posets with respect to $M$ (where $\Por_0,\Por_1\in M$).
  \begin{enumerate}[(a)]
   \item If $\Pnm_2\in M$ is a $\Por_1$-name for a poset, $\Qnm_2$ is a $\Qor_1$-name for a poset and $\Qor_1$ forces $\Pnm_2\lessdot_{M^{\Por_1}}\Qnm_2$, then $\langle\Por_0,\Por_1\ast\Pnm_2,\Qor_0,\Qor_1\ast\Qnm_2\rangle$ is correct with respect to $M$.
   \item Let $\Rnm\in M$ be a $\Por_0$-name of a poset\footnote{Unlike definable posets, this $\Rnm$ here is intended to have the same interpretation in any transitive model of $\thzfc$ that contains $M^{\Por_0}$}. Then, $\langle\Por_0\ast\Rnm,\Por_1\ast\Rnm,\Qor_0\ast\Rnm,\Qor_1\ast\Rnm\rangle$ is correct with respect to $M$.
  \end{enumerate}
\end{lemma}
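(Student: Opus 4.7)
For part (a), my plan is a direct unpacking of the reduction definition. Suppose $p_0 \in M\cap\Por_0$ is a reduction of $(p_1,\dot\pi_2) \in M\cap(\Por_1\ast\Pnm_2)$ with respect to $\Por_0\lessdot\Por_1\ast\Pnm_2$. First I show that $p_0$ is a reduction of $p_1$ in the base diagram: given any $p_0^+\leq p_0$ in $\Por_0$, applying the reduction to the condition $(p_0^+,\mathds{1}_{\Pnm_2})$ yields a witness $(p_1^+,\dot\pi^+)\leq(p_0^+,\mathds{1}_{\Pnm_2}),(p_1,\dot\pi_2)$ in $\Por_1\ast\Pnm_2$, whose first coordinate $p_1^+$ lies below both $p_0^+$ and $p_1$ in $\Por_1$. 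Correctness of the base diagram (hypothesis) then gives that $p_0$ reduces $p_1$ in $\Qor_0\lessdot\Qor_1$. Given any $q_0\leq p_0$ in $\Qor_0$, pick $q_1\leq q_0,p_1$ in $\Qor_1$. The candidate common extension of $q_0$ and $(p_1,\dot\pi_2)$ in $\Qor_1\ast\Qnm_2$ is $(q_1,\dot\pi_2)$: since $q_1\leq p_1\Vdash_{\Por_1}\dot\pi_2\in\Pnm_2$ and $\Qor_1$ forces $\Pnm_2\subseteq\Qnm_2$ (from $\Pnm_2\lessdot_{M^{\Por_1}}\Qnm_2$), this is a legitimate condition of $\Qor_1\ast\Qnm_2$, and it clearly lies below $(p_1,\dot\pi_2)$ and below $(q_0,\mathds{1}_{\Qnm_2})$.

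For part (b), the required regular embeddings along each edge of the diagram are all instances of Lemma \ref{2stepitemb} with $\Rnm$ playing the role of both $\Qnm$ and $\Qnm'$: the hypothesis ``$\Rnm\lessdot_{M^{\Por_0}}\Rnm$'' is trivially satisfied in any extension because the poset is literally the same. For the reduction property, suppose $(p_0,\dot r_0),(p_1,\dot r_1)\in M$ with $(p_0,\dot r_0)$ a reduction of $(p_1,\dot r_1)$ in $\Por_0\ast\Rnm,\Por_1\ast\Rnm$, and let $(q_0,\dot r'_0)\leq(p_0,\dot r_0)$ in $\Qor_0\ast\Rnm$. The plan is:
\begin{enumerate}
  \item Choose a reduction $(p_0^*,\dot r_0^*)\in\Por_0\ast\Rnm$ of $(q_0,\dot r'_0)$ with $(p_0^*,\dot r_0^*)\leq(p_0,\dot r_0)$, using density of reductions below $(p_0,\dot r_0)$.
  \item Apply the given reduction to $(p_0^*,\dot r_0^*)$ to produce $(p_1^+,\dot r^{++})\leq(p_0^*,\dot r_0^*),(p_1,\dot r_1)$ in $\Por_1\ast\Rnm$, giving $p_1^+\leq p_0^*,p_1$ and $p_1^+\Vdash\dot r^{++}\leq\dot r_0^*,\dot r_1$.
  \item Projecting the reduction of $(q_0,\dot r'_0)$ shows $p_0^*$ is a reduction of $q_0$ in $\Por_0,\Qor_0$. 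Hence by correctness of the base diagram, $p_0^*$ reduces $p_1^+$ in $\Qor_0,\Qor_1$, and therefore $p_1^+$ is compatible with $q_0$ in $\Qor_1$; fix $q_1^+\leq p_1^+,q_0$ in $\Qor_1$.
  \item Show that $q_1^+$ forces $\dot r^{++}$ and $\dot r'_0$ to be compatible in $\Rnm$. The crucial point is that since $\Rnm$ is a $\Por_0$-name, \emph{every} element of $R=\Rnm[G_0^P]$ lies in $V[G_0^P]$, even those arising as values of $\Por_1$-names such as $\dot r^{++}$. One may therefore view $r^{++}$ as having a $\Por_0$-name in $V[G_0^P]$ and apply the reduction of $(q_0,\dot r'_0)$ by $(p_0^*,\dot r_0^*)$ to this name: this yields $(q_0^+,r^+)\leq(p_0^*,\dot r_0^*),(q_0,\dot r'_0)$ in $\Qor_0\ast\Rnm$ with $q_0^+\Vdash r^+\leq r^{++},\dot r'_0$ in $R$. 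Combined with $q_1^+\leq p_1^+$ and absoluteness of compatibility inside the fixed poset $R$, this produces the desired compatibility witness below $q_1^+$.
  \item Fullness of names yields a $\Qor_1$-name $\dot r^*$ such that (after possibly strengthening $q_1^+$) $q_1^+\Vdash\dot r^*\leq\dot r'_0,\dot r_1$, so that $(q_1^+,\dot r^*)$ is the required common extension of $(q_0,\dot r'_0)$ and $(p_1,\dot r_1)$ in $\Qor_1\ast\Rnm$.
\end{enumerate}

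The main obstacle is step~4, which requires genuinely exploiting that $\Rnm$ is a $\Por_0$-name in order to transfer a statement living naturally above $\Qor_0$ (involving $\dot r'_0$) into one that can be witnessed using the $\Por_0\ast\Rnm$-side reduction. Without this ``ground-level'' character of $R$, one could not match the $\Por_1$-name $\dot r^{++}$ against the $\Qor_0$-name $\dot r'_0$ through a $\Por_0\ast\Rnm$-reduction argument; this is precisely why the footnote emphasizes that $\Rnm$ has the same interpretation in any extension of $M^{\Por_0}$.
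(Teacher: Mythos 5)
Your part (a) is fine: it is a correct hands-on verification (the paper instead just quotes Lemma \ref{2stepitemb} together with the transitivity Lemma \ref{CorrQuotEmb}(a)), and the only thing to add is the routine check that the four required (relative) regular embeddings of the new diagram hold. Part (b), however, has a genuine gap at step~1. You ``choose a reduction $(p_0^*,\dot r_0^*)\in\Por_0\ast\Rnm$ of $(q_0,\dot r'_0)$'', invoking ``density of reductions below $(p_0,\dot r_0)$''. But the only relation available between $\Por_0\ast\Rnm$ and $\Qor_0\ast\Rnm$ is $\lessdot_M$, i.e.\ maximal antichains lying in $M$ stay maximal; this does \emph{not} provide reductions (pseudo-projections) of conditions of the larger poset into the smaller one, and they can genuinely fail to exist. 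For instance, take $M$ countable, $\Por_0\in M$ atomless, $G\in V$ a $\Por_0$-generic filter over $M$, and $\Qor_0:=\Por_0\cup\{q\}$ with $q$ below exactly the members of $G$ and with nothing below $q$: then $\Por_0\lessdot_M\Qor_0$ (genericity meets every maximal antichain from $M$), yet $q$ has no reduction in $\Por_0$, since no $p_0\in\Por_0$ has all its extensions in $G$. So the very first move of your argument is only legitimate for the unrestricted notion of correctness, not for correctness with respect to $M$, which is what the lemma asserts and what the applications (posets interpreted in different models) actually need. A smaller slip of the same kind occurs in step~3: correctness of the base diagram transfers reductions of $\Por_1$-conditions by $\Por_0$-conditions, whereas what you have established is that $p_0^*$ reduces the $\Qor_0$-condition $q_0$; to apply the hypothesis you would first need $p_0^*$ (or something below it) to reduce $p_1^+$ with respect to $\Por_0,\Por_1$, which $p_1^+\leq p_0^*$ alone does not give.

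Step~4, which you yourself flag as the crux, is also not carried out: ``applying the reduction to this name'' does not typecheck, because $\dot r^{++}$ is a $\Por_1$-name, so no condition of $\Por_0\ast\Rnm$ has it (or a uniform $\Por_0$-name translation of it) as second coordinate; the fact that each individual value of $\dot r^{++}$ lies in $M^{\Por_0}$ does not produce a single $\Por_0$-name coherent across the $\Por_1$-generic. The paper's proof avoids both problems by working in the opposite direction: inside $M$ one shows that the set $D$ of conditions $(p,\dot r)\in\Por_0\ast\Rnm$ which are reductions of some $(p_1',\dot r)\leq(p_1,\dot r_1)$ \emph{with the same second coordinate} $\dot r$ (a $\Por_0$-name, using that $\Rnm$ is a $\Por_0$-name) is dense below $(p_0,\dot r_0)$; then $\lessdot_M$ is used only through ``a dense set from $M$ is predense in $\Qor_0\ast\Rnm$'' to find $(p_0',\dot r_0')\in D$ compatible with the given $(q_0,\dot s_0)$, and base-diagram correctness plus the shared name $\dot r_0'$ yields the common extension in $\Qor_1\ast\Rnm$. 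That same-name device is exactly what resolves the matching problem you run into, without ever needing reductions of $\Qor$-side conditions into the $\Por$-side.
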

\begin{proof}
  (a) follows directly from Lemma \ref{2stepitemb} and \ref{CorrQuotEmb}(a).

  We prove (b). Let $(p_0,\dot{r}_0)\in\Por_0\ast\Rnm$ be a reduction of $(p_1,\dot{r}_1)\in\Por_1\ast\Rnm$. Work within $M$. Define $D\subseteq\Por_0\ast\Rnm$ such that $(p,\dot{r})\in D$ iff it is a reduction of $(p'_1,\dot{r})\leq(p_1,\dot{r}_1)$ for some $p'_1\in\Por_1$. We show that $D$ is dense below $(p_0,\dot{r}_0)$. Indeed, let $(p',\dot{r}')\in\Por_0\ast\Rnm$ stronger than $(p_0,\dot{r}_0)$. Thus, $(p',\dot{r}')$ is compatible in $\Por_1\ast\Rnm$ with $(p_1,\dot{r}_1)$, so there is a common stronger condition $(p'_1,\dot{r})$. Without loss of generality, as $\Rnm$ is a $\Por_0$-name, we may assume that $\dot{r}$ is a $\Por_0$-name of a member of $\Rnm$. Also, $p'_1\leq p'$ implies that there is a reduction $p\in\Por_0$ of $p'_1$ which is stronger than $p'$. It is clear that $(p,\dot{r})$ is a reduction of $(p'_1,\dot{r})$.

  It remains to prove (possibly outside $M$) that $(p_0,\dot{r}_0)$ is a reduction of $(p_1,\dot{r}_1)$ with respect to $\Qor_0\ast\Rnm,\Qor_1\ast\Rnm$. Let $(q_0,\dot{s}_0)\in\Qor_0\ast\Rnm$ stronger than $(p_0,\dot{r}_0)$. As $D\in M$ is dense below $(p_0,\dot{r}_0)$ in $\Por_0\ast\Rnm$ and this poset is a regular subposet of $\Qor_0\ast\Rnm$ with respect to $M$, $D$ is predense below $(p_0,\dot{r}_0)$ in $\Qor_0\ast\Rnm$, so there exists a $(p'_0,\dot{r}'_0)\in D$ compatible with $(q_0,\dot{s}_0)$ in $\Qor_0\ast\Rnm$. Choose a common stronger condition $(q'_0,\dot{s}'_0)\in\Qor_0\ast\Rnm$ and also choose $p'_1\in\Por_1$ such that $(p'_0,\dot{r}'_0)$ is a reduction of $(p'_1,\dot{r}'_0)$ and the latter is stronger than $(p_1,\dot{r}_1)$. The correctness of $\langle\Por_0,\Por_1,\Qor_0,\Qor_1\rangle$ imply that $p'_0$ is a reduction of $p'_1$ with respect to $\Qor_0,\Qor_1$, so $q'_0$ is compatible with $p'_1$ in $\Qor_1$. Therefore, any common stronger condition $q_1\in\Qor_1$ forces $\dot{s}'_0\leq_{\Rnm}\dot{s}_0,\dot{r}'_0$, so $(q_0,\dot{s}_0)$ and $(p'_1,\dot{r}'_0)$ are compatible in $\Qor_1\ast\Rnm$.
\end{proof}

\begin{lemma}
  $\Cor$ is correctness preserving.
\end{lemma}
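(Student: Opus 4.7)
The plan is to reduce the statement directly to Lemma \ref{lemmacorrpres}(b) by exploiting the fact that Cohen forcing is, up to canonical identification, a \emph{ground model} poset. Take the conditions of $\Cor$ to be finite binary sequences in $2^{<\omega}$, with $s\leq t$ iff $t\subseteq s$, and $s\perp t$ iff there is some $i\in|s|\cap|t|$ with $s(i)\neq t(i)$. Both relations are $\Delta_0$, so the first step is to observe that the underlying set and the order of $\Cor$ are absolute between any two transitive models of $\thzfc$; in particular, writing $\Cor$ for the ground model Cohen forcing, $\Cor = \Cnm^{V^{\Por_0}} = \Cnm^{V^{\Por_1}} = \Cnm^{V^{\Qor_0}} = \Cnm^{V^{\Qor_1}}$, and each occurrence of $\Por_i\ast\Cnm^{V^{\Por_i}}$ (resp. $\Qor_i\ast\Cnm^{V^{\Qor_i}}$) in Definition \ref{DefSuslincorr} is the same poset as $\Por_i\ast\check\Cor$ (resp. $\Qor_i\ast\check\Cor$).

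The second step is to start from an arbitrary correct diagram $\langle\Por_0,\Por_1,\Qor_0,\Qor_1\rangle$ as in Definition \ref{DefSuslincorr} and, by the Remark following Definition \ref{DefCorr}, view it as a diagram that is correct with respect to $M=V$. I then apply Lemma \ref{lemmacorrpres}(b) with $\Rnm:=\check\Cor$. This $\Rnm$ is a (trivially) $\Por_0$-name for a poset and it satisfies the hypothesis isolated in the footnote of that lemma, namely that its interpretation is the same in any transitive model of $\thzfc$ containing $V^{\Por_0}$; this is precisely the absoluteness established in the previous step. The lemma's conclusion is that $\langle\Por_0\ast\check\Cor,\Por_1\ast\check\Cor,\Qor_0\ast\check\Cor,\Qor_1\ast\check\Cor\rangle$ is correct with respect to $V$, and unwinding the identifications from step one gives exactly the correctness demanded by Definition \ref{DefSuslincorr}.

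I do not expect any real obstacle: the sole conceptual ingredient is the identification of $\Cnm$ across the four extensions with one fixed ground-model poset, after which the entire preservation content is supplied by Lemma \ref{lemmacorrpres}(b). The same scheme will be used in the subsequent lemmas for the other Suslin posets of the paper (Random, Hechler, $\Eor$, $\Loc^h$), but for those the absoluteness of conditions and, crucially, of the compatibility of finite sets of conditions, must be verified directly; for $\Cor$ it is completely routine and needs no further comment.
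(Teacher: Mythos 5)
Your proposal is correct and is essentially the paper's own argument: the paper also proves the lemma by observing that $\Cor=2^{<\omega}$ has the same interpretation in every generic extension (so that $\Por\ast\Cnm\simeq\Por\times\Cor$) and then invoking Lemma \ref{lemmacorrpres}(b). Your write-up merely spells out the absoluteness step in more detail.
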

\begin{proof}
  Immediate from Lemma \ref{lemmacorrpres}(b) because the interpretation of $\Cor=2^{<\omega}$, ordered by end extension, is the same in any generic extension (in other words, $\Por\ast\Cnm\simeq\Por\times\Cor$ for any poset $\Por$).
\end{proof}

\begin{lemma}[Brendle {\cite{Br-shat}}]\label{randomCorrPres}
   $\Bor$ is correctness-preserving.
\end{lemma}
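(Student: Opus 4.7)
The four regular embeddings needed to make sense of the iterated diagram $\langle\Por_0\ast\Bnm,\Por_1\ast\Bnm,\Qor_0\ast\Bnm,\Qor_1\ast\Bnm\rangle$ follow from Lemmas \ref{Suslin2stepit} and \ref{2stepitemb}: the absoluteness of Borel codes and Lebesgue measure between transitive models gives $\Bor^{M}\lessdot_M\Bor^{N}$ whenever $M\subseteq N$, so $\Por_i\ast\Bnm\lessdot\Qor_i\ast\Bnm$ for $i=0,1$, and the horizontal embeddings follow analogously from $\Por_0\lessdot\Por_1$ and $\Qor_0\lessdot\Qor_1$ together with the same Suslin ccc absoluteness. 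The real content of the lemma is the correctness of this iterated diagram.

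Let $(p_0,\dot{B}_0)\in\Por_0\ast\Bnm$ be a reduction of $(p_1,\dot{B}_1)\in\Por_1\ast\Bnm$. I first observe that $p_0$ itself must be a reduction of $p_1$ with respect to $\Por_0,\Por_1$: otherwise some $p'_0\leq p_0$ in $\Por_0$ would be incompatible with $p_1$ in $\Por_1$, whence $(p'_0,\dot{B}_0)$ would be incompatible with $(p_1,\dot{B}_1)$ in $\Por_1\ast\Bnm$, contradicting the reduction hypothesis. By the correctness of the original diagram $\langle\Por_0,\Por_1,\Qor_0,\Qor_1\rangle$, $p_0$ is therefore a reduction of $p_1$ with respect to $\Qor_0,\Qor_1$. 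Given any $(q_0,\dot{C}_0)\leq(p_0,\dot{B}_0)$ in $\Qor_0\ast\Bnm$, I can pick $q_1\in\Qor_1$ stronger than both $q_0$ and $p_1$, and the proof reduces to showing that $q_1\Vdash_{\Qor_1}\mu(\dot{C}_0\cap\dot{B}_1)>0$; for then $(q_1,\dot{C}_0\cap\dot{B}_1)$ is a common extension of $(q_0,\dot{C}_0)$ and $(p_1,\dot{B}_1)$ in $\Qor_1\ast\Bnm$, witnessing compatibility.

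To establish this measure inequality I would argue by contradiction. Suppose some $q'_1\leq q_1$ in $\Qor_1$ forces $\mu(\dot{C}_0\cap\dot{B}_1)=0$; then for some positive rational $\epsilon$, $q'_1$ forces $\mu(\dot{C}_0\setminus\dot{B}_1)>\epsilon$. Shrink further so that $q'_1$ decides Borel codes for $\dot{C}_0$ and $\dot{B}_1$ up to an approximation of measure $<\epsilon/2$, choosing canonical $\Qor_0$-, respectively $\Por_1$-, names reflecting these codes. Using that $\Por_i\lessdot_M\Qor_i$ is Suslin ccc-preserving and that Lebesgue measure of Borel codes is absolute between the four extensions, this approximation can be pulled back to the $\Por$-side: one finds a reduction $p'_0\leq p_0$ of $q'_1$ in $\Por_0$ and a Borel $\Por_0$-name $\dot{C}'_0\leq\dot{B}_0$ (approximating the code of $\dot{C}_0$) such that $p'_0\Vdash\mu(\dot{C}'_0)>\epsilon/2$, while the reduction hypothesis on $(p_0,\dot{B}_0),(p_1,\dot{B}_1)$ supplies a $p'_1\leq p_1$ in $\Por_1$ compatible with $(p'_0,\dot{C}'_0)$, forcing $\dot{C}'_0\cap\dot{B}_1$ to be non-null. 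Transferring back via the correctness of $\langle\Por_0,\Por_1,\Qor_0,\Qor_1\rangle$ and measure absoluteness produces a common extension of $q'_1$ forcing $\mu(\dot{C}_0\cap\dot{B}_1)>0$, the required contradiction.

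The main obstacle is precisely the transfer step in the last paragraph: random forcing does not factor as a product (in contrast to Cohen, where this was trivial in the preceding lemma), so one cannot simply invoke Lemma \ref{lemmacorrpres}(b) and must instead track Borel codes across the four models. The technical heart is the observation that, for any two names $\dot{B},\dot{B}'$ for Borel sets coded in a given extension and any positive rational $r$, the statement ``$\mu(\dot{B}\cap\dot{B}')\geq r$'' is a property of the pair of Borel codes that is absolute between the four transitive models in play; combining this absoluteness with the Suslin ccc regularity provided by Lemma \ref{Suslin2stepit} is what makes the argument go through.
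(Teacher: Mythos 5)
Your reduction of the lemma to the claim that the (arbitrarily chosen) $q_1\leq q_0,p_1$ forces $\dot{C}_0\cap\dot{B}_1$ to be non-null is where the argument breaks: that claim is false. Take $\Por_\wedge=\Por_0=\Qor_0$ trivial and $\Por_1=\Qor_1=\{0,1,2\}$ with $1,2\leq 0$ incompatible, let $\dot{B}_1$ be the $\Por_1$-name with $1\Vdash\dot{B}_1=[\langle 0\rangle]$ and $2\Vdash\dot{B}_1=[\langle 1\rangle]$, and let $(p_0,\dot{B}_0)=(\mathds{1},2^\omega)$, which is a reduction of $(0,\dot{B}_1)$. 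With $(q_0,\dot{C}_0)=(\mathds{1},[\langle 0\rangle])$ and $q_1=2$ one gets $q_1\Vdash\lambda(\dot{C}_0\cap\dot{B}_1)=0$, even though the two conditions are of course compatible (via the condition $1$). So one must \emph{construct} a suitable $q_1$, and your contradiction argument does not do this: from ``some $q'_1\leq q_1$ forces the intersection null'' you pull back to the $\Por$-side and produce a $p'_1$ witnessing compatibility of an approximation, but nothing guarantees that $p'_1$ (or the condition obtained from it) is compatible with $q'_1$, so no contradiction results (in the example above, the witness is $1$ while $q'_1=2$). Measure-absoluteness of Borel codes is not the issue; the issue is that $\dot{C}_0$ and $\dot{B}_1$ live in incomparable extensions and can simply be disjoint unless one first localizes both to a common small piece of the space.

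That localization is the actual content of the paper's proof, which also avoids handling $\Qor_0$-names by using the other formulation of correctness: it starts from $(p_0,\dot{b}_0)$ and $(p_1,\dot{b}_1)$ with a common reduction in $\Por_\wedge\ast\Bnm_\wedge$ and produces extensions $(q_0,\dot{c}_0)$ and $(q_1,\dot{c}_1)$ having a common reduction in $\Por_\wedge$, together with a $\Por_\wedge$-name $\dot{c}_\wedge$ such that each side is forced to cover more than $\frac{3}{4}$ of $\dot{c}_\wedge$; then $\dot{c}_0\cap\dot{c}_1\cap\dot{c}_\wedge$ has measure $>\frac{1}{2}\lambda(\dot{c}_\wedge)$ below any common extension in $\Por_\vee$, which exists by correctness of the base diagram. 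Producing $\dot{c}_\wedge$ requires the quotient lemma (Lemma \ref{closenm}), the Lebesgue density theorem, and a maximal antichain $\{\dot{c}^n\}_{n<\omega}$ of pieces each $\frac{3}{4}$-covered by the $\Por_1$-side, from which one piece is selected that is also $\frac{3}{4}$-covered by the $\Por_0$-side. None of this machinery appears in your sketch, and it cannot be replaced by the approximation-and-absoluteness argument you outline.
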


The author is very grateful to Brendle for letting include his proof in this paper, whose cited reference is not available at the moment (but, hopefully, it will be soon). We first prove a lemma that is implicit in the original proof.

\begin{lemma}\label{closenm}
   Let $\Por$ be a regular subposet of the poset $\Por'$. Let $\Qnm'$ be a $\Por'$-name of a poset and $\Qnm$ a $\Por$-name of a poset such that $\Vdash_{\Por'}\Qnm\lessdot_{V^\Por}\Qnm'$. If $(p,\dot{q})\in\Por\ast\Qnm$, $(p',\dot{q}')\in\Por'\ast\Qnm'$, $p$ is a reduction of $p'$ and $p'\Vdash\dot{q}'\parallel\dot{q}$, then there exists a $\Por$-name $\dot{q}_0$ of a condition in $\Qnm$ such that $p$ forces $\dot{q}_0\leq\dot{q}$ and that, for any $r\in\Qnm$ stronger than $\dot{q}_0$, there exists a $p_1\leq p'$ in $\Por'/\Por$ such that $p_1\Vdash_{\Por'/\Por}r\parallel \dot{q}'$.
\end{lemma}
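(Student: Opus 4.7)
The proof realizes $\dot{q}_0$ as a projection of a witness for $\dot{q}'\parallel\dot{q}$ through the two-step iteration $(\Por'/\Por)\ast\Qnm'$. Start by using the maximum principle to fix a $\Por'$-name $\dot{s}$ with $p'\Vdash_{\Por'}\dot{s}\leq\dot{q},\dot{q}'$ in $\Qnm'$. Combining $\Por\lessdot\Por'$ with the hypothesis $\Vdash_{\Por'}\Qnm\lessdot_{V^\Por}\Qnm'$ via Lemma~\ref{2stepitemb} gives $\Por\ast\Qnm\lessdot\Por'\ast\Qnm'$; rewriting $\Por'\ast\Qnm'\simeq\Por\ast((\Por'/\Por)\ast\Qnm')$ via Lemma~\ref{QuotEqv} then yields that $\Por$ forces $\Qnm\lessdot(\Por'/\Por)\ast\Qnm'$. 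In a $\Por$-extension $V[G]$ with $p\in G$, all the posets involved live in $V[G]$, so this regular-subposet-hood is already ordinary inside $V[G]$ and reductions there are available by the usual density argument.

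Now work in $V[G]$. Because $p$ reduces $p'$, the condition $p'$ belongs to $\Por'/\Por$ and $(p',\dot{s})\in(\Por'/\Por)\ast\Qnm'$. Pick a reduction $q_0\in\Qnm[G]$ of $(p',\dot{s})$ in this iteration. Applying the reduction condition to $r=q_0$ itself yields $(p_1^0,\dot{t}_0)\leq(1,q_0),(p',\dot{s})$. Since $p'\Vdash\dot{s}\leq\dot{q}$ gives $(p',\dot{s})\leq(1,\dot{q}[G])$, this $(p_1^0,\dot{t}_0)$ is also a common extension of $(1,q_0)$ and $(1,\dot{q}[G])$ in $(\Por'/\Por)\ast\Qnm'$; the contrapositive of clause~(ii) in the definition of subposet then forces $q_0$ and $\dot{q}[G]$ to be compatible already in $\Qnm[G]$. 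Pick $q^*\leq q_0,\dot{q}[G]$ in $\Qnm[G]$. Then $q^*\leq\dot{q}[G]$, and any $r\leq q^*$ in $\Qnm[G]$ satisfies $r\leq q_0$, so it admits $(p_1,\dot{t})\leq(1,r),(p',\dot{s})$ in $(\Por'/\Por)\ast\Qnm'$; this gives $p_1\leq p'$ in $\Por'/\Por$ and $p_1\Vdash_{\Por'/\Por}\dot{t}\leq r$ together with $\dot{t}\leq\dot{s}\leq\dot{q}'$, i.e.\ $p_1\Vdash_{\Por'/\Por}r\parallel\dot{q}'$.

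Since such a $q^*$ can be produced in every $\Por$-generic containing $p$, the maximum principle delivers a $\Por$-name $\dot{q}_0$ with $p\Vdash\dot{q}_0\leq\dot{q}$ in $\Qnm$ satisfying the required property. The main subtlety is purely bookkeeping: one must reinterpret the $\Por'$-name $\Qnm'$ as a $(\Por'/\Por)$-name inside $V[\dot{G}]$ and observe that, once all the posets involved live in $V[\dot{G}]$, the ``with respect to $V^\Por$'' form of $\lessdot$ from the hypothesis collapses there to ordinary regular-subposet-hood---this is exactly what guarantees the existence of the reduction $q_0$. Once that is clarified, everything else is the short compatibility chase just described.
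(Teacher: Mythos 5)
Your argument is correct, but it takes a genuinely different route from the paper's. The paper works directly in $V[G]$ (with $p\in G$) and runs a density argument: it sets $D:=\{r\in\Qor\ /\ r\leq q$ and $p'\Vdash_{\Por'/\Por}r\perp\dot{q}'\}$, shows $D$ cannot be predense below $q$ (otherwise $p'$ would force in $\Por'/\Por$ both that $D$ stays predense below $q$ in $\Qnm'$ — by the hypothesis $\Vdash_{\Por'}\Qnm\lessdot_{V^\Por}\Qnm'$ — and that $q\parallel\dot{q}'$, a contradiction), and then takes $q_0\leq q$ incompatible with every member of $D$. You instead route through the structural machinery: a name $\dot{s}$ for a common extension of $\dot{q},\dot{q}'$ below $p'$, the fact that $\Por$ forces $\Qnm\lessdot(\Por'/\Por)\ast\Qnm'$, a reduction $q_0$ of $(p',\dot{s})$ in that two-step iteration, and an intersection with $\dot{q}[G]$ via incompatibility preservation. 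Both proofs run on the same engine (maximal antichains of $\Qnm[G]$ lying in $V[G]$ remain maximal in $\Qnm'$ after forcing with $\Por'/\Por$); the paper's version is more elementary and self-contained, while yours packages that fact as a regular embedding and then does a short compatibility chase, which is arguably cleaner but leans on quotient lemmas. The one place you state without proof — that $\Por\ast\Qnm\lessdot\Por'\ast\Qnm'\simeq\Por\ast((\Por'/\Por)\ast\Qnm')$ "yields" $\Vdash_\Por\Qnm\lessdot(\Por'/\Por)\ast\Qnm'$ — does need a word, since regularity of $\Por\ast\Qnm$ in $\Por\ast\Rnm$ does not formally hand you the forced statement; it can be discharged either by Lemma \ref{QuotEmb} applied to the (automatically correct) diagram $\langle\Por,\Por\ast\Qnm,\Por,\Por'\ast\Qnm'\rangle$ together with Lemma \ref{2stepofQuot}, or by checking directly in $V[G]$ that any maximal antichain $A\in V[G]$ of $\Qnm[G]$ is forced by $\Por'/\Por$ to remain maximal in $\Qnm'$ and is hence predense in $(\Por'/\Por)\ast\Qnm'$ — which is precisely the computation the paper's proof uses. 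With that step made explicit, your proof is complete.
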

\begin{proof}
   Let $G$ be $\Por$-generic over $V$ with $p\in G$. Work in $V[G]$. Let $D:=\{r\in\Qor\ /\ r\leq q\textrm{\ and\ } p'\Vdash_{\Por'/\Por}r\perp\dot{q}'\}$. Note that $D$ is not predense below $q$ (if so, $p'$ would force, with respect to $\Por'/\Por$, that $D$ is predense below $q$ in $\dot{\Qor'}$ and $\dot{q}'\perp\dot{q}$, which is impossible). Therefore, there exists a $q_0\leq q$ in $\Qor$ that is incompatible with all the members of $D$. Thus, any $r\leq q_0$ is not in $D$, so there is a $p_1\leq p'$ in $\Por'/\Por$ such that $p_1\Vdash_{\Por'/\Por}r\parallel \dot{q}'$.
\end{proof}

\begin{proof}[Proof of Lemma \ref{randomCorrPres}]
   Let $\langle\Por_\wedge,\Por_0,\Por_1,\Por_\vee\rangle$ be a correct diagram of posets. For $i\in I_4=\{\wedge,0,1,\vee\}$, let $\Bnm_i$ be a $\Por_i$-name for random forcing. To prove that $\langle\Por_\wedge\ast\Bnm_\wedge,\Por_0\ast\Bnm_0,\Por_1\ast\Bnm_1,\Por_\vee\ast
   \Bnm_\vee\rangle$ is correct, it is enough to show that, if $(p_0,\dot{b}_0)\in\Por_0\ast\Bnm_0$ and $(p_1,\dot{b}_1)\in\Por_1\ast\Bnm_1$ have a common reduction in $\Por_\wedge\ast\Bnm_\wedge$, then there are $(q_0,\dot{c}_0)\leq(p_0,\dot{b}_0)$ and $(q_1,\dot{c}_1)\leq(p_1,\dot{b}_1)$ in $\Por_0\ast\Bnm_0$ and $\Por_1\ast\Bnm_1$, respectively, such that $q_i\Vdash\lambda(\dot{c}_i\cap\dot{c}_\wedge)>\frac{3}{4}\lambda(\dot{c}_\wedge)$ for $i=0,1$ where $\lambda$ is the Lebesgue measure for $2^\omega$, $\dot{c}_\wedge$ is some $\Por_\wedge$-name for a condition in $\Bnm_\wedge$ and $q_0,q_1$ have a common reduction in $\Por_\wedge$. This is so because $q_0,q_1$ will be compatible in $\Por_\vee$ by correctness and $\dot{c}_\vee=\dot{c}_0\cap\dot{c}_1\cap\dot{c}_\wedge$ is forced to be, by any stronger condition than $q_0$ and $q_1$, a condition in $\Bnm_\vee$ of measure $>\frac{1}{2}\lambda(\dot{c}_\wedge)$.

   Let $(p,\dot{b})$ be a common reduction in $\Por_\wedge\ast\Bnm_\wedge$ of $(p_0,\dot{b}_0)$ and $(p_1,\dot{b}_1)$. Choose $(p'_1,\dot{b}'_1)\in\Por_1\ast\Bnm_1$ a condition stronger than $(p,\dot{b})$ and $(p_1,\dot{b}_1)$, so $p'_1$ forces that $\dot{b}'_1\subseteq\dot{b}\cap\dot{b}_1$. Choose $p'\leq p$ (in $\Por_\wedge$) a reduction of $p'_1$.
   \begin{claim}\label{closename}
      There is a $\Por_\wedge$-name $\dot{b}_\wedge$ of a condition in $\Bnm_\wedge$ such that $p'$ forces that $\dot{b}_\wedge\subseteq\dot{b}$ and that, for any $c\in\Bnm_\wedge$ stronger than $\dot{b}_\wedge$, there is a condition $q\leq p'_1$ in $\Por_1/\Por_\wedge$ such that $q\Vdash_{\Por_1/\Por_\wedge}\lambda(\dot{b}'_1\cap c)>0$.
   \end{claim}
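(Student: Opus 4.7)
The plan is to recognize Claim \ref{closename} as a direct instance of Lemma \ref{closenm}. I would apply that lemma with the substitution $\Por:=\Por_\wedge$, $\Por':=\Por_1$, $\Qnm:=\Bnm_\wedge$, $\Qnm':=\Bnm_1$, and use the conditions $(p,\dot q):=(p',\dot b)$ and $(p',\dot q'):=(p'_1,\dot b'_1)$ (there is an unavoidable clash of meta-variable names with the ambient proof of Lemma \ref{randomCorrPres}, but it is purely cosmetic). The name $\dot q_0$ produced by the lemma will be our $\dot b_\wedge$.

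Verifying the hypotheses of Lemma \ref{closenm} is short. First, $\Por_\wedge\lessdot\Por_1$ holds because $\langle\Por_\wedge,\Por_0,\Por_1,\Por_\vee\rangle$ is a correct diagram. Second, since random forcing $\Bor$ is Suslin ccc and the $\Por_\wedge$-extension is a submodel of any $\Por_1$-extension, Lemma \ref{Suslin2stepit} gives $\Vdash_{\Por_1}\Bnm_\wedge\lessdot_{V^{\Por_\wedge}}\Bnm_1$, so $\dot b$ can be regarded as a $\Por_1$-name for a condition in $\Bnm_1$ and compatibility statements between $\dot b$ and $\dot b'_1$ are meaningful in $\Bnm_1$. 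Third, $p'$ is a reduction of $p'_1$ by the very choice of $p'$ made just before the claim. Finally, $p'_1$ forces $\dot b'_1\subseteq\dot b$, and since $\dot b'_1$ is a nonzero random condition, $p'_1\Vdash\lambda(\dot b'_1\cap\dot b)=\lambda(\dot b'_1)>0$, which in random forcing is exactly $p'_1\Vdash_{\Por_1}\dot b'_1\parallel\dot b$.

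With the hypotheses in place, Lemma \ref{closenm} hands us a $\Por_\wedge$-name $\dot b_\wedge$ of a condition in $\Bnm_\wedge$ such that $p'$ forces $\dot b_\wedge\leq\dot b$, i.e.\ $\dot b_\wedge\subseteq\dot b$ in the order of $\Bnm_\wedge$, and such that for any $c\in\Bnm_\wedge$ stronger than $\dot b_\wedge$ there is some $q\leq p'_1$ in $\Por_1/\Por_\wedge$ with $q\Vdash_{\Por_1/\Por_\wedge} c\parallel\dot b'_1$. Translating compatibility in random forcing to positive Lebesgue measure of the intersection yields exactly $q\Vdash_{\Por_1/\Por_\wedge}\lambda(\dot b'_1\cap c)>0$, which is the conclusion of the claim.

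I do not anticipate a genuine obstacle: the content is already packaged by Lemma \ref{closenm}, and the role of this separate claim is just to give a convenient name $\dot b_\wedge$ that will later be paired with the analogous object on the $\Por_0$-side to build the required $q_0,q_1$ and $\dot c_\vee=\dot c_0\cap\dot c_1\cap\dot c_\wedge$ of measure exceeding $\tfrac12\lambda(\dot c_\wedge)$. The only point requiring care is that $\Bnm_\wedge$ and $\Bnm_1$ are names over different posets, so every compatibility assertion must be read inside a $\Por_1$-generic extension via the embedding $\Bnm_\wedge\lessdot_{V^{\Por_\wedge}}\Bnm_1$ supplied by Lemma \ref{Suslin2stepit}.
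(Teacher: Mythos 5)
Your proposal is correct and matches the paper exactly: the paper proves Claim \ref{closename} with the single line ``Direct consequence of Lemma \ref{closenm}'', and your instantiation ($\Por:=\Por_\wedge$, $\Por':=\Por_1$, $\Qnm:=\Bnm_\wedge$, $\Qnm':=\Bnm_1$, conditions $(p',\dot b)$ and $(p'_1,\dot b'_1)$) together with the hypothesis checks (correctness gives $\Por_\wedge\lessdot\Por_1$, Lemma \ref{Suslin2stepit} gives the Suslin embedding, and $p'_1\Vdash\dot b'_1\subseteq\dot b\cap\dot b_1$ gives compatibility) is exactly the verification the paper leaves implicit.
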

   \begin{proof}
      Direct consequence of Lemma \ref{closenm}.
   \end{proof}
   As $(p',\dot{b}_\wedge)$ is a reduction of $(p_0,\dot{b}_0)$, there is a common stronger condition $(p'_0,\dot{c}_0)\in\Por_0\ast\Bnm_0$. Then, $p'_0\Vdash\dot{c}_0\subseteq\dot{b}_\wedge$.
   By the Lebesgue density Theorem, find $s\in 2^{<\omega}$ and $p''_0\leq_{\Por_0}p'_0$ that forces $\lambda(\dot{c}_0\cap[s])>\frac{3}{4}\lambda([s])$. Put $\dot{b}''_\wedge=\dot{b}_\wedge\cap[s]$, so $p_0''\Vdash\lambda(\dot{c}_0\cap\dot{b}''_\wedge)>\frac{3}{4}\lambda(\dot{b}''_\wedge)$.

   Let $p''\leq p'$ be a reduction of $p''_0$ in $\Por_\wedge$.
   \begin{claim}\label{lebantich}
      There are $\Por_\wedge$-names $\{\dot{c}^n\}_{n<\omega}$ for conditions in $\Bnm_\wedge$ and $\{\dot{p}^n_1\}_{n>\omega}$ of conditions in $\Por_1/\Por_\wedge$ such that $p''$ forces that $\{\dot{c}^n\}_{n<\omega}$ is a maximal antichain below $\dot{b}''_\wedge$ and, for each $n<\omega$, $\dot{p}^n_1\leq p'_1$ forces, with respect to $\Por_1/\Por_\wedge$, that $\lambda(\dot{b}'_1\cap\dot{c}^n)>\frac{3}{4}\lambda(\dot{c}^n)$.
   \end{claim}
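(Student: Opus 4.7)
Plan: Work in $V^{\Por_\wedge}$ below a generic filter containing $p''$ and show that the set
\[
D := \left\{c^* \in \Bnm_\wedge \ / \ c^* \leq b''_\wedge \text{ and some } p^* \leq p'_1 \text{ in } \Por_1/\Por_\wedge \text{ forces } \lambda(b'_1 \cap c^*) > \tfrac{3}{4}\lambda(c^*)\right\}
\]
is dense in $\Bnm_\wedge$ below $b''_\wedge$. Once density is in hand, the ccc of $\Bnm_\wedge$ yields a countable maximal antichain $\{c^n\}_{n<\omega} \subseteq D$ below $b''_\wedge$; for each $n$ pick a witness $p^n_1$, and then reinterpret these choices as $\Por_\wedge$-names $\{\dot{c}^n\}_{n<\omega}$ and $\{\dot{p}^n_1\}_{n<\omega}$ forced by $p''$ to satisfy the claim.

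To verify density, I would fix $c \leq b''_\wedge$ in $\Bnm_\wedge$. Applied to $c$, Claim \ref{closename} supplies $q \leq p'_1$ in $\Por_1/\Por_\wedge$ with $q \Vdash \lambda(b'_1 \cap c) > 0$. In any $\Por_1/\Por_\wedge$-extension containing $q$, the Lebesgue density theorem furnishes some $t \in 2^{<\omega}$ with $\lambda((b'_1 \cap c) \cap [t]) > \tfrac{3}{4}\lambda([t])$. Because $2^{<\omega}$ is countable and absolute, a standard forcing-theoretic strengthening (fullness) produces $t^* \in 2^{<\omega}$ together with $p^* \leq q$ in $\Por_1/\Por_\wedge$ that decides $t = t^*$ and forces the density inequality for this value. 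Then $c^* := c \cap [t^*]$ lies in $\Bnm_\wedge$ (since $c \in \Bnm_\wedge$, $[t^*]$ is a ground-model Borel set, and $\lambda(c^*) \geq \lambda(b'_1\cap c^*) > 0$), and
\[
p^* \Vdash \lambda(b'_1 \cap c^*) = \lambda((b'_1 \cap c) \cap [t^*]) > \tfrac{3}{4}\lambda([t^*]) \geq \tfrac{3}{4}\lambda(c^*),
\]
witnessing $c^* \in D$ below $c$.

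The main obstacle I anticipate is ensuring that the witnessing condition $c^*$ really lives in $\Bnm_\wedge$ rather than only in $\Bnm_1$, i.e., that the choice of $t^*$ can be performed at the $\Por_\wedge$-level rather than after passage to $\Por_1/\Por_\wedge$. This is precisely where the countability and absoluteness of $2^{<\omega}$ are essential: the existential statement forced by $q$ can be refined via the forcing theorem to a specific $t^* \in 2^{<\omega}$ witnessed by some $p^* \leq q$, and both $t^*$ and $p^*$ then live naturally in $V^{\Por_\wedge}$. The remainder is routine name-bookkeeping together with the standard construction of a maximal antichain from the dense set $D$ inside the ccc forcing $\Bnm_\wedge$.
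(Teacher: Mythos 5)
Your proposal is correct and follows essentially the same route as the paper: work in a $\Por_\wedge$-generic extension containing $p''$, use Claim \ref{closename} together with the Lebesgue density theorem (with the witnessing $t^*\in 2^{<\omega}$ and $p^*\leq q$ chosen via the forcing theorem, which the paper does implicitly) to show the relevant set of conditions is dense below $b''_\wedge$, and then extract a countable maximal antichain with witnesses by ccc and translate back into names. Your explicit handling of why $t^*$ and hence $c^*=c\cap[t^*]$ can be chosen at the $\Por_\wedge$-level is exactly the point the paper's phrase ``find $c'\subseteq c$ and $q'\leq q$ that forces\dots'' is meant to cover.
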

   \begin{proof}
      Let $G$ be a $\Por_\wedge$-generic set over $V$ with $p''\in G$. Work in $V[G]$. Let $c\subseteq b''_\wedge$ in $\Bor$ arbitrary. By Claim \ref{closename}, there is a $q\leq p'_1$ that forces, with respect to $\Por_1/\Por_\wedge$, $\lambda(\dot{b}'_1\cap c)>0$. As in the paragraph preceding the present claim, use the Lebesgue density Theorem to get $c'\subseteq c$ and $q'\leq q$ that forces $\lambda(\dot{b}'_1\cap c')>\frac{3}{4}\lambda(c')$. This density argument implies that there exists $\{c^n\}_{n<\omega}$ maximal antichain below $b''_\wedge$ such that, for any $n<\omega$, there exists a $p_1^n\leq p'_1$ that forces $\lambda(\dot{b}'_1\cap c^n)>\frac{3}{4}\lambda(c^n)$.
   \end{proof}
   Note that there are $n<\omega$ and $q_0\in\Por_0$ a condition stronger than $p''$ and $p''_0$ such that $q_0\Vdash\lambda(\dot{c}_0\cap\dot{c}^n)>\frac{3}{4}\lambda(\dot{c}^n)$. If this were not the case, then any condition stronger than $p''$ and $p''_0$ in $\Por_0$ would force $\lambda(\dot{c}_0\cap\dot{c}^n)\leq\frac{3}{4}\lambda(\dot{c}^n)$ for all $n<\omega$, but this implies that $\lambda(\dot{c}_0\cap\dot{b}''_\wedge)\leq\frac{3}{4}\lambda(\dot{b}''_\wedge)$, which is false because $p''_0$ forces the contrary. Put $\dot{c}_\wedge=\dot{c}^n$.

   Let $q\leq p''$ be a reduction of $q_0$ in $\Por_\wedge$. As $q$ forces that $\dot{p}^n_1\leq p'_1$ in $\Por_1/\Por_\wedge$, there exists a $q'_1\leq p'_1$ in $\Por_1$ and $q_\wedge\leq q$ in $\Por_\wedge$ such that $q_\wedge\Vdash q'_1=\dot{p}^n_1\in\Por_1/\Por_\wedge$, so $q_\wedge$ is a reduction of $q'_1$. Let $q_1\in\Por_1$ be a condition stronger than $q_\wedge$ and $q'_1$, so Claim \ref{lebantich} implies that any reduction of $q_1$ in $\Por_\wedge$ forces that $q_1\Vdash_{\Por_1/\Por_\wedge}\lambda(\dot{b}'_1\cap\dot{c}_\wedge)>
   \frac{3}{4}\lambda(\dot{c}_\wedge)$. Therefore, by Lemma \ref{QuotEqv}, $q_1\Vdash_{\Por_1}\lambda(\dot{b}'_1\cap\dot{c}_\wedge)>\frac{3}{4}\lambda(\dot{c}_\wedge)$. Put $\dot{c}_1=\dot{b}'_1$. Note also that any reduction of $q_1$ in $\Por_\wedge$ is also a reduction of $q_0$, so the proof is complete.
\end{proof}

\begin{lemma}[Brendle {\cite[Lemma 1.3]{brendle2}}]\label{HechlerCorrPres}
   $\Dor$ is correctness preserving.
\end{lemma}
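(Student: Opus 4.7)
The plan is to follow the template set by Lemma \ref{randomCorrPres}, but replacing measure-theoretic density arguments with much simpler stem-density arguments. This is feasible because Hechler conditions $(s,f)$ decompose into a finite ground-model stem $s\in\omega^{<\omega}$ and a side function $f\in\omega^\omega$, and two Hechler conditions sharing the same stem are automatically compatible via pointwise maximum of their side functions. Moreover, for any $(p,\dot{d})\in\Por\ast\Dnm$ the set of strengthenings that decide the stem up to any prescribed length is dense, since the stem lives in the ground model. I will verify correctness of the diagram $\langle\Por_\wedge\ast\Dnm_\wedge,\Por_0\ast\Dnm_0,\Por_1\ast\Dnm_1,\Por_\vee\ast\Dnm_\vee\rangle$ by assuming $(p,\dot{d})$ is a common reduction in $\Por_\wedge\ast\Dnm_\wedge$ of $(p_0,\dot{d}_0)$ and $(p_1,\dot{d}_1)$ and producing a common extension in $\Por_\vee\ast\Dnm_\vee$.

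First I would strengthen $(p_1,\dot{d}_1)$ below $(p,\dot{d})$ to a condition $(p'_1,\dot{d}'_1)\in\Por_1\ast\Dnm_1$ with $p'_1\Vdash\mathrm{stem}(\dot{d}'_1)=t_1$ for a specific $t_1\in\omega^{<\omega}$, and choose $p'\leq p$ a reduction of $p'_1$ in $\Por_\wedge$. Then, applying Lemma \ref{closenm} to the Hechler factor in the same way as Claim \ref{closename} was used in the random proof, I obtain a $\Por_\wedge$-name $\dot{d}_\wedge$ of a condition in $\Dnm_\wedge$ such that $p'$ forces $\dot{d}_\wedge\leq\dot{d}$ and, for any $c\leq\dot{d}_\wedge$ in $\Dnm_\wedge$, some extension of $p'_1$ in $\Por_1/\Por_\wedge$ forces $c\parallel\dot{d}'_1$. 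By a further stem-decision density argument in $\Por_\wedge$, I may also assume $p'\Vdash\mathrm{stem}(\dot{d}_\wedge)=t^*$ for some $t^*\supseteq t_1$ chosen long enough to satisfy the Hechler domination requirement of $\dot{d}'_1$ on $[|t_1|,|t^*|)$ under $p'_1$.

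Next I would carry out the symmetric argument on the $\Por_0$ side: since $(p',\dot{d}_\wedge)$ is a reduction of $(p_0,\dot{d}_0)$, take a common stronger condition $(p'_0,\dot{c}_0)\in\Por_0\ast\Dnm_0$, strengthened so that $p'_0$ decides $\mathrm{stem}(\dot{c}_0)=t$ for some fixed $t\supseteq t^*$; let $q\leq p'$ be a reduction of $p'_0$ in $\Por_\wedge$. A second appeal to the closeness lemma, applied below $q$, yields a $\Por_\wedge$-name $\dot{c}_\wedge\leq\dot{d}_\wedge$ of decided stem $t$ and a condition $q_1\leq p'_1$ in $\Por_1$ such that some reduction of $q_1$ in $\Por_\wedge$ extends $q$ and forces $\dot{c}_\wedge\parallel\dot{d}'_1$ in $\Dnm_1$, i.e.\ forces the stem of $\dot{d}'_1$ to extend to $t$ in a way dominated by $\dot{d}'_1$'s side function. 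Then $p'_0$ and $q_1$ share a reduction in $\Por_\wedge$, so correctness of the base diagram gives a common extension $q^*\in\Por_\vee$; any $q^*$-extension forces $\dot{c}_0$ and $\dot{d}'_1$ to share stem $t$ (after appropriate Hechler strengthening), and hence to admit the common Hechler extension with stem $t$ and side function the pointwise maximum of the two side functions.

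The main obstacle I expect is the careful choreography of aligning the three stems $t_1\subseteq t^*\subseteq t$ across the four legs of the diagram while preserving the reduction structure inherited from the base correctness. This is conceptually simpler than the Lebesgue-density argument in the random case, because stems are finite objects in the ground model and compatibility with a fixed stem is a decidable rather than measure-theoretic condition; nevertheless, two passes of the closeness lemma are required, one for each side, to align the stems consistently with the reductions. Once the three-stem alignment is in place, the final compatibility in $\Por_\vee\ast\Dnm_\vee$ is automatic by pointwise maximum of the side functions.
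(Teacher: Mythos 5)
Your route is viable but genuinely different from --- and considerably heavier than --- the paper's. The paper does not transplant the random-forcing argument at all: it fixes a common reduction $(p,(s,\dot f))$ with $|s|\geq|s_1|$ (so $s_1\subseteq s$), proves one short claim --- if $q\leq p$ in $\Por_\wedge$ forces $\dot f\frestr|t|\leq t$, then some $p'_1\leq p_1,q$ in $\Por_1$ forces $\dot f_1\frestr|t|\leq t$, a direct consequence of $(p,(s,\dot f))$ being a reduction of $(p_1,(s_1,\dot f_1))$ --- and then takes a common extension $(p'_0,(s'_0,\dot f'_0))$ on the $\Por_0$ side together with a reduction $(p',(t,\dot f'))$ of it \emph{inside} $\Por_\wedge\ast\Dnm_\wedge$ with $|t|\geq|s'_0|$, so the stem $t$ automatically lives on the $\wedge$-side; the claim, downward closure of reductions and base correctness finish in a few lines, with no use of Lemma \ref{closenm} and no second round. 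Your two-pass architecture modeled on Lemma \ref{randomCorrPres} can be made to work (the final pointwise-maximum computation is exactly right, and compatibility of a stem-$t$ condition with $(t_1,\dot f'_1)$ does yield $\dot f'_1\frestr[|t_1|,|t|)\leq t$), but it buys nothing over the direct argument here.

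Two steps need repair as written. First, you cannot arrange that $t^*$ dominates the side function of $\dot d'_1$ on $[|t_1|,|t^*|)$ ``under $p'_1$'': that side function is a $\Por_1$-name whose values $p'_1$ does not bound, and no choice of $t^*$ made on the $\wedge$-side can force this; fortunately the claim is never needed --- the domination you actually use is at the final stem $t$ and comes from $q_1$ forcing $\dot c_\wedge\parallel\dot d'_1$. Second, a ``second appeal'' to Lemma \ref{closenm} does not produce a name $\dot c_\wedge\leq\dot d_\wedge$ with decided stem $t$; what does is the observation that $p'_0$ forces the $\Por_\wedge$-statement ``the side function of $\dot d_\wedge$ is $\leq t$ on $[|t^*|,|t|)$'' (since $p'_0\Vdash\dot c_0\leq\dot d_\wedge$ and $\dot c_0$ has stem $t$), hence its reduction $q$ forces it as well, so below $q$ the canonical stem-$t$ strengthening $\dot c_\wedge$ of $\dot d_\wedge$ is well defined; one then instantiates the property already supplied by the \emph{first} application of Lemma \ref{closenm} at this $\dot c_\wedge$, decides the quotient witness $q'_1\leq p'_1$ by some $q_\wedge\leq q$, and takes $q_1\leq q_\wedge,q'_1$, noting (as at the end of the proof of Lemma \ref{randomCorrPres}) that every reduction of $q_1$ in $\Por_\wedge$ is a reduction of $p'_0$ --- this, not the mere fact that some reduction of $q_1$ extends $q$ by fiat, is what delivers the common reduction you invoke before applying base correctness.
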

\begin{proof}
   Let $\langle\Por_\wedge,\Por_0,\Por_1,\Por_\vee\rangle$ be a correct diagram. Assume $(p_0,(s_0,\dot{f}_0))\in\Por_0\ast\Dnm_0$ and $(p_1,(s_1,\dot{f}_1))\in\Por_1\ast\Dnm_1$ with a common reduction in $\Por_\wedge\ast\Dnm_\wedge$. Show that they are compatible in $\Por_\vee\ast\Dnm_\vee$. Consider $(p,(s,\dot{f}))\in\Por_\wedge\ast\Dnm_\wedge$ such a common reduction with $|s|\geq|s_1|$. It is clear that $s_1\subseteq s$.
   \begin{claim}
      For any $t\supseteq s$ in $\omega^{<\omega}$, if $q\leq p$ in $\Por_\wedge$ forces that $\dot{f}\frestr|t|\leq t$, then there is a $p'_1\leq p_1,q$ in $\Por_1$ that forces $\dot{f}_1\frestr|t|\leq t$.
   \end{claim}
   As $(p,(s,\dot{f}))$ is a reduction of $(p_0,(s_0,\dot{f}_0))\in\Por_0\ast\Dnm_0$, let $(p'_0,(s'_0,\dot{f}'_0))\in\Por_0\ast\Dnm_0$ be a condition stronger than both, so $s\subseteq s'_0$. Let
   $(p',(t,\dot{f}'))\leq(p,(s,\dot{f}))$ be a reduction of $(p'_0,(s'_0,\dot{f}'_0))$ in $\Por_\wedge\ast\Dor_\wedge$ with $|s'_0|\leq|t|$, so $s'_0\subseteq t$. By the claim, there is a $p'_1\leq p_1,p'$ in $\Por_1$ that forces $\dot{f}_1\frestr|t|\leq t$. As any reduction of $p'_1$ in $\Por_\wedge$ is a reduction of $p'_0$, by correctness we get that $p'_0,p'_1$ are compatible in $\Por_\vee$. Note that any stronger condition than $p'_0$ and $p'_1$ in $\Por_\vee$ forces that $(s'_0,\dot{f}'_0)$ and $(s_1,\dot{f}_1)$ are compatible.
\end{proof}

\begin{lemma}\label{EvDiffCorrPres}
   $\Eor$ is correctness preserving.
\end{lemma}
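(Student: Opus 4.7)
The plan is to mirror the proof of Lemma~\ref{HechlerCorrPres}, with the Hechler ``stem plus dominating function'' replaced by the eventually different ``stem plus finite set to avoid''. Fix a correct diagram $\langle\Por_\wedge,\Por_0,\Por_1,\Por_\vee\rangle$, and suppose $(p_0,(s_0,\dot F_0))\in\Por_0\ast\Enm_0$ and $(p_1,(s_1,\dot F_1))\in\Por_1\ast\Enm_1$ admit a common reduction in $\Por_\wedge\ast\Enm_\wedge$. Pick such a common reduction $(p,(s,\dot F))$ with $|s|\geq|s_1|$, so that $s_1\subseteq s$; the task is to exhibit a common stronger condition in $\Por_\vee\ast\Enm_\vee$.

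The pivotal step, parallel to the claim in the Hechler proof, will read: for every $t\in\omega^{<\omega}$ with $t\supseteq s$, if $q\leq p$ in $\Por_\wedge$ forces that $t(i)\neq x(i)$ for all $x\in\dot F$ and all $i\in|t|\menos|s|$, then there exists $p'_1\leq p_1,q$ in $\Por_1$ forcing the corresponding statement for $\dot F_1$ on $|t|\menos|s_1|$. The hypothesis is precisely what makes $(q,(t,\dot F))$ a legitimate extension of $(p,(s,\dot F))$ in $\Por_\wedge\ast\Enm_\wedge$, hence by the reduction property it is compatible with $(p_1,(s_1,\dot F_1))$ in $\Por_1\ast\Enm_1$; a common stronger condition with second coordinate $(t,\dot F\cup\dot F_1)$ yields $p'_1$, the required avoidance on $|t|\menos|s_1|$ being exactly what is needed to make $(t,\dot F\cup\dot F_1)\leq(s_1,\dot F_1)$ legitimate.

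Granting the claim, I then choose $(p'_0,(s'_0,\dot F'_0))\in\Por_0\ast\Enm_0$ stronger than both $(p,(s,\dot F))$ and $(p_0,(s_0,\dot F_0))$ (so $s\subseteq s'_0$), and take a reduction $(p',(t,\dot F'))\leq(p,(s,\dot F))$ of $(p'_0,(s'_0,\dot F'_0))$ in $\Por_\wedge\ast\Enm_\wedge$ with $|t|\geq|s'_0|$, which forces $s'_0\subseteq t$ by the usual stem-comparability reasoning for the eventually different ordering. The claim then furnishes $p'_1\leq p_1,p'$ in $\Por_1$ forcing the avoidance for $\dot F_1$ on $|t|\menos|s_1|$. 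Since $p'$ reduces $p'_0$ in $\Por_\wedge$, any reduction of $p'_1$ in $\Por_\wedge$ is also a reduction of $p'_0$, so correctness of the original diagram yields $p'_0\parallel p'_1$ in $\Por_\vee$; any common extension in $\Por_\vee$ then forces $(s'_0,\dot F'_0\cup\dot F_1)$ to be a common extension of $(s'_0,\dot F'_0)$ and $(s_1,\dot F_1)$ in $\Enm_\vee$, because the comparison with $(s'_0,\dot F'_0)$ adds no new stem positions, while the comparison with $(s_1,\dot F_1)$ requires $s'_0(i)\neq x(i)$ for $x\in\dot F_1$ and $i\in|s'_0|\menos|s_1|\subseteq|t|\menos|s_1|$, which transfers directly from $t(i)\neq x(i)$ via $s'_0\subseteq t$.

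The main subtlety, tacit in the Hechler proof as well, is justifying that the chosen reduction has $s'_0\subseteq t$: any reduction of $(p'_0,(s'_0,\dot F'_0))$ with stem incomparable with $s'_0$ would admit a further extension whose stem diverges from $s'_0$, making it $\Enm_0$-incompatible with $s'_0$ and contradicting the reduction property, so any reduction with $|t|\geq|s'_0|$ automatically satisfies $s'_0\subseteq t$. Beyond this nothing genuinely new appears, because the eventually different ordering constrains only the new stem positions against the old second coordinate, so no avoidance between $s'_0$ and the ``fresh'' elements $\dot F'_0\menos\dot F$ ever needs to be checked in verifying the common extension.
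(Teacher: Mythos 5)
Your proof is correct and is exactly the argument the paper intends: its proof of this lemma consists of the single instruction to imitate the Hechler case, replacing $\dot{f}\frestr|t|\leq t$ by $\forall_{i\in[|s|,|t|)}\forall_{x\in\dot{F}}(x(i)\neq t(i))$ and $\dot{f}_1\frestr|t|\leq t$ by $\forall_{i\in[|s_1|,|t|)}\forall_{x\in\dot{F}_1}(x(i)\neq t(i))$, which is precisely the substitution you carry out. You additionally fill in two details the paper leaves tacit (the proof of the claim via compatibility with $(p_1,(s_1,\dot F_1))$, and the justification that $s'_0\subseteq t$), both of which are handled correctly.
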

\begin{proof}
   Imitate the proof of Lemma \ref{HechlerCorrPres} and just replace $\dot{f}\frestr|t|\leq t$ by $\forall_{i\in[|s|,|t|)}\forall_{x\in\dot{F}}(x(i)\neq t(i))$ and $\dot{f}_1\frestr|t|\leq t$ by $\forall_{i\in[|s_1|,|t|)}\forall_{x\in\dot{F}_1}(x(i)\neq t(i))$
\end{proof}

\begin{lemma}\label{LocCorrPres}
   For $h\in\omega^\omega$ non-decreasing that converges to infinite, $\Loc^h$ is correctness preserving.
\end{lemma}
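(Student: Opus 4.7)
The plan is to mimic the proof of Lemma \ref{HechlerCorrPres}, replacing the Hechler bound $\dot{f}\frestr|t|\leq t$ by the slalom-capture relation $\forall_{i\in[|s|,|t|)}\forall_{x\in\dot{F}}(x(i)\in t(i))$, and likewise $\dot{f}_1\frestr|t|\leq t$ by the corresponding statement $\forall_{i\in[|s_1|,|t|)}\forall_{x\in\dot{F}_1}(x(i)\in t(i))$.

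Given a correct diagram $\langle\Por_\wedge,\Por_0,\Por_1,\Por_\vee\rangle$ and conditions $(p_0,(s_0,\dot{F}_0))$, $(p_1,(s_1,\dot{F}_1))$ with a common reduction, I would pick $(p,(s,\dot{F}))$ such a common reduction in $\Por_\wedge\ast\Locnm^h_\wedge$ with $|s|\geq\max(|s_0|,|s_1|)$ so that $s_0,s_1\subseteq s$ by stem compatibility. The analogue of the key claim becomes: for any $t\in\prod_{i<n'}[\omega]^{\leq h(i)}$ with $s\subseteq t$, if $q\leq p$ in $\Por_\wedge$ forces $\forall_{i\in[|s|,|t|)}\forall_{x\in\dot{F}}(x(i)\in t(i))$, then there is $p'_1\leq p_1,q$ in $\Por_1$ forcing $\forall_{i\in[|s_1|,|t|)}\forall_{x\in\dot{F}_1}(x(i)\in t(i))$. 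Its proof follows the Hechler pattern: the hypothesis makes $(q,(t,\dot{F}))\leq(p,(s,\dot{F}))$ a valid condition in $\Por_\wedge\ast\Locnm^h_\wedge$ (the size constraint $|\dot{F}|\leq h(|t|)$ is automatic from $h$ non-decreasing and $|s|\leq|t|$), so by the reduction property it is compatible with $(p_1,(s_1,\dot{F}_1))$; any common extension $(p'_1,(t',\dot{G}))$ satisfies $t\subseteq t'$ and hence $t'\frestr|t|=t$, so the slalom order in $(t',\dot{G})\leq(s_1,\dot{F}_1)$ restricted to $i<|t|$ yields the conclusion.

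The rest of the argument copies the Hechler template: take $(p'_0,(s'_0,\dot{F}'_0))$ stronger than $(p,(s,\dot{F}))$ and $(p_0,(s_0,\dot{F}_0))$ in $\Por_0\ast\Locnm^h_0$; pick a reduction $(p',(t,\dot{F}'))\leq(p,(s,\dot{F}))$ of it in $\Por_\wedge\ast\Locnm^h_\wedge$ with $|t|\geq|s'_0|$, so $s'_0\subseteq t$ by stem compatibility of the reduction with $(p'_0,(s'_0,\dot{F}'_0))$; apply the claim with $q=p'$ (whose hypothesis is the slalom order reading of $(p',(t,\dot{F}'))\leq(p,(s,\dot{F}))$) to obtain $p'_1\leq p_1,p'$ in $\Por_1$; invoke that every reduction of $p'_1$ in $\Por_\wedge$ is a reduction of $p'_0$ to conclude by correctness that $p'_0,p'_1$ are compatible in $\Por_\vee$; and finally observe that any $q_\vee\leq p'_0,p'_1$ in $\Por_\vee$ forces $(s'_0,\dot{F}'_0)$ and $(s_1,\dot{F}_1)$ compatible in $\Locnm^h_\vee$, since restricting the claim's conclusion to $i<|s'_0|$ and using $t(i)=s'_0(i)$ gives $q_\vee\Vdash\forall_{i\in[|s_1|,|s'_0|)}\forall_{x\in\dot{F}_1}(x(i)\in s'_0(i))$, which is exactly the capture condition needed to produce a common extension with function set $\dot{F}'_0\cup\dot{F}_1$ and a suitably long stem extending $s'_0$ (such a lengthening is available because $h$ converges to infinity, providing enough room for the combined function set under the size constraint $|\dot{F}'_0\cup\dot{F}_1|\leq h(|t^*|)$). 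The only point slightly beyond the Hechler case is arranging the reduction's stem to satisfy $s'_0\subseteq t$, which is achieved by strengthening the $\Por_\wedge$-component of the reduction to force the capture of $\dot{F}$ at positions $[|s|,|s'_0|)$ by $s'_0$; this is possible because $(p'_0,(s'_0,\dot{F}'_0))\leq(p,(s,\dot{F}))$ already forces precisely this statement about the $\Por_\wedge$-name $\dot{F}$, and such statements reflect to $\Por_\wedge$ through any reduction of $p'_0$.
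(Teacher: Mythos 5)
Your overall strategy is the one the paper intends (its proof is literally ``same idea as the Hechler case''), and the claim, its proof via the reduction property, and the correctness step at the level of $\Por_\wedge,\Por_0,\Por_1,\Por_\vee$ all go through. But the final step has a genuine gap that is exactly the point where $\Loc^h$ differs from $\Dor$ and $\Eor$: knowing that $q_\vee$ forces $\forall_{i\in[|s_1|,|s'_0|)}\forall_{x\in\dot{F}_1}(x(i)\in s'_0(i))$ is \emph{not} by itself enough to make $(s'_0,\dot{F}'_0)$ and $(s_1,\dot{F}_1)$ compatible. A common extension $(t^*,\dot{F}'_0\cup\dot{F}_1)$ with $t^*\supseteq s'_0$ must satisfy, at \emph{every} coordinate $i\in[|s'_0|,|t^*|)$, both $t^*(i)\supseteq\{x(i)\ /\ x\in\dot{F}'_0\cup\dot{F}_1\}$ and $|t^*(i)|\leq h(i)$; you only check the constraint $|\dot{F}'_0\cup\dot{F}_1|\leq h(|t^*|)$ at the top. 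Since all you know is $|\dot{F}'_0|\leq h(|s'_0|)$ and $|\dot{F}_1|\leq h(|s_1|)$, the union can have up to $2h(|s'_0|)$ members taking pairwise distinct values at coordinate $|s'_0|$, and then no admissible $t^*(|s'_0|)$ exists; taking $t^*=s'_0$ fails for the same reason. The hypothesis that $h$ converges to infinity gives room only eventually, not at $|s'_0|$, so in this situation the two conditions are genuinely incompatible and the proof as written breaks. (In Hechler and $\Eor$ there is no width constraint, which is why the overlap condition alone suffices there.)

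The missing idea is the standard ``make room'' maneuver (the same one used to prove $\Loc^h$ is $\sigma$-linked): before running the claim, strengthen $(p'_0,(s'_0,\dot{F}'_0))$ further by a \emph{pure} stem extension (no new functions), which is always possible for a single condition since $|\dot{F}'_0|\leq h(\mathrm{old\ length})\leq h(i)$ for later $i$ and $h\to\infty$, so as to guarantee $h(|s'_0|)\geq|\dot{F}'_0|+h(|s_1|)$. Then at the end $(s'_0,\dot{F}'_0\cup\dot{F}_1)$ itself is a legitimate condition and is a common extension, using precisely the forced capture of $\dot{F}_1$ by $s'_0$ on $[|s_1|,|s'_0|)$. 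A secondary, minor point: your justification for arranging $s'_0\subseteq t$ (``such statements reflect to $\Por_\wedge$ through any reduction of $p'_0$'') is imprecise -- not every reduction of $p'_0$ forces the capture statement; rather, below any reduction there is a condition forcing it, by compatibility with $p'_0$ and absoluteness. The simpler route is to note that, by the paper's definition, any condition below a reduction is again a reduction, so you may strengthen the reduction inside $\Por_\wedge\ast\Locnm_\wedge$ until its stem has length $\geq|s'_0|$, and stem comparability then gives $s'_0\subseteq t$.
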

\begin{proof}
   Same idea of the proof of Lemma \ref{HechlerCorrPres}.
\end{proof}

\subsection{Forcing with ultraproducts and ultrapowers}\label{SubsecUltraP}

We present some facts, introduced by Shelah \cite{shelah}  (see also \cite{br} and \cite{brendle3}) about forcing with the ultrapower of a ccc poset by a measurable cardinal. Fix a measurable cardinal $\kappa$ and a non-principal $\kappa$-complete ultrafilter $\Dwf$ on $\kappa$.

Say that a property $\varphi(\alpha)$ \emph{holds for $\Dwf$-many $\alpha$} iff $\left\{\alpha<\kappa\ /\ \varphi(\alpha)\right\}\in\Dwf$. If $\langle X_\alpha\rangle_{\alpha<\kappa}$ is a sequence of sets, $(\prod_{\alpha<\kappa}X_\alpha)/\Dwf=[\{X_\alpha\}_{\alpha<\kappa}]$ denotes the \emph{ultraproduct of $\langle X_\alpha\rangle_{\alpha<\kappa}$ modulo $\Dwf$}, which is the quotient of $\prod_{\alpha<\kappa}X_\alpha$ modulo the equivalence relation given by $x\sim_\Dwf y$ iff $x_\alpha=y_\alpha$ for $\Dwf$-many $\alpha$. If $x=\langle x_\alpha\rangle_{\alpha<\kappa}\in\prod_{\alpha<\kappa}X_\alpha$, denote its equivalence class under $\sim_\Dwf$ by $\bar{x}=\langle x_\alpha\rangle_{\alpha<\omega}/\Dwf$. An ultraproduct of the form $\prod_{\alpha<\kappa}X/\Dwf=X^\kappa/\Dwf$ is often known as an \emph{ultrapower}.

Fix a sequence $\langle\Por_\alpha\rangle_{\alpha<\kappa}$ of posets. For notation, if $p\in\bar{\Por}:=\prod_{\alpha<\kappa}\Por_\alpha$, denote $p_\alpha=p(\alpha)$. For $p,q\in\bar{\Por}$, say that $p\leq_\Dwf q$ iff $p_\alpha\leq q_\alpha$ for $\Dwf$-many $\alpha$. The poset $\bar{\Por}/\Dwf$, ordered by $\bar{p}\leq\bar{q}$ iff $p\leq_\Dwf q$, is the \emph{$\Dwf$-ultraproduct of $\langle\Por_\alpha\rangle_{\alpha<\kappa}$}. We are particularly interested in the \emph{$\Dwf$-ultrapower} $\Por^\kappa/\Dwf$ of a poset $\Por$.

\begin{lemma}[Shelah {\cite{shelah}}, see also {\cite[Lemma 0.1]{br}}]\label{UltraprodEmb}
   Consider $i:\Por\to\Por^\kappa/\Dwf$ defined by $i(r)=\bar{r}$ where $r_\alpha=r$ for all $\alpha<\kappa$. Then, $i$ is a regular embedding iff $\Por$ is $\kappa$-cc.
\end{lemma}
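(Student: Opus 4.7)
The plan is to first verify that $i$ is always order-preserving and incompatibility-preserving, so the entire question reduces to whether the maximal antichain condition (iii) in the definition of regular subposet holds. Order preservation is immediate: if $r\leq r'$, then the constant sequences satisfy $r_\alpha\leq r'_\alpha$ everywhere, hence $\bar{r}\leq\bar{r'}$. For incompatibility, if $r\perp r'$, then any $\bar{s}\leq i(r),i(r')$ would give $s_\alpha\leq r,r'$ for $\Dwf$-many $\alpha$, impossible since $r\perp r'$.

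For the $(\Leftarrow)$ direction, I will assume $\Por$ is $\kappa$-cc. Given a maximal antichain $A\subseteq\Por$, we have $|A|<\kappa$, so it suffices to show $i[A]$ is predense in $\Por^\kappa/\Dwf$. Fix $\bar{p}$ represented by $\langle p_\alpha\rangle_{\alpha<\kappa}$. For each $\alpha$, pick (via choice) some $a(\alpha)\in A$ compatible with $p_\alpha$, and let $q_\alpha\leq p_\alpha,a(\alpha)$. The map $\alpha\mapsto a(\alpha)$ takes values in a set of size $<\kappa$, so by $\kappa$-completeness of $\Dwf$ there is some $a\in A$ with $X=\{\alpha<\kappa\ /\ a(\alpha)=a\}\in\Dwf$. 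Then $q_\alpha\leq p_\alpha,a$ for all $\alpha\in X$, so $\bar{q}\leq\bar{p},i(a)$ in the ultrapower, witnessing compatibility.

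For $(\Rightarrow)$, I will prove the contrapositive. Assume $\Por$ is not $\kappa$-cc and fix an antichain $\{p_\alpha\}_{\alpha<\kappa}$ in $\Por$. Extend it to a maximal antichain $A\supseteq\{p_\alpha\ /\ \alpha<\kappa\}$. Let $\bar{p}=\langle p_\alpha\rangle_{\alpha<\kappa}/\Dwf\in\Por^\kappa/\Dwf$. I claim $\bar{p}$ is incompatible with every $i(q)$ for $q\in A$, which shows $i[A]$ is not maximal in $\Por^\kappa/\Dwf$ and so $i$ fails to be a regular embedding. For $q\in A\menos\{p_\alpha\ /\ \alpha<\kappa\}$: any $\bar{s}\leq i(q),\bar{p}$ forces $s_\alpha\leq q$ and $s_\alpha\leq p_\alpha$ for $\Dwf$-many $\alpha$, but $q\perp p_\alpha$ for all such $\alpha$ by the antichain property. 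For $q=p_\beta$: compatibility would require $s_\alpha\leq p_\beta, p_\alpha$ for $\Dwf$-many $\alpha$, forcing $\alpha=\beta$ on a $\Dwf$-large set, which contradicts that $\Dwf$ is non-principal.

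The main obstacle is the $(\Leftarrow)$ direction, where the crucial ingredient is $\kappa$-completeness of $\Dwf$ together with $|A|<\kappa$ (from $\kappa$-cc) to stabilize the choice function $a(\alpha)$ on a $\Dwf$-large set; without $\kappa$-cc, $A$ could have size $\kappa$ and this stabilization would fail, and indeed the $(\Rightarrow)$ argument shows it must fail.
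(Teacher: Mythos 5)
Your proof is correct, and it is the standard argument: the paper states this lemma without proof (citing Shelah and Brendle's Lemma 0.1), and your argument — stabilizing the choice of an element of the maximal antichain on a $\Dwf$-large set via $\kappa$-completeness for the $\kappa$-cc direction, and using the diagonal element $\langle p_\alpha\rangle_{\alpha<\kappa}/\Dwf$ built from a $\kappa$-sized antichain (extended to a maximal one) together with non-principality for the converse — is exactly the proof in the cited reference. The only cosmetic omission is the trivial converse of order preservation ($i(r)\leq i(r')$ implies $r\leq r'$, immediate since the relevant set of indices is either $\varnothing$ or $\kappa$), which is needed for clause (i) of the definition of regular subposet.
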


Throughout the text, when dealing with ultrapowers of posets, we identify $i(r)$ with $r$, so we can think of $\Por$ as a regular subposet of $\Por^\kappa/\Dwf$ when $\Por$ is $\kappa$-cc.

\begin{lemma}[Shelah {\cite{shelah}}, see also {\cite[Lemma 0.2]{br}}]\label{Ultraprodccc}
   If $\mu<\kappa$ and $\Por_\alpha$ is a $\mu$-cc poset for all $\alpha<\kappa$, then $\prod_{\alpha<\kappa}\Por_\alpha/\Dwf$ is also $\mu$-cc. The same holds for $\mu$-Knaster, $\mu$-centered and $\mu$-linked in place of $\mu$-cc.
\end{lemma}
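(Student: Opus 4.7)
My plan is to exploit two consequences of $\kappa$-completeness of $\Dwf$ together with $\mu<\kappa$: (i) any partition of $\kappa$ into at most $\mu$ pieces puts exactly one piece in $\Dwf$, equivalently any function $\kappa\to\mu$ is constant on some set in $\Dwf$; and (ii) any intersection of at most $\mu$ sets in $\Dwf$ is still in $\Dwf$. Also recall that $\bar{p}\parallel\bar{q}$ in $\bar{\Por}/\Dwf$ iff $\{\alpha : p_\alpha\parallel q_\alpha\}\in\Dwf$, and dually for $\perp$. I would treat each property in turn.

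For $\mu$-cc, argue by contradiction: a would-be antichain $\{\bar{p}^\xi\}_{\xi<\mu}$ produces sets $A_{\xi,\eta}=\{\alpha : p^\xi_\alpha\perp p^\eta_\alpha\}\in\Dwf$ for all $\xi<\eta<\mu$. By (ii), the intersection $A$ of these $\mu$-many sets lies in $\Dwf$, so it is nonempty. For any $\alpha\in A$, the family $\{p^\xi_\alpha\}_{\xi<\mu}$ is a genuine antichain of size $\mu$ in $\Por_\alpha$ (pairwise incompatibility forces distinctness), contradicting $\mu$-cc of $\Por_\alpha$.

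For $\mu$-Knaster, given $\{\bar{p}^\xi\}_{\xi<\mu}$, for each $\alpha<\kappa$ pick $I_\alpha\in[\mu]^\mu$ such that $\{p^\xi_\alpha\}_{\xi\in I_\alpha}$ is 2-linked, and enumerate $I_\alpha$ in increasing order as $\langle\xi_\alpha^\beta\rangle_{\beta<\mu}$. By (i), for each $\beta<\mu$ there is $\xi^\beta\in\mu$ such that $B_\beta:=\{\alpha : \xi_\alpha^\beta=\xi^\beta\}\in\Dwf$. The map $\beta\mapsto\xi^\beta$ is injective, because if $\xi^\beta=\xi^{\beta'}$ for $\beta\neq\beta'$ then on $B_\beta\cap B_{\beta'}\in\Dwf$ (nonempty) we would get $\xi_\alpha^\beta=\xi_\alpha^{\beta'}$, contradicting that the enumeration is injective. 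So $I:=\{\xi^\beta : \beta<\mu\}$ has size $\mu$; and for any $\beta\neq\beta'$, on $B_\beta\cap B_{\beta'}\in\Dwf$ both $p^{\xi^\beta}_\alpha$ and $p^{\xi^{\beta'}}_\alpha$ lie in the 2-linked set $\{p^\xi_\alpha\}_{\xi\in I_\alpha}$, so they are compatible in $\Por_\alpha$, witnessing compatibility of $\bar{p}^{\xi^\beta}$ and $\bar{p}^{\xi^{\beta'}}$ in the ultraproduct.

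The $\mu$-centered and $\mu$-linked cases share one scheme: write $\Por_\alpha=\bigcup_{\beta<\mu}C_\alpha^\beta$ with each $C_\alpha^\beta$ centered (respectively $2$-linked), and define $D^\beta:=\{\bar{p}\in\bar{\Por}/\Dwf : \{\alpha : p_\alpha\in C_\alpha^\beta\}\in\Dwf\}$. By (i), every $\bar{p}$ belongs to some $D^\beta$. Given finitely many (respectively two) conditions in $D^\beta$, the intersection of the associated $\Dwf$-sets is in $\Dwf$ by (ii), and on that set I can pick a common extension coordinate-wise (defining the extension arbitrarily outside), producing a common extension in the ultraproduct. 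Thus $\bar{\Por}/\Dwf=\bigcup_{\beta<\mu}D^\beta$ is a decomposition into $\mu$-many centered (respectively $2$-linked) pieces. The only mildly delicate step I expect is the injectivity argument in the Knaster case; everything else reduces to clean applications of (i) and (ii).
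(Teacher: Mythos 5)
Your proof is correct, and it is essentially the intended argument: the paper itself gives no proof of this lemma (it cites the $\mu$-cc/ultrapower case of Shelah and of \cite[Lemma 0.2]{br} and explicitly leaves the ultraproduct cases to the reader), and your four cases are exactly the routine applications of $\kappa$-completeness of $\Dwf$ with $\mu<\kappa$ that are being alluded to, with the $\mu$-cc case matching the standard ultrapower proof. The only points needing care --- that $\bar{p}\perp\bar{q}$ in $\prod_{\alpha<\kappa}\Por_\alpha/\Dwf$ iff $p_\alpha\perp q_\alpha$ for $\Dwf$-many $\alpha$, the injectivity of $\beta\mapsto\xi^\beta$ in the Knaster case, and the well-definedness of the pieces $D^\beta$ on $\sim_\Dwf$-equivalence classes --- are all handled or immediate in your write-up.
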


The previous lemma was proved only for $\mu$-cc and ultrapowers in the mentioned references, but the cases for ultraproducts follow easily. The details are left to the reader.

Fix, until the end of this section, a sequence $\langle\Por_\alpha\rangle_{\alpha<\kappa}$ of ccc posets and put $\Qor:=\prod_{\alpha<\kappa}\Por_\alpha/\Dwf$, which is ccc. We analyze how $\Qor$-names for reals look like. For reference, consider $\omega^\omega$. First, we show how to construct a $\Qor$-name of a real from a sequence $\langle\dot{f}_\alpha\rangle_{\alpha<\kappa}$ where each $\dot{f}_\alpha$ is a $\Por_\alpha$-name of a real. For each $\alpha<\omega$ and $n<\omega$, let $\{p_\alpha^{n,j}\ /\ j<\omega\}$ be a maximal antichain in $\Por_\alpha$ and $k_\alpha^n:\omega\to\omega$ a function such that $p_\alpha^{n,j}\Vdash\dot{f}_\alpha(n)=k_\alpha^n(j)$ for all $j<\omega$. Put $p^{n,j}=\langle p^{n,j}_\alpha\rangle_{\alpha<\kappa}$ and note that, for $n<\omega$, $\{\bar{p}^{n,j}\ /\ j<\omega\}$ is a maximal antichain in $\Qor$ by $\kappa$-completeness of $\Dwf$. Also, as $\cfrak<\kappa$, there exists a $D\in\Dwf$ and, for each $n<\omega$, a function $k^n:\omega\to\omega$ such that $k^n_\alpha=k^n$ for all $\alpha\in D$. Define a $\Qor$-name $\dot{f}=\langle\dot{f}_\alpha\rangle_{\alpha<\kappa}/\Dwf$ for a real such that, for any $n,j<\omega$, $\bar{p}^{n,j}\Vdash\dot{f}(n)=k^n(j)$. Note that, if $\langle \dot{g}_\alpha\rangle_{\alpha<\kappa}$ is a sequence where each $\dot{g}_\alpha$ is a $\Por_\alpha$-name for a real and $\Vdash_\Por\dot{f}_\alpha=\dot{g}_\alpha$ for $\Dwf$-many $\alpha$, then $\Vdash_{\Por^\kappa/\Dwf}\dot{f}=\dot{g}$ where $\dot{g}=\langle\dot{g}_\alpha\rangle_{\alpha<\kappa}/\Dwf$.

We show that any $\Qor$-name $\dot{f}$ for a real can be described in this way. For each $n<\omega$, let $\{\bar{p}^{n,j}\ /\ j<\omega\}$ be a maximal antichain in $\Qor$ and $k^n:\omega\to\omega$ such that $\bar{p}^{n,j}\Vdash\dot{f}(n)=k^n(j)$. By $\kappa$-completeness of $\Dwf$, we can find $D\in\Dwf$ such that, for all $\alpha\in D$, $\{p^{n,j}_\alpha\ /\ j<\omega\}$ is a maximal antichain in $\Por_\alpha$ for any $n<\omega$. Let $\dot{f}_\alpha$ be the $\Por_\alpha$-name of a real such that $p^{n,j}_\alpha\Vdash_\Por\dot{f}_\alpha(n)=k^n(j)$. For $\alpha\in\kappa\menos D$ just choose any $\Por_\alpha$-name $\dot{f}_\alpha$ for a real, so we get that $\Vdash_{\Por^\kappa/\Dwf}\dot{f}=\langle\dot{f}_\alpha\rangle_{\alpha<\kappa}/\Dwf$.

\begin{lemma}\label{Sigma1-1andUltrapow}
   Fix $m<\omega$ and a $\boldsymbol{\Sigma}^1_m$ property $\varphi(x)$ of reals. For each $\alpha<\kappa$, let $\dot{f}_{\alpha}$ be a $\Por_\alpha$-name for a real and put $\dot{f}=\langle\dot{f}_{\alpha}\rangle_{\alpha<\kappa}/\Dwf$. Then, for $\bar{p}\in\prod_{\alpha<\kappa}\Por_\alpha/\Dwf$, $\bar{p}\Vdash\varphi(\dot{f})$ iff $p_\alpha\Vdash_{\Por_\alpha}\varphi(\dot{f}_{\alpha})$ for $\Dwf$-many $\alpha$.
\end{lemma}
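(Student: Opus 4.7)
My plan is to prove both directions of the biconditional by simultaneous induction on $m$, handling $\boldsymbol{\Sigma}^1_m$ and $\boldsymbol{\Pi}^1_m$ formulas in tandem. The useful shortcut is that, since $\Dwf$ is an ultrafilter and the negation of a $\boldsymbol{\Sigma}^1_m$-formula is $\boldsymbol{\Pi}^1_m$, one implication at each level can be extracted from the other at the dual class by a contrapositive argument: if $\bar{p}\Vdash\varphi(\dot{f})$ but $E:=\{\alpha<\kappa\ /\ p_\alpha\Vdash\varphi(\dot{f}_\alpha)\}\notin\Dwf$, then $\kappa\menos E\in\Dwf$, so for each $\alpha\in\kappa\menos E$ I pick $q_\alpha\leq p_\alpha$ forcing $\neg\varphi(\dot{f}_\alpha)$ (taking $q_\alpha=p_\alpha$ otherwise), and the ``if'' direction applied to $\neg\varphi\in\boldsymbol{\Pi}^1_m$ would yield $\bar{q}\Vdash\neg\varphi(\dot{f})$, contradicting $\bar{q}\leq\bar{p}$. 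Hence it suffices to prove the ``if'' direction at each projective level.

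For the base case I would take $\varphi$ Borel (or arithmetic), which ultimately reduces to atomic statements of the form ``$\dot{f}\frestr n=s$''. Here the construction in the paragraphs preceding the lemma is tailor-made: the maximal antichains $\{\bar{p}^{n,j}\ /\ j<\omega\}$ of $\Qor$ that decide $\dot{f}\frestr n$ are, by $\kappa$-completeness of $\Dwf$ combined with $\cfrak<\kappa$, induced on a set $D_n\in\Dwf$ by coordinate antichains $\{p^{n,j}_\alpha\ /\ j<\omega\}$ in $\Por_\alpha$ that decide $\dot{f}_\alpha\frestr n$ identically on $D_n$. Since any Borel property involves only countably many such antichains and $\kappa$-completeness permits intersecting countably many members of $\Dwf$, I obtain a single $D\in\Dwf$ on which coordinate forcing and ultraproduct forcing agree at the Borel level; arithmetic quantifiers over $\omega$ fold into this picture automatically.

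For the inductive step at the $\boldsymbol{\Sigma}^1_m$ level, write $\varphi(x)=\exists y\,\psi(x,y)$ with $\psi\in\boldsymbol{\Pi}^1_{m-1}$. If $p_\alpha\Vdash\varphi(\dot{f}_\alpha)$ on some $D\in\Dwf$, I choose for each $\alpha\in D$ a $\Por_\alpha$-name $\dot{y}_\alpha$ witnessing $p_\alpha\Vdash\psi(\dot{f}_\alpha,\dot{y}_\alpha)$ (and an arbitrary $\dot{y}_\alpha$ for $\alpha\notin D$) and assemble $\dot{y}:=\langle\dot{y}_\alpha\rangle_{\alpha<\kappa}/\Dwf$, which is a genuine $\Qor$-name for a real by the discussion preceding the lemma. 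The induction hypothesis for $\psi$ then yields $\bar{p}\Vdash\psi(\dot{f},\dot{y})$, whence $\bar{p}\Vdash\varphi(\dot{f})$; the $\boldsymbol{\Pi}^1_m$ step is dual (in fact easier, as no witness real has to be produced). The main obstacle I anticipate is the base-case bookkeeping, namely making rigorous that every Borel property of $\dot{f}$ is decided by countably many antichain decisions, each coordinate-induced modulo $\Dwf$, so that all these decisions are simultaneously controlled on a common $D\in\Dwf$; once this is in place, the rest of the induction is routine ultrafilter combinatorics.
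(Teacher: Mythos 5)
Your overall architecture---induction on the projective level, deriving the ``only if'' direction for a class from the ``if'' direction for the dual class via the ultrafilter (the contrapositive with $\bar{q}\leq\bar{p}$), and handling the existential step by assembling a witness name $\dot{y}=\langle\dot{y}_\alpha\rangle_{\alpha<\kappa}/\Dwf$ and invoking the induction hypothesis---is exactly the paper's argument, including the appeal to the representation of $\prod_{\alpha<\kappa}\Por_\alpha/\Dwf$-names for reals given before the lemma. The gap is in your base case. You propose to treat all Borel (or arithmetic) $\varphi$ by fixing, once and for all, a single $D\in\Dwf$ on which the coordinate antichains deciding $\dot{f}_\alpha\frestr n$ match those deciding $\dot{f}\frestr n$, and then asserting that ``coordinate forcing and ultraproduct forcing agree at the Borel level'' on $D$. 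That mechanism does not suffice: whether a condition forces a non-clopen statement about $\dot{f}$ is not determined by the antichain/value data alone, but by density arguments below that condition. Already for a closed set $[T]$ (and likewise for an open set or an arithmetic quantifier over $\omega$), proving that $\Dwf$-many $p_\alpha\Vdash\dot{f}_\alpha\in[T]$ implies $\bar{p}\Vdash\dot{f}\in[T]$ requires, for each $\bar{q}\leq\bar{p}$ and each $k$, threading a finite decreasing chain through the antichains to read off a candidate value of $\dot{f}\frestr k$, and then using $\kappa$-completeness to see that the same finite sequence is forced coordinatewise on a $\Dwf$-large set (hence lies in $T$); for countable unions one must in addition stabilize which disjunct is realized below $\bar{q}$. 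These uses of completeness involve $\Dwf$-sets that depend on the extension $\bar{q}$ under consideration, so they cannot be absorbed into one upfront intersection; what is really needed is an induction along the Borel code (or along the tree), not a single $D$.

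The paper sidesteps this bookkeeping entirely by taking the base case to be closed sets only: $\varphi$ is coded by a tree $T\subseteq\omega^{<\omega}$ and the threading/density argument just described is carried out for both $\dot{f}\frestr k\in T$ and $\dot{f}\frestr k\notin T$; since $\boldsymbol{\Sigma}^1_1$ sets are projections of closed sets, Borel sets never require separate treatment in the induction. If you replace your base case by this (or by a rank induction on Borel codes in which $\kappa$-completeness is applied afresh at every countable disjunction), the remainder of your proposal is correct and coincides with the paper's proof.
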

\begin{proof}
   This is proved by induction on $m<\omega$. Recall that $\boldsymbol{\Sigma}^1_0=\boldsymbol{\Pi}^1_0$ corresponds to the pointclass of closed sets. Thus, if $\varphi(x)$ is a $\boldsymbol{\Sigma}^1_0$-property of reals, there exists a tree $T\subseteq\omega^\omega$ such that, for $x\in\omega^\omega$, $\varphi(x)$ iff $x\in[T]:=\{z\in\omega^\omega\ /\ \forall_{k<\omega}(z\frestr k\in T)\}$.

   As in the previous discussion choose, for each $n<\omega$, a maximal antichain $\{\bar{p}^{n,j}\ /\ j<\omega\}$ in $\prod_{\alpha<\kappa}\Por_\alpha/\Dwf$ and a function $k^n:\omega\to\omega$ such that $\bar{p}^{n,j}\Vdash\dot{f}(n)=k^n(j)$ and $p^{n,j}_\alpha\Vdash\dot{f}_\alpha(n)=k^n(j)$ for $\Dwf$-many $\alpha$. First, assume that $p_\alpha\Vdash f_\alpha\in[T]$ for $\Dwf$-many $\alpha$ and fix $k<\omega$. If $\bar{q}\leq\bar{p}$, we can find a decreasing sequence $\{\bar{q}^i\}_{i\leq k}$ and a $t\in\omega^k$ such that $\bar{q}^0=\bar{q}$ and $\bar{q}^{i+1}\leq\bar{p}^{i,t(i)}$ for any $i<k$. Therefore, $\bar{q}^k\Vdash\dot{f}\frestr k=k^n\circ t$ and, for $\Dwf$-many $\alpha$, $q^k_\alpha\Vdash\dot{f}_\alpha\frestr k=k^n\circ t$, so $k^n\circ t\in T$.

   Now, assume that $p_\alpha\not\Vdash f_\alpha\in[T]$ for $\Dwf$-many $\alpha$. Without loss of generality, we may assume that there is a $k<\omega$ such that $p_\alpha\Vdash\dot{f}_\alpha\frestr k\notin T$ for $\Dwf$-many $\alpha$. To prove $\bar{p}\Vdash\dot{f}\frestr k\notin T$ repeat the same argument as before, but note that this time we get $k^n\circ t\notin T$.

   For the induction step, assume that $\varphi(x)$ is $\boldsymbol{\Sigma}^1_{m+1}$, so $\varphi(x)\sii\exists_{y\in\omega^\omega}\psi(x,y)$ where $\psi(x,y)$ is some $\boldsymbol{\Pi}^1_m(\omega^\omega\times\omega^\omega)$-statement (notice that, if this theorem is valid for all $\boldsymbol{\Sigma}^1_m$-statements, then it is also valid for all $\boldsymbol{\Pi}^1_m$-statements). Assume that $p_\alpha\Vdash\exists_{z\in\omega^\omega}\psi(\dot{f}_\alpha,z)$ for $\Dwf$-many $\alpha$ and, for those $\alpha$, choose a $\Por_\alpha$-name $\dot{g}_\alpha$ such that $p_\alpha\Vdash\psi(\dot{f}_\alpha,\dot{g}_\alpha)$. By induction hypothesis, $\bar{p}\Vdash\psi(\dot{f},\dot{g})$ where $\dot{g}=\langle\dot{g}_\alpha\rangle_{\alpha<\kappa}/\Dwf$. The converse also follows easily.
\end{proof}

\begin{corollary}[Shelah {\cite{shelah}}, see also {\cite[Lemma 0.3]{br}}]\label{DestrMad}
   Let $\Por$ be a ccc poset and $\dot{\Awf}$ a $\Por$-name of an a.d. family such that $\Vdash_\Por|\dot{\Awf}|\geq\kappa$. Then, $\Vdash_{\Por^\kappa/\Dwf}\dot{\Awf}\textrm{ is not maximal}$.
\end{corollary}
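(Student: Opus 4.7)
The plan is to exhibit, in the $\Por^\kappa/\Dwf$-extension, an infinite subset of $\omega$ which is almost disjoint from every element of the (unchanged) family $\dot{\Awf}$; this witnesses non-maximality.

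\emph{Step 1 (Construction of the candidate real).} Since $\Vdash_\Por|\dot{\Awf}|\geq\kappa$, work in $V$ and choose $\Por$-names $\langle\dot{a}_\alpha\rangle_{\alpha<\kappa}$ such that $\Vdash_\Por$ ``$\{\dot{a}_\alpha\}_{\alpha<\kappa}$ are pairwise distinct elements of $\dot{\Awf}$''. Following the construction described before Lemma \ref{Sigma1-1andUltrapow}, set $\dot{a}:=\langle\dot{a}_\alpha\rangle_{\alpha<\kappa}/\Dwf$, a $\Por^\kappa/\Dwf$-name for a subset of $\omega$.

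\emph{Step 2 ($\dot{a}$ is infinite).} The statement ``$x\in[\omega]^\omega$'' (for $x\in 2^\omega$) is $\boldsymbol{\Pi}^0_2$, hence Borel, hence $\boldsymbol{\Sigma}^1_1$. Since $\mathds{1}_{\Por}\Vdash_\Por\dot{a}_\alpha\in[\omega]^\omega$ for every $\alpha<\kappa$, Lemma \ref{Sigma1-1andUltrapow} yields $\Vdash_{\Por^\kappa/\Dwf}\dot{a}\in[\omega]^\omega$.

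\emph{Step 3 (Almost disjointness from $\dot{\Awf}$).} Fix a $\Por$-name $\dot{b}$ for an element of $\dot{\Awf}$ and put
\[
E:=\{\alpha<\kappa\ /\ \exists_{p\in\Por}(p\Vdash_\Por\dot{a}_\alpha=\dot{b})\}.
\]
For each $\alpha\in E$ pick a witness $p_\alpha$. If $\alpha\neq\alpha'$ are in $E$ and $p_\alpha,p_{\alpha'}$ had a common extension, that extension would force $\dot{a}_\alpha=\dot{b}=\dot{a}_{\alpha'}$, contradicting the distinctness of the enumerated names; hence $\{p_\alpha\}_{\alpha\in E}$ is an antichain in $\Por$, so by ccc $|E|\leq\aleph_0<\kappa$ and $\kappa\smallsetminus E\in\Dwf$ by $\kappa$-completeness. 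Fix $\alpha\in\kappa\smallsetminus E$. Since $\Vdash_\Por\dot{a}_\alpha,\dot{b}\in\dot{\Awf}$ and $\dot{\Awf}$ is a.d., we have $\Vdash_\Por$ ``$\dot{a}_\alpha=\dot{b}$ or $|\dot{a}_\alpha\cap\dot{b}|<\omega$''. Given any $p\in\Por$, extend $p$ to some $p'$ deciding ``$\dot{a}_\alpha=\dot{b}$''; by $\alpha\notin E$ the only possibility is $p'\Vdash\dot{a}_\alpha\neq\dot{b}$, whence $p'\Vdash|\dot{a}_\alpha\cap\dot{b}|<\omega$. This density argument gives $\Vdash_\Por|\dot{a}_\alpha\cap\dot{b}|<\omega$ for every $\alpha\in\kappa\smallsetminus E$. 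As ``$|x\cap y|<\omega$'' is Borel (applied with the constant sequence $\dot{b}_\alpha:=\dot{b}$), Lemma \ref{Sigma1-1andUltrapow} gives $\Vdash_{\Por^\kappa/\Dwf}|\dot{a}\cap\dot{b}|<\omega$.

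\emph{Step 4 (Conclusion).} Let $H$ be $\Por^\kappa/\Dwf$-generic and put $G:=H\cap\Por$, which is $\Por$-generic by Lemma \ref{UltraprodEmb}. Then $a:=\dot{a}[H]$ is infinite by Step 2 and $a\cap b$ is finite for every $b\in\dot{\Awf}[G]=\dot{\Awf}[H]$ by Step 3; since $a\cap a=a$ is infinite, $a\notin\dot{\Awf}[H]$, so $\dot{\Awf}[H]\cup\{a\}$ properly extends $\dot{\Awf}[H]$ as an a.d.\ family, proving non-maximality.

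The main obstacle is Step 3: the ultrapower lemma is stated for a sequence of names indexed by $\alpha<\kappa$, but the enemy $\dot{b}$ is a single name, so one must combine the ccc of $\Por$ (to bound the set of $\alpha$ where $\dot{a}_\alpha$ could coincide with $\dot{b}$) with the $\kappa$-completeness of $\Dwf$ to absorb this bounded exceptional set and reduce the desired almost disjointness of $\dot{a}$ and $\dot{b}$ to the pointwise almost disjointness of the $\dot{a}_\alpha$'s with $\dot{b}$.
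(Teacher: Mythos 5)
Your proof is correct and follows essentially the same route as the paper: form the $\Dwf$-ultraproduct name from $\kappa$-many names of pairwise distinct members of $\dot{\Awf}$ and reduce infiniteness and almost disjointness to Lemma \ref{Sigma1-1andUltrapow}. The only cosmetic difference is that the paper fixes a condition $r$ and an enumeration $\{\dot{A}_\xi\ /\ \xi<\lambda\}$ of the family, while you handle an arbitrary name $\dot{b}$ for a member of $\dot{\Awf}$ via the ccc-antichain plus $\kappa$-completeness argument — a slightly more explicit way of verifying the same "for $\Dwf$-many $\alpha$" hypothesis.
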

\begin{proof}
   Let $r\in\Por$ and $\lambda\geq\kappa$ be a cardinal such that $r\Vdash_\Por\dot{\Awf}=\{\dot{A}_\xi\ /\ \xi<\lambda\}$. By Lemma \ref{Sigma1-1andUltrapow}, $\dot{A}=\langle\dot{A}_\alpha\rangle_{\alpha<\kappa}/\Dwf$ (this can be defined in a similar way by associating the characteristic function to each set) is a $\Por^\kappa/\Dwf$-name of an infinite subset of $\omega$ and $r\Vdash_{\Por^\kappa/\Dwf}|\dot{A}_\xi\cap\dot{A}|<\aleph_0$ for all $\xi<\lambda$.
\end{proof}


\section{Templates}\label{SecTemp}

We introduce Shelah's notion of a template (in a simpler way than in the original work \cite{shelah}), which is the index set of a forcing iteration as defined in Section \ref{SecItTemp}. Except for Lemmas \ref{RestUltrapowTemp} and \ref{RestChainTemp}, all definitions and results are, in essence, due to Shelah \cite{shelah}, but for proofs we refer to \cite{br}.

For a linear order $L:=\langle L,\leq_L\rangle$ and $x\in L$, denote $L_x:=\left\{z\in L\ /\ z<x\right\}$.

\begin{definition}[Indexed template]\label{DefIndxTemp}
  An \emph{indexed template} (or just a \emph{template}) is a pair $\langle L,\bar{\Iwf}:=\langle\Iwf_x\rangle_{x\in L}\rangle$ where $L$ is a linear order, $\Iwf_x\subseteq\Pwf(L_x)$ for all $x\in L$ and the following properties are satisfied.
  \begin{enumerate}[(1)]
     \item $\varnothing\in\Iwf_x$.
     \item $\Iwf_x$ is closed under finite unions and intersections.
     \item If $z<x$ then there is some $A\in\Iwf_x$ such that $z\in A$.
     \item $\Iwf_x\subseteq\Iwf_y$ if $x<y$.
     \item $\Iwf(L):=\bigcup_{x\in L}\Iwf_x\cup\{L\}$ is well-founded with the subset relation.
  \end{enumerate}
\end{definition}

An iteration along $L$ as in Section \ref{SecItTemp} can be constructed thanks to the well-foundedness of $\Iwf(L)$. Note that properties (2) and (4) imply that $\Iwf(L)$ is closed under finite unions and intersections.

If $A\subseteq L$ and $x\in L$, define $\Iwf_x\frestr A:=\left\{A\cap X\ /\ X\in\Iwf_x\right\}$ the \emph{trace of $\Iwf_x$ on $A$}. Put $\bar{\Iwf}\frestr A:=\langle\Iwf_x\frestr A\rangle_{x\in A}$ and $\Iwf(A):=\bigcup_{x\in A}\Iwf_x\frestr A\cup\{A\}$. Note that $\Iwf(A)\subseteq\Iwf(L)\frestr A=\{A\cap X\ /\ X\in\Iwf(L)\}$ but equality may not hold. For $Z\in\Iwf(L)\frestr A$, let $X_Z:=X_{\bar{\Iwf},Z}$ be a set in $\Iwf(L)$ of minimal rank such that $Z=A\cap X_Z$.

\begin{lemma}\label{tracetemp}
   $\langle A,\bar{\Iwf}\frestr A\rangle$ is an indexed template. Moreover, $\Iwf(L)\frestr A$ is well-founded and $\mathrm{rank}_{\Iwf(L)\upharpoonright A}(A\cap X)\leq\mathrm{rank}_{\Iwf(L)}(X)$ for all $X\in\Iwf(L)$.
\end{lemma}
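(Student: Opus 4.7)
The clauses (1)--(4) of Definition \ref{DefIndxTemp} for $\langle A,\bar{\Iwf}\frestr A\rangle$ are essentially inherited from the analogous clauses for $\langle L,\bar{\Iwf}\rangle$ via the trace operation. Indeed, I would note that $\varnothing=A\cap\varnothing\in\Iwf_x\frestr A$; that $(A\cap X)\cup(A\cap Y)=A\cap(X\cup Y)$ and $(A\cap X)\cap(A\cap Y)=A\cap(X\cap Y)$, so closure under finite unions and intersections is transferred directly; that for $z<x$ in $A$ any $X\in\Iwf_x$ witnessing property~(3) for $\bar{\Iwf}$ gives $z\in A\cap X\in\Iwf_x\frestr A$; and that the monotonicity $\Iwf_x\subseteq\Iwf_y$ for $x<y$ immediately yields $\Iwf_x\frestr A\subseteq\Iwf_y\frestr A$.

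The content of the lemma is clause (5), together with the stronger ``moreover'' statement that the larger family $\Iwf(L)\frestr A=\{A\cap X\ /\ X\in\Iwf(L)\}$ is already well-founded, with $\mathrm{rank}_{\Iwf(L)\frestr A}(A\cap X)\leq \mathrm{rank}_{\Iwf(L)}(X)$. The plan is to use the auxiliary assignment $Z\mapsto X_Z$ defined just before the lemma (a $\subseteq$-minimal-rank witness in $\Iwf(L)$ for the equality $Z=A\cap X_Z$) and to show that
\[
   Z_1\subsetneq Z_2 \text{ in } \Iwf(L)\frestr A \quad\Longrightarrow\quad \mathrm{rank}_{\Iwf(L)}(X_{Z_1})<\mathrm{rank}_{\Iwf(L)}(X_{Z_2}).
\]
This is the single nontrivial step. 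To prove it, form $X_{Z_1}\cap X_{Z_2}$, which lies in $\Iwf(L)$ by closure under intersections, and compute $A\cap(X_{Z_1}\cap X_{Z_2})=Z_1\cap Z_2=Z_1$. Hence $X_{Z_1}\cap X_{Z_2}$ is a legitimate candidate for $X_{Z_1}$, so by minimality $\mathrm{rank}_{\Iwf(L)}(X_{Z_1})\leq \mathrm{rank}_{\Iwf(L)}(X_{Z_1}\cap X_{Z_2})$. If $X_{Z_1}\cap X_{Z_2}=X_{Z_2}$, then $X_{Z_2}\subseteq X_{Z_1}$ gives $Z_2\subseteq Z_1$, contradicting $Z_1\subsetneq Z_2$; therefore $X_{Z_1}\cap X_{Z_2}\subsetneq X_{Z_2}$, and since $\Iwf(L)$ is well-founded by hypothesis, $\mathrm{rank}_{\Iwf(L)}(X_{Z_1}\cap X_{Z_2})<\mathrm{rank}_{\Iwf(L)}(X_{Z_2})$, yielding the strict inequality.

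From this, $Z\mapsto \mathrm{rank}_{\Iwf(L)}(X_Z)$ is a strictly order-preserving map from $(\Iwf(L)\frestr A,\subsetneq)$ into the ordinals, which forces $\Iwf(L)\frestr A$ to be well-founded; the rank bound $\mathrm{rank}_{\Iwf(L)\frestr A}(A\cap X)\leq \mathrm{rank}_{\Iwf(L)}(X)$ then follows by a quick induction on $\mathrm{rank}_{\Iwf(L)}(X)$, using that any proper subset of $A\cap X$ in $\Iwf(L)\frestr A$ is realised as $A\cap X'$ for some $X'$ with $\mathrm{rank}_{\Iwf(L)}(X')<\mathrm{rank}_{\Iwf(L)}(X)$ by the previous paragraph. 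Since $\Iwf(A)\subseteq\Iwf(L)\frestr A$ (together with the element $A=A\cap L$), clause~(5) of Definition~\ref{DefIndxTemp} for $\langle A,\bar{\Iwf}\frestr A\rangle$ follows, completing the proof. The only genuine obstacle is the intersection/minimality argument above; everything else is bookkeeping.
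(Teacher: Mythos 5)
Your proposal is correct and follows essentially the same route as the paper: both proofs hinge on the rank-minimal witnesses $X_Z$ and the observation that intersecting $X_{Z}$ with $X_{Z'}$ (for $Z\subseteq Z'$) still traces to $Z$, so rank-minimality forces the ranks (in the paper, even the witnesses themselves) to respect the inclusion, giving a strictly rank-increasing map that yields well-foundedness and the bound $\mathrm{rank}_{\Iwf(L)\upharpoonright A}(A\cap X)\leq\mathrm{rank}_{\Iwf(L)}(X)$. The verification of clauses (1)--(4), which you spell out, is left implicit in the paper as routine.
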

\begin{proof}
    Put $\Jwf=\Iwf(L)\frestr A$. First note that, for $Z,Z'\in\Jwf$, $Z\subseteq Z'$ iff $X_Z\subseteq X_{Z'}$. Indeed, if $Z\subseteq Z'$, then $A\cap (X_Z\cap X_{Z'})=Z$ and $X_Z\cap X_{Z'}\in\Iwf(L)$. Therefore, by minimality of $X_Z$, $X_Z\cap X_{Z'}=X_Z$.

    From the previous argument, $\Jwf$ is well-founded, moreover, $\mathrm{rank}_\Jwf(Z)\leq\mathrm{rank}_{\Iwf(L)}(X_Z)$ for all $Z\in\Jwf$. If $Z=A\cap X$ for some $X\in\Iwf(L)$, then $\mathrm{rank}_\Jwf(Z)\leq\mathrm{rank}_{\Iwf(L)}(X_Z)\leq\mathrm{rank}_{\Iwf(L)}(X)$ because of the minimality of $X_Z$.
\end{proof}

Also note that, if $X\subseteq A\subseteq L$, then $(\Iwf_x\frestr A)\frestr X=\Iwf_x\frestr X$ for any $x\in L$, $(\bar{\Iwf}\frestr A)\frestr X=\bar{\Iwf}\frestr X$ and $(\Iwf(A))(X)=\Iwf(X)$.

For a template $\langle L,\bar{\Iwf}\rangle$ define $\mathrm{Dp}^{\bar{\Iwf}}:\Pwf(L)\to\mathbf{ON}$ by $\mathrm{Dp}^{\bar{\Iwf}}(X)=\mathrm{rank}_{\Iwf(X)}(X)$. Although this is not a rank function (that is, increasing with respect to $\subsetneq$), recursion on $\alpha=\mathrm{Dp}^{\bar{\Iwf}}(X)$ turns out to be useful to prove statements of the form $\forall_{X\subseteq L}\varphi(X)$ where $\varphi(X)$ is a property related to iterations along $\langle L,\bar{\Iwf}\rangle$. For such proofs, the following lemma is very useful. When the template is understood, we just denote $\mathrm{Dp}:=\mathrm{Dp}^{\bar{\Iwf}}$.

\begin{lemma}\label{UpsilonTemp}
   Fix $A\subseteq L$. $\mathrm{Dp}:=\mathrm{Dp}^{\bar{\Iwf}}$ has the following properties.
   \begin{enumerate}[(a)]
      \item If $Y\in\Iwf(A)$, then $\mathrm{Dp}(Y)\leq\mathrm{rank}_{\Iwf(A)}(Y)$.
      \item If $X\subseteq A$ then $\mathrm{Dp}(X)\leq\mathrm{Dp}(A)$.
      \item Let $x\in A$. If $Y\subsetneq A\cap(L_x\cup\{x\})$ and $Y\cap L_x\in\Iwf_x\frestr A$ then $\mathrm{Dp}(Y)<\mathrm{Dp}(A)$. In particular, $\mathrm{Dp}(X)<\mathrm{Dp}(A)$ for all $X\in\Iwf_x\frestr A$
      \item $\mathrm{Dp}^{\bar{\Iwf}\upharpoonright A}=\mathrm{Dp}\frestr\Pwf(A)$.
   \end{enumerate}
\end{lemma}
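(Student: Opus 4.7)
The plan is to handle the four items in the order (d), (a), (b), (c), each building on the previous. Part (d) is immediate from the remark preceding the lemma: for $X\subseteq A$, $(\bar{\Iwf}\frestr A)\frestr X=\bar{\Iwf}\frestr X$, so $(\bar{\Iwf}\frestr A)(X)=\Iwf(X)$ as families of subsets of $X$, and therefore they induce the same well-founded rank at $X$.

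Parts (a) and (b) share the same approach. For any $Y\subseteq A$, first observe that $\Iwf(Y)\subseteq\Iwf(A)\frestr Y$: each $\Iwf_y\frestr Y$ (for $y\in Y\subseteq A$) equals $(\Iwf_y\frestr A)\frestr Y$, and $Y=A\cap Y$ belongs to $\Iwf(A)\frestr Y$. Since rank in a subfamily cannot exceed rank in the larger family, $\mathrm{Dp}(Y)=\mathrm{rank}_{\Iwf(Y)}(Y)\leq\mathrm{rank}_{\Iwf(A)\frestr Y}(Y)$. Applying the estimate of Lemma \ref{tracetemp} with $\Iwf(A)$ in place of $\Iwf(L)$ and base set $Y$ in place of $A$ yields $\mathrm{rank}_{\Iwf(A)\frestr Y}(Y)\leq\mathrm{rank}_{\Iwf(A)}(Y')$ for any $Y'\in\Iwf(A)$ with $Y\subseteq Y'$. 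For (a) take $Y'=Y\in\Iwf(A)$; for (b) take $Y'=A$ and use $\mathrm{rank}_{\Iwf(A)}(A)=\mathrm{Dp}(A)$.

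Part (c): the ``in particular'' clause follows from the main statement by setting $Y=X$, since $X\in\Iwf_x\frestr A$ and $x\in A$ force $X\subseteq L_x\subsetneq(A\cap L_x)\cup\{x\}=A\cap(L_x\cup\{x\})$. For the main statement, the easy case is $x\notin Y$: then $Y=Y\cap L_x\in\Iwf_x\frestr A\subseteq\Iwf(A)$ and $Y\subsetneq A$ (because $x\in A\setminus Y$), so strict rank descent in $\Iwf(A)$ gives $\mathrm{rank}_{\Iwf(A)}(Y)<\mathrm{Dp}(A)$, and (a) concludes $\mathrm{Dp}(Y)<\mathrm{Dp}(A)$.

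The main obstacle is the case $x\in Y$. The strategy is to describe $\Iwf(Y)$ very explicitly. Every $y\in Y$ satisfies $y\leq x$, so property (4) of a template gives $\Iwf_y\subseteq\Iwf_x$; moreover, any $W\in\Iwf_x$ satisfies $W\subseteq L_x$, so $W\cap Y=W\cap(Y\cap L_x)$. Combined with the hypothesis $Y\cap L_x\in\Iwf_x\frestr A$, this lets one identify $\Iwf(Y)\setminus\{Y\}$ with $\{Z\in\Iwf_x\frestr A:Z\subseteq Y\cap L_x\}$, whose $\subseteq$-maximum is $Y\cap L_x$ itself; a direct computation then gives $\mathrm{Dp}(Y)=\mathrm{rank}_{\Iwf_x\frestr A}(Y\cap L_x)+1$. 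To beat this bound by $\mathrm{Dp}(A)$, note that $Y\subsetneq A\cap(L_x\cup\{x\})$ with $x\in Y$ forces $Y\cap L_x\subsetneq A\cap L_x$. Pick $z\in(A\cap L_x)\setminus(Y\cap L_x)$; template axiom (3) supplies $W\in\Iwf_x$ with $z\in W$, and closure under unions (axiom (2)) yields $Z_0=(W\cap A)\cup(Y\cap L_x)\in\Iwf_x\frestr A$ with $Y\cap L_x\subsetneq Z_0$. Strict rank descent then forces $\mathrm{rank}_{\Iwf_x\frestr A}(Z_0)>\mathrm{rank}_{\Iwf_x\frestr A}(Y\cap L_x)$, and since $\Iwf_x\frestr A\subseteq\Iwf(A)$, one concludes $\mathrm{Dp}(A)\geq\mathrm{rank}_{\Iwf(A)}(Z_0)+1\geq\mathrm{rank}_{\Iwf_x\frestr A}(Z_0)+1>\mathrm{Dp}(Y)$.
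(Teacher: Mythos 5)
Your proof is correct and takes essentially the same route as the paper: (a), (b), (d) are exactly the paper's application of Lemma \ref{tracetemp} to the trace family $\Iwf(A)\frestr Y$, and (c) rests on the same key computation that $\mathrm{Dp}(Y)$ equals the rank of $Y\cap L_x$ plus one, with axioms (2) and (3) used to strictly enlarge $Y\cap L_x$. The only (cosmetic) difference is in the case $x\in Y$ of (c): you compare ranks directly inside $\Iwf(A)$ via the enlarged set $Z_0\in\Iwf_x\frestr A$, whereas the paper enlarges $Y$ to the auxiliary set $E=Y\cup C$ and concludes through $\mathrm{Dp}(Y)<\mathrm{Dp}(E)\leq\mathrm{Dp}(A)$ using (b); both hinge on the same identity.
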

\begin{proof}
   For $X\subseteq A$, put $\Jwf_X:=\Iwf(A)\frestr X$.
   \begin{enumerate}[(a)]
      \item $\mathrm{Dp}(Y)\leq\mathrm{rank}_{\Jwf_Y}(Y)=\mathrm{rank}_{\Jwf_Y}(Y\cap Y)\leq\mathrm{rank}_{\Iwf(A)}(Y)$ by Lemma \ref{tracetemp}.
      \item By Lemma \ref{tracetemp}, $\mathrm{Dp}(X)\leq\mathrm{rank}_{\Jwf_X}(A\cap X)\leq\mathrm{rank}_{\Iwf(A)}(A)$.
      \item First assume that $x\in Y$, so there is a $z\in(A\cap L_x)\menos Y$. By Definition $\ref{DefIndxTemp}$(3), there is some $C\in\Iwf_x\frestr A$ that contains $z$. Put $E=Y\cup C$. Clearly, $E_x,Y_x\in\Iwf_x\frestr E$ where $E_x=E\cap L_x$ and $Y_x=Y\cap L_x$. Moreover, $\Iwf_x\frestr Y=\{D\subseteq Y\ /\ D\in\Iwf_x\frestr E\}$, so $\mathrm{rank}_{\Iwf(Y)}(Y_x)=\mathrm{rank}_{\Iwf(E)}(Y_x)$. Therefore, $\mathrm{Dp}(Y)=\mathrm{rank}_{\Iwf(Y)}(Y_x)+1=\mathrm{rank}_{\Iwf(E)}(Y_x)+1
          <\mathrm{rank}_{\Iwf(E)}(E_x)+1=\mathrm{Dp}(E)\leq\mathrm{Dp}(A)$.

          If $x\notin Y$ then $Y\in\Iwf_x\frestr A$. So, by (a), $\mathrm{Dp}(Y)\leq\mathrm{rank}_{\Iwf(A)}(Y)<\mathrm{rank}_{\Iwf(A)}(A)$.
      \item For $X\subseteq A$, $\mathrm{Dp}^{\bar{\Iwf}\upharpoonright A}(X)=\mathrm{rank}_{(\Iwf(A))(X)}(X)=\mathrm{rank}_{\Iwf(X)}(X)$.

   \end{enumerate}
\end{proof}

For $x\in L$, define $\hat{\Iwf}_x:=\left\{B\subseteq L_x\ /\ B\in\Iwf_x\frestr(B\cup\{x\})\right\}$. This family is important at the time of the construction of an iteration because the generic object added at stage $x$ is generic over all the intermediate extensions that come from any set in $\hat{\Iwf}_x$ (see the comment after Theorem \ref{TempIt}). Note that $B\in\hat{\Iwf}_x$ if and only if $B\subseteq H$ for some $H\in\Iwf_x$, that is, $\hat{\Iwf}_x$ is the ideal on $L_x$ generated by $\Iwf_x$ (which is equal to $\Pwf(L_x)$ iff $L_x\in\Iwf_x$). Also, (1), (2) and (3) imply that any finite subset of $L_x$ is in $\hat{\Iwf}_x$.

\begin{remark}\label{RemSimpleTemp}
   Lemma \ref{UpsilonTemp} implies that the relation $\lhd_{\bar{\Iwf}}$ is well-founded on $\Pwf(L)$ where $A\lhd_{\bar{\Iwf}} B$ iff $A\subseteq B$ and $A\in\hat{\Iwf}_x$ for some $x\in B$. The anonymous referee of this paper noted that, for the purposes of this text, the definition of template can be simplified by only looking at the ideals $\hat{\Iwf}_x$. Say that $\langle L,\bar{\Jwf}\rangle$ is a \emph{simple indexed template} if $L$ is a linear order and $\bar{\Jwf}=\langle\Jwf_x\rangle_{x\in L}$ where the following conditions are satisfied:
   \begin{enumerate}[(1)]
    \item For $x\in L$, $\Jwf_x\subseteq\Pwf(L_x)$ is an ideal ($L_x\in\Jwf_x$ is allowed) that contains $[L_x]^{<\omega}$.
    \item $x<y$ in $L$ implies $\Jwf_x\subseteq\Jwf_y$.
    \item The relation $\lhd_{\bar{\Jwf}}$ is well-founded on $\Pwf(L)$, where $A\lhd_{\bar{\Jwf}} B$ iff $A\subseteq B$ and $A\in\Jwf_x$ for some $x\in B$.
   \end{enumerate}
   It is clear that any indexed template induces a simple one. Though all the theory of template iterations can be reformulated in terms of simple indexed templates, we present it in the context of indexed templates (Definition \ref{DefIndxTemp}) because we are particularly interested in tracking the generators of the ideals in our applications, specifically in relation with Lemmas \ref{RestUltrapowTemp} and \ref{RestChainTemp}. The reader may note that statements and proofs in this paper can be translated directly to the case of simple indexed templates, but this is not relevant for our applications.
\end{remark}

\begin{example}\label{ExmpTemp}
  \begin{enumerate}[(1)]
     \item Given a linear order $L$, $\Iwf_x=[L_x]^{<\omega}$ for $x\in L$ form an indexed template on $L$. Note that $\hat{\Iwf}_x=\Iwf_x$ and, for $X\subseteq L$,
         \[\mathrm{Dp}(X)=\left\{\begin{array}{ll}
            |X| & \textrm{if $X$ is finite,}\\
            \omega & \textrm{otherwise.}
         \end{array}\right.\]
     \item (Template for a fsi) Let $\delta$ be an ordinal number. Then, $\Iwf_\alpha:=\alpha+1=\left\{\xi\ /\ \xi\leq\alpha\right\}$ for $\alpha<\delta$ form an indexed template on $\delta$. This is the template structure that corresponds to a fsi of length $\delta$. Note that $\hat{\Iwf}_\alpha=\pts(\alpha)$ and, for $X\subseteq\delta$, $\mathrm{Dp}(X)$ is the order type of $X$.
  \end{enumerate}
\end{example}

\begin{definition}[Innocuous extension]\label{DefInnoc}
   Let $\langle L,\bar{\Iwf}\rangle$ be an indexed template and $\theta$ an uncountable cardinal.
   \begin{enumerate}[(I)]
       \item An indexed template $\langle L,\bar{\Jwf}\rangle$ is a \emph{$\theta$-innocuous extension of $\langle L,\bar{\Iwf}\rangle$} if, for every $x\in L$,
           \begin{enumerate}[(1)]
              \item $\Iwf_x\subseteq\Jwf_x$ and
              \item if $A\in\Jwf_x$ and $X\subseteq A$ has size $<\theta$ then there exists a $C\in\Iwf_x$ containing $X$.
           \end{enumerate}
           If in (2) we can even find $C\subseteq A$, say that $\langle L,\bar{\Jwf}\rangle$ is a
           \emph{strongly $\theta$-innocuous extension of} $\langle L,\bar{\Iwf}\rangle$.
       \item Let $\langle L',\bar{\Iwf}'\rangle$ be an indexed template such that $L'$ is a linear order extending $L$. $\langle L',\bar{\Iwf}'\rangle$ is a \emph{(strongly) $\theta$-innocuous extension of $\langle L,\bar{\Iwf}\rangle$} if
           \begin{enumerate}[(1)]
              \item for every $x\in L$, $\Iwf'_x\frestr L\subseteq\Iwf'_x$ and
              \item $\langle L,\bar{\Iwf}'\frestr L\rangle$ is a (strongly) $\theta$-innocuous extension of $\langle L,\bar{\Iwf}\rangle$.
           \end{enumerate}
   \end{enumerate}
\end{definition}

The main point of this definition is that, when two iterations are defined along templates where one is an innocuous extension of the other and where some ``coherence'' is ensured in the construction of both iterations, we can get regular contention or even equivalence between the resulting posets. The results that express this are Corollary \ref{EmbCor} and Lemma \ref{InnEqv}.

In Lemmas \ref{AddSmallTemp}, \ref{UtrapowTemp} and \ref{ChainTemp}, the statements about innocuity are more general than in the cited reference \cite{br}. However, their proofs are either easy or very similar to those presented there.

\begin{lemma}[{\cite[Lemma 1.3]{br}}]\label{AddSmallTemp}
   Let $\langle L,\bar{\Iwf}\rangle$ be an indexed template, $L_0\subseteq L$. For $x\in L$, define
   $\Jwf_x:=\left\{A\cup(B\cap L_0)\ /\ A,B\in\Iwf_x\right\}$. Then, $\langle L,\bar{\Jwf}\rangle$ is an indexed template which is a $\theta$-innocuous extension of $\langle L,\bar{\Iwf}\rangle$ and a strongly $\theta$-innocuous extension of $\langle L_0,\bar{\Iwf}\frestr L_0\rangle$
   for any uncountable cardinal $\theta$. Moreover, $\bar{\Jwf}\frestr L_0=\bar{\Iwf}\frestr L_0$.
\end{lemma}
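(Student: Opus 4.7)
The plan is to verify each part of the conclusion separately: the five template axioms of Definition~\ref{DefIndxTemp} for $\langle L,\bar{\Jwf}\rangle$, the $\theta$-innocuous extension over $\bar{\Iwf}$, the strongly $\theta$-innocuous extension over $\bar{\Iwf}\frestr L_0$, and the moreover identity $\bar{\Jwf}\frestr L_0=\bar{\Iwf}\frestr L_0$. I would establish the moreover identity first, because it immediately trivializes part of the strong innocuity check. Fix $x\in L_0$: for any $A,B\in\Iwf_x$ one has $(A\cup(B\cap L_0))\cap L_0=(A\cup B)\cap L_0\in\Iwf_x\frestr L_0$, and conversely every $C\cap L_0\in\Iwf_x\frestr L_0$ is the $L_0$-trace of $\varnothing\cup(C\cap L_0)\in\Jwf_x$, giving $\Jwf_x\frestr L_0=\Iwf_x\frestr L_0$.

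Next I would verify the template axioms for $\bar{\Jwf}$. Taking $B=\varnothing$ shows $\Iwf_x\subseteq\Jwf_x$, which delivers axioms (1), (3), (4) at once and will also be used in the innocuity checks. Closure under unions is the direct computation $(A_1\cup(B_1\cap L_0))\cup(A_2\cup(B_2\cap L_0))=(A_1\cup A_2)\cup((B_1\cup B_2)\cap L_0)$, and closure under intersections follows from the distributive expansion $(A_1\cup(B_1\cap L_0))\cap(A_2\cup(B_2\cap L_0))=(A_1\cap A_2)\cup\bigl(((A_1\cap B_2)\cup(B_1\cap A_2)\cup(B_1\cap B_2))\cap L_0\bigr)$, whose two constituents lie in $\Iwf_x$ by its closure under unions and intersections.

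The two innocuity claims are then short. For (I) against $\bar{\Iwf}$, condition (1) is $\Iwf_x\subseteq\Jwf_x$, and condition (2) is witnessed uniformly by $C:=A\cup B\in\Iwf_x$ (which contains the whole of $A\cup(B\cap L_0)$), with no cardinality restriction on $X$, so every uncountable $\theta$ works. For (II) against $\bar{\Iwf}\frestr L_0$, condition (II.1) for $x\in L_0$ reduces to the observation $(A\cup B)\cap L_0=\varnothing\cup((A\cup B)\cap L_0)\in\Jwf_x$, and condition (II.2) is immediate from the moreover identity, since a template is trivially a strongly $\theta$-innocuous extension of itself.

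The only genuine obstacle is axiom (5), well-foundedness of $\Jwf(L)$ under $\subseteq$. The starting observation is that any $Y\in\Jwf_x$ with representation $Y=A\cup(B\cap L_0)$ sits inside $A\cup B\in\Iwf_x\subseteq\Iwf(L)$, so the assignment $\rho(Y):=\min\{\mathrm{rank}_{\Iwf(L)}(C):C\in\Iwf(L)\cup\{L\},\,Y\subseteq C\}$ is a well-defined ordinal-valued, weakly monotone function on $\Jwf(L)$. I would then argue by transfinite induction on $\alpha$ that $\subseteq$ is well-founded on each sublevel $\{Y\in\Jwf(L):\rho(Y)\le\alpha\}$: the base case $\alpha=0$ is trivial, and in the inductive step a putative infinite properly descending chain either eventually drops into a strictly lower $\rho$-level (where the inductive hypothesis applies) or stays at $\rho(Y_i)=\alpha$ constantly, in which case careful tracking of the allowed normalized representations $Y_i=A_i\cup(B_i\cap L_0)$ with $A_i\subseteq B_i$ of minimal rank in $\Iwf(L)$ reduces the descent to one within $\Iwf(L)$ itself, contradicting its well-foundedness. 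Making this last bookkeeping step precise is the one delicate piece of the argument.
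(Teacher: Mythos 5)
Everything except axiom (5) is handled correctly and efficiently: your proof of the identity $\bar{\Jwf}\frestr L_0=\bar{\Iwf}\frestr L_0$, the closure computations, and the two innocuity checks are exactly right (the paper itself gives no proof here, deferring to Brendle, so these routine verifications are all one can compare against). The genuine gap is precisely where you flag it: well-foundedness of $\Jwf(L)$, which is the only non-trivial content of the lemma. Your $\rho$-level induction does not supply the missing step: constancy of $\rho$ along a tail of a strictly decreasing chain gives you nothing, and the claim that such a chain $Y_i=A_i\cup(B_i\cap L_0)$ ``reduces to a descent within $\Iwf(L)$ itself'' is exactly what fails for the naive representatives. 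From $Y_{i+1}\subsetneq Y_i$ you only get $Y_{i+1}\setminus L_0\subseteq Y_i\setminus L_0$ and $Y_{i+1}\cap L_0\subseteq Y_i\cap L_0$; this imposes no inclusions among the $A_i$, the $B_i$, or the $A_i\cup B_i$ (their parts outside the relevant pieces of $L$ can fluctuate arbitrarily), even after your $A_i\subseteq B_i$ normalization, so no descending chain in $\Iwf(L)$ is produced and no contradiction with its well-foundedness is obtained. The ``bookkeeping'' you defer is the whole proof.

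The clean repair bypasses $\rho$ entirely. For $Y=A\cup(B\cap L_0)\in\Jwf(L)$ note that $Y\setminus L_0=A\cap(L\setminus L_0)\in\Iwf(L)\frestr(L\setminus L_0)$ and $Y\cap L_0=(A\cup B)\cap L_0\in\Iwf(L)\frestr L_0$, and that $Y\subseteq Y'$ holds iff both coordinates of $(Y\setminus L_0,\,Y\cap L_0)$ are contained in the corresponding coordinates for $Y'$. Both trace families are well-founded by Lemma \ref{tracetemp}, and a product of two well-founded orders is well-founded: along a strictly decreasing chain both coordinates decrease weakly and at least one decreases strictly infinitely often. Alternatively, your minimal-rank idea can be made to work with a different normalization: choose $A_i\in\Iwf(L)$ of minimal rank with $A_i\cap(L\setminus L_0)=Y_i\setminus L_0$ and $B_i\in\Iwf(L)$ of minimal rank with $B_i\cap L_0=Y_i\cap L_0$; the argument of Lemma \ref{tracetemp} (minimality plus strict monotonicity of rank under $\subsetneq$) gives $A_{i+1}\subseteq A_i$ and $B_{i+1}\subseteq B_i$, with at least one inclusion strict at each step, so one coordinate yields an infinite strictly decreasing chain in $\Iwf(L)$, a contradiction. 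Either way, the function $\rho$ and the transfinite induction on its levels are unnecessary scaffolding.
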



Fix a measurable cardinal $\kappa$ with a non-principal $\kappa$-complete ultrafilter $\Dwf$ and
let $\langle L,\bar{\Iwf}\rangle$ be an indexed template. Put $L^*:=L^\kappa/\Dwf$, which is a linear order. For $\bar{x}=\langle x_\alpha\rangle_{\alpha<\kappa}/\Dwf\in L^*$, let $\Iwf^*_{\bar{x}}$ be the family of sets of the form
$\bar{A}:=[\{A_\alpha\}_{\alpha<\kappa}]=\prod_{\alpha<\kappa}A_\alpha/\Dwf$ where $\{A_\alpha\}_{\alpha<\kappa}$ is a sequence of subsets of $L$ such that $A_\alpha\in\Iwf_{x_\alpha}$ for $\Dwf$-many $\alpha$. Identifying the members of $L$ with constant functions in $L^*$, $L^*$ extends the linear order $L$ and $\Iwf_x\subseteq\Iwf'_x:=\Iwf^*_x\frestr L$ for all $x\in L$. For $\bar{x}\in L^*$, let $\Iwf^\dagger_{\bar{x}}=\left\{A\cup(B\cap L)\ /\ A,B\in\Iwf^*_{\bar{x}}\right\}$. Notice that $\Iwf'_x=\Iwf^\dagger_x\frestr L\subseteq\Iwf^\dagger_x$ for all $x\in L$. From Lemma \ref{AddSmallTemp}, we get

\begin{lemma}[{\cite[Lemma 2.1]{br}}]\label{UtrapowTemp}
   \begin{enumerate}[(a)]
      \item $\langle L^*,\bar{\Iwf}^*\rangle$ is an indexed template.
      \item $\langle L,\bar{\Iwf}'\rangle$ is an indexed template which is a strongly $\kappa$-innocuous extension of $\langle L,\bar{\Iwf}\rangle$.
      \item $\langle L^*,\bar{\Iwf}^\dagger\rangle$ is an indexed template which is a
            $\theta$-innocuous extension of $\langle L^*,\bar{\Iwf}^*\rangle$ and a strongly
            $\theta$-innocuous extension of $\langle L,\bar{\Iwf}'\rangle$ for any uncountable cardinal $\theta$.
      \item $\langle L^*,\bar{\Iwf}^\dagger\rangle$ is a strongly $\kappa$-innocuous extension of $\langle L,\bar{\Iwf}\rangle$.
   \end{enumerate}
\end{lemma}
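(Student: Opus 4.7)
The plan is to verify parts (a)--(d) in order, with (c) and (d) reducing to (b) via Lemma~\ref{AddSmallTemp}. For (a), I would check the five indexed-template axioms for $\langle L^*,\bar\Iwf^*\rangle$: axioms (1)--(4) are routine coordinate-wise arguments in which finitely many $\Dwf$-large sets (on which representatives witness the relevant property) are intersected and remain $\Dwf$-large by $\kappa$-completeness of $\Dwf$. The real content is axiom (5): given a strictly descending chain $\bar A^0 \supsetneq \bar A^1 \supsetneq \cdots$ in $\Iwf(L^*)$, the ultraproduct criterion for strict inclusion (using AC to pick witnesses of non-inclusion) yields $E_n := \{\alpha \mid A^{n+1}_\alpha \subsetneq A^n_\alpha\} \in \Dwf$ for each $n$; countable completeness of $\Dwf$ gives $\bigcap_n E_n \in \Dwf$, and any $\alpha^*$ in this intersection produces a strictly descending infinite chain in $\Iwf(L)$, contradicting its well-foundedness.

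For (b), that $\langle L,\bar\Iwf'\rangle$ is an indexed template follows from Lemma~\ref{tracetemp} applied to $\langle L^*,\bar\Iwf^*\rangle$ with the subset $L$, and the inclusion $\Iwf_x \subseteq \Iwf'_x$ comes from constant representatives. For the strongly $\kappa$-innocuous condition, given $A = L \cap \bar A \in \Iwf'_x$ with $A_\alpha \in \Iwf_x$ for $\Dwf$-many $\alpha$ and $X \subseteq A$ of size $<\kappa$, each $D_z := \{\alpha \mid z \in A_\alpha\}$ is in $\Dwf$, so by $\kappa$-completeness the set $D^* := \bigcap_{z \in X} D_z \cap \{\alpha \mid A_\alpha \in \Iwf_x\}$ is $\Dwf$-large and any $\alpha_0 \in D^*$ satisfies $X \subseteq A_{\alpha_0} \in \Iwf_x$. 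To additionally secure $A_{\alpha_0} \subseteq A$, I would argue (again via $\kappa$-completeness, exploiting the closure of $\Iwf_x$ under finite intersections) that $\{\alpha \mid A_\alpha \subseteq A\} \in \Dwf$ after a careful choice or modification of the representative; picking $\alpha_0$ in the intersection of this set with $D^*$ then produces $C := A_{\alpha_0}$ with $X \subseteq C \subseteq A$ and $C \in \Iwf_x$.

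For (c), apply Lemma~\ref{AddSmallTemp} to $\langle L^*,\bar\Iwf^*\rangle$ with $L_0 = L$: by construction the resulting $\bar\Jwf$ of that lemma coincides with $\bar\Iwf^\dagger$, so its conclusions yield all three claims of (c) for any uncountable $\theta$, together with the identity $\bar\Iwf^\dagger \frestr L = \bar\Iwf'$. For (d), condition (II.1) of Definition~\ref{DefInnoc} is immediate from the definition of $\bar\Iwf^\dagger$, while condition (II.2) is exactly (b) given the identity $\bar\Iwf^\dagger \frestr L = \bar\Iwf'$ from (c); so (d) is a corollary of (b) and (c).

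The main obstacle is the strongly $\kappa$-innocuous condition in (b): establishing that $\{\alpha \mid A_\alpha \subseteq A\} \in \Dwf$ requires a genuinely delicate combinatorial argument combining $\kappa$-completeness of the ultrafilter with the structural closure properties of $\Iwf_x$. Every other step is either routine ultraproduct bookkeeping (as in (a)) or a direct application of Lemma~\ref{AddSmallTemp} (for (c) and (d)).
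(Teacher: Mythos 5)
Your overall architecture is the intended one: (a) by coordinatewise verification plus countable completeness for well-foundedness, (c) by applying Lemma~\ref{AddSmallTemp} to $\langle L^*,\bar{\Iwf}^*\rangle$ with $L_0=L$ (which indeed yields $\bar{\Iwf}^\dagger\frestr L=\bar{\Iwf}'$), and (d) from (b) together with $\Iwf'_x=\Iwf^\dagger_x\frestr L\subseteq\Iwf^\dagger_x$; all of that is correct. The gap is exactly at the point you flag as ``the main obstacle'' in (b), and the step you defer is not merely delicate --- as stated it is false. First, there is no freedom in ``modifying the representative'': if $\prod_{\alpha}A_\alpha/\Dwf=\prod_\alpha A'_\alpha/\Dwf$ as subsets of $L^*$, then $A_\alpha=A'_\alpha$ for $\Dwf$-many $\alpha$ (pick points in the differences otherwise), so the representative is unique modulo $\Dwf$. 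Second, $\{\alpha\ /\ A_\alpha\subseteq A\}\in\Dwf$ can simply fail: take the template of Example~\ref{ExmpTemp}(1) on $L=\kappa+1$, $x=\kappa$, $\Iwf_x=[\kappa]^{<\omega}$, and $A_\alpha=\{t,s_\alpha\}$ with $t$ fixed and the $s_\alpha$ pairwise distinct; then $A=\bar{A}\cap L=\{t\}$ (each $s_\beta$ lies in only one $A_\alpha$ and $\Dwf$ is non-principal), while $A_\alpha\subseteq A$ for no $\alpha$. So your argument establishes only plain $\kappa$-innocuity (the part you did carry out via $\kappa$-completeness), whereas the strong version is what (b) asserts, what (d) inherits, and what Lemma~\ref{RestChainTemp} later uses.

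The missing idea is to use finite intersections together with the well-foundedness axiom of the template, not the ultrafilter alone. With $W:=\{\alpha\ /\ A_\alpha\in\Iwf_x\textrm{ and }X\subseteq A_\alpha\}\in\Dwf$ (this is where $\kappa$-completeness enters), every $C_F:=\bigcap_{\alpha\in F}A_\alpha$ with $F\in[W]^{<\omega}$ nonempty lies in $\Iwf_x$ and contains $X$; it suffices to find one with $C_F\subseteq A$. If none exists, choose recursively $\alpha_0\in W$, then $z_n\in\bigl(\bigcap_{i\leq n}A_{\alpha_i}\bigr)\menos A$, and then $\alpha_{n+1}\in W\cap\bigcap_{i\leq n}\{\alpha\ /\ z_i\notin A_\alpha\}$ (each $z_i\in L\menos A$ gives $\{\alpha\ /\ z_i\notin A_\alpha\}\in\Dwf$, and only finitely many sets are intersected at each step). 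The sets $\bigcap_{i\leq n}A_{\alpha_i}$ then form a strictly decreasing $\omega$-chain in $\Iwf_x\subseteq\Iwf(L)$, contradicting Definition~\ref{DefIndxTemp}(5); equivalently, take $C_F$ of minimal $\Iwf(L)$-rank among those containing $X$ and show it must be contained in $A$. With this replacement for your subset claim, (b) is proved and your reductions for (c) and (d) go through unchanged.
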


Typically, given a poset $\Qor$ that comes from an iteration along the template $\langle L,\bar{\Iwf}\rangle$, its ultrapower is (forcing equivalent to) an iteration along $\langle L^*,\bar{\Iwf}^*\rangle$. Also, $\bar{\Iwf}^\dagger$ is very close to $\bar{\Iwf}^*$, so there is an iteration along $\langle L^*,\bar{\Iwf}^\dagger\rangle$ that gives a poset which is forcing equivalent to the ultrapower of $\Qor$. This procedure is used for the inductive step of the construction of the chain of template iterations of the proof of Theorem \ref{AppSplitting}. Though $\bar{\Iwf}^*$ and $\bar{\Iwf}^\dagger$ may define the same iteration for $\Qor^\kappa/\Dwf$, $\bar{\Iwf}^\dagger$ is preferred because of (d) and Lemma \ref{RestChainTemp} (after constructing a chain of templates by ultrapowers, see also Remark \ref{RemChain}).

\begin{lemma}\label{RestUltrapowTemp}
   Fix $\theta<\kappa$ an infinite cardinal. Assume that $|\Iwf(X)|<\theta$ for all $X\in[L]^{<\theta}$. Then, for every $\bar{X}\in[L^*]^{<\theta}$, $|\Iwf^\dagger(\bar{X})|<\theta$.
\end{lemma}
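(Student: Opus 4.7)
The plan is to fix representatives of $\bar X$ in $L^\kappa$, reduce the counting of $\Iwf^\dagger(\bar X)$ to counting $\Dwf$-equivalence classes of sequences drawn from the fibers $\Iwf_{x^{i_0}_\alpha}\frestr X_\alpha$, and then finish by applying the $\kappa$-completeness of $\Dwf$ twice, exploiting $\theta<\kappa$ and the hypothesis that each $\Iwf(X_\alpha)$ has size $<\theta$.

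First I would enumerate $\bar X=\{\bar x_i\ /\ i<\lambda\}$ with $\lambda<\theta$, fix representatives $x^i=\langle x^i_\alpha\rangle_{\alpha<\kappa}$ of each $\bar x_i$, and set $X_\alpha:=\{x^i_\alpha\ /\ i<\lambda\}\in[L]^{<\theta}$, so the hypothesis gives $|\Iwf(X_\alpha)|<\theta$. The central observation I would use is that, for a fixed $i_0<\lambda$ and $\bar A=[A_\alpha]_\Dwf\in\Iwf^*_{\bar x_{i_0}}$ (with $A_\alpha\in\Iwf_{x^{i_0}_\alpha}$ for $\Dwf$-many $\alpha$), the trace $\bar A\cap\bar X$ is already determined by $\langle A_\alpha\cap X_\alpha\rangle$ modulo $\Dwf$, because $\bar x_i\in\bar A$ iff $\{\alpha\ /\ x^i_\alpha\in A_\alpha\cap X_\alpha\}\in\Dwf$, and each $A_\alpha\cap X_\alpha$ lives in $\Iwf_{x^{i_0}_\alpha}\frestr X_\alpha\subseteq\Iwf(X_\alpha)$.

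The key step is then to bound the number of such equivalence classes. Since $\alpha\mapsto|\Iwf_{x^{i_0}_\alpha}\frestr X_\alpha|$ is cardinal-valued below $\theta<\kappa$, $\kappa$-completeness of $\Dwf$ forces it to be constant on a $\Dwf$-set, equal to some $\mu^{(i_0)}<\theta$. I would fix an enumeration $\Iwf_{x^{i_0}_\alpha}\frestr X_\alpha=\{t^\alpha_\xi\ /\ \xi<\mu^{(i_0)}\}$ on that set, so that each relevant sequence is coded by a function $\xi:\kappa\to\mu^{(i_0)}$; a second use of $\kappa$-completeness forces $\xi$ to be $\Dwf$-almost constant, giving at most $\mu^{(i_0)}<\theta$ possible traces $\bar A\cap\bar X$ with $\bar A\in\Iwf^*_{\bar x_{i_0}}$. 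The same reasoning controls the $\bar B\cap\bar X\cap L$ summand in the definition of $\Iwf^\dagger_{\bar x_{i_0}}$, so $|\Iwf^\dagger_{\bar x_{i_0}}\frestr\bar X|<\theta$, and summing over $i_0<\lambda$ (using regularity of $\theta$, which is satisfied in the intended applications) delivers $|\Iwf^\dagger(\bar X)|<\theta$.

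The main obstacle will be precisely that the fiber $\Iwf_{x^{i_0}_\alpha}\frestr X_\alpha$ genuinely varies with $\alpha$, so the counting cannot be collapsed to a single ultrapower of one fixed family of size $<\theta$; this is why two separate applications of $\kappa$-completeness are needed, the first to bound the fiber sizes uniformly by $\mu^{(i_0)}$ and the second to reduce the encoding $\xi$ to a constant value.
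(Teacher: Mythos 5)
Your proposal follows essentially the same route as the paper's proof: represent $\bar{X}$ as $[\{X_\alpha\}_{\alpha<\kappa}]$ with each $X_\alpha\in[L]^{<\theta}$, note that the trace on $\bar{X}$ of a set in $\Iwf^\dagger_{\bar{x}_{i_0}}$ is completely determined by the $\Dwf$-classes of the sequences $\langle A_\alpha\cap X_\alpha\rangle$ and $\langle B_\alpha\cap X_\alpha\rangle$, and count those classes by two applications of $\kappa$-completeness (stabilizing the fiber sizes, then making the coding function $\Dwf$-almost constant); all of this is correct. The one place you fall short of the statement is the final summation: the lemma assumes only that $\theta<\kappa$ is an infinite cardinal, not that it is regular, so taking the union over $i_0<\lambda$ of sets of size $\leq(\mu^{(i_0)})^2<\theta$ is not justified in general (for singular $\theta$ and $\lambda\geq\cf(\theta)$ the bounds $\mu^{(i_0)}$ could be cofinal in $\theta$). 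The repair is immediate and is in fact how the paper counts: since $\Iwf_{x^{i_0}_\alpha}\frestr X_\alpha\subseteq\Iwf(X_\alpha)$ and, by $\kappa$-completeness, $|\Iwf(X_\alpha)|$ equals a single $\nu'<\theta$ for $\Dwf$-many $\alpha$, every element of $\Iwf^\dagger(\bar{X})$ is determined by a pair of $\Dwf$-classes of sequences drawn from the sets $\Iwf(X_\alpha)$ themselves, giving the uniform bound $|\Iwf^\dagger(\bar{X})|\leq(\nu')^2<\theta$ with no dependence on $i_0$ and no regularity hypothesis; equivalently, in your notation all the $\mu^{(i_0)}$ are bounded by this one $\nu'$, so the union over $i_0<\lambda$ has size at most $\lambda\cdot(\nu')^2<\theta$.
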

\begin{proof}
   Let $\bar{X}=\left\{\bar{x}^\xi\ /\ \xi<\nu\right\}$ for some $\nu<\theta$. For $\alpha<\kappa$ let $X_\alpha:=\left\{x^\xi_\alpha\ /\ \xi<\nu\right\}$. Then, $\bar{X}=[\{X_\alpha\}_{\alpha<\kappa}]$, so any
   $Z\in\Iwf^\dagger(\bar{X})$ comes from two objects of the form $\bar{Y}=[\{Y_\alpha\}_{\alpha<\kappa}]$ where $Y_\alpha\in\Iwf(X_\alpha)$ for $\Dwf$-many $\alpha$. But, as $\theta<\kappa$ and each $|\Iwf(X_\alpha)|<\theta$, there exists
   $\nu'<\theta$ such that $|\Iwf(X_\alpha)|=\nu'$ for $\Dwf$-many $\alpha$. Therefore,
   $|\Iwf^\dagger(\bar{X})|\leq(\nu')^2<\theta$.
\end{proof}

Now we deal with the construction of a ``limit'' of templates, which is relevant for the limit step in the proof of Theorem \ref{AppSplitting}.
Fix, until the end of this section, an uncountable cardinal $\theta$ and consider a chain of indexed templates $\big\{\langle L^\alpha,\bar{\Iwf}^\alpha\rangle\big\}_{\alpha<\delta}$ such that, for $\alpha<\beta<\delta$, $\langle L^\beta,\bar{\Iwf}^\beta\rangle$ is a strongly $\theta$-innocuous extension of $\langle L^\alpha,\bar{\Iwf}^\alpha\rangle$. Moreover, assume that there is an ordinal $\mu\subseteq L^0$ such that, for all $\alpha<\delta$,
\begin{enumerate}[(i)]
   \item $\mu$ is cofinal in $L^\alpha$ and
   \item $L^\alpha_\xi\in\Iwf^\alpha_\xi$ for all $\xi\in\mu$.
\end{enumerate}
 Define $L^\delta:=\bigcup_{\alpha<\delta}L^\alpha$ and, for $x\in L^\delta$, let $\Iwf_x:=\bigcup_{\alpha\in[\alpha_x,\delta)}\Iwf^\alpha_x$ where $\alpha_x$ is the least $\alpha$ such that $x\in L^\alpha$. Also, put $\Jwf_x:=\Iwf_x\cup\left\{L^\delta_\xi\cup A\ /\ \xi\in\mu\right.$, $\xi\leq x$ and $\left.A\in\Iwf_x\right\}$.

 \begin{lemma}[{\cite[Lemma 1.8]{br}}]\label{ChainTemp}
   \begin{enumerate}[(a)]
    \item $\langle L^\delta,\bar{\Iwf}\rangle$ is an indexed template which is a strongly $\theta$-innocuous extension of $\langle L^\alpha,\bar{\Iwf}^\alpha\rangle$ for all $\alpha<\delta$.
    \item $\langle L^\delta,\bar{\Jwf}\rangle$ is an indexed template which is a strongly $\theta$-innocuous extension of $\langle L^\alpha,\bar{\Iwf}^\alpha\rangle$ for all $\alpha<\delta$. Moreover, if $\cf(\delta)\geq\theta$, then $\langle L^\delta,\bar{\Jwf}\rangle$ is a strongly $\theta$-innocuous extension of $\langle L^\delta,\bar{\Iwf}\rangle$.
   \end{enumerate}
 \end{lemma}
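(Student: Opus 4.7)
The plan is to go axiom-by-axiom through Definition~\ref{DefIndxTemp} and then verify the clauses of Definition~\ref{DefInnoc}. In (a), axioms (1)--(4) for $\bar{\Iwf}$ on $L^\delta$ are routine once one observes that for $\alpha\leq\beta$ and $x\in L^\alpha$ one has $\Iwf^\alpha_x\subseteq\Iwf^\beta_x$ (an immediate consequence of the chain's strong-innocuity hypothesis, combining type-II clause (1) with type-I clause (1) applied to $\bar{\Iwf}^\beta\frestr L^\alpha$), so any finite collection of sets from $\bar{\Iwf}$ lives inside a single $\Iwf^\gamma$ where the required closure, existence, and monotonicity are already known.

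The core technical step, and what I expect to be the main obstacle, is axiom (5): well-foundedness of $\Iwf(L^\delta)$. Given a hypothetical strictly descending chain $A_0\supsetneq A_1\supsetneq\cdots$ in $\Iwf(L^\delta)\menos\{L^\delta\}$ with $A_n\in\Iwf^{\alpha_n}_{x_n}$, I would first use the cofinality of $\mu$ in each $L^{\alpha_n}$ together with hypothesis (ii) and axiom (4) to replace $x_n$ by a fixed $\xi\in\mu$ above $x_0$, obtaining $A_n\in\Iwf^{\alpha_n}_\xi$ for every $n$. If $\{\alpha_n\}_n$ is bounded below $\delta$, the chain lives inside a single well-founded $\Iwf^\alpha(L^\alpha)$ and we are done. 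Otherwise, pick witnesses $y_n\in A_n\menos A_{n+1}$; the countable set $Y=\{y_n:n<\omega\}$ satisfies $A_n\cap Y=\{y_k:k\geq n\}$ and has size $<\theta$. Applying the type-II strong innocuity of $\bar{\Iwf}^{\alpha_n}$ over $\bar{\Iwf}^{\alpha_0}$ to $A_n\subseteq L^{\alpha_0}$ and the small set $Y\cap A_n$, I obtain $C_n\subseteq A_n$ in $\Iwf^{\alpha_0}_\xi$ with $Y\cap A_n\subseteq C_n$. The finite intersections $E_n:=\bigcap_{k\leq n}C_k$ lie in $\Iwf^{\alpha_0}_\xi$ and form a strictly descending chain (strictness witnessed by $y_n\in E_n\menos E_{n+1}$), contradicting the well-foundedness of $\Iwf^{\alpha_0}(L^{\alpha_0})$.

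The strong $\theta$-innocuity of $\langle L^\delta,\bar{\Iwf}\rangle$ over $\langle L^\alpha,\bar{\Iwf}^\alpha\rangle$ is then a direct unpacking of Definition~\ref{DefInnoc}(II): clause (1) follows from the analogous clause of the chain hypothesis, and clause (2) reduces to the type-I case on $L^\alpha$, where any $B=L^\alpha\cap A'\in\Iwf_x\frestr L^\alpha$ with $A'\in\Iwf^\beta_x$ inherits the required small-subset pullback $C\in\Iwf^\alpha_x$ directly from the strong innocuity of $\bar{\Iwf}^\beta$ over $\bar{\Iwf}^\alpha$.

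For (b), the verification that $\langle L^\delta,\bar{\Jwf}\rangle$ is a template parallels (a). The one new computation is closure under intersections, $(L^\delta_\xi\cup A)\cap(L^\delta_{\xi'}\cup A')=L^\delta_{\min(\xi,\xi')}\cup(L^\delta_\xi\cap A')\cup(L^\delta_{\xi'}\cap A)\cup(A\cap A')$, which lies in $\Jwf_x$ because condition (ii) and axiom (4) put each $L^\alpha_\xi$ into $\Iwf^\alpha_\xi\subseteq\Iwf^\alpha_x$. Strong innocuity of $\bar{\Jwf}$ over $\bar{\Iwf}^\alpha$ reduces to (a) after treating the $L^\delta_\xi$-piece separately, since $L^\alpha_\xi$ already sits in the smaller template. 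The ``moreover'' clause under $\cf(\delta)\geq\theta$ is the cleanest: given $X\subseteq A=L^\delta_\xi\cup B\in\Jwf_x$ with $|X|<\theta$, split $X=X_1\cup X_2$ with $X_1\subseteq L^\delta_\xi$ and $X_2\subseteq B$; because $|X_1|<\theta\leq\cf(\delta)$ and $L^\delta=\bigcup_{\alpha<\delta}L^\alpha$, the set $X_1$ is bounded inside some $L^\alpha$, so taking $\alpha$ further large enough that $B\in\Iwf^\alpha_x$ and $x\in L^\alpha$ yields $C:=L^\alpha_\xi\cup B\in\Iwf^\alpha_x\subseteq\Iwf_x$ with $X\subseteq C\subseteq A$.
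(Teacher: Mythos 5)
The paper itself offers no proof of this lemma (it defers to \cite{br}, Lemma 1.8, remarking only that the extra innocuity statements are easy), so your argument has to stand on its own; its engine --- choosing witnesses $y_n\in A_n\menos A_{n+1}$, pulling the countable set $Y$ down into the fixed template $\bar{\Iwf}^{\alpha_0}$ by strong innocuity, and contradicting well-foundedness of $\Iwf^{\alpha_0}(L^{\alpha_0})$ --- is the right one, and your verifications of axioms (1)--(4), of both innocuity clauses in (a), of the intersection computation for $\bar{\Jwf}$, and of the ``moreover'' clause under $\cf(\delta)\geq\theta$ are correct. One step in (a) is stated incorrectly but is harmless: you cannot replace the indices $x_n$ by a single fixed $\xi\in\mu$, since nothing bounds the $x_n$ below one element of $\mu$ (monotonicity only lets you move an index up, and hypothesis (ii) only puts $A_n$ in the \emph{ideal} $\hat{\Iwf}^{\alpha_n}_\xi$, not in $\Iwf^{\alpha_n}_\xi$). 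The fix is to choose $\xi_n\in\mu$ with $x_n\leq\xi_n$ separately for each $n$; since $\xi_n\in\mu\subseteq L^{\alpha_0}$ and $A_n\subseteq A_0\subseteq L^{\alpha_0}$, the type-I strong innocuity of $\bar{\Iwf}^{\alpha_n}\frestr L^{\alpha_0}$ over $\bar{\Iwf}^{\alpha_0}$ applies at index $\xi_n$, and the intersections $E_n$ then lie in $\Iwf^{\alpha_0}_{\max_{k\leq n}\xi_k}$, which is all the contradiction needs.

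The genuine gap is in (b): well-foundedness of $\Jwf(L^\delta)$ does not simply ``parallel (a)''. In (a) the whole descending chain sits inside $L^{\alpha_0}$ because $A_0\in\Iwf^{\alpha_0}_{x_0}$; a descending chain $B_n=L^\delta_{\xi_n}\cup A_n$ in $\Jwf(L^\delta)$ does not, since the initial segments $L^\delta_{\xi_n}$ need not be contained in any single $L^\alpha$, so type-I innocuity over $L^{\alpha_0}$ cannot be applied to the $B_n$ themselves. The argument can be repaired along your own lines, but an explicit reduction is needed: pass to an infinite subsequence on which the ordinals $\xi_n$ are non-decreasing (possible since ordinals admit no infinite strictly decreasing sequence); then each witness $y_n\in B_n\menos B_{n+1}$ avoids $L^\delta_{\xi_{n+1}}\supseteq L^\delta_{\xi_k}$, hence $y_n\in A_k$ for all $k\leq n$ (in particular $Y\subseteq A_0\subseteq L^{\alpha_0}$) while $y_n\notin A_{n+1}$; replacing $A_n$ by $A_n\cap A_0\in\Iwf_{\max(x_n,x_0)}$ then puts you exactly in the situation of (a), and the pullback-and-intersect argument yields a strictly descending chain in $\Iwf^{\alpha_0}(L^{\alpha_0})$, the desired contradiction. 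Without some such normalization, your proof of (b) is incomplete at precisely the point where $\bar{\Jwf}$ differs from $\bar{\Iwf}$.
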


 Note that properties (i) and (ii) also hold for the template $\langle L^\delta,\bar{\Jwf}\rangle$, but (ii) may not hold for $\langle L^\delta,\bar{\Iwf}\rangle$. Although, in many cases, both templates lead to the same template iteration construction when $\cf(\delta)\geq\theta$, $\bar{\Jwf}$ is preferred over $\bar{\Iwf}$ because of property (ii).

 Like Lemma \ref{RestUltrapowTemp}, the following result states that, in the resulting template, the property of having small templates when restricting to a small set is preserved. This is needed to use Theorem \ref{PresTemp} in Section \ref{SecAppl}.

\begin{lemma}\label{RestChainTemp}
   Assume that $\nu\leq\theta$ is a regular cardinal and that, for each $\alpha<\delta$ and $X\in[L^\alpha]^{<\nu}$, $|\Iwf^\alpha(X)|<\nu$. Then, $|\Iwf(X)|<\nu$ and $|\Jwf(X)|<\nu$ for any $X\in[L^\delta]^{<\nu}$.
\end{lemma}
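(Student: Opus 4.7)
The plan is to fix $X\in[L^\delta]^{<\nu}$ and, for each $x\in X$, express $\Iwf_x\frestr X$ as a directed union over $\alpha\in[\alpha_x,\delta)$, where $\alpha_x$ is the least ordinal with $x\in L^{\alpha_x}$. Set $X_\alpha:=X\cap L^\alpha$; since $|X_\alpha|\leq|X|<\nu$, the hypothesis immediately gives $|\Iwf^\alpha(X_\alpha)|<\nu$ for every $\alpha<\delta$.

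The first step is the observation that any $A\in\Iwf^\alpha_x$ is a subset of $L^\alpha_x\subseteq L^\alpha$, so $A\cap X=A\cap X_\alpha$; thus $\Iwf^\alpha_x\frestr X=\Iwf^\alpha_x\frestr X_\alpha\subseteq\Iwf^\alpha(X_\alpha)$ has size $<\nu$. Moreover, for $\alpha\leq\beta$ in $[\alpha_x,\delta)$, $\theta$-innocuity yields $\Iwf^\alpha_x\subseteq\Iwf^\beta_x\frestr L^\alpha\subseteq\Iwf^\beta_x$, and $A\subseteq L^\alpha$ forces $A\cap X_\alpha=A\cap X_\beta$, so the slices are nested. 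Consequently $\Iwf_x\frestr X=\bigcup_{\alpha\in[\alpha_x,\delta)}\Iwf^\alpha_x\frestr X_\alpha$ is a directed union.

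The second step splits on $\alpha^*:=\sup\{\alpha_x:x\in X\}$. If $\alpha^*<\delta$, then $X\subseteq L^{\alpha^*}$ and the union collapses to $\Iwf^{\alpha^*}_x\frestr X$, giving $|\Iwf_x\frestr X|<\nu$ directly. Otherwise $\alpha^*=\delta$, which forces $\cf(\delta)\leq|X|<\nu$; choosing a cofinal sequence in $\delta$ of length $\cf(\delta)<\nu$ represents $\Iwf_x\frestr X$ as a union of fewer than $\nu$ slices each of size $<\nu$, so regularity of $\nu$ still yields $|\Iwf_x\frestr X|<\nu$. Regularity is then applied once more to conclude $|\Iwf(X)|\leq 1+|X|\cdot\sup_{x\in X}|\Iwf_x\frestr X|<\nu$.

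Finally, for $\Jwf$, each element of $\Jwf_x\frestr X$ has the form $(L^\delta_\xi\cap X)\cup(A\cap X)$ with $\xi\in\mu$, $\xi\leq x$, $A\in\Iwf_x$. Since $\{L^\delta_\xi\cap X:\xi\in\mu\}$ consists of initial segments of $X$, it has at most $|X|+1<\nu$ distinct members, so $|\Jwf_x\frestr X|\leq(|X|+1)\cdot|\Iwf_x\frestr X|<\nu$, and $|\Jwf(X)|<\nu$ follows by the same aggregation. The only genuine obstacle is the case split between the bounded and unbounded behaviour of $\{\alpha_x:x\in X\}$ in $\delta$; this is precisely where the regularity of $\nu$ and $|X|<\nu$ conspire to force $\cf(\delta)<\nu$ in the harder case, after which the rest is bookkeeping.
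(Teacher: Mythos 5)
Your decomposition of $\Iwf_x\frestr X$ as $\bigcup_{\beta\in[\alpha_x,\delta)}\Iwf^\beta_x\frestr X_\beta$, the case $\alpha^*=\delta$ (which indeed forces $\cf(\delta)\leq|X|<\nu$ and is handled correctly via a cofinal sequence of length $\cf(\delta)<\nu$ plus regularity), and the bookkeeping for $\bar{\Jwf}$ and for the aggregation over $x\in X$ are all fine. The gap is in the case $\alpha^*<\delta$: the assertion that the union \emph{collapses} to $\Iwf^{\alpha^*}_x\frestr X$ is not justified, and it is exactly the point of the lemma. For $\beta>\alpha^*$ the family $\Iwf^\beta_x$ is in general strictly larger than $\Iwf^{\alpha^*}_x$, and even though $X\subseteq L^{\alpha^*}$, a trace $Z=H\cap X$ with $H\in\Iwf^\beta_x$ need not be the trace of any member of $\Iwf^{\alpha^*}_x$; plain $\theta$-innocuity (the only property you use) only provides $C\in\Iwf^{\alpha^*}_x$ with $Z\subseteq C$, and then $C\cap X$ may be strictly larger than $Z$. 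When $\cf(\delta)\geq\nu$, the sets $\Iwf^\beta_x\frestr X$ form an increasing chain of subsets of $\pts(X)$ along a chain of cofinality $\geq\nu$, and nothing in your argument prevents it from growing cofinally often, so the union could have size $\geq\nu$; Remark \ref{RemChain} notes explicitly that the lemma may fail if only (non-strong) innocuity is assumed, so this step cannot be waved through.

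What closes the gap, and what the paper's proof does at the corresponding point, is the \emph{strong} $\theta$-innocuity of the chain, which you never invoke: given $Z=H\cap X$ with $H\in\Iwf^\beta_x$ and $\beta>\alpha^*$, one has $Z\subseteq H\cap L^{\alpha^*}\in\Iwf^\beta_x\frestr L^{\alpha^*}$ and $|Z|\leq|X|<\nu\leq\theta$, so strong innocuity yields $C\in\Iwf^{\alpha^*}_x$ with $Z\subseteq C\subseteq H$, whence $Z=C\cap X\in\Iwf^{\alpha^*}_x\frestr X$. With this inserted, your case $\alpha^*<\delta$ does collapse as claimed and the rest of the argument goes through; apart from this missing step your proof is essentially the paper's (which splits on $\cf(\delta)<\nu$ versus $\cf(\delta)\geq\nu$ rather than on $\alpha^*$, an inessential difference).
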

\begin{proof}
   If $\cf(\delta)<\nu$, choose an increasing cofinal sequence $\{\alpha_\eta\}_{\eta<\cf(\delta)}$ for $\delta$
   and note that $\Iwf(X)\subseteq\{X\}\cup\bigcup_{\eta<\cf(\delta)}\Iwf^{\alpha_\eta}(X\cap L^{\alpha_\eta})$ for any $X\subseteq L^\delta$, so it has size $<\nu$ when $X$ does. In the case that $\cf(\delta)\geq\nu$, if $X\in[L^\delta]^{<\nu}$, there exists an $\alpha<\delta$ such that $X\subseteq L^\alpha$. We claim that $\Iwf(X)=\Iwf^\alpha(X)$. If $Z\in\Iwf(X)$, then either $Z=X\in\Iwf^\alpha(X)$ or $Z=X\cap H$ for some
   $H\in\Iwf^\beta_x$ with $x\in X$ and $\alpha<\beta<\delta$. As $|Z|<\nu$, by strong $\theta$-innocuity, we can find a $C\in\Iwf^\alpha_x$ such that $Z\subseteq C\subseteq H$, so $Z=C\cap X\in\Iwf^\alpha_x\frestr X$.

   For the case of $\Jwf$, note that $\left\{L^\delta_\xi\cap X\ /\ \xi\leq\mu\right\}$ has size $\leq|X|$. As, for any $X\subseteq L^\delta$, $\Jwf(X)=\{(L^\delta_\xi\cap X)\cup Z\ /$ $\xi\leq\mu$ and $Z\in\Iwf(X)\}$, then it has size $<\nu$ when $X$ does.
\end{proof}

\begin{remark}\label{RemChain}
   In the chain of templates, if we just assume that $\langle L^\beta,\bar{\Iwf}^{\beta}\rangle$ is a $\theta$-innocuous extension of $\langle L^\alpha,\bar{\Iwf}^{\alpha}\rangle$ (and also assume (i) and (ii) as well), then Lemma \ref{ChainTemp} is still valid for $\theta$-innocuity (not strongly). Although Lemma \ref{RestChainTemp} may not be valid, it can be reformulated: if $\hat{\Iwf}^\alpha(X)$ is generated by $<\nu$ objects for all $X\in[L^\alpha]^{<\nu}$ and $\alpha<\delta$, then $\hat{\Iwf}(X)$ and $\hat{\Jwf}(X)$ are generated by $<\nu$ objects for all $X\in[L^\delta]^{<\nu}$, where we denote $\hat{\Iwf}(X):=\bigcup_{z\in X}\hat{\Iwf}_z\frestr X$ (which is clearly an ideal on $X$). This reformulation is also valid in the context of simple indexed templates (see Remark \ref{RemSimpleTemp}) and Lemma \ref{RestUltrapowTemp} can be reformulated in a similar way (note that we cannot define strong innocuity in that context).
\end{remark}


\section{Iterations along templates}\label{SecItTemp}

We present the theory of template iterations for non-definable posets. Although this approach is general, proofs are not different from those in \cite{brendle2}, actually, our presentation is based on this reference. We can say that the original version of template iterations with definable forcings (in \cite{shelah}) corresponds to Example \ref{ExpTempItFund} with $L_C=\varnothing$.

\begin{theorem}[Iteration along a template]\label{TempIt}
   Given a template $\langle L,\bar{\Iwf}\rangle$, a partial order $\Por\frestr A$ is defined by recursion on $\alpha=\mathrm{Dp}(A)$ for all $A\subseteq L$ with the following conditions.
   \begin{enumerate}[(1)]
      \item For $x\in L$ and $B\in\hat{\Iwf}_x$, $\Qnm^B_x$ is a $\Por\frestr B$-name of a poset. The following conditions should hold.
            \begin{enumerate}[(i)]
               \item If $E\subseteq B$ and $\Por\frestr E\lessdot\Por\frestr B$, then $\Vdash_{\Por\upharpoonright B}\Qnm_x^E\lessdot_{V^{\Por\upharpoonright E}}\Qnm_x^B$.
               \item If $E\in\hat{\Iwf}_x$, $\Por\frestr(B\cap E)$ is a regular subposet of both $\Por\frestr B$ and $\Por\frestr E$, and $\dot{q}$ is a $\Por\frestr(B\cap E)$-name such that $\Vdash_{\Por\upharpoonright E}\dot{q}\in\Qnm_x^E$                     and $\Vdash_{\Por\upharpoonright B}\dot{q}\in\Qnm_x^B$, then $\Vdash_{\Por\upharpoonright(B\cap E)}\dot{q}\in\Qnm_x^{B\cap E}$.
               \item If $B',D\subseteq B$ and $\langle\Por\frestr(B'\cap D),\Por\frestr B',\Por\frestr D,\Por\frestr B\rangle$ is a correct diagram, then the diagram ${\langle\Por\frestr(B'\cap D)\ast\Qnm_x^{B'\cap D}},\Por\frestr B'\ast\Qnm_x^{B'},\Por\frestr D\ast\Qnm_x^D,\Por\frestr B\ast\Qnm_x^B\rangle$ is correct.
            \end{enumerate}
      \item The partial order $\Por\frestr A$ is defined by:
            \begin{enumerate}[(i)]
               \item $\Por\frestr A$ consists of all finite partial functions $p$ with domain contained in $A$ such that $p=\varnothing$ or, if $|p|>0$ and $x=\max(\dom p)$, then there exists a $B\in\Iwf_x\frestr A$ such that $p\frestr L_x\in\Por\frestr B$ and $p(x)$ is a $\Por\frestr B$-name for a condition in $\Qnm_x^B$.
               \item The order on $\Por\frestr A$ is given by: $q\leq_A p$ if $\dom p\subseteq\dom q$ and either $p=\varnothing$ or, when $p\neq0$ and $x=\max(\dom q)$, there is a $B\in\Iwf_x\frestr A$ such that $q\frestr L_x\in\Por\frestr B$ and, either $x\notin\dom p$, $p\in\Por\frestr B$ and $q\frestr L_x\leq_B p$, or $x\in\dom p$, $p\frestr L_x\in\Por\frestr B$, $q\frestr L_x\leq_B p\frestr L_x$ and $p(x),q(x)$ are $\Por\frestr B$-names for conditions in $\Qnm_x^B$ such that $q\frestr L_x\Vdash_{\Por\upharpoonright B}q(x)\leq p(x)$.
            \end{enumerate}
   \end{enumerate}
   Within this recursion, the following properties are proved.
   \begin{enumerate}[(a)]
      \item If $p\in\Por\frestr A$, $x\in A$ and $\max(\dom p)<x$, then there exists a $B\in\Iwf_x\frestr A$ such that $p\in\Por\frestr B$.
      \item For $D\subseteq A$, $\Por\frestr D\subseteq\Por\frestr A$ and, for $p,q\in\Por\frestr D$, $q\leq_D p$ iff $q\leq_A p$.
      \item $\Por\frestr A$ is a poset.
      \item $\Por\frestr A$ is obtained from some posets of the form $\Por\frestr B$ with $B\subsetneq A$ in the following way:
            \begin{enumerate}[(i)]
               \item If $x=\max(A)$ exists and $A_x:=A\cap L_x\in\hat{\Iwf}_x$, then $\Por\frestr A=\Por\frestr A_x\ast\Qnm_x^{A_x}$.
               \item If $x=\max(A)$ but $A_x\notin\hat{\Iwf}_x$, then $\Por\frestr A$ is the direct limit of the $\Por\frestr B$ where
                     $B\subseteq A$ and $B\cap L_x\in\Iwf_x\frestr A$.
               \item If $A$ does not have a maximum element, then $\Por\frestr A$ is the direct limit of the $\Por\frestr B$ where $B\in\Iwf_x\frestr A$ for some $x\in A$ (in the case $A=\varnothing$, it is clear that $\Por\frestr A=\mathds{1}$).
            \end{enumerate}
            Note that, by Lemma \ref{UpsilonTemp}(c), $\mathrm{Dp}(A_x)<\mathrm{Dp}(A)$ in (i) and, in (ii) and (iii), $\mathrm{Dp}(B)<\mathrm{Dp}(A)$ for each corresponding $B$.
      \item If $D\subseteq A$, then $\Por\frestr D$ is a regular subposet of $\Por\frestr A$.
      \item If $D\subseteq L$ then $\Por\frestr(A\cap D)=\Por\frestr A\cap\Por\frestr D$.
      \item If $D,A'\subseteq A$ then $\langle\Por\frestr(A'\cap D),\Por\frestr A',\Por\frestr D,\Por\frestr A\rangle$ is a correct diagram.
   \end{enumerate}
\end{theorem}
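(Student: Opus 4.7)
My plan is to run a simultaneous induction on $\alpha=\mathrm{Dp}(A)$ proving the definition of $\Por\frestr A$ together with all of (a)--(g). At stage $\alpha$, I dispatch three cases according to (d). If $A=\varnothing$ (base case) set $\Por\frestr A=\{\varnothing\}$ and everything is trivial. If $x=\max(A)$ exists and $A_x=A\cap L_x\in\hat{\Iwf}_x$, then $\mathrm{Dp}(A_x)<\alpha$ by Lemma \ref{UpsilonTemp}(c), so $\Por\frestr A_x$ is defined and $\Qnm_x^{A_x}$ is a $\Por\frestr A_x$-name for a poset by hypothesis (1); declare $\Por\frestr A:=\Por\frestr A_x\ast\Qnm_x^{A_x}$. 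Otherwise (either $A$ has no maximum, or $A_x\notin\hat{\Iwf}_x$), the iteration is the direct limit of $\Por\frestr B$ with $B\subsetneq A$ ranging over the class prescribed in (d)(ii) or (d)(iii); each such $B$ has $\mathrm{Dp}(B)<\alpha$ by Lemma \ref{UpsilonTemp}(c), and by induction (g) those $\Por\frestr B$ form a directed system whose regular embeddings and correct diagrams are witnessed by the induction hypothesis applied to pairs $B\subseteq B'$ in the system. Lemma \ref{dirlimEmb} then legitimizes the direct limit.

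Properties (a), (b), (c), (d), (f) are routine book-keeping: (a) follows because $\Iwf_x\frestr A$ is closed under finite unions and the domain of $p$ is finite (so a single $B$ absorbs finitely many); (c) reduces to the partial-order axioms of the $\Por\frestr B$'s via (a); (b) amounts to checking that the definitions of $\leq_D$ and $\leq_A$ agree on $\Por\frestr D$, which follows from the coherence of the witnessing $B$'s; (d) is true by construction; (f) follows from unwinding condition (a) and the compatibility of $\Iwf_x\frestr(A\cap D)$ with traces. The substantive content lies in (e) and (g), and since (e) is a special case of (g) (take $A'=D$), I concentrate on (g).

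For (g), fix $D,A'\subseteq A$ and split by the same three cases on $A$. In the two-step case $\Por\frestr A=\Por\frestr A_x\ast\Qnm_x^{A_x}$, I distinguish whether $x\in D$, $x\in A'$, or neither, writing each $\Por\frestr(\cdot)$ as either an iteration $\Por\frestr(\cdot\cap L_x)\ast\Qnm_x^{(\cdot\cap L_x)}$ (via (d)(i) applied inside the induction hypothesis) or purely as $\Por\frestr(\cdot\cap L_x)$. The induction hypothesis gives correctness of the lower-floor diagram $\langle\Por\frestr(A'\cap D\cap L_x),\Por\frestr(A'\cap L_x),\Por\frestr(D\cap L_x),\Por\frestr A_x\rangle$, which I lift to the upper floor by combining Lemma \ref{lemmacorrpres}(a) (to add a name on one side) with hypothesis (1)(iii) (to ensure the quotient names on the four nodes form a correct diagram themselves), and finally Lemma \ref{CorrQuotEmb} to compose. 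In the direct-limit case, I express each of $\Por\frestr A'$, $\Por\frestr D$, $\Por\frestr(A'\cap D)$ as the direct limit of its own cofinal system of regular subposets indexed by sets $B'\subseteq A'$, $E\subseteq D$ obtained by intersecting the witnesses for $A$ with $A'$ and $D$ respectively; Lemma \ref{dirlimEmb} then reduces the desired correctness to correctness of the small diagrams at each level, which is the induction hypothesis on $B'\cup E$ (whose depth is strictly less than $\alpha$).

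The main obstacle is orchestrating the correctness transfer in the two-step case when $x$ belongs to some of $A',D$ and not others: one must split into subcases and, at each, verify that the quotient name $\Qnm_x^{\cdot}$ satisfies hypothesis (1)(i)--(iii) with respect to the particular subposet pattern so that Lemma \ref{lemmacorrpres} can be invoked without circularity. A secondary subtlety is ensuring, in the direct-limit case, that the cofinal family of subsets of $A'$ (resp.\ $D$, $A'\cap D$) we extract is genuinely directed and that the compatibility of the witnessing $B\in\Iwf_x\frestr A$ in the definition of $\Por\frestr A$ transfers to a witness in $\Iwf_x\frestr A'$; this is where closure of $\Iwf_x$ under finite intersections (Definition \ref{DefIndxTemp}(2)) becomes indispensable.
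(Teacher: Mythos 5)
Your overall architecture (simultaneous induction on $\mathrm{Dp}(A)$, case split as in (d), Lemma \ref{dirlimEmb} at limits, hypothesis (1)(iii) plus Lemma \ref{lemmacorrpres}(a) in the two-step case) is the paper's, but you have inverted definition and theorem, and this hides real content. The theorem defines $\Por\frestr A$ explicitly by (2) as a poset of finite partial functions, and (d) is a \emph{property to be proved} of that poset: (d)(i) requires a density argument (a condition of $\Por\frestr A$ need not have $x$ in its domain, and its witness $B\in\Iwf_x\frestr A$ need not be $A_x$), and (d)(ii),(iii) require property (a) to see that every condition lies in some member of the directed system. Declaring $\Por\frestr A:=\Por\frestr A_x\ast\Qnm_x^{A_x}$ and calling (d) ``true by construction'' skips exactly these steps, and with the pair representation the literal statements (b) and (f) --- that $\Por\frestr D$ is a subset of $\Por\frestr A$ carrying the same order, and that $\Por\frestr(A\cap D)=\Por\frestr A\cap\Por\frestr D$ as sets --- are no longer ``routine book-keeping''; they are precisely what later results (Lemma \ref{CondSupp}, Theorem \ref{EmbThm}) use. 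A related omission: for $D\subseteq A$ one only gets $\mathrm{Dp}(D)\leq\mathrm{Dp}(A)$ (Lemma \ref{UpsilonTemp}(b)), with equality possible, so the induction must be phrased to cover subsets of equal depth; the paper proves (f) for all $D$ with $\mathrm{Dp}(D)\leq\alpha$ and, inside (e) and (g), applies the case analysis (d) to $D$ at the same stage. Your bookkeeping via ``sets of strictly smaller depth'' does not cover, e.g., a $D\subseteq A$ of equal depth whose case in (d) differs from $A$'s (say $D$ has a maximum $y$ with $D\cap L_y\in\hat{\Iwf}_y$ while $A$ is a limit case); this sub-case gets a separate argument in the paper (find a single $B$ in the system with $D\subseteq B$, rather than represent $\Por\frestr D$ as a limit).

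The second genuine slip is deriving (e) from (g) by taking $A'=D$. By Definition \ref{DefCorr}, ``$\langle\Por\frestr(A'\cap D),\Por\frestr A',\Por\frestr D,\Por\frestr A\rangle$ is a correct diagram'' presupposes the regular embeddings, in particular $\Por\frestr D\lessdot\Por\frestr A$, and the paper's proof of (g) freely uses (e), (f) and (d) already established at that stage; so specializing (g) assumes what you want. What actually works is to prove (e) first within the same case analysis --- two-step case via (1)(i) and Lemma \ref{2stepitemb}; limit cases via Lemma \ref{dirlimEmb} after showing, using (a) and the induction hypothesis, either that $\Por\frestr D$ is the direct limit of the intersected system or that it sits inside a single $\Por\frestr B$ of the system --- and only then prove (g). Also note that in the limit case of (g), Lemma \ref{dirlimEmb} by itself only yields correctness of diagrams of the form $\langle\Por_i,\Por,\Qor_i,\Qor\rangle$; to get the full four-limit diagram you still need the paper's argument of choosing a witness $B$, intersecting with $A'$ and $D$, and enlarging $B$ to handle an arbitrary extension $q\leq_D r$. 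If you repair these points --- carry out the density/union verifications of (d) for the poset defined by (2), establish (e) before (g), and handle equal-depth subsets --- your argument becomes the paper's proof.
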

\begin{proof}
   By just changing certain notation, the proof follows the same ideas as \cite[Thm. 2.2]{brendle2}. Lemma \ref{UpsilonTemp} guaranties that (2) can be defined recursively by the function $\mathrm{Dp}$.
   \begin{enumerate}[(a)]
      \item Denote $z:=\max(\dom p)$. By (2)(i), there is an $E\in\Iwf_z\frestr A$ such that $p\frestr L_z\in\Por\frestr E$ and $p(z)$ is a $\Por\frestr E$-name for a condition in $\Qnm^E_z$. By Definition \ref{DefIndxTemp} and Lemma \ref{tracetemp}, there is some $B\in\Iwf_x\frestr A$ such that $z\in B$. We may assume that $E\in\Iwf_z\frestr B$ (as $E=A\cap H$ and $B=A\cap H'$ for some $H\in\Iwf_z$ and $H'\in\Iwf_x$, just redefine $B$ as $A\cap(H\cup H')$). Thus, $p\in\Por\frestr B$.
      \item Let $p\in\Por\frestr D$ and assume that $p\neq\varnothing$, so let $x=\max(\dom p)$. By (2), there is an $E\in\Iwf_x\frestr D$ such that $p\frestr L_x\in\Por\frestr E$ and $p(x)$ is a $\Por\frestr E$-name for a condition in $\Qnm^E_x$. Also, there exists an $H\in\Iwf_x$ such that $E=D\cap H$. Put $B:=A\cap H\in\Iwf_x\frestr A$. As $E\subseteq B$ and $\mathrm{Dp}(B)<\mathrm{Dp}(A)$ (see Lemma \ref{UpsilonTemp}), by induction hypothesis and (e), $\Por\frestr E\lessdot\Por\frestr B$, so $p\frestr L_x\in\Por\frestr B$. Moreover, by (1)(i), $p(x)$ is a $\Por\frestr B$-name for a condition in $\Qnm^B_x$, so $p\in\Por\frestr A$.

      Now, fix $p,q\in\Por\frestr D$. Assume that $q\leq_D p$ and $x=\max(\dom p)=\max(\dom q)$. By (2)(ii), there exists an $E\in\Iwf_x\frestr D$ such that $p\frestr L_x,q\frestr L_x\in\Por\frestr E$, $q\frestr L_x\leq_E p\frestr L_x$ and $p(x)$ and $q(x)$ are $\Por\frestr E$-names for conditions in $\Qnm^E_x$ such that $q\frestr L_x\Vdash_{\Por\upharpoonright E}q(x)\leq_{\Qnm^E_x} p(x)$. Also, there is an $H\in\Iwf_x$ such that $E= D\cap H$. Put $B=A\cap H$ so, by induction hypothesis, $q\frestr L_x\leq_B p\frestr L_x$, $p(x)$ and $q(x)$ are $\Por\frestr B$-names for conditions in $\Qnm^B_x$ and $q\frestr L_x\Vdash_{\Por\upharpoonright B}q(x)\leq_{\Qnm^B_x} p(x)$. Clearly, $q\leq_A p$. The case $\max(\dom p)<\max(\dom q)$ is treated similarly.

      To prove the converse, assume $q\leq_A p$ and $x=\max(\dom p)=\max(\dom q)$. $p,q\in\Por\frestr D$ implies that there is an $E\in\Iwf_x\frestr D$ such that $p\frestr L_x,q\frestr L_x\in\Por\frestr E$ and $p(x)$ and $q(x)$ are $\Por\frestr E$-names for conditions in $\Qnm^E_x$ (in fact, we find $E_1$ for $p$, $E_2$ for $q$ and put $E=E_1\cup E_2$, so induction hypothesis and (e) are used). On the other hand, $q\leq_A p$ implies that there is a $B\in\Iwf_x\frestr A$ such that the statement in (2)(ii) holds. We may assume that $E\subseteq B$ (there are $H,H'\in\Iwf_x$ such that $E=D\cap H$ and $B=A\cap H'$, so just redefine $B$ as $A\cap (H\cup H')$ and note that the induction hypothesis and (e) are used to see that the statement in (2)(ii) still holds). By induction hypothesis, $q\frestr L_x\leq_E p\frestr L_x$ and $q\frestr L_x\Vdash_{\Por\upharpoonright E}q(x)\leq_{\Qnm^E_x}p(x)$, so $q\leq_D p$. The case $\max(\dom p)<\max(\dom q)$ is treated similarly, but it requires (a).

      \item Reflexivity of $\leq_A$ is easy by the induction hypothesis, so we prove transitivity. Let $p,q,r\in\Por\frestr A$ be such that $r\leq_A q$ and $q\leq_A p$. Assume that $x=\max(\dom p)=\max(\dom q)=\max(\dom r)$ (the other cases are treated similarly). We can find a $B\in\Iwf_x\frestr A$ such that $p\frestr L_x,q\frestr L_x,r\frestr L_x\in\Por\frestr B$, $r\frestr L_x\leq_B q\frestr L_x$, $q\frestr L_x\leq_B p\frestr L_x$ and $p(x),q(x),r(x)$ are $\Por\frestr B$-names for conditions in $\Qnm^B_x$ such that $r\frestr L_x\Vdash_{\Por\upharpoonright B}r(x)\leq q(x)$ and $q\frestr L_x\Vdash_{\Por\upharpoonright B}q(x)\leq p(x)$. By induction hypothesis, it is clear that $r\frestr L_x\leq_B p\frestr L_x$ and $r\frestr L_x\Vdash_{\Por\upharpoonright B}r(x)\leq p(x)$.
      \item \begin{enumerate}[(i)]
         \item It is enough to show that the set $\{p\in\Por\frestr A\ /\ x=\max(\dom p)\textrm{\ and }p\frestr L_x\in\Por\frestr A_x\}$ is dense in $\Por\frestr A$. Let $p\in\Por\frestr A$. If either $p=\varnothing$ or $\max(\dom p)<x$ then, by (a), $p\in\Por\frestr B$ for some $B\in\Iwf_x\frestr A$, so $p\in\Por\frestr A_x$ and $p\widehat{\ \ }\langle\dot{q}\rangle_x\leq_A p$ for some $\Por\frestr A_x$-name $\dot{q}$ for a condition in $\Qnm^{A_x}_x$. On the other hand, if $\max(\dom p)=x$, then it is clear that $p\frestr L_x\in\Por\frestr A_x$.
         \item Let $p\in\Por\frestr A$. If either $p=\varnothing$ or $\max(\dom p)<x$ then there is a $B\in\Iwf_x\frestr A$ such that $p\in\Por\frestr B$ (by (a)), so assume that $\max(\dom p)=x$. There is an $E\in\Iwf_x\frestr A$ such that $p\frestr L_x\in\Por\frestr E$ and $p(x)$ is a $\Por\frestr E$-name for a condition in $\Qnm^E_x$. Put $B:=E\cup\{x\}$. It is clear that $B\cap L_x=E\in\Iwf_x\frestr B$ and that $p\in\Por\frestr B$. On the other hand, by induction hypothesis and (e), $\{\Por\frestr B\ /\ B\subseteq A\textrm{\ and\ }B\cap L_x\in\Iwf_x\frestr A\}$ is a directed system of posets, so $\Por\frestr A$ is its direct limit.
         \item Let $p\in\Por\frestr A$ and $y=\max(\dom p)$. As there is some $x\in A$ above $y$, there is some $B\in\Iwf_x\frestr A$ such that $p\in\Por\frestr B$ by (a). On the other hand, by induction hypothesis and (e), $\{\Por\frestr B\ /\ \exists_{x\in A}(B\in\Iwf_x\frestr A)\}$ is a directed system of posets, so $\Por\frestr A$ is its direct limit.
         \end{enumerate}
      \item We argue by cases from (d).
         \begin{enumerate}[(i)]
            \item If $x\notin D$ then $D\subseteq A_x$. It is clear that $\Por\frestr A_x\lessdot\Por\frestr A$ and, by induction hypothesis (as $\mathrm{Dp}(A_x)<\mathrm{Dp}(A)$), $\Por\frestr D\lessdot\Por\frestr A_x$. Assume $x\in D$ otherwise. Note that $D_x:=D\cap L_x\in\hat{\Iwf}_x$, so $\Por\frestr D_x\lessdot\Por\frestr A_x$ by induction hypothesis. Then, by (1)(i), Lemma \ref{2stepitemb} and (d)(i), $\Por\frestr D\lessdot\Por\frestr A$.
            \item We proceed by the following cases.
                 \begin{itemize}
                    \item $D_x:=D\cap L_x\in\hat{\Iwf}_x$. Then, there is some $B_x\in\Iwf_x\frestr A$ such that $D_x=D\cap B_x$. Put $B:=B_x\cup\{x\}$. $\Por\frestr B\lessdot\Por\frestr A$ by (d)(ii) and, by induction hypothesis, $\Por\frestr D\lessdot\Por\frestr B$.
                    \item $D_x\notin\hat{\Iwf}_x$. We first assume that $x\in D$. Then, $\Por\frestr D=\limdir_{E\in\Dwf}\Por\frestr E$ where $\Dwf:=\{E\subseteq D\ /\ E\cap L_x\in\Iwf_x\frestr D\}$ (we can apply (d)(ii) here because $\mathrm{Dp}(D)\leq\mathrm{Dp}(A)$). Clearly, $\Dwf=\{B\cap D\ /\ B\in\Awf\}$ where $\Awf:=\{B\subseteq A\ /\ B\cap L_x\in\Iwf_x\frestr A\}$ and, for each $B,B'\in\Awf$, if $B\subseteq B'$, then $\langle\Por\frestr(B\cap D,\Por\frestr(B'\cap D),\Por\frestr B,\Por\frestr B')\rangle$ is correct by induction hypothesis and (g). Therefore, by Lemma \ref{dirlimEmb}, $\Por\frestr D=\limdir_{B\in\Awf}\Por\frestr(B\cap D)$ is a regular subposet of $\Por\frestr A$.

                    Now, we assume that $x\notin D$. $D_x\notin\hat{\Iwf}_x$ implies that, whenever $D$ has a maximum $z$, $D_z:=D\cap L_z\notin\hat{\Iwf}_z$, so $\Por\frestr D$ is described as a direct limit from (d)(ii) or (iii). In either case, $\Por\frestr D=\bigcup_{B\in\Awf}\Por\frestr(B\cap D)$, moreover, as $\{\Por\frestr(B\cap D)\ /\ B\in\Awf\}$ is a directed system of posets by induction hypothesis (because $\mathrm{Dp}(B\cap D)\leq\mathrm{Dp}(B)<\alpha$ for all $B\in\Awf$), $\Por\frestr D=\limdir_{B\in\Awf}\Por\frestr(B\cap D)$.
                    Hence, as in the previous argument, $\Por\frestr D\lessdot\Por\frestr A$.
                 \end{itemize}
                 \item If $D\in\hat{\Iwf}_x$ for some $x\in A$, we can find some $B\in\Iwf_x\frestr A$ such that $D\subseteq B$, so $\Por\frestr D\lessdot\Por\frestr B$ (by induction hypothesis) and, by (d)(iii), it is clear that the latter is a regular subposet of $\Por\frestr A$. So assume that $D\notin\hat{\Iwf}_x$ for any $x\in A$. Proceeding like in the previous paragraph, $\Por\frestr D=\limdir_{B\in\Awf}\Por\frestr(B\cap D)$ with $\Awf:=\{B\subseteq A\ /\ \exists_{x\in A}(B\in\Iwf_x\frestr A)\}$, so Lemma \ref{dirlimEmb} implies $\Por\frestr D\lessdot\Por\frestr A$.
         \end{enumerate}
      \item We prove the statement for all $D\subseteq L$ with $\mathrm{Dp}(D)\leq\alpha$. By (b), it is clear that $\Por\frestr(A\cap D)\subseteq\Por\frestr A\cap\Por\frestr D$. To prove the converse contention, assume $p\in\Por\frestr A\cap\Por\frestr D$ with $x=\max(\dom p)$. Then, there are $B\in\Iwf_x\frestr A$ and $E\in\Iwf_x\frestr D$ such that $p\frestr L_x\in\Por\frestr B\cap\Por\frestr E$ and $p(x)$ is a $\Por\frestr B$-name for a condition in $\Qnm^B_x$ as well as a $\Por\frestr E$-name for a condition in $\Qnm^E_x$. We may assume that $B\cap E\in\Iwf_x\frestr(A\cap D)$ (by increasing $B$ and $E$ so that there is an $H\in\Iwf_x$ such that $B=A\cap H$ and $E=D\cap H$). By induction hypothesis, as $\mathrm{Dp}(B),\mathrm{Dp}(E)<\alpha$, $\Por\frestr(B\cap E)=\Por\frestr B\cap \Por\frestr E$, so $p\frestr L_x\in\Por\frestr (B\cap E)$. Clearly, $p(x)$ is a $\Por\frestr(B\cap E)$-name\footnote{Considering the formal definition of a name (see, e.g., \cite{kunen}), if $\Por$ and $\Qor$ are posets, $\dot{x}$ is a $\Por$-name and, at the same time, a $\Qor$-name, then it is a $\Por\cap\Qor$-name.}. Thus, by (1)(ii), $p(x)$ is a $\Por\frestr(B\cap E)$-name for a condition in $\Qnm^{B\cap E}_x$.
      \item We split into cases according to (d).
         \begin{enumerate}[(i)]
            \item Here, $A'_x:=A'\cap L_x$ and $D_x:=D\cap L_x$ are subsets of $A_x$, so they are in $\hat{\Iwf}_x$. By induction hypothesis, $\langle\Por\frestr(A'\cap D\cap L_x),\Por\frestr A'_x,\Por\frestr D_x,\Por\frestr A_x\rangle$ is correct, so the result follows (do cases for $x$ being in $A'$ or in $D$, use (1)(iii) in the case $x\in A'\cap D$ and use Lemma \ref{lemmacorrpres}(a) in the other cases).
            \item Let $p\in\Por\frestr A'$ and $r\in\Por\frestr(A'\cap D)$ a reduction of $p$. We first assume that $D_x\in\hat{\Iwf}_x$. Find $B\in\Awf:=\{B\subseteq A\ /\ B\cap L_x\in\Iwf_x\frestr A\}$ such that $D\subseteq B$ and $p\in\Por\frestr B$ (by (d)(ii)). Put $B':=A'\cap B$, so $p\in\Por\frestr B'$ by (f) and, as $A'\cap D= B'\cap D$, $r$ is a reduction of $p$ with respect to $\Por\frestr(A'\cap D),\Por\frestr B'$. On the other hand, by induction hypothesis, $\langle\Por\frestr(A'\cap D),\Por\frestr B',\Por\frestr D,\Por\frestr B\rangle$ is correct, so $r$ is a reduction of $p$ with respect to $\Por\frestr D,\Por\frestr B$. Hence, this is so with respect to $\Por\frestr D,\Por\frestr A$.

                Now, assume that $D_x\notin\hat{\Iwf}$, so $\Por\frestr D=\limdir_{B\in\Awf}\Por\frestr(B\cap D)$ (see the second case of the proof of (e)). Choose $B\in\Awf$ such that $p\in\Por\frestr B$ and $r\in\Por\frestr(B\cap D)$. Put $B'=A'\cap B$. As before, $p\in\Por\frestr B'$ and $r\in\Por\frestr(B'\cap D)$ by (f) and $\langle\Por\frestr(B'\cap D),\Por\frestr B',\Por\frestr(B\cap D),\Por\frestr B\rangle$ is correct by induction hypothesis. Clearly, $r$ is a reduction of $p$ with respect to $\Por\frestr(B'\cap D),\Por\frestr B'$ and, by correctness, it is with respect to $\Por\frestr(B\cap D),\Por\frestr B$. We claim that $r$ is a reduction of $p$ with respect to $\Por\frestr D,\Por\frestr A$. Indeed, if $q\leq_D r$, find $B_1\in\Awf$ containing $B$ such that $q\in\Por\frestr(B_1\cap D)$. The diagram $\langle\Por\frestr(B\cap D),\Por\frestr B,\Por\frestr(B_1\cap D),\Por\frestr B_1\rangle$ is correct, which implies that $r$ is a reduction of $p$ with respect to $\Por\frestr(B_1\cap D),\Por\frestr B_1$, so $q$ is compatible with $p$ in $\Por\frestr B_1$ (and so in $\Por\frestr A$).

            \item By cases on whether $\exists_{x\in A}(D\in\Iwf_x\frestr A)$ or not, a similar argument as before (using facts from the proof of (e) as well) works.
         \end{enumerate}
   \end{enumerate}
\end{proof}

Condition (1), particularly item (i), implies that, when we step into the generic extension of $\Por\frestr L$, the generic object added at stage $x$ is generic over the intermediate extension by $\Por\frestr B$ for any $B\in\hat{\Iwf}_x$. In general, as $L_x$ may not belong to $\hat{\Iwf}_x$ (that is, to $\Iwf_x$), this object added at stage $x$ need \underline{not} be generic over the intermediate extension by $\Por\frestr L_x$ or over the extension for any subset of $L_x$ that is not in $\hat{\Iwf}_x$.

\begin{example}[Fsi in terms of a template iteration]\label{ExmpFSI}
   Let $\delta$ be an ordinal and consider the template $\bar{\Iwf}$ defined in Example \ref{ExmpTemp}(2). An iteration along $\langle\delta,\bar{\Iwf}\rangle$, defined as in Theorem \ref{TempIt}, is equivalent to the fsi $\langle\Por\frestr\alpha,\Qnm^\alpha_\alpha\rangle_{\alpha<\delta}$. Unlike a standard fsi, this iteration can be restricted to any subset of $\delta$. To be more precise, if $X\subseteq\delta$, then $\Por\frestr X$ is equivalent to the fsi $\langle\Por\frestr(X\cap\alpha),\Qnm^{X\cap\alpha}_\alpha\rangle_{\alpha\in X}$, which is a regular subposet of $\Por\frestr\delta$. Recall that, for any $\alpha<\delta$, $\hat{\Iwf}_\alpha=\pts(\alpha)$, so the generic object added at stage $\alpha$ is generic over the intermediate extension by $\Por\frestr X$ for any $X\subseteq\alpha$.

   Of course, the proof of Theorem \ref{TempIt} is much simpler for this template, for it is enough to have the conditions in (1) and prove, by induction on $\alpha\leq\delta$, that $\Por\frestr X$ is defined for any $X\subseteq\alpha$ and that properties (a)-(g) hold.
\end{example}

\begin{example}\label{ExpTempItFund}
   Let $\langle L,\Iwf\rangle$ be an indexed template, $L=L_S\cup L_C$ a disjoint union. For $x\in L$ define the orders $\Qnm_x^B$ for $B\in\hat{\Iwf}_x$ according to one of the following cases.
   \begin{enumerate}[(i)]
      \item If $x\in L_S$, $\Qnm_x^B$ is a $\Por\frestr B$-name for $\Sor_x^{V^{\Por\upharpoonright B}}$, where $\Sor_x $ is a fixed Suslin correctness-preserving ccc poset coded in the ground model.
      \item If $x\in L_C$, for a fixed $C_x\in\hat{\Iwf}_x$ and a $\Por\frestr C_x$-name $\Qnm_x$ for a poset,
            \[\Qnm_x^B=\left\{
                \begin{array}{ll}
                   \Qnm_x & \textrm{if $C_x\subseteq B$}\\
                   \mathds{1} & \textrm{otherwise.}
                \end{array}\right.\]
   \end{enumerate}
   In (ii) note that, if $B\in\hat{\Iwf}_x$ contains $C_x$, the interpretation of $\Qnm_x^B$ in $V^{\Por\upharpoonright B}$ is the same poset as $\Qnm_x$ interpreted in $V^{\Por\upharpoonright C_x}$ (which is not required to be ccc). Therefore, by Lemma \ref{lemmacorrpres} and other direct calculations, the properties stated in (1) of Theorem \ref{TempIt} hold, so the template iteration can be defined as stated in that Theorem.
\end{example}

The following result states sufficient conditions for regular contention between two template iterations. Although it is stated in a general way, we only use a particular case (Corollary \ref{EmbCor}) for our applications.

\begin{theorem}\label{EmbThm}
   Let $L$ be a linear order, $\bar{\Iwf}$ and $\bar{\Jwf}$ indexed templates on $L$ such that $\Iwf_x\subseteq\Jwf_x$ for all $x\in L$. Consider two template iterations $\Por\frestr\langle L,\bar{\Iwf}\rangle$
   and $\check{\Por}\frestr\langle L,\bar{\Jwf}\rangle$ with the following properties.
   \begin{enumerate}[(1)]
      \item For $x\in L$ and $B\in\hat{\Iwf}_x$, if $\Por\frestr B\lessdot\check{\Por}\frestr B$, then $\Vdash_{\check{\Por}\upharpoonright B}\Qnm_x^B\lessdot_{V^{\Por\upharpoonright B}}\dot{\check{\Qor}}_x^B$.
      \item Whenever $B\in\hat{\Iwf}_x$, $A\subseteq B$ and $\langle\Por\frestr A,\check{\Por}\frestr A,\Por\frestr B,\check{\Por}\frestr B\rangle$ is a correct diagram, then the diagram $\langle\Por\frestr A\ast\Qnm_x^A,\check{\Por}\frestr A\ast\dot{\check{\Qor}}_x^A,\Por\frestr B\ast\Qnm_x^B,\check{\Por}\frestr B\ast\dot{\check{\Qor}}_x^B
          \rangle$ is correct.
      \item For $B\subseteq L$, $x\in B$, if $C\in\Jwf_x\frestr B$ and $p\in\check{\Por}\frestr C$, then there exists an $A\in\Iwf_x\frestr B$ such that $p\in\check{\Por}\frestr A$.
      \item For $B\subseteq L$, $x\in B$, if $C\in\Jwf_x\frestr B$ and $\dot{q}$ is a $\check{\Por}\frestr C$-name for a condition in $\dot{\check{\Qor}}_x^C$, then there exists an $A\in\Iwf_x\frestr B$ such that $\dot{q}$ is a $\check{\Por}\frestr A$-name for a condition in $\dot{\check{\Qor}}_x^A$.
   \end{enumerate}
   Then, for each $B\subseteq L$,
   \begin{enumerate}[(a)]
      \item $\Por\frestr B\lessdot\check{\Por}\frestr B$ and
      \item if $A\subseteq B$, then $\langle\Por\frestr A,\check{\Por}\frestr A,\Por\frestr B,\check{\Por}\frestr B\rangle$ is correct.
   \end{enumerate}
\end{theorem}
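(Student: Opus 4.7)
The plan is to prove (a) and (b) simultaneously by well-founded induction on the rank $\alpha = \mathrm{Dp}^{\bar{\Jwf}}(B)$, following the structural decomposition of $\check{\Por}\frestr B$ given by Theorem \ref{TempIt}(d) applied to the template $\bar{\Jwf}$. The base case $B = \varnothing$ is trivial since both posets are the one-condition poset. Note that when $A\subseteq B$ one has $\mathrm{Dp}^{\bar{\Jwf}}(A)\leq\mathrm{Dp}^{\bar{\Jwf}}(B)$ (Lemma \ref{UpsilonTemp}(b)), so the induction hypothesis applies to (b) for smaller $B$ as well.

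For (a) in the inductive step, I would split into three cases. First, if $x = \max(B)$ exists and $B_x := B \cap L_x \in \hat{\Iwf}_x$, then $\Iwf_x\subseteq\Jwf_x$ implies $B_x \in \hat{\Jwf}_x$ as well, so $\Por\frestr B = \Por\frestr B_x \ast \Qnm_x^{B_x}$ and $\check{\Por}\frestr B = \check{\Por}\frestr B_x \ast \dot{\check{\Qor}}_x^{B_x}$. The induction hypothesis for $B_x$ gives $\Por\frestr B_x \lessdot \check{\Por}\frestr B_x$, condition (1) then yields $\Qnm_x^{B_x} \lessdot \dot{\check{\Qor}}_x^{B_x}$ in the appropriate extension, and Lemma \ref{2stepitemb} delivers (a). Second, the remaining cases are those in which $\Por\frestr B$ is a direct limit (either $B$ has no maximum or $\max(B) = x$ exists with $B_x \notin \hat{\Iwf}_x$). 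In these cases $\Por\frestr B = \limdir_{A\in\Awf}\Por\frestr A$ for the appropriate family $\Awf$ of subsets of $B$ indexed by elements of $\Iwf_x\frestr B$, and the induction hypothesis together with Lemma \ref{dirlimEmb} yields the regular embedding into $\check{\Por}\frestr B$, provided one shows that the corresponding family is cofinal inside $\check{\Por}\frestr B$ as well. This is exactly where conditions (3) and (4) enter: given any $p \in \check{\Por}\frestr B$, one reads off $p\frestr L_x$ and $p(x)$ from some $C\in\Jwf_x\frestr B$, and (3) plus (4) produce an $A \in \Iwf_x\frestr B$ containing the supports of both, so $p$ already lives in $\check{\Por}\frestr A$ (possibly composed with $\dot{\check{\Qor}}_x^A$ in the subcase $B_x\in\hat{\Jwf}_x\setminus\hat{\Iwf}_x$).

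For (b), once (a) is established and the required correctness has been verified for all strict predecessors of $B$, I would propagate correctness through the structural decomposition: in the two-step-iteration subcase it follows from condition (2) applied with the induction hypothesis for $A\cap L_x$ and $B_x$ (combined, if needed, with Lemma \ref{lemmacorrpres}(a) when $x\notin A$ or $x\notin B$); in the direct-limit subcases the correctness is built into the conclusion of Lemma \ref{dirlimEmb}. Lemma \ref{CorrQuotEmb} is available to chain diagrams if necessary, and Theorem \ref{TempIt}(g) guarantees the internal correctness of the $\Por$-side and the $\check{\Por}$-side diagrams used as input.

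The main obstacle is the subcase $B_x \in \hat{\Jwf}_x \setminus \hat{\Iwf}_x$, in which the decompositions of $\Por\frestr B$ and $\check{\Por}\frestr B$ have different shapes — direct limit versus two-step iteration. The delicate point is to show that $\check{\Por}\frestr B_x \ast \dot{\check{\Qor}}_x^{B_x}$ can itself be represented as the direct limit of $\check{\Por}\frestr A \ast \dot{\check{\Qor}}_x^{A\cap L_x}$ ranging over those $A\subseteq B$ with $A\cap L_x \in \Iwf_x\frestr B$ and $x\in A$; this requires condition (3) to localize $p\frestr L_x$, condition (4) to localize the name $p(x)$, closure of $\Iwf_x\frestr B$ under finite unions to guarantee directedness of the indexing family, and the induction hypothesis on pairs $A\subseteq A'$ to verify that the resulting system is a directed system of regular subposets with pairwise correct diagrams. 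Once this representation is in place, Lemma \ref{dirlimEmb} completes the argument.
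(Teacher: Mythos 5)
Your overall plan coincides with the paper's proof. The paper inducts on $\mathrm{Dp}^{\bar{\Iwf}}(B)$ while you induct on $\mathrm{Dp}^{\bar{\Jwf}}(B)$, but either works: for the sets you recurse into --- $B_x$ when $B_x\in\hat{\Iwf}_x$, and the members of the directed family $\Bwf$ of those $B'\subseteq B$ with $B'\cap L_x\in\Iwf_x\frestr B$ (resp.\ $B'\in\Iwf_x\frestr B$ for some $x\in B$ when $B$ has no maximum) --- Lemma \ref{UpsilonTemp}(c) makes both ranks drop, since $\Iwf_x\frestr B\subseteq\Jwf_x\frestr B$. Your two-step case is the paper's (induction hypothesis, (1) and Lemma \ref{2stepitemb} for (a); (2), plus Lemma \ref{lemmacorrpres}(a) when $x\notin A$, for (b)), and in the direct-limit cases your use of (3) and (4) to show $\check{\Por}\frestr B=\limdir_{B'\in\Bwf}\check{\Por}\frestr B'$ and then Lemma \ref{dirlimEmb} is exactly the paper's argument; in particular your ``main obstacle'' subcase $B_x\in\hat{\Jwf}_x\menos\hat{\Iwf}_x$ is handled the same way, because every $p\in\check{\Por}\frestr B$ carries its data on some $A'\in\Jwf_x\frestr B$ by the very definition of the $\bar{\Jwf}$-iteration, regardless of how $\check{\Por}\frestr B$ decomposes.

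The one genuine gap is in (b) for the direct-limit cases: you assert that the correctness of $\langle\Por\frestr A,\check{\Por}\frestr A,\Por\frestr B,\check{\Por}\frestr B\rangle$ is ``built into the conclusion of Lemma \ref{dirlimEmb}''. That lemma only yields correctness when $A$ is one of the posets of the directed system, i.e.\ $A\in\Bwf$; statement (b) is for arbitrary $A\subseteq B$ (for instance $A=B_x$ itself in case (ii)), which need not belong to the family. The paper closes this with a separate argument you still need: given $p\in\Por\frestr A$ a reduction of $q\in\Por\frestr B$, choose $B'\in\Bwf$ with $p,q\in\Por\frestr B'$ and put $A'=A\cap B'$; then $p$ is a reduction of $q$ with respect to $\Por\frestr A',\Por\frestr B'$ (using $\Por\frestr B'\lessdot\Por\frestr B$), the induction hypothesis applied to $B'$ upgrades this to a reduction with respect to $\check{\Por}\frestr A',\check{\Por}\frestr B'$, and finally the internal correctness of the $\check{\Por}$-iteration, Theorem \ref{TempIt}(g) applied to the subsets $A,B'$ of $B$, transfers it to $\check{\Por}\frestr A,\check{\Por}\frestr B$. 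You mention Theorem \ref{TempIt}(g) as ``available'', but this localization-and-transfer step is precisely where it is needed; without it, (b) is unproved in the limit cases.
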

\begin{proof}
   Proceed by induction on $\mathrm{Dp}^{\bar{\Iwf}}(B)$. The non-trivial case is when $B\neq\varnothing$. According to Theorem \ref{TempIt}, consider the following cases.
   \begin{enumerate}[(i)]
      \item \emph{Case $x=\max(B)$ and $B_x=B\cap L_x\in\hat{\Iwf}_x$.} Then, $\Por\frestr B=\Por\frestr B_x\ast\Qnm_x^{B_x}$ and $\check{\Por}\frestr B=\check{\Por}\frestr B_x\ast\dot{\check{\Qor}}_x^{B_x}$ so, by induction hypothesis, Lemma \ref{2stepitemb} and (1), $\Por\frestr B\lessdot\check{\Por}\frestr B$. This gives (a).

          For (b), if $x\in A$, note that $A_x=A\cap L_x\in\hat{\Iwf}_x$. By induction hypothesis,
          $\langle\Por\frestr A_x,\check{\Por}\frestr A_x,\Por\frestr B_x,\check{\Por}\frestr B_x\rangle$
          is a correct diagram, so $\langle\Por\frestr A,\check{\Por}\frestr A,\Por\frestr B,\check{\Por}\frestr B\rangle$ is correct by (2). The conclusion is simpler when $x\notin A$ (Lemma \ref{lemmacorrpres} is used here).
      \item \emph{Case $x=\max(B)$ and $B_x\notin\hat{\Iwf}_x$.} Then, with $\Bwf:=\left\{B'\subseteq B\ /\ B'\cap L_x\in\Iwf_x\frestr B\right\}$, $\Por\frestr B=\limdir_{B'\in\Bwf}\Por\frestr B'$.
          If $B'\subseteq B''$ are in $\Bwf$ then, by induction hypothesis, $\langle\Por\frestr B',\check{\Por}\frestr B',\Por\frestr B'',\check{\Por}\frestr B''\rangle$ is correct. By Lemma \ref{dirlimEmb}, it is enough to prove that $\check{\Por}\frestr B=\limdir_{B'\in\Bwf}\check{\Por}\frestr B'$ to get $\Por\frestr B\lessdot\check{\Por}\frestr B$. If $p\in\check{\Por}\frestr B$ then, in the case that $x=\max(\dom(p))$,
          there exists an $A'\in\Jwf_x\frestr B$ such that $p\frestr L_x\in\check{\Por}\frestr A'$ and $p(x)$
          is a $\check{\Por}\frestr A'$-name for a condition in $\dot{\check{\Qor}}_x^{A'}$. By (3) and (4), we can find $C\in\Iwf_x\frestr B$ such that $p\frestr L_x\in\check{\Por}\frestr C$ and $p(x)$
          is a $\check{\Por}\frestr C$-name for a condition in $\dot{\check{\Qor}}_x^{C}$, so $p\in\check{\Por}\frestr(C\cup\{x\})$ with $C\cup\{x\}\in\Bwf$. The case $\max(\dom(p))<x$ is treated in a similar way.

          For (b), let $A\subseteq B$ and $p\in\Por\frestr A$ a reduction of $q\in\Por\frestr B$. We prove that $p$ is a reduction of $q$ with respect to $\check{\Por}\frestr A,\check{\Por}\frestr B$. Find $B'\in\Bwf$ such that $p,q\in\Por\frestr B'$. Put $A'=A\cap B'$, so $p\in\Por\frestr A'$. It is easy to notice that $p$ is a reduction of $q$ with respect to $\Por\frestr A',\Por\frestr B'$ so, by induction hypothesis, $p$ is a reduction of $q$ with respect to $\check{\Por}\frestr A',\check{\Por}\frestr B'$. As $\langle\check{\Por}\frestr A',\check{\Por}\frestr A,\check{\Por}\frestr B',\check{\Por}\frestr B\rangle$ is correct, our claim is proved.
      \item \emph{Case $B$ does not have a maximum element.} Then, $\Por\frestr B=\limdir_{B'\in\Bwf'}\Por\frestr B'$ where $\Bwf':=\{B'\subseteq B\ /$ $\exists_{x\in B}(B'\in\Iwf_x\frestr B)\}$. Like in the previous case, (3) and (4) imply that $\check{\Por}\frestr B=\limdir_{B'\in\Bwf}\check{\Por}\frestr B'$. Then, by Lemma \ref{dirlimEmb}, $\Por\frestr B\lessdot\check{\Por}\frestr B$. The argument for (b) is very similar to the one of the previous case.
   \end{enumerate}
\end{proof}

For our applications, we are interested in template iterations that produce ccc posets. The following result presents some sufficient conditions for this.

\begin{lemma}\label{templateitccc}
   Consider a template iteration $\Por\frestr\langle L,\bar{\Iwf}\rangle$ with the following properties for all $x\in L$ and $B\in\hat{\Iwf}_x$:
   \begin{enumerate}[(i)]
      \item There are $\Por\frestr B$-names $\langle\dot{Q}^B_{n,x}\rangle_{n<\omega}$
            witnessing that $\Qnm^B_x$ is $\sigma$-linked.
      \item If $D\subseteq B$ then $\Vdash_{\Por\upharpoonright B}\dot{Q}^D_{n,x}\subseteq\dot{Q}^B_{n,x}$ for all $n<\omega$.
   \end{enumerate}
   Then $\Por\frestr L$ has the Knaster condition.
\end{lemma}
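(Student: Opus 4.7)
The plan is to adapt the standard Knaster-ness argument for a finite support iteration of $\sigma$-linked posets: given an arbitrary family $\{p_\alpha\}_{\alpha<\omega_1}\subseteq\Por\frestr L$, I will apply the $\Delta$-system lemma to their (finite) domains, pigeonhole on ``linked-set colors'' attached to the common root, and then glue any two such color-matched conditions into a common extension. Two extra wrinkles over the pure fsi case must be handled: (a) each value $p(x)$ is only a name over some $\Por\frestr B^p_x$, where $B^p_x\in\Iwf_x\frestr L$ depends on $p$, and (b) the answer to ``which linked piece of $\Qnm^B_x$ contains $p(x)$'' also depends on the choice of $B$; hypothesis (ii) is precisely what will let us transport colors between different witnessing sets.

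The first ingredient is a color-witnessing lemma, proved by induction on $\mathrm{Dp}^{\bar{\Iwf}}(A)$: for every $A\subseteq L$ and every $p\in\Por\frestr A$, there exists $q\leq_A p$ such that for each $x\in\dom(q)$ I can exhibit $n^q_x<\omega$ and $B^q_x\in\Iwf_x\frestr A$ with $q\frestr L_x\in\Por\frestr B^q_x$, with $q(x)$ a $\Por\frestr B^q_x$-name, and with $q\frestr L_x\Vdash q(x)\in\dot{Q}^{B^q_x}_{n^q_x,x}$. The core case of the inductive step is Theorem \ref{TempIt}(d)(i), where $x=\max(\dom(p))$ and $A_x\in\hat{\Iwf}_x$: hypothesis (i) yields that $\Qnm^{A_x}_x$ is $\sigma$-linked with witnesses $\dot{Q}^{A_x}_{n,x}$, so I can strengthen $p\frestr L_x$ within $\Por\frestr A_x$ so as to decide some fixed $n_x$ with $p(x)\in\dot{Q}^{A_x}_{n_x,x}$ and then, since $\mathrm{Dp}(A_x)<\mathrm{Dp}(A)$ by Lemma \ref{UpsilonTemp}(c), invoke the induction hypothesis on that strengthening to assign colors to the remaining lower coordinates. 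In cases (d)(ii)-(d)(iii), $p$ sits inside $\Por\frestr B$ for some $B\subsetneq A$ of lower depth, so the induction hypothesis applies directly there, and each $B^q_y\in\Iwf_y\frestr B$ is enlarged inside $\Iwf_y\frestr A$, the color being preserved via hypothesis (ii) and Theorem \ref{TempIt}(1)(i).

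With the color-witnessed $q_\alpha\leq p_\alpha$ in hand, I then apply the $\Delta$-system lemma to $\{\dom(q_\alpha)\}$ to produce an uncountable subfamily whose domains share a common root $R$ and order type, and pigeonhole on the finitely many colors $\{n^{q_\alpha}_x:x\in R\}$ to further thin to an uncountable $I_1$ on which $n^{q_\alpha}_x=n_x$ for all $\alpha\in I_1$ and $x\in R$. Given $\alpha,\beta\in I_1$, I build a common extension $r$ on $\dom(q_\alpha)\cup\dom(q_\beta)$ by recursion in the increasing order of coordinates: at each $z\in\dom(r)$ I fix $B^r_z\in\Iwf_z\frestr L$ containing $B^{q_\alpha}_z\cup B^{q_\beta}_z$ together with every $B^r_y$ previously chosen for $y<z$ in $\dom(r)$; for $z\notin R$ I copy the unique available $q_\alpha(z)$ or $q_\beta(z)$, reinterpreted as a $\Por\frestr B^r_z$-name via Theorem \ref{TempIt}(1)(i); and for $z\in R$ I invoke the $2$-linkedness of $\dot{Q}^{B^r_z}_{n_z,z}$ combined with hypothesis (ii) (which forces both $q_\alpha(z)$ and $q_\beta(z)$ to land in that linked set after reinterpretation) to pick $r(z)$ a $\Por\frestr B^r_z$-name for a common strengthening of the two.

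The main obstacle I anticipate is the verification, at each recursive stage of the gluing, that $r$ really defines a condition of $\Por\frestr L$, i.e., that $r\frestr L_z\in\Por\frestr B^r_z$ and that $r(z)$ is genuinely a $\Por\frestr B^r_z$-name for a member of $\Qnm^{B^r_z}_z$. In the pure fsi template of Example \ref{ExmpFSI} the $B^q_x$'s can be taken to be initial segments and this is automatic, but here the witnesses $B^{q_\alpha}_z$ and $B^{q_\beta}_z$ are typically incomparable, so the chain $\{B^r_z\}_{z\in\dom(r)}$ must be built coherently and the repeated name reinterpretations must be shown to commute. I expect this bookkeeping to be accomplished through a careful simultaneous induction on $\dom(r)$ using clauses (1)(i)-(iii) of Theorem \ref{TempIt} together with its correctness property (g).
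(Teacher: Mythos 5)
Your proposal is correct and takes essentially the same route as the paper's proof: a color-witnessing density lemma proved by induction on $\mathrm{Dp}$, a delta-system argument with pigeonholing on the colors over the root, and a coordinate-by-coordinate gluing of two color-compatible conditions that uses hypothesis (ii) to transport colors to a common witnessing set and $2$-linkedness to merge the names at root coordinates. The only cosmetic difference is organizational: the paper fixes the auxiliary sets by a regressive recursion with $B_k\in\Iwf_{x_k}\frestr B_{k+1}$ and $B_m=A$, whereas you build the increasing chain of witnessing sets forward inside $\Iwf_z\frestr L$ using closure under finite unions and monotonicity of the $\Iwf_x$, which amounts to the same bookkeeping.
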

\begin{proof}
   The idea is the same as in the proof of \cite[Lemma 2.3]{brendle2}. By induction on $\mathrm{Dp}(A)$ with $A\subseteq L$ it is easy to prove that, for any $p\in\Por\frestr A$, there is a stronger condition $q\in\Por\frestr A$ and a function $f_q:\dom q\to\omega$ such that, for any $x\in\dom q$, there is a $B\in\Iwf_x\frestr A$ such that $q\frestr L_x\in\Por\frestr B$ and $q\frestr L_x\Vdash q(x)\in\dot{Q}^B_{f_q(x),x}$.

  Fix $A\subseteq L$. We prove that, whenever $p,q\in\Por\frestr A$ are as above and $f_p$ and $f_q$ are compatible functions, then $p$ and $q$ are compatible conditions. Enumerate $\dom p\cup\dom q=\{x_k\ /\ k<m\}$ in increasing order. Construct conditions $r_k$ and sets $B_k$ for $k\leq m$ such that
  \begin{itemize}
     \item $B_k\in\Iwf_{x_{k}}\frestr B_{k+1}$ for $k<m$, $B_m=A$,
     \item $\dom r_k=\{x_j\ /\ j<k\}\subseteq B_k$ and $p\frestr L_{x_k},q\frestr L_{x_k},r_k\in\Por\frestr B_k$, (for $k=m$ there is no $x_m$, so use $p\frestr L_{x_m}=p$, likewise for $q$),
     \item $r_k\leq p\frestr L_{x_k}, q\frestr L_{x_k}$ and
     \item for all $k<m$, $r_{k+1}\frestr L_{x_k}=r_k$, $r_k$ forces, in $\Por\frestr B_k$, that $r_{k+1}(x_k)$ is stronger than both $p(x_k)$ and $q(x_k)$ (or only one of these if the other is not defined). Also, $p\frestr L_{x_k}$ forces $p(x_k)\in\dot{Q}^{B_k}_{f_p(x_k),x_k}$ and $q\frestr L_{x_k}$ forces $q(x_k)\in\dot{Q}^{B_k}_{f_q(x_k),x_k}$ (again, ignore undefined cases).
  \end{itemize}
  $\langle B_k\rangle_{k\leq m}$ is constructed by regressive recursion on $k\leq m$ such that $p\frestr L_{x_k}\in\Por\frestr B_k$ forces $p(x_k)\in\dot{Q}^{B_k}_{f_p(x_k),x_k}$ and $q\frestr L_{x_k}\in\Por\frestr B_k$ forces $q(x_k)\in\dot{Q}^{B_k}_{f_q(x_k),x_k}$ for $k<m$ (when, for example, $q(x_k)$ is not defined, just ensure that $q\frestr L_{x_k}\in\Por\frestr B_k$). Construct $r_k$ by recursion on $k\leq m$.
  Put $r_0=\varnothing$. Assume we have constructed $r_k$ ($k<m$). If $x_k\in\dom p\menos\dom q$, put $r_{k+1}=r_k\widehat{\ \ }\langle p(x_k)\rangle_{x_k}$; if $x_k\in\dom q\menos\dom p$, put $r_{k+1}=r_k\widehat{\ \ }\langle q(x_k)\rangle_{x_k}$; otherwise, if $x_k\in\dom p\cap\dom q$, $p\frestr L_{x_k}, q\frestr L_{x_k}, r_k\in\Por\frestr B_k$, $p\frestr L_{x_k}$ forces $p(x)\in\dot{Q}^{B_k}_{n_k,x_k}$ and $q\frestr L_{x_k}$ forces $q(x)\in\dot{Q}^{B_k}_{n_k,x_k}$ where $n_k=f_p(x_k)=f_q(x_k)$. As $r_k\leq p\frestr L_{x_k},q\frestr L_{x_k}$, it forces that $p(x_k)$ and $q(x_k)$ are compatible in $\Qnm^B_{x_k}$, so let $r_{k+1}(x_k)$ be a $\Por\frestr B_k$-name of a common stronger condition.

  A typical delta-system argument and the previous facts imply that $\Por\frestr A$ has the Knaster condition.
\end{proof}

If the template $\langle L,\overline{\Iwf}\rangle$ is as in Example \ref{ExmpTemp}(2), to obtain that $\Por\frestr L$ has the ccc it is enough to assume that $\Vdash_{\Por\upharpoonright B}``\Qnm^B_x \textrm{\ has the ccc''}$ for any $x\in L$ and $B\in\hat{\Iwf}_x$. The reason of this, as explained in Example \ref{ExmpFSI}, is that $\Por\frestr X$ is a fsi for any $X\subseteq L$.

The following result is a generalization of \cite[Lemma 2.4]{brendle2}.

\begin{lemma}\label{CondSupp}
  Fix $\theta$ a cardinal with uncountable cofinality. Consider a template iteration $\Por\frestr\langle L,\bar{\Iwf}\rangle$ defined as in Example \ref{ExpTempItFund} where,
  \begin{itemize}
     \item for $x\in L_S$, $\Sor_x$ is a Suslin $\sigma$-linked correctness-preserving forcing notion and
     \item for $x\in L_C$, $\Qnm_x$ is a $\Por\frestr C_x$-name for a $\sigma$-linked poset whose conditions are reals\footnote{These reals belong to some fixed uncountable Polish space $R_x$ coded in the ground model.} and $|C_x|<\theta$.
  \end{itemize}
  Then, for each $A\subseteq L$,
  \begin{enumerate}[(a)]
     \item $\Por\frestr A$ has the Knaster condition,
     \item if $p\in\Por\frestr A$ there exists a $C\subseteq A$ of size $<\theta$ such that $p\in\Por\frestr C$, and
     \item if $\dot{x}$ is a $\Por\frestr A$-name for a real, then there exists a $C'\subseteq A$ of size $<\theta$ such that $\dot{x}$ is a $\Por\frestr C'$-name.
  \end{enumerate}
\end{lemma}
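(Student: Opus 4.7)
My plan is to prove (a) first and then establish (b) and (c) by a single induction on $\alpha=\mathrm{Dp}^{\bar{\Iwf}}(A)$.

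For (a), I would appeal to Lemma \ref{templateitccc}. I need to exhibit, for each $x\in L$ and $B\in\hat{\Iwf}_x$, $\Por\frestr B$-names $\{\dot{Q}^B_{n,x}\}_{n<\omega}$ witnessing that $\Qnm^B_x$ is $\sigma$-linked, with $\Vdash_{\Por\upharpoonright B}\dot{Q}^D_{n,x}\subseteq\dot{Q}^B_{n,x}$ whenever $D\subseteq B$. For $x\in L_S$, use the $\boldsymbol{\Sigma}^1_1$ sets witnessing that $\Sor_x$ is Suslin $\sigma$-linked, evaluated in each model $V^{\Por\upharpoonright B}$; monotonicity is automatic from absoluteness of $\boldsymbol{\Sigma}^1_1$. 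For $x\in L_C$, fix $\Por\frestr C_x$-names $\dot{Q}_{n,x}$ witnessing $\Qnm_x$ is $\sigma$-linked, WLOG with $\mathds{1}\in\dot{Q}_{n,x}$, and set $\dot{Q}^B_{n,x}:=\dot{Q}_{n,x}$ if $C_x\subseteq B$ and $\dot{Q}^B_{n,x}:=\{\mathds{1}\}$ otherwise; then monotonicity is trivial.

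For (b), at depth $\alpha$ I decompose $\Por\frestr A$ using Theorem \ref{TempIt}(d). In the direct-limit cases (no maximum of $A$, or $\max(A)=x$ with $A_x:=A\cap L_x\notin\hat{\Iwf}_x$), any $p\in\Por\frestr A$ lies in some $\Por\frestr B$ with $\mathrm{Dp}(B)<\alpha$, and the induction hypothesis supplies a support of size $<\theta$. The essential case is $\max(A)=x$ with $A_x\in\hat{\Iwf}_x$ and $x\in\dom p$: Theorem \ref{TempIt}(a) hands me $B\in\Iwf_x\frestr A$ with $p\frestr L_x\in\Por\frestr B$ and $p(x)$ a $\Por\frestr B$-name for a condition in $\Qnm^B_x$; by Lemma \ref{UpsilonTemp}(c), $\mathrm{Dp}(B)<\alpha$. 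Induction clause (b) yields a support $C_0\subseteq B$ for $p\frestr L_x$; since conditions in $\Sor_x$ or in $\Qnm_x$ are reals in a fixed Polish space, $p(x)$ is a $\Por\frestr B$-name for a real, and induction clause (c) yields a support $C_1\subseteq B$, both of size $<\theta$. I then take $C:=C_0\cup C_1\cup\{x\}$, adjoining $C_x$ when $x\in L_C$; this has size $<\theta$. Writing $B=A\cap H$ and $C_x\subseteq H'$ with $H,H'\in\Iwf_x$, closure of $\Iwf_x$ under finite unions yields $C\cap L_x=C\cap(H\cup H')\in\Iwf_x\frestr C$. Finally, I verify that $p(x)$ is a $\Por\frestr(C\cap L_x)$-name for a condition in $\Qnm^{C\cap L_x}_x$: in the $L_S$ case this uses Lemma \ref{Suslin2stepit} together with absoluteness of the $\boldsymbol{\Sigma}^1_1$-set $\Sor_x$, while in the $L_C$ case it follows because $\Qnm_x$ is a fixed $\Por\frestr C_x$-name and $C_x\subseteq C\cap L_x$. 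Thus $p\in\Por\frestr C$.

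For (c), I exploit the ccc established in (a). Present $\dot{x}$ in nice form: for each $n<\omega$ a maximal antichain $\{p^{n,j}\}_{j<\omega}$ of $\Por\frestr A$ (countable by ccc) together with values $k^n_j\in\omega$ with $p^{n,j}\Vdash_{\Por\upharpoonright A}\dot{x}(n)=\check{k}^n_j$. Induction clause (b) gives each $p^{n,j}$ a support $C^{n,j}\subseteq A$ of size $<\theta$, and $\cf(\theta)>\omega$ ensures $C':=\bigcup_{n,j<\omega}C^{n,j}$ still has size $<\theta$. By Theorem \ref{TempIt}(e) $\Por\frestr C'\lessdot\Por\frestr A$, so incompatibility among the $p^{n,j}$ transfers downward, and predensity lifts back down because compatibility in $\Por\frestr A$ of two conditions in $\Por\frestr C'$ implies compatibility in $\Por\frestr C'$. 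Hence each antichain remains a maximal antichain of $\Por\frestr C'$, making $\dot{x}$ a $\Por\frestr C'$-name.

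The main obstacle I anticipate is the bookkeeping in the successor case of (b): both verifying that the candidate support $C$ meets the template condition $C\cap L_x\in\Iwf_x\frestr C$, and checking that $p(x)$ genuinely descends to a $\Por\frestr(C\cap L_x)$-name for a condition in $\Qnm^{C\cap L_x}_x$ (which in the $L_C$ case forces the inclusion $C_x\subseteq C$, and in the $L_S$ case relies on the Suslin absoluteness machinery). Once these are in place the induction runs routinely.
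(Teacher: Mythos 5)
Your overall route is the paper's: (a) is quoted from Lemma \ref{templateitccc} with exactly the witnessing sequences you describe, and (b),(c) are proved by a simultaneous induction on $\mathrm{Dp}(A)$ whose essential step is the successor case, with (c) obtained from ccc-ness plus the fact that a name for a real is determined by countably many conditions. All of that matches the paper and is fine, including the verification that $C\cap L_x\in\Iwf_x\frestr C$ via closure of $\Iwf_x$ under finite unions and the use of Suslin absoluteness (Lemma \ref{Suslin2stepit}) in the $L_S$ case.

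The one step that fails as written is the \emph{unconditional} adjoining of $C_x$ in the successor case for $x\in L_C$. If $C_x\subseteq B$ there is no problem: then $C_x\subseteq B\subseteq A$, so $C\subseteq A$, and your verification goes through. But if $C_x\not\subseteq B$ --- precisely the situation the definition in Example \ref{ExpTempItFund} is designed to accommodate --- then $\Qnm^B_x$ is the trivial poset and $p(x)$ is a name for its unique condition. Adjoining $C_x$ then causes two problems. First, nothing guarantees $C_x\subseteq A$: your $C_0,C_1$ are subsets of $B\subseteq A$, but $C_x$ is a fixed member of $\hat{\Iwf}_x$ which may meet $L\menos A$, so the requirement $C\subseteq A$ in (b) can be violated (and this containment is really used later, e.g.\ in the proof of Corollary \ref{EmbCor}). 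Second, once $C_x\subseteq C\cap L_x$ you have $\Qnm^{C\cap L_x}_x=\Qnm_x$, and $p(x)$, being a name for the condition of $\mathds{1}$, need not name a condition of $\Qnm_x$, so your verification clause for the $L_C$ case does not apply to it. The repair is the paper's case split: adjoin $C_x$ only when $C_x\subseteq B$; when $C_x\not\subseteq B$, take $C:=C_0\cup C_1\cup\{x\}$ and use $B':=C_0\cup C_1$ as the witness required by the definition of $\Por\frestr C$ (it lies in $\Iwf_x\frestr C$ because $C_0\cup C_1$ is contained in a member $H$ of $\Iwf_x$ and $x\notin H$), noting that $C_x\not\subseteq B'$, so $\Qnm^{B'}_x$ is again trivial and $p(x)$ trivially names one of its conditions. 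With this adjustment the induction closes exactly as in the paper.
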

\begin{proof}
   (a) follows from Lemma \ref{templateitccc}. We prove (b) and (c) simultaneously by induction on $\mathrm{Dp}(A)$. Let $p\in\Por\frestr A$ and $x=\max(\dom p)$, so there exists a $B\in\Iwf_x\frestr A$ such that $p\frestr L_x\in\Por\frestr B$ and $p(x)$ is a $\Por\frestr B$-name for a condition in $\Qnm_x^B$. By induction hypothesis, there exists $D\subseteq B$ of size $<\theta$ such that $p\frestr L_x\in\Por\frestr D$ and $p(x)$ is a $\Por\frestr D$-name for a real. If $x\in L_S$, then clearly $p(x)$ is a name for a condition in $\Qnm^{D}_x=\Snm^{V^{\Por\upharpoonright D}}_x$, so $p\in\Por\frestr(D\cup\{x\})$. When $x\in L_C$, if $C_x\not\subseteq B$ then $p(x)$ will be the trivial condition and $p\in\Por\frestr(D\cup\{x\})$. Else, if $C_x\subseteq B$, we may assume $C_x\subseteq D$, so $p(x)$ is a $\Por\frestr D$-name for a condition in $\Qnm_x^D=\Qnm_x$. Then, $p\in\Por\frestr(D\cup\{x\})$.

   Now, if $\dot{x}$ is a $\Por\frestr A$-name for a real, note that it can be determined by countably many conditions $\langle r_n\rangle_{n<\omega}$ in $\Por\frestr A$. As each $r_n\in\Por\frestr E_n$ for some $E_n\subseteq A$ of size $<\theta$ and $\theta$ has uncountable cofinality, then $C':=\bigcup_{n<\omega}E_n\subseteq A$ has size $<\theta$ and $r_n\in\Por\frestr C'$ for all $n<\omega$. This implies that $\dot{x}$ is a $\Por\frestr C'$-name.
\end{proof}

In fact, \cite[Lemma 2.4]{brendle2} corresponds to the case $L_C=\varnothing$, so $C$ and $C'$ can be found countable in there. But this cannot be guaranteed in the presence of non-definable posets that come from $L_C\neq\varnothing$.

The following is a consequence of Theorem \ref{EmbThm} that fits better for our applications. Although this type of results was considered originally to get only forcing equivalence, we need to extend to cases where we can get regular contention, fact that is needed in order to deal with the limit steps of small cofinality in the proof of Theorem \ref{AppSplitting}.

\begin{corollary}[Regular contention between template iterations, particular case]\label{EmbCor}
   Let $\theta$ be a cardinal with uncountable cofinality, $L$ a linear order, $\bar{\Iwf}$ and $\bar{\Jwf}$ templates on $L$ such that $\langle L,\bar{\Jwf}\rangle$ is a $\theta$-innocuous extension of $\langle L,\bar{\Iwf}\rangle$. Consider two template iterations $\Por\frestr\langle L,\bar{\Iwf}\rangle$
   and $\check{\Por}\frestr\langle L,\bar{\Jwf}\rangle$ defined with the conditions of Lemma \ref{CondSupp}, such that
   \begin{enumerate}[(1')]
   \setcounter{enumi}{-1}
      \item The same $L_S$ and $L_C$ are considered for both iterations.
      \item For $x\in L_S$, the same Suslin forcing
            $\Sor_x$ is considered for both template iterations.
      \item For $x\in L_C$ either $\check{C}_x=C_x$ and $\dot{\check{\Qor}}_x=\Qnm_x$, or $C_x=\varnothing$ and $\Qnm_x$ is the trivial forcing.
   \end{enumerate}
   Then, the following hold for each $B\subseteq L$.
   \begin{enumerate}[(a)]
      \item $\Por\frestr B\lessdot\check{\Por}\frestr B$.
      \item If $A\subseteq B$, then $\langle\Por\frestr A,\check{\Por}\frestr A,\Por\frestr B,\check{\Por}\frestr B\rangle$ is a correct diagram.
   \end{enumerate}
\end{corollary}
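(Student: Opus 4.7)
The plan is to derive both claims as a direct application of Theorem \ref{EmbThm} to the pair $\Por\frestr\langle L,\bar{\Iwf}\rangle$ and $\check{\Por}\frestr\langle L,\bar{\Jwf}\rangle$, so the work reduces to verifying the four hypotheses (1)--(4) of that theorem. The innocuity assumption and the fact that, in both iterations, the fibers are either the same Suslin forcing or governed by the same name $\Qnm_x$ (over the same small set $C_x$) should make each of (1)--(4) fall out cleanly.

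For hypothesis (1), fix $x\in L$ and $B\in\hat{\Iwf}_x$ with $\Por\frestr B\lessdot\check{\Por}\frestr B$. If $x\in L_S$, then $\Qnm_x^B$ and $\dot{\check{\Qor}}_x^B$ are names for the same Suslin ccc poset $\Sor_x$ interpreted in $V^{\Por\upharpoonright B}$ and $V^{\check{\Por}\upharpoonright B}$, respectively, so Lemma \ref{Suslin2stepit} yields the embedding. If $x\in L_C$, condition (2') of the statement guarantees that either the two names coincide (both $\Qnm_x$ when $C_x\subseteq B$, both trivial otherwise) or $\Qnm_x^B$ itself is trivial, and in every case the embedding is immediate. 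For hypothesis (2), assuming $\langle\Por\frestr A,\check{\Por}\frestr A,\Por\frestr B,\check{\Por}\frestr B\rangle$ is correct with $A\subseteq B$, the case $x\in L_S$ is handled by correctness-preservation of $\Sor_x$, while for $x\in L_C$ I reduce to Lemma \ref{lemmacorrpres}(b) (when the four corners are extended by the same $\Por\frestr C_x$-name $\Qnm_x$) or to Lemma \ref{lemmacorrpres}(a) with a trivial name (when $C_x$ fails to be contained in $A$ or in $B$).

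Hypotheses (3) and (4) are the crux of the argument, and the place where $\theta$-innocuity enters. Fix $B\subseteq L$, $x\in B$ and $C\in\Jwf_x\frestr B$, so $C\subseteq H$ for some $H\in\Jwf_x$. For (3), given $p\in\check{\Por}\frestr C$, Lemma \ref{CondSupp}(b) applied to $\check{\Por}$ yields $D\subseteq C$ of size $<\theta$ with $p\in\check{\Por}\frestr D$. For (4), given a $\check{\Por}\frestr C$-name $\dot{q}$ for a condition in $\dot{\check{\Qor}}_x^C$, Lemma \ref{CondSupp}(c) produces such a $D$; when $x\in L_C$ and $C_x\subseteq C$, I replace $D$ by $D\cup C_x$ (still of size $<\theta$ because $|C_x|<\theta$) to ensure that $\dot{\check{\Qor}}_x^D$ is the intended $\Qnm_x$ rather than a trivial poset. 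Since $D\subseteq H\in\Jwf_x$ and $|D|<\theta$, the $\theta$-innocuity of $\langle L,\bar{\Jwf}\rangle$ over $\langle L,\bar{\Iwf}\rangle$ supplies $H'\in\Iwf_x$ with $D\subseteq H'$; setting $A:=B\cap H'\in\Iwf_x\frestr B$ gives $p\in\check{\Por}\frestr A$ in (3) and $\dot{q}$ a $\check{\Por}\frestr A$-name for a condition in $\dot{\check{\Qor}}_x^A$ in (4), by monotonicity from Theorem \ref{TempIt}(b). The main bookkeeping subtlety, which I expect to be the principal obstacle, is precisely this treatment of (4) in the $L_C$-case: one must simultaneously absorb $C_x$ into the support of $\dot{q}$ and invoke clause (2') to identify $\dot{\check{\Qor}}_x^A$ with $\Qnm_x$, while keeping the support strictly below $\theta$ so that $\theta$-innocuity can still be applied.
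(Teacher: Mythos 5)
Your proposal is correct and follows essentially the same route as the paper: reduce to verifying hypotheses (1)--(4) of Theorem \ref{EmbThm}, handle (1) and (2) by cases on $L_S$ versus $L_C$ using correctness preservation and Lemma \ref{lemmacorrpres}, and derive (3) and (4) from the small-support property of Lemma \ref{CondSupp} combined with $\theta$-innocuity (absorbing $C_x$ into the support in the $L_C$ case of (4), exactly as the paper does).
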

\begin{proof}
   It is enough to prove conditions (1)-(4) of Theorem \ref{EmbThm}.
   \begin{enumerate}[(1)]
      \item Straightforward from (0'), (1') and (2').
      \item For $x\in L_S$, the result follows because $\Sor_x$ is a correctness-preserving Suslin ccc notion. For $x\in L_C$, it follows from (2') and Lemma \ref{lemmacorrpres}.
      \item Let $B\subseteq L$, $x\in B$, $C\in\Jwf_x\frestr B$ and $p\in\check{\Por}\frestr C$. By Lemma \ref{CondSupp}, there exists a $K\subseteq C$ such that $p\in\check{\Por}\frestr K$ and $|K|<\theta$. Then, by $\theta$-innocuity, there exists an $H\in\Iwf_x$ such that $K\subseteq H$, so $K\subseteq A$ and $p\in\check{\Por}\frestr A$ where $A:=B\cap H\in\Iwf_x\frestr B$.
      \item Let $B\subseteq L$, $x\in B$, $C\in\Jwf_x\frestr B$ and $\dot{q}$ a $\check{\Por}\frestr C$-name for a condition in $\dot{\check{\Qor}}_x^C$. A similar argument as before works using Lemma \ref{CondSupp}. We show this for $x\in L_C$ (the case for $x\in L_S$ is even simpler). If $\check{C}_x\subseteq C$, find $K\subseteq C$ such that $\dot{q}$ is a $\check{\Por}\frestr K$-name for a real, $|K|<\theta$ and $\check{C}_x\subseteq K$. Hence, $\dot{q}$ is a $\check{\Por}\frestr K$-name for a condition in $\dot{\check{\Qor}}_x$. On the other hand, by $\theta$-innocuity, there is an $A\in\Iwf_x\frestr B$ containing $K$ and $\dot{q}$ is a $\check{\Por}\frestr A$-name for a condition in $\dot{\check{\Qor}}_x$. The case $\check{C_x}\nsubseteq C$ is simpler because $\dot{q}$ is a $\check{\Por}\frestr C$-name for the trivial condition.
   \end{enumerate}
\end{proof}

We conclude this section with a generalization of \cite[Lemma 1.7]{br}.

\begin{lemma}\label{InnEqv}
   Assume that $\langle L,\bar{\Jwf}\rangle$ is a $\theta$-innocuous extension of $\langle L,\bar{\Iwf}\rangle$. Consider $\Por\frestr\langle L,\bar{\Iwf}\rangle$ and $\check{\Por}\frestr\langle L,\bar{\Jwf}\rangle$ template iterations satisfying the hypotheses of Corollary \ref{EmbCor}, but in (2') always assume that $\check{C}_x=C_x$ and $\dot{\check{\Qor}}_x=\Qnm_x$.
   Then, there exists a dense embedding $F:\check{\Por}\frestr L\to\Por\frestr L$.
\end{lemma}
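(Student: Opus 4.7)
The plan is to show that, via the regular embedding $\Por\frestr L\lessdot\check{\Por}\frestr L$ given by Corollary \ref{EmbCor}, $\Por\frestr L$ is actually \emph{dense} in $\check{\Por}\frestr L$; this yields forcing equivalence of the two posets, from which a dense embedding $F$ can be produced by the standard isomorphism of Boolean completions. I would establish density by induction on $A\subseteq L$, using $\mathrm{Dp}^{\bar{\Jwf}}(A)$ as the well-founded parameter from Lemma \ref{UpsilonTemp}, proving simultaneously that $\Por\frestr A$ is dense in $\check{\Por}\frestr A$ for every $A\subseteq L$. The two essential tools are Lemma \ref{CondSupp}, which grants small-support (of size $<\theta$) witnesses for conditions and for names of reals in $\check{\Por}\frestr A$, and $\theta$-innocuity, which upgrades any such $\bar{\Jwf}$-support to a $\bar{\Iwf}$-support.

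For the inductive step, take a non-trivial $q\in\check{\Por}\frestr A$ with $x=\max(\dom q)$ and let $B\in\Jwf_x\frestr A$ witness that $q\frestr L_x\in\check{\Por}\frestr B$ and $q(x)$ is a $\check{\Por}\frestr B$-name for a condition in $\dot{\check{\Qor}}_x^B$. Apply Lemma \ref{CondSupp} to obtain $K\subseteq B$ with $|K|<\theta$ such that $q\frestr L_x\in\check{\Por}\frestr K$ and $q(x)$ is a $\check{\Por}\frestr K$-name, enlarging $K$ to contain $C_x$ when $x\in L_C$. By $\theta$-innocuity, pick $H\in\Iwf_x$ containing $K$ and, replacing $H$ by $A\cap H\in\Iwf_x\frestr A$, assume $H\subseteq A$. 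Since $\mathrm{Dp}^{\bar{\Jwf}}(H)<\mathrm{Dp}^{\bar{\Jwf}}(A)$ by Lemma \ref{UpsilonTemp}(c), the induction hypothesis gives that $\Por\frestr H$ is dense in $\check{\Por}\frestr H$, so I pick $p'\in\Por\frestr H$ with $p'\leq q\frestr L_x$. The induction hypothesis further provides forcing equivalence of $\Por\frestr H$ and $\check{\Por}\frestr H$, so the $\check{\Por}\frestr H$-name $q(x)$ translates into a $\Por\frestr H$-name $\dot r$ for the same value, which is a condition in $\Qnm_x^H$; here one uses that $\Qnm_x^H$ and $\dot{\check{\Qor}}_x^H$ are concretely the same poset of reals (both equal $\Sor_x$ for $x\in L_S$, or $\Qnm_x$ for $x\in L_C$ with $C_x\subseteq H$, or the trivial poset otherwise) interpreted in the common generic extension. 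Then $F_A(q):=p'\widehat{\ \ }\langle\dot r\rangle_x$ lies in $\Por\frestr(H\cup\{x\})\subseteq\Por\frestr A$ and is below $q$.

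The main obstacle will be packaging the density witness into an honest dense embedding $F$ that preserves order and incompatibility, rather than a bare selection. Once density of $\Por\frestr L$ inside $\check{\Por}\frestr L$ is established this follows from the standard construction of a dense embedding from an isomorphism of Boolean completions; alternatively, one strengthens the inductive statement to produce $F_A$ as a genuine dense embedding at each stage, choosing $H$, $p'$ and $\dot r$ sufficiently canonically (e.g.\ relative to fixed enumerations of maximal antichains) so that coherence across the recursion is preserved, with correctness of the induced diagrams (Corollary \ref{EmbCor}(b)) ensuring that incompatibilities are reflected. The overall structure should closely mirror \cite[Lemma 1.7]{br}, with the only new bookkeeping being the absorption of the supports $C_x$ for $x\in L_C$ into $K$ before invoking $\theta$-innocuity.
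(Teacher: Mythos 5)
Your inductive core---shrink the $\bar{\Jwf}$-support of $q$ to a set of size $<\theta$ via Lemma \ref{CondSupp}, absorb $C_x$ when $x\in L_C$, upgrade to an $\Iwf_x$-support by $\theta$-innocuity, and recurse on $\mathrm{Dp}^{\bar{\Jwf}}$---is exactly the mechanism of the paper's proof (which follows \cite[Lemma 1.7]{br}). The gap is in the packaging. Your headline route would at best establish that $\Por\frestr L$ is dense in $\check{\Por}\frestr L$ and hence that the two posets are forcing equivalent; but forcing equivalence, i.e.\ isomorphic Boolean completions, only gives a dense embedding of $\check{\Por}\frestr L$ into the \emph{completion} of $\Por\frestr L$, not into $\Por\frestr L$ itself, which is what the lemma asserts (and which later constructions mirror with coherent systems of maps, cf.\ $F_{\bar{A}}$ in Lemma \ref{ApprSplUltrapow}). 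Moreover, the map you would extract from density---send $q$ to some $p'\leq q\frestr L_x$ provided by the induction hypothesis, concatenated with a translated name---is a bare selection: if $q'\leq q$, nothing guarantees that the $p'$ chosen for $q'$ lies below the one chosen for $q$, so order preservation, and with it the notion of dense embedding, fails for this assignment.

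The paper closes exactly this hole by building $F_B:\check{\Por}\frestr B\to\Por\frestr B$ by recursion on $\mathrm{Dp}^{\bar{\Jwf}}(B)$ with the additional inductive clause $F_{B'}\subseteq F_B$ whenever $B'\subseteq B$: after shrinking the support and applying innocuity to obtain $\bar{C}\in\Iwf_x\frestr B$, one sets $F_B(p):=F_{\bar{C}}(p\frestr L_x)\widehat{\ \ }\langle p_0(x)\rangle_x$, where $p_0(x)$ is the $\Por\frestr\bar{C}$-name obtained from $p(x)$ by translating along the \emph{already constructed} embedding $F_{\bar{C}}$; crucially, $p\frestr L_x$ is mapped to its canonical image $F_{\bar{C}}(p\frestr L_x)$, not to an arbitrary extension supplied by density. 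The coherence clause is what makes $F_B(p)$ independent of the chosen support $\bar{C}$ and what yields monotonicity and preservation of incompatibility. Your closing remark about making choices ``canonically relative to fixed enumerations of maximal antichains'' gestures at this but leaves the actual mechanism unstated, and it is precisely the content of the proof; with that adjustment your argument coincides with the paper's, while the density statement you take as the engine is neither needed nor proved there.
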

\begin{proof}
   Proceed like in the proof of \cite[Lemma 1.7]{br}. By recursion on $\mathrm{Dp}^{\bar{\Jwf}}(B)$ for $B\subseteq L$, construct
   $F_B:\check{\Por}\frestr B\to\Por\frestr B$ such that
   \begin{enumerate}[(1)]
      \item $F_B$ is a dense embedding and
      \item $F_{B'}\subseteq F_B$ whenever $B'\subseteq B$.
   \end{enumerate}
   Let $p\in\check{\Por}\frestr B$. If $p=\varnothing$, put $F_B(\varnothing)=\varnothing$, so assume that $p\neq\varnothing$. Let $x:=\max(\dom p)$ and find $\bar{B}\in \Jwf_x\frestr B$ such that $p\frestr L_x\in\check{\Por}\frestr\bar{B}$ and $p(x)$ is a $\check{\Por}\frestr\bar{B}$-name for a condition in $\dot{\check{\Qor}}^{\bar{B}}_x$. Consider the following cases.
   \begin{enumerate}[(i)]
      \item \emph{$x\in L_S$}. By hypothesis, there exists an $\bar{A}\subseteq\bar{B}$ of size $<\theta$ such that $p\frestr L_x\in\check{\Por}\frestr\bar{A}$ and $p(x)$ is a $\check{\Por}\frestr\bar{A}$-name for a condition in $\Sor_x^{V^{\check{\Por}\upharpoonright\bar{A}}}$. By innocuity, there exists a $\bar{C}\in\Iwf_x\frestr B\subseteq\Jwf_x\frestr B$
          containing $\bar{A}$, so $p\frestr L_x\in\check{\Por}\frestr\bar{C}$ and $p(x)$ is a $\check{\Por}\frestr\bar{C}$-name for a condition in $\Sor_x^{V^{\check{\Por}\upharpoonright\bar{C}}}$. As $\mathrm{Dp}^{\bar{\Jwf}}(\bar{C})<\mathrm{Dp}^{\bar{\Jwf}}(B)$, the function $F_{\bar{C}}$ has already been defined. So let
          $F_B(p):=F_{\bar{C}}(p\frestr L_x)\widehat{\ \ }\langle p_0(x)\rangle_x$ where $p_0(x)$ is the $\Por\frestr\bar{C}$-name associated to $p(x)$ with respect to the function $F_{\bar{C}}$. Notice that, because of (2), $F_B(p)$ does not depend on the choice of $\bar{C}$.
      \item \emph{$x\in L_C$ and $C_x\subseteq\bar{B}$}, so $\dot{\check{\Qor}}^{\bar{B}}_x=\Qnm_x$. Proceed like before, but take $\bar{A}$ such that $C_x\subseteq \bar{A}$.
      \item \emph{$x\in L_C$ but $C_x\not\subseteq\bar{B}$}, so $\dot{\check{\Qor}}^{\bar{B}}_x=\mathds{1}$, that is, $p(x)$ is forced to be the trivial condition.
          Proceed as in\footnote{Here, $F_B(p)=F_{\bar{B}}(p\frestr L_x)$ would be ok, but proceeding as in (i) guarantees that $\dom F_B(p)=\dom p$.} (i).
   \end{enumerate}
\end{proof}


\section{Preservation theorems}\label{SecPres}

The main goal of this section is to prove preservation results for template iterations associated to some cardinal invariants. These preservation properties were developed in the context of fsi of ccc posets by Judah and Shelah \cite{jushe}, with improvements by Brendle \cite{brendle}. These are summarized and generalized in \cite{gold} and in \cite[Sect. 6.4 and 6.5]{barju}. The exposition in \cite{mejia,mejia02} is very close to the presentation of this section.

\begin{context}\label{ContextUnbd}
 Fix an increasing sequence $\langle\sqsubset_n\rangle_{n<\omega}$ of 2-place closed relations (in the topological sense) in $\omega^\omega$ such that, for any $n<\omega$ and $g\in\omega^\omega$, $(\sqsubset_n)^g=\left\{f\in\omega^\omega\ /\ f\sqsubset_n g\right\}$ is (closed) nwd (nowhere dense).

 Put $\sqsubset=\bigcup_{n<\omega}\sqsubset_n$. Therefore, for every $g\in\omega^\omega$, $(\sqsubset)^g$ is an $F_\sigma$ meager set.

 For $f,g\in\omega^\omega$, say that \emph{$g$ $\sqsubset$-dominates $f$} if $f\sqsubset g$. $F\subseteq\omega^\omega$ is a \emph{$\sqsubset$-unbounded family} if no function in $\omega^\omega$ $\sqsubset$-dominates all the members of $F$. Associate with this notion the cardinal $\bfrak_\sqsubset$, which is the least size of a $\sqsubset$-unbounded family. Dually, say that $C\subseteq\omega^\omega$ is a \emph{$\sqsubset$-dominating family} if any real in $\omega^\omega$ is $\sqsubset$-dominated by some member of $C$. The cardinal $\dfrak_\sqsubset$ is the least size of a $\sqsubset$-dominating family.

 Given a set $Y$, say that a real $f\in\omega^\omega$ is \emph{$\sqsubset$-unbounded over $Y$} if $f\not\sqsubset g$ for every $g\in Y\cap\omega^\omega$. This is denoted by $f\not\sqsubset Y$.
\end{context}

Although we define Context \ref{ContextUnbd} for $\omega^\omega$, we can use, in general, the same notion by changing the space for the domain or the codomain of $\sqsubset$ to another uncountable Polish space, like $2^\omega$ or other spaces whose members can be coded by reals in $\omega^\omega$.

For all the notions, results and discussions in this section, fix $\theta$ an uncountable regular cardinal.

\begin{definition}[Judah and Shelah {\cite{jushe}}, {\cite[Def. 6.4.4]{barju}}]\label{DefPresProp}
   A forcing notion $\Por$ is \emph{$\theta$-$\sqsubset$-good} if the following property holds\footnote{\cite[Def. 6.4.4]{barju} has a different formulation, which is equivalent to our formulation for $\theta$-cc posets (recall that $\theta$ is uncountable regular). See \cite[Lemma 2]{mejia} for details.}: For any $\Por$-name $\dot{h}$ for a real in $\omega^\omega$, there exists a nonempty $Y\subseteq\omega^\omega$ (in the ground model) of size $<\theta$ such that, for any $f\in\omega^\omega$, if $f\not\sqsubset Y$ then $\Vdash f\not\sqsubset\dot{h}$.

   Say that $\Por$ is \emph{$\sqsubset$-good} if it is $\aleph_1$-$\sqsubset$-good.
\end{definition}

This is a standard property associated to the preservation of $\bfrak_\sqsubset\leq\theta$ and the preservation of $\dfrak_\sqsubset$ large through forcing extensions that have the property. To explain this, first say that $F\subseteq\omega^\omega$ is \emph{$\theta$-$\sqsubset$-unbounded} if, for any $X\subseteq\omega^\omega$ of size $<\theta$, there exists an $f\in F$ which is $\sqsubset$-unbounded over $X$. It is clear that, if $F$ is such a family, then $\bfrak_\sqsubset\leq|F|$ and $\theta\leq\dfrak_\sqsubset$. In practice, $F$ has size $\theta$, so $\bfrak_\sqsubset\leq\theta$. On the other hand, $\theta$-$\sqsubset$-good posets preserve, in any generic extension, $\theta$-$\sqsubset$-unbounded families of the ground model and, if $\lambda\geq\theta$ is a cardinal and $\dfrak_\sqsubset\geq\lambda$ in the ground model, then this inequality is also preserved in any generic extension. It is also known that the property of Definition \ref{DefPresProp} is preserved under fsi of $\theta$-cc posets. Also, for posets $\Por\lessdot\Qor$, if $\Qor$ is $\theta$-$\sqsubset$-good, then so is $\Por$.

We prove in this section that this property, under some conditions, is preserved through template iterations. Before that, we present some examples.

\begin{lemma}[{\cite[Lemma 4]{mejia}}]\label{smallPlus}
   Any poset of size $<\theta$ is $\theta$-$\sqsubset$-good. In particular, $\Cor$ is $\sqsubset$-good.
\end{lemma}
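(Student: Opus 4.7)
The plan is to associate an ``interpretation'' $h_p\in\omega^\omega$ of the name $\dot{h}$ to each $p\in\Por$ and let $Y:=\{h_p:p\in\Por\}$, which has size $\leq|\Por|<\theta$. Concretely, I would fix a $\Por$-name $\dot{h}$ for a real and, for each $p\in\Por$, recursively pick a descending sequence $p=q_0^p\geq q_1^p\geq\cdots$ with $q_m^p$ deciding $\dot{h}\frestr m$; the value $h_p\in\omega^\omega$ is then uniquely determined by the requirement $q_m^p\Vdash\dot{h}\frestr m=h_p\frestr m$.

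To verify that $Y$ witnesses $\theta$-$\sqsubset$-goodness, I would suppose $f\in\omega^\omega$ fails $\Vdash f\not\sqsubset\dot{h}$, so that some $p\in\Por$ forces $f\sqsubset\dot{h}$. Since $\sqsubset=\bigcup_{n<\omega}\sqsubset_n$, the set $\{r\leq p:\exists n<\omega\,(r\Vdash f\sqsubset_n\dot{h})\}$ is dense below $p$, and I would pick $q\leq p$ together with $n<\omega$ such that $q\Vdash f\sqsubset_n\dot{h}$. The goal then reduces to showing $f\sqsubset_n h_q$, which gives $h_q\in Y$ with $f\sqsubset h_q$ as required.

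The key step, which I expect to be the main obstacle, is the limit argument connecting forcing with closedness. For every $m<\omega$ the condition $q_m^q\leq q$ forces both $\dot{h}\frestr m=h_q\frestr m$ and $f\sqsubset_n\dot{h}$; consequently the statement ``there exists $g\in\omega^\omega$ with $g\frestr m=h_q\frestr m$ and $f\sqsubset_n g$'' is consistent with $q_m^q$ and, being the nonemptiness of a closed subset of $\omega^\omega$, is absolute to $V$. Picking such $g_m$ yields a sequence converging to $h_q$ whose members all lie in the closed section $(\sqsubset_n)^f$; closedness of $\sqsubset_n$ then forces $h_q\in(\sqsubset_n)^f$, i.e., $f\sqsubset_n h_q$. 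The delicate point is precisely that, because only $\sqsubset$ (not each $\sqsubset_n$) is assumed $F_\sigma$, without the closedness of the relations $\sqsubset_n$ the argument would collapse: one needs a single $n$ to be witnessed below $q$ so that the limit takes place inside a genuinely closed section.

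Finally, for the ``in particular'' clause, $\Cor=2^{<\omega}$ has size $\aleph_0<\aleph_1$, so by the first part it is $\aleph_1$-$\sqsubset$-good, i.e., $\sqsubset$-good.
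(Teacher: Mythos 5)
Your proof is correct and is essentially the standard argument: the paper itself only cites \cite[Lemma 4]{mejia} for this lemma, and that proof (mirrored in the paper's own Lemmas \ref{PresGoodLimdir}, \ref{Presnewrealccc} and \ref{PresUnbLemma}) is exactly your scheme of interpreting $\dot{h}$ below each $p\in\Por$ and exploiting closedness of $\sqsubset_n$ — the cited version merely phrases the last step contrapositively (if $f\not\sqsubset_n h_q$, some basic neighborhood $[h_q\frestr m]$ misses the closed set $(\sqsubset_n)_f=\{g\ /\ f\sqsubset_n g\}$, so $q^q_m$ forces $f\not\sqsubset_n\dot{h}$), which is the same absoluteness content as your limit-of-witnesses argument. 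One cosmetic point: in the paper's notation the set you use is $(\sqsubset_n)_f$, whereas $(\sqsubset_n)^f$ denotes $\{e\ /\ e\sqsubset_n f\}$.
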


\begin{example}\label{SubsecUnbd}
 \begin{enumerate}[(1)]
  \item \emph{Preserving non-meager sets:} For $f,g\in\omega^\omega$ and $n<\omega$, define $f\eqcirc_n g$ iff $\forall_{k\geq n}(f(k)\neq g(k))$, so $f\eqcirc g$ iff $f$ and $g$ are eventually different, that is, $\forall^\infty_{k<\omega}(f(k)\neq g(k))$. Theorem \ref{Catchar} implies that $\bfrak_\eqcirc=\non(\Mwf)$ and $\dfrak_\eqcirc=\cov(\Mwf)$.

  \item \emph{Preserving unbounded families:} For $f,g\in\omega^\omega$, define $f\leq^*_n g$ iff $\forall_{k\geq n}(f(k)\leq g(k))$, so $f\leq^*g$ iff $\forall^\infty_{k\in\omega}(f(k)\leq g(k))$. Clearly, $\bfrak=\bfrak_{\leq^*}$ and $\dfrak=\dfrak_{\leq^*}$. Miller \cite{miller} proved that $\Eor$ is $\leq^*$-good. $\Bor$ is also $\leq^*$-good because it is $\omega^\omega$-bounding.

  \item \emph{Preserving splitting families:} For $A,B\in[\omega]^\omega$, define $A\propto_n B\sii(B\menos n\subseteq A\textrm{\ or }B\menos n\subseteq\omega\menos A)$, so $A\propto B\sii(B\subseteq^* A\textrm{\ or }B\subseteq^*\omega\menos A)$. Note also that $A\not\propto B$ iff $A$ splits $B$, so $\sfrak=\bfrak_\propto$ and $\rfrak=\dfrak_\propto$. Baumgartner and Dordal \cite{baudor} proved that $\Dor$ is $\propto$-good (see also \cite[Main Lemma 3.8]{brendlebog}).

  \item \emph{Preserving finitely splitting families:} For $a\in[\omega]^\omega$ and an interval partition $\bar{J}=\langle J_n\rangle_{n<\omega}$ of $\omega$, define $a\vartriangleright_n\bar{J}\sii(\forall_{k\geq n}(J_k\nsubseteq a)\textrm{\ or }\forall_{k\geq n}(J_k\nsubseteq\omega\menos a))$, so $a\vartriangleright\bar{J}$ iff $(\forall_{k<\omega}^\infty(J_k\nsubseteq a)\textrm{\ or }\forall^\infty_{k<\omega}(J_k\nsubseteq\omega\menos a))$. $a\not\vartriangleright\bar{J}$ is read \emph{$a$ splits $\bar{J}$}. It is proved in \cite{KaWe-Spl} that $\bfrak_\vartriangleright=\max\{\bfrak,\sfrak\}$ and $\dfrak_\vartriangleright=\min\{\dfrak,\rfrak\}$.

  The author proved in \cite[Lemma 2.20]{mejia02} that any $\leq^*$-good poset is $\vartriangleright$-good. In particular, $\Bor$ and $\Eor$ are $\vartriangleright$-good.

  \item \emph{Preserving null-covering families:} Let $\langle I_n\rangle_{n<\omega}$ be the interval partition of $\omega$ such that $\forall_{n<\omega}(|I_n|=2^{n+1})$ . For $f,g\in2^\omega$ define $f\pitchfork_ng\sii\forall_{k\geq n}(f\frestr I_k\neq g\frestr I_k)$, so $f\pitchfork g\sii \forall^\infty_{k<\omega}(f\frestr I_k\neq g\frestr I_k)$. Clearly, $(\pitchfork)^g$ is a co-null $F_\sigma$ meager set. This relation is related to the covering-uniformity of measure because $\cov(\Nwf)\leq\bfrak_\pitchfork\leq\non(\Mwf)$ and $\cov(\Mwf)\leq\dfrak_\pitchfork\leq\non(\Nwf)$ (see \cite[Lemma 7]{mejia}).

  It is known in \cite[Lemma $1^*$]{brendle} that, given an infinite cardinal $\nu<\theta$, every $\nu$-centered forcing notion is $\theta$-$\pitchfork$-good.

  \item \emph{Preserving ``union of non-null sets is non-null'':} Fix $\Hwf:=\{id^{k+1}\ /\ k<\omega\}$ (where $id^{k+1}(i)=i^{k+1}$) and let $S(\omega,\Hwf):=\bigcup_{h\in\Hwf}S(\omega,h)$. For $x\in\omega^\omega$ and a slalom $\psi\in S(\omega,\Hwf)$, put $x\in^*_n\psi$ iff $\forall_{k\geq n}(x(k)\in\psi(k))$, so $x\in^*\psi$ iff $\forall^\infty_{k<\omega}(x(k)\in\psi(k))$. By Bartoszy\'{n}ski characterization (Theorem \ref{BartChar}) applied to $id$ and to a function $g$ that dominates all the functions in $\Hwf$, $\add(\Nwf)=\bfrak_{\in^*}$ and $\cof(\Nwf)=\dfrak_{\in^*}$.

  Judah and Shelah \cite{jushe} proved that, given an infinite cardinal $\nu<\theta$, every $\nu$-centered forcing notion is $\theta$-$\in^*$-good. Moreover, as a consequence of results of Kamburelis \cite{kamburelis}, any subalgebra\footnote{Here, $\Bor$ is seen as the complete Boolean algebra of Borel sets (in $2^\omega$) modulo the null ideal.} of $\Bor$ is $\in^*$-good.
 \end{enumerate}
\end{example}

\begin{example}[Preserving new reals]\label{SubsecNewreal}
   For $f,g\in\omega^\omega$ define $f=^*_n g$ as $\forall_{k\geq n}(f(k)=g(k))$, so $f=^*g\sii\forall^\infty_{k<\omega}(f(k)=g(k))$. Note that, if $M$ is a model of $\thzfc$ and $c$ is a real, then $c$ is $=^*$-unbounded over $M$ iff $c\notin M$. It is also easy to see that $b_{=^*}=2$ and $d_{=^*}=\cfrak$. Here, we are not interested in the cardinal invariants but in the ``preservation'' of new reals that are added at certain stage of an iteration and that cannot be added at other different stages. Concretely, we use this relation to prove Theorem \ref{newrealint}.
\end{example}

\begin{lemma}\label{Presnewrealccc}
   Any $\theta$-cc poset is $\theta$-$=^*$-good. In particular, ccc posets are $=^*$-good.
\end{lemma}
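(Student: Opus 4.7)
My plan is to exhibit, given any $\Por$-name $\dot h$ for a real in $\omega^\omega$, a nonempty $Y\subseteq\omega^\omega$ of size $<\theta$ that captures all ``tails'' of $\dot h$ that some condition forces. The key observation is that, under $\theta$-cc, for each starting index $n<\omega$ only $<\theta$ distinct tails $t\in\omega^{[n,\omega)}$ can be forced to coincide with $\dot h\frestr[n,\omega)$, since conditions forcing distinct tails must be pairwise incompatible.

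Precisely, for each $n<\omega$ I would set
\[T_n:=\{t\in\omega^{[n,\omega)}\ /\ \exists_{p\in\Por}(p\Vdash\dot h\frestr[n,\omega)=\check t)\}.\]
If $t_1\neq t_2$ both belong to $T_n$ with respective witnesses $p_1,p_2\in\Por$, then any common extension would force $t_1=t_2$, so $p_1\perp p_2$; hence selecting one witness per tail produces an antichain of size $|T_n|$, giving $|T_n|<\theta$ by $\theta$-cc. Let $Y_n\subseteq\omega^\omega$ be the set obtained by extending each $t\in T_n$ by zeros on $[0,n)$, and set $Y:=\{\check 0\}\cup\bigcup_{n<\omega}Y_n$, where $\check 0$ denotes the constant zero function, thrown in to guarantee $Y\neq\varnothing$. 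Since $\theta$ is uncountable regular, a countable union of sets of size $<\theta$ has size $<\theta$, so $|Y|<\theta$.

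To verify the goodness property, let $f\in\omega^\omega$ satisfy $f\neq^* y$ for every $y\in Y$. Suppose toward a contradiction that some $p\in\Por$ forces $f=^*\dot h$; then $p$ forces $\exists_{n<\omega}(\dot h\frestr[n,\omega)=\check{f}\frestr[n,\omega))$, so by a standard density/maximum-principle argument there exist $p'\leq p$ and $n<\omega$ with $p'\Vdash\dot h\frestr[n,\omega)=\check{(f\frestr[n,\omega))}$. This places $f\frestr[n,\omega)$ in $T_n$, whence the corresponding $y\in Y_n$ satisfies $f=^*y$, contradicting the choice of $f$. Therefore $\Vdash f\neq^*\dot h$, proving $\theta$-$=^*$-goodness; the ccc case is just the special instance $\theta=\aleph_1$, where each $|T_n|\leq\aleph_0$. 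The only point requiring any care is the cardinal arithmetic bounding $|Y|<\theta$, which is immediate from the uncountable regularity of $\theta$; everything else is a direct consequence of $\theta$-cc and the definition of $=^*$.
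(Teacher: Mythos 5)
Your argument is correct, but it runs along a slightly different track than the paper's. The paper's proof takes a single maximal antichain $A\subseteq\Por$ whose members either force $\dot h\notin V$ or decide $\dot h$ to be a specific ground-model real $f_p$, sets $Y:=\{f_p\ /\ p\in A\}$, and then uses (implicitly) the observation from Example \ref{SubsecNewreal} that a real eventually equal to a ground-model real is itself in the ground model, so a condition forcing $\dot h\notin V$ automatically forces $f\neq^*\dot h$ for every ground-model $f$. You instead avoid the ``new versus old'' dichotomy entirely: for each $n$ you bound, via a per-$n$ antichain of witnesses, the family $T_n$ of forcible tails of $\dot h$ from position $n$ on, and take $Y$ to be the union of the (zero-padded) tails. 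Both proofs hinge on the same use of $\theta$-cc to bound a set of candidate reals; yours trades the single maximal antichain and the absoluteness remark about $=^*$ for countably many antichains plus the regularity of $\theta$ to keep $|Y|<\theta$ (regularity the paper has fixed for the whole section anyway, and which is genuinely needed in your version). One cosmetic point in your favour: you take care of the nonemptiness of $Y$ explicitly by adjoining the zero function, which the paper's proof glosses over when every $p\in A$ forces $\dot h\notin V$. All steps check out, including the passage from ``no condition forces $f=^*\dot h$'' to ``$\Vdash f\neq^*\dot h$'' and the incompatibility of witnesses for distinct tails.
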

\begin{proof}
   Let $\dot{h}$ be a $\Por$-name for a real. Find a maximal antichain $A\subseteq\Por$ such that, for $p\in A$, either $p\Vdash``\dot{h}\notin V"$ or there is a real $f_p$ such that $p\Vdash\dot{h}=f_p$. Clearly, $Y:=\left\{f_p\ /\ p\in A\right\}$ (we include only those that exist) has size $<\theta$ and it is a witness of goodness for $\dot{h}$.
\end{proof}

To prove the preservation theorems, we generalize preservation of goodness in limit steps (though this proof is not that different from the fsi case).

\begin{lemma}\label{PresGoodLimdir}
   Let $I$ be a directed partial order, $\langle\Por_i\rangle_{i\in I}$ a directed system of posets and $\Por=\limdir_{i\in I}\Por_i$. If $|I|<\theta$ and $\Por_i$ is $\theta$-$\sqsubset$-good for any $i\in I$, then $\Por$ is $\theta$-$\sqsubset$-good.
\end{lemma}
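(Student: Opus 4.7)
The strategy is to combine witnesses of $\theta$-$\sqsubset$-goodness from each $\Por_i$ into a single witness for $\Por$. Given a $\Por$-name $\dot h$ for a real in $\omega^\omega$, for each $i\in I$ I would produce a $\Por_i$-name $\dot h_i$ for a real that serves as a ``$\Por_i$-approximation'' of $\dot h$. The $\theta$-$\sqsubset$-goodness of $\Por_i$ applied to $\dot h_i$ then yields a witness $Y_i\subseteq\omega^\omega$ for $\dot h_i$ of size $<\theta$, and I set $Y:=\bigcup_{i\in I}Y_i$. Since $|I|<\theta$ and $\theta$ has uncountable cofinality, $|Y|<\theta$.

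To construct $\dot h_i$: for each $n<\omega$, I would fix a maximal antichain $\{q_{n,k}\}_{k}$ in $\Por$ with $q_{n,k}\Vdash\dot h(n)=m_{n,k}$, and for each $k$ pick a reduction $p_{n,k}\in\Por_i$ of $q_{n,k}$ (using $\Por_i\lessdot\Por$, which holds by construction of the direct limit, cf. Lemma \ref{dirlimEmb} in analogous cases). Extracting from the $\{p_{n,k}\}_k$ a maximal antichain in $\Por_i$ and assigning values $m_{n,k}$ along it defines the $\Por_i$-name $\dot h_i$. One then applies $\theta$-$\sqsubset$-goodness of $\Por_i$ to $\dot h_i$ to obtain $Y_i$.

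To verify $Y$ witnesses goodness for $\dot h$: suppose $f\in\omega^\omega$ is $\sqsubset$-unbounded over $Y$, hence over each $Y_i$, and assume for contradiction that some $p\in\Por$ and $n<\omega$ satisfy $p\Vdash_\Por f\sqsubset_n\dot h$. Picking $i\in I$ with $p\in\Por_i$, the construction of $\dot h_i$ together with the topological closedness of $\sqsubset_n$ should yield $p\Vdash_{\Por_i}f\sqsubset\dot h_i$, contradicting that $Y_i$ is a witness of goodness for $\dot h_i$. The main obstacle is this transfer step: showing that forcing $f\sqsubset_n\dot h$ in $\Por$ reflects down to forcing $f\sqsubset\dot h_i$ in $\Por_i$. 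The closedness of $\sqsubset_n$ is precisely what makes this work, since the values recorded by $\dot h_i$ approximate the values of $\dot h$ in the sense that any $\Por$-generic compatible with $p$ produces a value for $\dot h(n)$ that is a limit of values witnessed by the reductions $p_{n,k}$ below $p$, and closedness ensures this suffices to preserve the $\sqsubset_n$-relation. The argument is patterned on the fsi preservation proofs of \cite{jushe} and \cite[Sect. 6.4, 6.5]{barju}, adapted to the direct-limit structure with the cardinality bound $|I|<\theta$.
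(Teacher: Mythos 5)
There is a genuine gap, and it sits exactly where you located the ``main obstacle'': the construction of $\dot h_i$ and the transfer step. First, the construction itself is not well defined. Reductions of pairwise incompatible conditions $q_{n,k}$ need not be incompatible in $\Por_i$ --- a single $p\in\Por_i$ can be a reduction of many incompatible $q_{n,k}$ carrying different values $m_{n,k}$ --- and the chosen set $\{p_{n,k}\}_k$ need not even be predense in $\Por_i$, so ``extracting a maximal antichain and assigning the values $m_{n,k}$ along it'' has no canonical meaning. Second, and more fundamentally, even for a reasonable repair of this definition the claimed transfer $p\Vdash_\Por f\sqsubset_n\dot h\ \Longrightarrow\ p\Vdash_{\Por_i}f\sqsubset\dot h_i$ fails. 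Your $\dot h_i$ only records, coordinate by coordinate, \emph{possible} values of $\dot h(n)$, and the conditions of $\Por$ witnessing these possibilities for different $n$ may be pairwise incompatible. Since $f\sqsubset_n g$ is a condition on the whole real $g$ and not a conjunction of coordinatewise constraints (e.g.\ for $\propto_n$ it is a disjunction of two global alternatives), coordinatewise consistency with $f\sqsubset_n\dot h$ does not combine into $f\sqsubset_n h_i$ for the diagonal real $h_i$; closedness of $\sqsubset_n$ does not rescue this, because closedness only lets you pass from a \emph{negative} fact about $h_i$ to a finite initial segment, not from positive facts about $\dot h$ to the assembled $h_i$.

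The paper's proof fixes exactly this by making the approximation coherent: working in $V^{\Por_i}$, it chooses a single decreasing sequence $\langle\dot p^i_m\rangle_{m<\omega}$ of conditions of the quotient $\Por/\Por_i$ together with a $\Por_i$-name $\dot h_i$ such that $\dot p^i_m\Vdash_{\Por/\Por_i}\dot h\frestr m=\dot h_i\frestr m$, so initial segments of $\dot h_i$ are tied to $\dot h$ by actual quotient conditions. The contradiction is then run in the opposite direction from yours: with $p\in\Por_i$ and a generic $G\ni p$, goodness of $\Por_i$ gives $f\not\sqsubset_n h_i$ in $V[G]$; closedness of the section $(\sqsubset_n)_f$ yields an $m$ with $[h_i\frestr m]\cap(\sqsubset_n)_f=\varnothing$, whence $p^i_m\Vdash_{\Por/\Por_i}f\not\sqsubset_n\dot h$, contradicting that the quotient forces $f\sqsubset_n\dot h$ (which follows from $p\Vdash_\Por f\sqsubset_n\dot h$ and $p\in G$). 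Your outer skeleton (one witness $Y_i$ per $i$, the union $Y$ of size $<\theta$ by regularity of $\theta$, reflecting a purported failure into some $\Por_i$) matches the paper, but you need to replace the antichain-and-reduction approximation by this quotient-based, fusion-style approximation for the argument to go through.
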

\begin{proof}
   Let $\dot{h}$ be a $\Por$-name for a real in $\omega^\omega$. For $i\in I$, find a $\Por_i$-name for a real $\dot{h}_i$ and a sequence $\{\dot{p}^i_m\}_{m<\omega}$ of $\Por_i$-names that represents a decreasing sequence of conditions in $\Por/\Por_i$ such that $\Por_i$ forces that $\dot{p}^i_m\Vdash_{\Por/\Por_i}\dot{h}\frestr m=\dot{h}_i\frestr m$. For each $i\in I$ choose $Y_i\subseteq\omega^\omega$ of size $<\theta$ that witnesses goodness of $\Por_i$ for $\dot{h}_i$. As $|I|<\theta$, $Y=\bigcup_{i\in I}Y_i$ has size $<\theta$ by regularity of $\theta$.

   We prove that $Y$ witnesses goodness of $\Por$ for $\dot{h}$. Assume, towards a contradiction, that $f\in\omega^\omega$, $f\not\sqsubset Y$ and that there are $p\in\Por$ and $n<\omega$ such that $p\Vdash_\Por f\sqsubset_n\dot{h}$. Choose $i\in I$ such that $p\in\Por_i$. Let $G$ be $\Por_i$-generic over the ground model $V$ with $p\in G$. Then, by the choice of $Y_i$, $f\not\sqsubset h_i$, in particular, $f\not\sqsubset_n h_i$. As $C:=(\sqsubset_n)_f=\{g\in\omega^\omega\ /\ f\sqsubset_n g\}$ is closed, there is an $m<\omega$ such that $[h_i\frestr m]\cap C=\varnothing$. Thus, $p^i_m\Vdash_{\Por/\Por_i}[\dot{h}\frestr m]\cap C=\varnothing$, that is, $p^i_m\Vdash_{\Por/\Por_i}f\not\sqsubset_n\dot{h}$. On the other hand, $\Vdash_{\Por/\Por_i}f\sqsubset_n\dot{h}$ (because $p\Vdash_{\Por,V} f\sqsubset_n\dot{h}$), a contradiction.
\end{proof}

\begin{theorem}[First preservation theorem for template iterations]\label{PresTemp}
   Let $\Por\frestr\langle L,\bar{\Iwf}\rangle$ be a template iteration such that $\Por\frestr L$ is $\theta$-cc. Assume that $\nu\leq\theta$ is an uncountable cardinal such that
   \begin{enumerate}[(i)]
      \item for all $B\in[L]^{<\nu}$, $\Iwf(B)$ has size $<\nu$,
      \item for all $A\subseteq L$, if $p\in\Por\frestr A$ and $\dot{h}$ is a $\Por\frestr A$-name for a real, then there is a $C\subseteq A$ of size $<\nu$ such that $p\in\Por\frestr C$ and $\dot{h}$ is a $\Por\frestr C$-name, and
      \item for all $x\in L$ and $B\in\hat{\Iwf}_x$, $\Vdash_{\Por\upharpoonright B}\Qnm^B_x$ is $\theta$-$\sqsubset$-good.
   \end{enumerate}
   Then, $\Por\frestr L$ is $\theta$-$\sqsubset$-good. Moreover, if $L'$ is an initial segment of $L$ such that $\forall_{x\in L\menos L'}(L'\in\hat{\Iwf}_x)$, then $\Por\frestr L'$ forces that $\Por\frestr L/\Por\frestr L'$ is $\theta$-$\sqsubset$-good.
\end{theorem}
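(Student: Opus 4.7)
The plan is to prove by induction on $\mathrm{Dp}(A)$ the following auxiliary claim: for every $A \subseteq L$ with $|\Iwf(A)| < \theta$, the poset $\Por\frestr A$ is $\theta$-$\sqsubset$-good. The main statement for $\Por\frestr L$ will then follow by a support reduction using hypotheses (i) and (ii).

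I proceed by the case analysis from Theorem \ref{TempIt}(d). The base case $A = \varnothing$ is trivial. In case (d)(i), with $x = \max(A)$ and $A_x := A \cap L_x \in \hat{\Iwf}_x$, one has $\Por\frestr A = \Por\frestr A_x \ast \Qnm_x^{A_x}$. Writing $A_x = A \cap H_0$ with $H_0 \in \Iwf_x$ and using closure of the $\Iwf_y$ under intersection, one checks that $|\Iwf(A_x)| \leq |\Iwf(A)| + 1 < \theta$, so the inductive hypothesis applies to $A_x$; combining the resulting witness with the (iii)-witness for $\Qnm_x^{A_x}$ yields goodness of the two-step iteration by a standard argument. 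In cases (d)(ii) and (d)(iii), $\Por\frestr A$ is the direct limit of a system $\Bwf$ that I choose of cardinality at most $|\Iwf(A)| < \theta$ (in (d)(ii) via the cofinal subfamily $\{H \cup \{x\} : H \in \Iwf_x \frestr A\}$; in (d)(iii) noting $\Bwf \subseteq \Iwf(A)$). Each $B \in \Bwf$ satisfies $\mathrm{Dp}(B) < \mathrm{Dp}(A)$ by Lemma \ref{UpsilonTemp}(c) and, by the same absorption argument, $|\Iwf(B)| < \theta$, so by induction each $\Por\frestr B$ is $\theta$-$\sqsubset$-good; Lemma \ref{PresGoodLimdir} then delivers goodness of $\Por\frestr A$.

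To obtain the main statement, let $\dot h$ be a $\Por\frestr L$-name for a real. By (ii) there is $C \subseteq L$ with $|C| < \nu$ such that $\dot h$ is a $\Por\frestr C$-name, and by (i) $|\Iwf(C)| < \nu \leq \theta$. The auxiliary claim produces a $Y$ of size $<\theta$ witnessing goodness of $\Por\frestr C$ for $\dot h$. Since $\Por\frestr C \lessdot \Por\frestr L$ by Theorem \ref{TempIt}(e) and ``$f \not\sqsubset \dot h$'' is a $\Pi^0_2$ statement in the parameters $f$ and $\dot h$, hence absolute, the same $Y$ witnesses goodness for $\Por\frestr L$.

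For the ``Moreover'' clause, the hypothesis that $L' \in \hat{\Iwf}_x$ for every $x \in L \menos L'$ ensures that in the $\Por\frestr L'$-extension the quotient $\Por\frestr L / \Por\frestr L'$ is naturally realized as a template iteration along $L \menos L'$ satisfying the analogues of (i)--(iii); running the same induction and support reduction inside that extension gives the desired goodness of the quotient. The main technical hurdle throughout is the bookkeeping to ensure that the bound $|\Iwf(B)| < \theta$ propagates down the induction, which rests on the absorption observation: whenever $B \subseteq A$ with $B = A \cap H$ for some $H \in \Iwf(L)$, the trace $\Iwf_y \frestr B$ is, up to restricting to $B$, already contained in $\Iwf(A)$.
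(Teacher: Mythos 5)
Your argument for the first assertion (goodness of $\Por\frestr L$ itself) is essentially sound, and it is a genuine reorganization of the paper's proof: you run the induction only over sets $A$ with $|\Iwf(A)|<\theta$, where the directed systems appearing in cases (d)(ii)--(iii) of Theorem \ref{TempIt} can be replaced by cofinal subsystems of size $<\theta$ so that Lemma \ref{PresGoodLimdir} applies, and you invoke hypotheses (i)--(ii) only once, at the top, reducing an arbitrary name to a small support and transferring the witness along $\Por\frestr C\lessdot\Por\frestr L$ by absoluteness. The paper instead makes the inductive statement about the quotients $\Por\frestr A/\Por\frestr L'$ for all $L'\subseteq A\subseteq L$ and performs the support reduction inside each limit case (reducing $\dot h$ to $\Por\frestr(L'\cup C)$ with $|C|<\nu$ and writing $\Por\frestr(L'\cup C)/\Por\frestr L'$ as a direct limit over a family of size $\leq|\Iwf(C)|<\nu$). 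For the plain statement these two bookkeepings buy the same thing, and your absorption computations ($|\Iwf(A_x)|$, $|\Iwf(C\cup\{x\})|$, $|\Iwf(B)|$ bounded by $|\Iwf(A)|$ plus one, using closure of $\Iwf_x$ under intersections and $\Iwf_y\subseteq\Iwf_x$ for $y<x$) do go through, provided you phrase the absorption with $H\in\Iwf_x$ for some $x\in A$ rather than merely $H\in\Iwf(L)$.

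The genuine gap is in the ``Moreover'' clause, which is in fact the part of the theorem used in the applications. You dismiss it by asserting that, since $L'\in\hat{\Iwf}_x$ for all $x\in L\menos L'$, the quotient $\Por\frestr L/\Por\frestr L'$ is ``naturally realized as a template iteration along $L\menos L'$'' in $V^{\Por\upharpoonright L'}$ satisfying the analogues of (i)--(iii). Nothing in the paper, and nothing in your proposal, establishes such a quotient-representation theorem, and it is not a routine remark: one would have to define the iterands of the purported iteration (presumably $\Qnm_x^{L'\cup B}$ viewed as names over $\Por\frestr(L'\cup B)/\Por\frestr L'$), re-verify the coherence and correctness requirements (1)(i)--(iii) of Theorem \ref{TempIt} for these quotient iterands (via Lemmas \ref{QuotEmb} and \ref{CorrQuotEmb}), prove that the iteration so obtained is forcing equivalent to $\Por\frestr L/\Por\frestr L'$, and also check the analogues of hypotheses (i)--(ii) (smallness of the trace template and reduction of quotient-names to small supports) in the extension. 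That is a theorem-sized undertaking hidden in one sentence, and the $\theta$-cc of the quotient must also be secured for your two-step and name-reduction steps. The paper avoids all of this by never representing the quotient as an iteration: it proves, by induction on $\mathrm{Dp}(A)$ for $L'\subseteq A\subseteq L$, that $\Por\frestr L'$ forces $\Por\frestr A/\Por\frestr L'$ to be $\theta$-$\sqsubset$-good, using only Lemma \ref{2stepofQuot} (quotient of a two-step iteration), Lemma \ref{QuotEmb} (regular embeddings of quotients from correct diagrams) and Lemma \ref{dirlimquot} (quotients commute with direct limits), together with the per-name support reduction described above. Your auxiliary-claim strategy can be repaired along the same lines by stating the claim directly for the quotients (for $A\supseteq L'$ with $\Iwf(A\menos L')$-type smallness), but as written the ``Moreover'' statement is not proved.
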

\begin{proof}
   We prove, by induction on $\mathrm{Dp}(A)$ with $L'\subseteq A\subseteq L$, that $\Por\frestr L'$ forces that $\Por\frestr A/\Por\frestr L'$ is $\theta$-$\sqsubset$-good. We may assume $L'\subsetneq A$. Proceed by cases.
   \begin{enumerate}[(1)]
      \item \emph{$A$ has a maximum $x$ and $A_x=A\cap L_x\in\hat{\Iwf}_x$.} By Lemma \ref{2stepofQuot}, in $V':=V^{\Por\upharpoonright L'}$ (fix this notation for the rest of the proof), $\Por\frestr A/\Por\frestr L'$ is equivalent to $(\Por\frestr A_x/\Por\frestr L')\ast\Qnm^{A_x}_x$, so it is $\theta$-$\sqsubset$-good by (iii) and  induction hypothesis (recall that the two step iteration of $\theta$-cc $\theta$-$\sqsubset$-good posets is $\theta$-$\sqsubset$-good, which is easy to prove).
      \item \emph{$A$ has a maximum $x$ but $A_x\notin\hat{\Iwf}_x$.} Then, $\Por\frestr A=\limdir_{B\in\Awf}\Por\frestr B$ where $\Awf:=\{B\subseteq A\ /\ B\cap L_x\in\Iwf_x\frestr A\textrm{\ and }L'\subseteq B\}$. Let $\dot{h}\in V$ be a $\Por\frestr A$-name for a real. If there exists a $B\in\Awf$ such that $\dot{h}$ is a $\Por\frestr B$-name then, in $V'$, $\Por\frestr B/\Por\frestr L'$ is $\theta$-$\sqsubset$-good (by induction hypothesis) and any witness of goodness of $\Por\frestr B/\Por\frestr L'$ for $\dot{h}$ is also a witness of goodness of $\Por\frestr A/\Por\frestr L'$ because $\Por\frestr B/\Por\frestr L'\lessdot\Por\frestr A/\Por\frestr L'$ by Lemma \ref{QuotEmb}. So assume that $\dot{h}$ is not a $\Por\frestr B$-name for any $B\in\Awf$. By (ii), there is a $C\in[A\menos L']^{<\nu}$ such that $\dot{h}$ is a $\Por\frestr(L'\cup C)$-name with $x\in C$. As $L'\in\hat{\Iwf}_x$ note that
          \begin{multline*}
                \Cwf:=\left\{D\subseteq L'\cup C\ /\ L'\subseteq D\textrm{\ and\ }D\cap L_x\in\Iwf_x\frestr (L'\cup C)\right\}\\
                \subseteq\left\{B\cap(L'\cup C)\ /\ B\in\Awf\right\}\subseteq\left\{L'\cup E\ /\ E\subseteq C\textrm{\ and\ }E\cap L_x\in\Iwf_x\frestr C\right\}.
          \end{multline*}
          and $\Cwf$ is cofinal in the latter set. As $\mu:=|\Iwf(C)|<\nu$ (by (i)), $|\Cwf|\leq\mu$, so enumerate $\Cwf:=\left\{D_\alpha\ /\ \alpha<\mu\right\}$ where each $D_\alpha=B_\alpha\cap(L'\cup C)$ for some $B_\alpha\in\Awf$. Note also that $(L'\cup C)\cap L_x\notin\Cwf$ (if so, there exists a $B\in\Awf$ such that $L'\cup C\subseteq B$ and $\dot{h}$ would be a $\Por\frestr B$-name, which is false), so $\Por\frestr(L'\cup C)=\limdir_{\alpha<\mu}\Por\frestr D_\alpha$ and, by Lemma \ref{dirlimquot}, $\Por\frestr(L'\cup C)/\Por\frestr L'=\limdir_{\alpha<\mu}\Por\frestr D_\alpha/\Por\frestr L'$ in $V'$. By induction hypothesis, as $\Por\frestr D_\alpha/\Por\frestr L'\lessdot\Por\frestr B_\alpha/\Por\frestr L'$, both posets are $\theta$-$\sqsubset$-good for any $\alpha<\mu$. Therefore, by Lemma \ref{PresGoodLimdir}, $\Por\frestr(L'\cup C)/\Por\frestr L'$ is $\theta$-$\sqsubset$-good. Any family of reals that witnesses this goodness for $\dot{h}$ works for the goodness of $\Por\frestr A/\Por\frestr L'$ for $\dot{h}$.
      \item \emph{$A$ does not have a maximum element.} So $\Por\frestr A=\limdir_{B\in\Bwf}\Por\frestr B$ where $\Bwf:=\{B\in\Iwf_x\frestr A\ /\ x\in A\textrm{\ and }L'\subseteq B\}$. Let $\dot{h}$ a $\Por\frestr A$-name for a real. If there is no $B\in\Bwf$ such that $\dot{h}$ is a $\Por\frestr B$-name, find $C\subseteq A\menos L'$ of size $<\nu$ such that $\dot{h}$ is a $\Por\frestr(L'\cup C)$-name and, without loss of generality,  assume that $C$ doesn't have a maximum. Proceed exactly like in the previous case.
   \end{enumerate}
\end{proof}

\begin{remark}\label{ShTempMainThm}
   Shelah's model (\cite{shelah}, see also \cite{br}) for the consistency of $\dfrak<\afrak$ with $\thzfc$ uses a template iteration like in Example \ref{ExpTempItFund} where $L_C=\varnothing$ and $\Sor_x=\Dor$ for every $x\in L_S=L$. To use the isomorphism-of-names argument, the iteration is constructed under the continuum hypothesis so the conditions of Theorem \ref{PresTemp} with $\theta=\aleph_1$ and $\sqsubset=\propto$ are meet and, thus, $\sfrak=\aleph_1$ in the generic extension. Therefore, if $\aleph_1<\mu<\lambda$ are regular cardinals and $\lambda^\omega=\lambda$, there is a model of $\thzfc$ such that $\sfrak=\aleph_1<\bfrak=\dfrak=\mu<\afrak=\cfrak=\lambda$. Moreover, the same model satisfies $\cov(\Nwf)=\aleph_1$, $\add(\Mwf)=\cof(\Nwf)=\mu$ and $\non(\Nwf)=\lambda$.
\end{remark}

We now prove a preservation result of the same property but with different conditions.

\begin{theorem}[Second preservation theorem for template iterations]\label{PresTemp2}
   Let $\Por\frestr\langle L,\bar{\Iwf}\rangle$ be a template iteration such that $\Por\frestr L$ is $\theta$-cc and let $L'$ be an initial segment of $L$ such that $\forall_{x\in L\menos L'}(L'\in\hat{\Iwf}_x)$. Assume, for any $A\subseteq L$ with $L'\subsetneq A$:
   \begin{enumerate}[(i)]
      \item Whenever $A$ has a maximum $x$ and $A_x:=A\cap L_x\notin\hat{\Iwf}_x$, if $\dot{h}$ is a $\Por\frestr A$-name for a real, then there exists an increasing sequence $\langle B_n\rangle_{n<\omega}$ in $\Bwf_A:=\left\{B\subseteq A\ /\ B\cap L_x\in\Iwf_x\frestr A\right.$ and $\left.L'\subseteq B\right\}$ such that $\dot{h}$ is a $\Por\frestr C$-name for a real,
            where $C:=\bigcup_{n<\omega}B_n$, and $\Por\frestr C=\limdir_{n<\omega}\Por\frestr B_n$,
      \item Whenever $A$ does not have a maximum and $\dot{h}$ is a $\Por\frestr A$-name for a real, then
            there exists an increasing sequence $\langle B_n\rangle_{n<\omega}$ in $\Bwf_A:=\left\{B\in\Iwf_x\frestr A\ /\ x\in A\right.$ and $\left.L'\subseteq B\right\}$ like in (i).
      \item for all $x\in L$ and $B\in\hat{\Iwf}_x$, $\Vdash_{\Por\upharpoonright B}\Qnm^B_x$ is $\theta$-$\sqsubset$-good.
   \end{enumerate}
   Then, $\Por\frestr L'$ forces that $\Por\frestr L/\Por\frestr L'$ is $\theta$-$\sqsubset$-good.
\end{theorem}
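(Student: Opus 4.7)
The plan is to mirror the inductive argument of Theorem~\ref{PresTemp}, proving by induction on $\mathrm{Dp}(A)$ the statement ``$\Por\frestr L'$ forces that $\Por\frestr A/\Por\frestr L'$ is $\theta$-$\sqsubset$-good'' for every $A\subseteq L$ with $L'\subseteq A$. The base case and the trivial case $A=L'$ are immediate, and the only adjustments to the proof of Theorem~\ref{PresTemp} occur in the ``limit'' cases, where hypotheses (i) and (ii) of Theorem~\ref{PresTemp2} replace the cardinal-arithmetic assumptions of Theorem~\ref{PresTemp} by a direct countable-chain assumption on the names.

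For the case where $A$ has a maximum $x$ with $A_x=A\cap L_x\in\hat{\Iwf}_x$, I would argue exactly as in Theorem~\ref{PresTemp}: by Theorem~\ref{TempIt}(d)(i) and Lemma~\ref{2stepofQuot}, in $V':=V^{\Por\upharpoonright L'}$ the quotient $\Por\frestr A/\Por\frestr L'$ is forcing-equivalent to $(\Por\frestr A_x/\Por\frestr L')\ast\Qnm^{A_x}_x$. The induction hypothesis gives goodness of the first factor; (iii) gives goodness of the second factor in any $\Por\frestr A_x$-extension; and two-step iteration of $\theta$-cc $\theta$-$\sqsubset$-good posets preserves goodness.

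In the other two cases (when $A$ has a maximum $x$ with $A_x\notin\hat{\Iwf}_x$, or when $A$ has no maximum) I would fix a $\Por\frestr A$-name $\dot{h}$ for a real and apply (i) or (ii) respectively to obtain an increasing sequence $\langle B_n\rangle_{n<\omega}$ in $\Bwf_A$ whose union $C$ satisfies that $\dot{h}$ is a $\Por\frestr C$-name and $\Por\frestr C=\limdir_{n<\omega}\Por\frestr B_n$. Since each $L'\subseteq B_n$, Theorem~\ref{TempIt}(e) yields $\Por\frestr B_n\lessdot\Por\frestr C\lessdot\Por\frestr A$, and by Lemma~\ref{dirlimquot}, in $V'$ we have $\Por\frestr C/\Por\frestr L'=\limdir_{n<\omega}\Por\frestr B_n/\Por\frestr L'$. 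The induction hypothesis gives that each $\Por\frestr B_n/\Por\frestr L'$ is $\theta$-$\sqsubset$-good in $V'$, and since $|\omega|<\theta$, Lemma~\ref{PresGoodLimdir} (applied inside $V'$) gives that $\Por\frestr C/\Por\frestr L'$ is $\theta$-$\sqsubset$-good. A witness of goodness for $\dot{h}$ inside $\Por\frestr C/\Por\frestr L'$ is then automatically a witness for $\dot{h}$ with respect to $\Por\frestr A/\Por\frestr L'$, because $\Por\frestr C/\Por\frestr L'\lessdot\Por\frestr A/\Por\frestr L'$ (by Lemma~\ref{QuotEmb} applied to the correct diagram coming from Theorem~\ref{TempIt}(g)), and $\dot{h}$ lives on the smaller poset.

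The main obstacle is essentially formal: one has to check carefully that the sequence provided by (i) or (ii) lies in $\Bwf_A$ (so that $L'\subseteq B_n$ and the induction hypothesis applies to each $B_n$), and that the direct-limit identity $\Por\frestr C=\limdir_{n<\omega}\Por\frestr B_n$ passes to the quotient modulo $\Por\frestr L'$. Both facts are exactly what hypotheses (i)--(ii) and Lemma~\ref{dirlimquot} are designed to give, so once those are in place the argument reduces to invoking Lemma~\ref{PresGoodLimdir} with $I=\omega$ and noting that $\omega<\theta$ because $\theta$ is uncountable. No new ideas beyond those in the proof of Theorem~\ref{PresTemp} are needed; the countable-chain hypothesis replaces the role previously played by the bound $|\Iwf(C)|<\nu$.
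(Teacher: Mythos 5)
Your proposal is correct and follows essentially the same route as the paper's proof: the same induction on $\mathrm{Dp}(A)$, the same three cases, with the successor-type case handled exactly as in Theorem \ref{PresTemp} and the limit-type cases handled by applying (i)/(ii), Lemma \ref{dirlimquot}, the induction hypothesis, and Lemma \ref{PresGoodLimdir} (with $I=\omega<\theta$), then transferring the witness for $\dot{h}$ from $\Por\frestr C/\Por\frestr L'$ to $\Por\frestr A/\Por\frestr L'$.
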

\begin{proof}
   By induction on $\mathrm{Dp}(A)$ for $A\supseteq L'$, we prove that $\Por\frestr L'$ forces that $\Por\frestr A/\Por\frestr L'$ is $\theta$-$\sqsubset$-good. Proceed by cases when $L'\neq A$.
   \begin{enumerate}[(1)]
      \item \emph{$A$ has a maximum $x$ and $A_x=A\cap L_x\in\hat{\Iwf}_x$.} Exactly like case (1) of the proof of Theorem \ref{PresTemp}.
      \item \emph{$A$ has a maximum $x$ but $A_x\notin\hat{\Iwf}_x$.} If $\dot{h}$ is a $\Por\frestr A$-name for a real, use (i) to find $\langle B_n\rangle_{n<\omega}$ and $C$. Then, by induction hypothesis and Lemmas \ref{PresGoodLimdir} and \ref{dirlimquot}, $\Por\frestr C/\Por\frestr L'$ is $\theta$-$\sqsubset$-good in $V^{\Por\upharpoonright L'}$. Any family of reals that witnesses this goodness for $\dot{h}$ also works for $\Por\frestr A/\Por\frestr L'$.
      \item \emph{$A$ does not have a maximum.} Proceed like in case (2) and use (ii).
   \end{enumerate}
\end{proof}

\begin{remark}\label{RemFsi}
    It is easy to note that any template iteration $\Por\frestr\langle\delta,\Iwf\rangle$ for a fsi (Example \ref{ExmpFSI}), where all the involved posets have the ccc, satisfies the conditions of the previous theorem for any initial segment $L'$ (clearly, any initial segment of $\delta$ is an ordinal), moreover, any $A\subseteq\delta$ that has a maximum $x$ satisfies $A\cap L_x\in\hat{\Iwf}_x$, so condition (i) becomes irrelevant in this case.
\end{remark}

The following theorem shows that, in many template iterations, new reals that are added at a certain stage cannot be added at other stages of the iteration. This is important in our applications, in relation with Lemma \ref{lemmagfrak}, to find the value of $\gfrak$ in some generic extension.

\begin{theorem}[New reals not added at other stages]\label{newrealint}
   Let $\Por\frestr\langle L,\bar{\Iwf}\rangle$ be a template iteration as in Example \ref{ExpTempItFund},
   $x\in L$ such that $\bar{L}_x:=L_x\cup\{x\}\in\hat{\Iwf}_z$ for all $z>x$ in $L$ and let $\dot{f}$ be a $\Por\frestr\bar{L}_x$-name of a real such that
   $\Vdash_{\Por\upharpoonright\bar{L}_x}\dot{f}\notin V^{\Por\upharpoonright L_x}$. Then, $\Por\frestr L$ forces that $\dot{f}\notin V^{\Por\upharpoonright(L\menos\{x\})}$.
\end{theorem}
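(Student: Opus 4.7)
The plan is to exploit the correct-diagram structure linking the four sub-iterations along $L_x$, $L':=L_x\cup\{x\}=\bar{L}_x$, $L'':=L\menos\{x\}$ and $L$, and then to invoke the standard intermediate-model identity for mutually generic extensions.

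Since $L'\cap L''=L_x$ and $L'\cup L''=L$, Theorem~\ref{TempIt}(e)(f)(g) yields the regular embeddings $\Por\frestr L_x\lessdot\Por\frestr L'\lessdot\Por\frestr L$ and $\Por\frestr L_x\lessdot\Por\frestr L''\lessdot\Por\frestr L$, the intersection identity $\Por\frestr L'\cap\Por\frestr L''=\Por\frestr L_x$, and the correctness of the diagram $\langle\Por\frestr L_x,\Por\frestr L',\Por\frestr L'',\Por\frestr L\rangle$. The hypothesis $\bar{L}_x\in\hat{\Iwf}_z$ for every $z>x$ is what places $L'$ inside the generators at every higher stage and makes the resulting decomposition of $L$ through $L',L''$ compatible with the template structure. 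Applying Lemma~\ref{CorrQuotEmb}(b) to the trivially correct diagram $\langle\Por\frestr L_x,\Por\frestr L_x,\Por\frestr L_x,\Por\frestr L''\rangle$ and to $\langle\Por\frestr L_x,\Por\frestr L',\Por\frestr L'',\Por\frestr L\rangle$, I deduce that in $V^{\Por\frestr L_x}$ the quotient diagram $\langle\mathds{1},\Por\frestr L'/\Por\frestr L_x,\Por\frestr L''/\Por\frestr L_x,\Por\frestr L/\Por\frestr L_x\rangle$ is correct. Since the bottom corner is trivial, this says precisely that every pair of conditions, one from $\Por\frestr L'/\Por\frestr L_x$ and one from $\Por\frestr L''/\Por\frestr L_x$, is compatible in $\Por\frestr L/\Por\frestr L_x$.

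Fix $G$ a $\Por\frestr L$-generic over $V$, and set $G_0:=G\cap\Por\frestr L_x$, $G_1:=G\cap\Por\frestr L'$ and $G_2:=G\cap\Por\frestr L''$. The pairwise compatibility above is exactly the condition needed for $G_1/G_0$ and $G_2/G_0$ to be mutually generic over $V[G_0]$; the standard intermediate-model theorem for mutually generic extensions then delivers $V[G_1]\cap V[G_2]=V[G_0]$. Let $f:=\dot{f}[G_1]$. By the hypothesis $\Vdash_{\Por\frestr L'}\dot{f}\notin V^{\Por\frestr L_x}$, $f\notin V[G_0]$. If, toward a contradiction, some condition of $\Por\frestr L$ in $G$ forced $\dot{f}\in V^{\Por\frestr L''}$, then $f\in V[G_2]$, and hence $f\in V[G_1]\cap V[G_2]=V[G_0]$, contradicting $f\notin V[G_0]$. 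Therefore $\Por\frestr L$ forces $\dot{f}\notin V^{\Por\frestr(L\menos\{x\})}$, as required.

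The main obstacle is the passage from pairwise compatibility of the quotient posets to the intermediate-model identity $V[G_1]\cap V[G_2]=V[G_0]$. Mutual genericity is a direct reformulation of pairwise compatibility combined with the trivial bottom of the quotient diagram; however, the intermediate-model identity is a classical fact which is not made explicit earlier in this paper and therefore should either be cited from the forcing amalgamation literature or verified directly. For the reals case actually used here, a short argument using ccc-ness of the three quotient posets (to approximate names for a putative common real by maximal antichains in $\mathbb{Q}_1$ and in $\mathbb{Q}_2$) and the compatibility of all name pairs provided by the quotient diagram suffices.
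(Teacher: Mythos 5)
Your reduction to the diagram $\langle\Por\frestr L_x,\Por\frestr\bar{L}_x,\Por\frestr(L\menos\{x\}),\Por\frestr L\rangle$ and the passage to the correct quotient diagram over $V^{\Por\upharpoonright L_x}$ are fine (Theorem \ref{TempIt}(g) plus Lemma \ref{CorrQuotEmb}). The genuine gap is the next sentence: correctness with trivial bottom corner says only that every $q_1$ in $\Por\frestr\bar{L}_x/\Por\frestr L_x$ is compatible with every $q_2$ in $\Por\frestr(L\menos\{x\})/\Por\frestr L_x$ inside the big quotient, and this is \emph{not} ``exactly the condition needed'' for the restricted generics to be mutually generic. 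A minimal instance of the theorem already refutes it: take $L=\{0<1\}$ with the fsi template of Example \ref{ExmpTemp}(2), $\Sor_0=\Cor$, $\Sor_1=\Dor$, and $x=0$, so that $\Por\frestr L=\Cor\ast\Dor^{V^{\Cor}}$, $\Por\frestr\{1\}=\Dor^{V}$ and $\bar{L}_0=\{0\}\in\hat{\Iwf}_1$. Every Cohen condition is compatible with every ground-model Hechler condition in $\Por\frestr L$, yet $G_2:=G\cap\Dor^{V}$ computes the Hechler real $d$ (for every $n$ the condition $(d\frestr n,\mathbf{0})$ with the zero side condition lies in $G_2$), and $d$ dominates the Cohen real $c$; if $G_1=G\cap\Cor$ and $G_2$ were mutually generic, $c$ would be Cohen, hence unbounded, over $V[G_2]$, a contradiction. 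So mutual genericity fails, and with it your only justification of the identity $V[G_1]\cap V[G_2]=V[G_0]$. The fallback sketch at the end does not repair this: the product-style argument for reading off $\dot{f}_1(n)$ needs a common extension of the two approximating conditions that lies \emph{below} the master condition forcing $\dot{f}_1=\dot{f}_2$, and correctness only supplies some common extension in $\Por\frestr L/\Por\frestr L_x$, with no control over its relation to that master condition.

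For comparison, the paper proves the theorem by a completely different, and in this setting unavoidable, route: the relation $=^*$ of Example \ref{SubsecNewreal} turns ``$\dot{f}\notin V^{\Por\upharpoonright L_x}$'' into ``$\dot{f}$ is $=^*$-unbounded over $V^{\Por\upharpoonright L_x}$''; Lemma \ref{Presnewrealccc} gives $=^*$-goodness of all (ccc) iterands; and Corollary \ref{PresUnbRealExpTemp} (i.e.\ Theorem \ref{unbrealint}, an induction on $\mathrm{Dp}$ handling successor steps via Lemma \ref{PresUnbLemma} and direct limits via Lemma \ref{PresUnbrealLimit}) preserves this unboundedness over $V^{\Por\upharpoonright(B\menos\{x\})}$ all the way up to $B=L$. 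That induction is also where the hypothesis $\bar{L}_x\in\hat{\Iwf}_z$ for $z>x$ does its real work. If you want to salvage your approach you would have to prove the intersection-of-models statement for these quotients directly, but as the example shows it cannot be extracted from pairwise compatibility alone.
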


This result is a direct consequence of Theorem \ref{unbrealint}, which is a more general result about the preservation of $\sqsubset$-unbounded reals. Fix $\sqsubset$ a relation as in Context \ref{ContextUnbd}, $M$ a transitive model of $\thzfc$, $\Por\in M$ and $\Qor$ posets such that $\Por\lessdot_M\Qor$ and let $c\in\omega^\omega$ be a $\sqsubset$-unbounded real over $M$. We are interested in the property ``\emph{$\Qor$ forces $c$ $\sqsubset$-unbounded over $M^{\Por}$}", that is, for every $\Por$-name $\dot{h}\in M$ for a real in $\omega^\omega$, $\Qor$ forces that $c\not\sqsubset\dot{h}$. More generally, we say that \emph{$\Qor$ preserves $\sqsubset$-unbounded reals over $M^\Por$} if it forces $c$ $\sqsubset$-unbounded over $M^\Por$ for all $c$ $\sqsubset$-unbounded over $M$. This property is essential for the applications of matrix iteration constructions in \cite{blsh,BF,mejia} and turns out to be useful in the framework of template iterations, as we illustrate in the remaining results.

\begin{lemma}\label{PresUnbLemma}
  \begin{enumerate}[(a)]
     \item \emph{(\cite[Thm. 7]{mejia})} Let $\Sor$ be a Suslin ccc poset coded in $M$ such that ``$\Sor$ is $\sqsubset$-good" is true in $M$. Then, $\Sor^V$ preserves $\sqsubset$-unbounded reals over $M^{\Sor^M}$.
     \item \emph{(\cite[Lemma 11]{BF})} If $\Por\in M$ is a poset, then $\Por$ preserves\footnote{Here, $\Por^M=\Por$.} $\sqsubset$-unbounded reals over $M^\Por$.
  \end{enumerate}
\end{lemma}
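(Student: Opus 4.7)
The plan is to treat the two parts separately, with (b) established by a tree-theoretic construction performed inside $M$ and (a) reduced to the definable goodness witness in $M$ via analytic absoluteness of the Suslin forcing relation.

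For part (b), let $c\in V$ be $\sqsubset$-unbounded over $M$ and $\dot h\in M$ a $\Por$-name for a real, and suppose toward contradiction that $q\in\Por$ and $n<\omega$ satisfy $q\Vdash_\Por c\sqsubset_n\dot h$. Since $\Por\in M$ and $M$ is transitive, $\Por\subseteq M$ and $q\in M$. Inside $M$ I would define the tree
\[
T_q:=\left\{s\in\omega^{<\omega}\ /\ \exists r\leq_\Por q\ (r\Vdash_\Por s\subseteq\dot h)\right\}\in M;
\]
absoluteness of the forcing relation between $M$ and $V$ (both contain the parameters $\Por$, $q$, $\dot h$) ensures that this tree is computed the same in $M$ and in $V$. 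Because $\dot h$ is a name for a function in $\omega^\omega$, any condition witnessing $s\in T_q$ has a refinement deciding $\dot h(|s|)$, so every $s\in T_q$ has a successor $s\widehat{\ \ }\langle j\rangle\in T_q$. Hence $T_q$ has no dead ends inside $M$, and I can build in $M$ its leftmost branch $g\in M\cap\omega^\omega$ by $g(k):=\min\{j\in\omega : g\frestr k\widehat{\ \ }\langle j\rangle\in T_q\}$. Back in $V$, let $T_c\subseteq\omega^{<\omega}$ be the tree whose set of infinite branches equals the closed set $\{h\in\omega^\omega : c\sqsubset_n h\}$. For each $s\in T_q$, taking a $V$-generic $G$ containing a witnessing $r\leq q$ yields $\dot h^G\supseteq s$ and, via $q\Vdash c\sqsubset_n\dot h$, also $\dot h^G\in[T_c]$, so $s\in T_c$. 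Thus $T_q\subseteq T_c$ in $V$, whence $g\in[T_c]$, i.e., $c\sqsubset_n g$ with $g\in M$, contradicting the $\sqsubset$-unboundedness of $c$ over $M$.

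For part (a), let $\dot h\in M$ be a $\Sor^M$-name for a real and $c\in V$ be $\sqsubset$-unbounded over $M$. Applying the hypothesis inside $M$ produces a set $Y\in M$ of size $<\aleph_1$ (in $M$) such that $M$ satisfies ``for every $f\in\omega^\omega$, $f\not\sqsubset Y\Rightarrow\Vdash_{\Sor^M}f\not\sqsubset\dot h$''. Since $\Sor$ is Suslin ccc, the name $\dot h$ can be coded by a sequence of reals in $M$, and ``$\Vdash_\Sor f\not\sqsubset\dot h$'' applied to the Borel property $\sqsubset=\bigcup_n\sqsubset_n$ is expressible as a $\boldsymbol{\Pi}^1_2$-statement in $f$ whose remaining parameters (codes for $\Sor$, $\dot h$, and $Y$) are reals in $M$. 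By Shoenfield absoluteness the implication transfers upward, so in $V$ we obtain ``$f\not\sqsubset Y\Rightarrow\Vdash_{\Sor^V}f\not\sqsubset\dot h$'' for every $f\in\omega^\omega$, where $\dot h$ is interpreted as a $\Sor^V$-name via $\Sor^M\lessdot_M\Sor^V$ (Lemma \ref{Suslin2stepit}). Since $Y\subseteq M\cap\omega^\omega$ and $c$ is $\sqsubset$-unbounded over $M$, $c\not\sqsubset Y$; instantiating with $f=c$ gives $\Vdash_{\Sor^V}c\not\sqsubset\dot h$, as required.

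The subtle points are, for (b), manufacturing a single real $g\in M$ from $\dot h$ without imposing any chain condition on $\Por$ (solved by the leftmost-branch recipe, which only needs that $T_q$ is pruned, a fact automatic from $\dot h$ being a name for a function), and, for (a), transferring a forcing statement from $M$ to $V$ across the external parameter $c\notin M$, which is exactly what Suslinness buys us through the analytic definability of the forcing relation on Borel formulas.
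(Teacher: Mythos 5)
Your proposal is correct, and it splits naturally against the paper as follows. For part (b) you are doing essentially what the paper does: the paper works in $M$ with a decreasing sequence $\langle p_k\rangle$ below a given condition deciding longer and longer initial segments of $\dot h$, producing a single real $g\in M$, and then uses closedness of the section $(\sqsubset_n)_c$ to find $k$ with $[g\frestr k]\cap(\sqsubset_n)_c=\varnothing$, so that $p_k$ forces $c\not\sqsubset_n\dot h$; your tree $T_q$ with its leftmost branch is the same idea packaged as a contradiction (every initial segment of $g$ is realizable by $\dot h$ below $q$, so $g$ lands in the closed set $(\sqsubset_n)_c$), and the absoluteness you invoke is only needed for the atomic statements ``$r\Vdash \check s\subseteq\dot h$'', which is unproblematic since $\Por^M=\Por$ and every $V$-generic filter is $M$-generic. (You also silently use that ``no $(q,n)$ forces $c\sqsubset_n\dot h$'' yields $\Vdash c\not\sqsubset\dot h$, which is the standard reflection of the witness $n\in\check\omega$; fine.) For part (a) the paper gives no proof at all — it only cites \cite[Thm. 7]{mejia} — so here you are supplying an argument rather than reproducing one, and your route is indeed the standard one: transfer the goodness witness $Y\in M$ by expressing ``for all $f$, if $f\not\sqsubset Y$ then $\Vdash_\Sor f\not\sqsubset\dot h$'' projectively in the real codes for $\Sor$, $\dot h$ and $Y$, and apply Shoenfield, using $\Sor^M\lessdot_M\Sor^V$ (Lemma \ref{Suslin2stepit}) to keep the nice-name antichains predense in $\Sor^V$.

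The only place where your sketch is thinner than it should be is the complexity claim in (a). If one writes the hypothesis as ``$\exists p\in\Sor\,\exists n\,(p\Vdash f\sqsubset_n\dot h)$'' the naive count overshoots $\boldsymbol{\Pi}^1_2$; to stay within Shoenfield's range one should formulate the statement with universal quantifiers over conditions, namely ``for all $f,n,p$: if $f\not\sqsubset Y$ and $p\in\Sor$ then $p\not\Vdash f\sqsubset_n\dot h$'', and use the $\boldsymbol{\Sigma}^1_1$ characterizations available for Suslin ccc forcing: ``$q\Vdash\dot h\frestr k=\check s$'' is $\boldsymbol{\Sigma}^1_1$ via incompatibility with the coded maximal antichains, and ``$p\not\Vdash f\sqsubset_n\dot h$'' becomes ``$\exists q\leq p\,\exists k,s\,(q\Vdash\dot h\frestr k=\check s$ and $(f\frestr k,s)$ leaves the tree coding $\sqsubset_n)$'', which is $\boldsymbol{\Sigma}^1_1$ in $f,p$ and the codes; the whole statement is then genuinely $\boldsymbol{\Pi}^1_2$. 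This also makes explicit that the relations $\sqsubset_n$ must be coded by reals in $M$ (implicit in the framework, since $\sqsubset$ is reinterpreted in extensions anyway) and that $M$ must contain enough ordinals for Shoenfield, which holds in all the paper's applications where $M$ and $V$ share ordinals. None of this is a gap in the idea — it is exactly the bookkeeping behind the cited \cite[Thm. 7]{mejia} — but as written the ``$\boldsymbol{\Pi}^1_2$'' assertion is the one step you would need to verify rather than assert.
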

\begin{proof}
  We prove (b) as it is more general than the cited result. Let $c$ be a $\sqsubset$-unbounded real over $M$. Work within $M$. Let $\dot{h}$ be a $\Por$-name for a real in $\omega^\omega$ and fix $p\in\Por$ and $n<\omega$. Choose $\{p_k\}_{k<\omega}$ a decreasing sequence in $\Por$ and $g\in\omega^\omega$ such that $p_0=p$ and $p_k\Vdash\dot{h}\frestr k=g\frestr k$.

  Possibly outside $M$, $c\not\sqsubset g$, so $c\not\sqsubset_n g$, which implies that there is a $k<\omega$ such that $[g\frestr k]\cap(\sqsubset_n)_c=\varnothing$, this because $(\sqsubset_n)_c$ is a closed set. As $p_k\Vdash_M\dot{h}\frestr k=g\frestr k$, we get that $p_k\Vdash c\not\sqsubset_n\dot{h}$ (as $c$ may not be in $M$, this forcing relation may not interpreted in $M$ but it holds in the universe).
\end{proof}

This property of preserving unbounded reals works well under fsi and direct limits, as explained in the following result.

\begin{lemma}\label{PresUnbrealLimit}
  Let $c$ be a $\sqsubset$-unbounded real over $M$.
  \begin{enumerate}[(a)]
   \item Let $\Por_0\in M$ and $\Qor_0$ be posets, $\Pnm_1\in M$ a $\Por_0$-name for a poset and $\Qnm_1$ a $\Qor$-name for a poset such that $\Por_0\lessdot_M\Qor_0$ and $\Qor_0$ forces $\Pnm_1\lessdot_{M^{\Por_0}}\Qnm_1$. If $\Qor_0$ forces $c$ $\sqsubset$-unbounded over $M^{\Por_0}$ and $\Qor_0$ forces that $\Qnm_1$ forces $c$ $\sqsubset$-unbounded over $(M^{\Por_0})^{\Pnm_1}$, then $\Qor_0\ast\Qnm_1$ forces $c$ $\sqsubset$-unbounded over $M^{\Por_0\ast\Pnm_1}$.
   \item Let $I\in M$ be a directed set, $\langle\Por_i\rangle_{i\in I}\in M$ and $\langle\Qor_i\rangle_{i\in I}$ directed systems of posets such that
   \begin{enumerate}[(i)]
      \item for each $i\in I$, $\Por_i\lessdot_M\Qor_i$ and $\Qor_i$ forces $c$ $\sqsubset$-unbounded over $M^{\Por_i}$ and
      \item whenever $i\leq j$, the diagram $\langle\Por_i,\Por_j,\Qor_i,\Qor_j\rangle$ is correct with respect to $M$.
   \end{enumerate}
   Then, $\Qor$ forces $c$ $\sqsubset$-unbounded over $M^\Por$ where $\Por:=\limdir_{i\in I}\Por_i$ and $\Qor:=\limdir_{i\in I}\Qor_i$. Moreover, for any $i\in I$, $\langle\Por_i,\Por,\Qor_i,\Qor\rangle$ is correct with respect to $M$.
  \end{enumerate}
\end{lemma}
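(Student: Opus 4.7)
For (a), the plan is to reduce to the two-step forcing identity. By Lemma~\ref{2stepitemb}, the hypothesis that $\Qor_0$ forces $\Pnm_1 \lessdot_{M^{\Por_0}} \Qnm_1$ yields $\Por_0 \ast \Pnm_1 \lessdot_M \Qor_0 \ast \Qnm_1$, so any $(\Por_0 \ast \Pnm_1)$-name $\dot{h} \in M$ for a real is meaningfully evaluated by a $(\Qor_0 \ast \Qnm_1)$-generic. I will identify $\dot{h}$ with the equivalent $\Por_0$-name (in $M$) for a $\Pnm_1$-name for a real, and then use the standard equivalence
\[
\Qor_0 \ast \Qnm_1 \Vdash c \not\sqsubset \dot{h}
\quad\Longleftrightarrow\quad
\Qor_0 \Vdash_{\Qor_0}\bigl(\Qnm_1 \Vdash c \not\sqsubset \dot{h}\bigr),
\]
whose right side is exactly the second hypothesis. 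The first hypothesis is used only implicitly, to guarantee that $\dot{h}$ has a sensible interpretation as a $\Pnm_1$-name in $M^{\Por_0}$.

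For (b), I will first apply Lemma~\ref{dirlimEmb} with (i) and (ii) to conclude $\Por \lessdot_M \Qor$ and that every diagram $\langle \Por_i, \Por, \Qor_i, \Qor\rangle$ is correct with respect to $M$, which already gives the ``moreover'' clause. For the main claim, fix a $\Por$-name $\dot{h} \in M$ for a real, $q \in \Qor$, and $n < \omega$; by density it suffices to find $q' \leq q$ in $\Qor$ with $q' \Vdash_\Qor c \not\sqsubset_n \dot{h}$. I pick a reduction $r \in \Por$ of $q$ (available by $\Por \lessdot_M \Qor$) and, working inside $M$ with $\dot{h}$ as a $\Por$-name exactly as in the proof of Lemma~\ref{PresUnbLemma}(b), build a decreasing chain $\{p_k\}_{k<\omega} \subseteq \Por$ below $r$ together with $g \in M \cap \omega^\omega$ such that $p_k \Vdash_\Por \dot{h}\frestr k = g\frestr k$. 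Since $c$ is $\sqsubset$-unbounded over $M$, $c \not\sqsubset g$; as $(\sqsubset_n)^c$ is closed and misses $g$, some basic open $[g\frestr k]$ avoids it, so $p_k \Vdash_\Por c \not\sqsubset_n \dot{h}$. Any $q' \leq p_k, q$ in $\Qor$ (compatible because $r$ is a reduction of $q$) is the desired witness.

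The technical step I expect to require the most care is the transfer of the forcing statement from $p_k \Vdash_\Por c \not\sqsubset_n \dot{h}$ to $q' \Vdash_\Qor c \not\sqsubset_n \dot{h}$. It rests on three ingredients: for any $\Qor$-generic $G$ over $V$ with $q' \in G$, $\Por \cap G$ is $\Por$-generic over $M$ thanks to $\Por \lessdot_M \Qor$; the interpretation $\dot{h}[G]$ as a $\Qor$-name coincides with $\dot{h}[\Por \cap G]$ (by induction on the rank of names, using that all pairs in $\dot{h}$ have conditions from $\Por$); and the closed relation $\sqsubset_n$ is absolute between the transitive models $M[\Por \cap G]$ and $V[G]$. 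Together these push $c \not\sqsubset_n \dot{h}[\Por \cap G]$ from $M[\Por \cap G]$ into $V[G]$. Note that the unboundedness portion of (b)(i) never enters the argument beyond what Lemma~\ref{dirlimEmb} already extracts from $\Por_i \lessdot_M \Qor_i$, since $\Por \in M$ allows Lemma~\ref{PresUnbLemma}(b) to be invoked on $\Por$ directly.
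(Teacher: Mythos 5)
Your treatment of (a) and of the ``moreover'' clause is fine, but your proof of (b) has a genuine gap, and it sits exactly where you claim the hypotheses are not needed. You pick ``a reduction $r\in\Por$ of $q$ (available by $\Por\lessdot_M\Qor$)''. The relation $\lessdot_M$, as defined in this paper, only says that maximal antichains of $\Por$ \emph{lying in $M$} remain maximal in $\Qor$; it does not provide reductions (that is clause (iii) of the full notion $\lessdot$, which is not assumed here), and in the intended applications $\Qor$ is a poset of $V$ while $\Por\in M$, so reductions typically fail to exist. If reductions did exist for every $q\in\Qor$, then together with (i) of the definition of subposet one would essentially have full regularity $\Por\lessdot\Qor$ in $V$, and your argument (chain built in $M$, $g\in M$, unboundedness of $c$ over $M$ only) would indeed go through --- but that is a much stronger hypothesis than the lemma's.

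That this is not a repairable cosmetic issue is shown by the fact that your argument never uses the assumption that each $\Qor_i$ forces $c$ to be $\sqsubset$-unbounded over $M^{\Por_i}$ (you say so explicitly), whereas without it the statement is false. Take $\sqsubset$ to be $=^*$ (Example \ref{SubsecNewreal}), $M$ countable transitive, $\Por=2^{<\omega}\in M$ Cohen forcing, and $c\in 2^\omega$ Cohen over $M$ (so $c\notin M$, i.e.\ $c$ is $=^*$-unbounded over $M$). Let $\Qor\supseteq\Por$ be the poset obtained by adding conditions $(s,k)$ which promise that the generic branch agrees with $c$ from $k$ on; since every real $=^*c$ is Cohen over $M$, one checks $\Por\lessdot_M\Qor$, yet any $\Qor$-generic $G$ makes the induced $\Por$-generic real eventually equal to $c$, so $c\in M[\Por\cap G]$ and $c$ is \emph{not} $=^*$-unbounded over $M^{\Por}$. (In this example no condition of the form $(s,k)$ has a reduction in $\Por$, which is precisely where your construction breaks down.) The paper's proof avoids this by localizing at some $i$ with $q\in\Qor_i$: it passes to a $\Qor_i$-generic extension, builds the decreasing chain $\{p_k\}$ and the real $g$ inside $M[G\cap\Por_i]$ as conditions of the quotient $\Por/\Por_i$, uses hypothesis (i) to conclude $c\not\sqsubset g$ (this is where the unboundedness over $M^{\Por_i}$ is indispensable, since $g$ need not lie in $M$), and transfers the forcing statement via $\Por/\Por_i\lessdot_{M[G\cap\Por_i]}\Qor/\Qor_i$, which comes from Lemma \ref{QuotEmb} and is where the correctness hypothesis (ii) is used. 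You should rework (b) along these lines rather than working below a reduction in $\Por$.
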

\begin{proof}
  (a) is obvious, so we prove (b). By Lemma \ref{dirlimEmb}, it is enough to prove that, if $\dot{h}\in M$ is a $\Por$-name for a real in $\omega^\omega$, then $\Vdash_{\Qor}c\not\sqsubset\dot{h}$. Assume, towards a contradiction, that there are $q\in\Qor$ and $n<\omega$ such that $q\Vdash_{\Qor}c\sqsubset_n\dot{h}$. Choose $i\in I$ such that $q\in\Qor_i$.

   Let $G$ be $\Qor_i$-generic over $V$ with $q\in G$. By assumption, $\Vdash_{\Qor/\Qor_i}c\sqsubset_n\dot{h}$. In $M[G\cap\Por_i]$, find $g\in\omega^\omega$ and a decreasing chain $\{p_k\}_{k<\omega}$ in $\Por/\Por_i$ such that $p_k\Vdash_{\Por/\Por_i}\dot{h}\frestr k=g\frestr k$. In $V[G\cap\Qor_i]$, by (i), $c\not\sqsubset g$, so there is a $k<\omega$ such that $[g\frestr k]\cap(\sqsubset_n)_c=\varnothing$. Then, as $\Por/\Por_i\lessdot_{M[G\cap\Por_i]}\Qor/\Qor_i$ by Lemma \ref{QuotEmb}, $p_k\Vdash_{\Qor/\Qor_i}[\dot{h}\frestr k]\cap(\sqsubset_n)_c=\varnothing$, that is, $p_k\Vdash_{\Qor/\Qor_i} c\not\sqsubset_n\dot{h}$, which is a contradiction.
\end{proof}

\begin{theorem}[Preservation of $\sqsubset$-unbounded reals]\label{unbrealint}
   Let $\Por\frestr\langle L,\bar{\Iwf}\rangle$ be a template iteration. Fix $x\in L$ such that $\bar{L}_x:=L_x\cup\{x\}\in\hat{\Iwf}_z$ for all $z>x$ in $L$ and let $\dot{c}$ be a $\Por\frestr\bar{L}_x$-name for a $\sqsubset$-unbounded real over $V^{\Por\upharpoonright L_x}$. Assume
   \begin{description}
    \item[$(\star)$] for any $y\in L$, $B\in\hat{\Iwf}_y$ with $\bar{L}_x\subseteq B$, if $\Por\frestr B$ forces $\dot{c}$ $\sqsubset$-unbounded over $V^{\Por\upharpoonright(B\menos\{x\})}$, then $\Por\frestr(B\cup\{y\})$ forces $\dot{c}$ $\sqsubset$-unbounded over $V^{\Por\upharpoonright((B\cup\{y\})\menos\{x\})}$.
   \end{description}
   Then, $\Por\frestr L$ forces $\dot{c}$ $\sqsubset$-unbounded over $V^{\Por\upharpoonright(L\menos\{x\})}$
\end{theorem}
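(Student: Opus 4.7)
My plan is to proceed by well-founded induction on $\mathrm{Dp}(A)$ for subsets $A\subseteq L$ containing $\bar{L}_x$, showing that $\Por\frestr A$ forces $\dot{c}$ to be $\sqsubset$-unbounded over $V^{\Por\upharpoonright(A\menos\{x\})}$; the conclusion of the theorem is then the instance $A=L$. The base case $A=\bar{L}_x$ is immediate, since $\bar{L}_x\menos\{x\}=L_x$. The inductive step splits into the three structural cases of Theorem \ref{TempIt}(d).

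In the ``successor'' case, where $A$ has a maximum $y$ and $A_y:=A\cap L_y\in\hat{\Iwf}_y$, the point $\bar{L}_x$ sits inside $A_y$ (because $x<y$ and $\bar{L}_x\subseteq A$), so the induction hypothesis applies to $A_y$, and then hypothesis $(\star)$ with $B:=A_y$ yields the conclusion for $A=A_y\cup\{y\}$. The two ``limit'' cases (either $A$ has maximum $y$ with $A_y\notin\hat{\Iwf}_y$, or $A$ has no maximum) are handled uniformly. Writing $\Por\frestr A=\limdir_{B\in\Bwf}\Por\frestr B$ as in Theorem \ref{TempIt}(d)(ii)--(iii), I would restrict to the subfamily $\Bwf':=\{B\in\Bwf:\bar{L}_x\subseteq B\}$. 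This $\Bwf'$ is cofinal in $\Bwf$: since $\bar{L}_x\in\hat{\Iwf}_y$ for the relevant $y$ (picking, in the no-maximum case, any $y'>x$ in $A$ above the current index), some $H\in\Iwf_y$ contains $\bar{L}_x$, and replacing $B$ by $B\cup(A\cap H)$ keeps it in $\Bwf$ while absorbing $\bar{L}_x$. By Theorem \ref{TempIt}(f), $\Por\frestr(B\menos\{x\})=\Por\frestr B\cap\Por\frestr(A\menos\{x\})$ for every $B\in\Bwf'$, so $\Por\frestr(A\menos\{x\})$ is the direct limit over $\Bwf'$ of the $\Por\frestr(B\menos\{x\})$; correctness of the diagrams $\langle\Por\frestr(B\menos\{x\}),\Por\frestr(B'\menos\{x\}),\Por\frestr B,\Por\frestr B'\rangle$ for $B\subseteq B'$ in $\Bwf'$ is provided by Theorem \ref{TempIt}(g).

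With this setup I would adapt the argument of Lemma \ref{PresUnbrealLimit}(b) to the limit step. Supposing for contradiction that some $q\in\Por\frestr A$ and $n<\omega$ force $\dot{c}\sqsubset_n\dot{h}$ for a $\Por\frestr(A\menos\{x\})$-name $\dot{h}$ of a real, pick $B\in\Bwf'$ with $q\in\Por\frestr B$, take a $\Por\frestr B$-generic $G$ with $q\in G$, and work in $V[G]$; here $c:=\dot{c}[G]$ is a genuine real which, by the induction hypothesis for $B$, is $\sqsubset$-unbounded over $V[G\cap\Por\frestr(B\menos\{x\})]$. Inside the latter model, construct a decreasing chain $\{p_k\}_{k<\omega}$ in $\Por\frestr(A\menos\{x\})/\Por\frestr(B\menos\{x\})$ and a real $g$ with $p_k\Vdash\dot{h}\frestr k=g\frestr k$; since $c\not\sqsubset g$ and $(\sqsubset_n)_c$ is closed, some $p_k$ forces $c\not\sqsubset_n\dot{h}$. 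By Lemma \ref{QuotEmb} applied to the correct diagram $\langle\Por\frestr(B\menos\{x\}),\Por\frestr(A\menos\{x\}),\Por\frestr B,\Por\frestr A\rangle$, the quotient $\Por\frestr(A\menos\{x\})/\Por\frestr(B\menos\{x\})$ regularly embeds into $\Por\frestr A/\Por\frestr B$, so $p_k$ forces $c\not\sqsubset_n\dot{h}$ in the larger quotient as well, contradicting $\Vdash_{\Por\frestr A/\Por\frestr B}\dot{c}\sqsubset_n\dot{h}$. The main obstacle is precisely that $\dot{c}$ is a name rather than a ground-model real, which prevents a direct invocation of Lemma \ref{PresUnbrealLimit}(b); the cofinality of $\Bwf'$ in $\Bwf$ is the key device that lets us always find a $B\in\Bwf'$ simultaneously absorbing $q$ and $\bar{L}_x$, thereby evaluating $\dot{c}$ in the $\Por\frestr B$-extension and reducing the limit step to the same quotient/correctness argument that drives the original lemma.
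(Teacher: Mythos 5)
Your proposal is correct and follows essentially the same route as the paper: induction on $\mathrm{Dp}(A)$ for $\bar{L}_x\subseteq A\subseteq L$, the successor case handled by the induction hypothesis for $A_y$ together with $(\star)$, and the two limit cases handled by writing $\Por\frestr A$ and $\Por\frestr(A\menos\{x\})$ as direct limits over the sets $B$ containing $\bar{L}_x$ and running the quotient/correctness argument of Lemma \ref{PresUnbrealLimit}(b). The paper simply builds the condition $\bar{L}_x\subseteq B$ into its family $\Bwf$ and cites that lemma directly, so your explicit re-run of its proof to accommodate $\dot{c}$ being a name (rather than a ground-model real) is the same device, just spelled out.
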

\begin{proof}
   By induction on $\mathrm{Dp}(A)$ with $\bar{L}_x\subseteq A\subseteq L$, we prove that $\Por\frestr A$ forces $\dot{c}$ $\sqsubset$-unbounded over $V^{\Por\upharpoonright(A\menos\{x\})}$. We may assume $A\neq\bar{L}_x$. Proceed by cases.
   \begin{enumerate}[(1)]
      \item \emph{$A$ has a maximum $y$ and $A_y=A\cap L_y\in\hat{\Iwf}_y$.} So $x<y$. By induction hypothesis and $(\star)$, $\Por\frestr(A_y\cup\{y\})$ forces $\dot{c}$ $\sqsubset$-unbounded over $V^{\Por\upharpoonright((A_y\cup\{y\})\menos\{x\})}$.
      \item \emph{$A$ has a maximum $y$ but $A_y\notin\hat{\Iwf}_y$.} Clearly, $x<y$. If $\Bwf=\{B\subseteq A\ /\ B\cap L_y\in\Iwf_y\frestr A\textrm{\ and\ }\bar{L}_x\subseteq B\}$, then $\Por\frestr A=\limdir_{B\in\Bwf}\Por\frestr B$ and $\Por\frestr A\menos\{x\}=\limdir_{B\in\Bwf}\Por\frestr(B\menos\{x\})$. Moreover, if $B\subseteq B'$ are in $\Bwf$, $\langle\Por\frestr B\menos\{x\},\Por\frestr B'\menos\{x\},\Por\frestr B,\Por\frestr B'\rangle$ is correct. By induction hypothesis, $\Por\frestr B$ forces $\dot{c}$ $\sqsubset$-unbounded over $V^{\Por\upharpoonright(B\menos\{x\})}$ for all $B\in\Bwf$. Therefore, by Lemma \ref{PresUnbrealLimit}, $\Por\frestr A$ forces  $\dot{c}$ $\sqsubset$-unbounded over $V^{\Por\upharpoonright(A\menos\{x\})}$
      \item \emph{$A$ does not have a maximum.} Proceed like in the previous case.
   \end{enumerate}
\end{proof}

\begin{corollary}\label{PresUnbRealExpTemp}
   Let $\Por\frestr\langle L,\bar{\Iwf}\rangle$ be a template iteration as in Example \ref{ExpTempItFund} such that, for every $x\in L_S$ and $B\in\hat{\Iwf}_x$, $\Por\frestr B$ forces that $\Qnm^B_x$ is $\sqsubset$-good. Let $x\in L$ such that $\bar{L}_x:=L_x\cup\{x\}\in\hat{\Iwf}_z$ for all $z>x$ in $L$ and let $\dot{c}$ be a $\Por\frestr\bar{L}_x$-name for a $\sqsubset$-unbounded real over $V^{\Por\upharpoonright L_x}$. Then, $\Por\frestr L$ forces that $\dot{c}$ is $\sqsubset$-unbounded over $V^{\Por\upharpoonright(L\menos\{x\})}$.
\end{corollary}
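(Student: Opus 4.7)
This is a direct application of Theorem \ref{unbrealint}; the only task is to verify the hypothesis $(\star)$ of that theorem. Fix $y\in L$ and $B\in\hat{\Iwf}_y$ with $\bar{L}_x\subseteq B$, and assume $\Por\frestr B$ forces $\dot{c}$ to be $\sqsubset$-unbounded over $V^{\Por\upharpoonright(B\menos\{x\})}$. We must show that $\Por\frestr(B\cup\{y\})$ forces $\dot{c}$ to be $\sqsubset$-unbounded over $V^{\Por\upharpoonright((B\cup\{y\})\menos\{x\})}$.

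By Theorem \ref{TempIt}(d)(i), $\Por\frestr(B\cup\{y\})\simeq\Por\frestr B\ast\Qnm^B_y$ and $\Por\frestr((B\cup\{y\})\menos\{x\})\simeq\Por\frestr(B\menos\{x\})\ast\Qnm^{B\menos\{x\}}_y$; moreover, condition~(1)(i) of Theorem \ref{TempIt} gives that $\Por\frestr B$ forces $\Qnm^{B\menos\{x\}}_y\lessdot\Qnm^B_y$. This is precisely the setup of Lemma \ref{PresUnbrealLimit}(a) (with $M=V$, $\Por_0=\Por\frestr(B\menos\{x\})$, $\Qor_0=\Por\frestr B$, $\Pnm_1=\Qnm^{B\menos\{x\}}_y$, $\Qnm_1=\Qnm^B_y$). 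Hence it suffices to verify that $\Por\frestr B$ forces $\Qnm^B_y$ to preserve the $\sqsubset$-unboundedness of $\dot{c}$ over the intermediate extension $V^{\Por\upharpoonright(B\menos\{x\})}\ast\Qnm^{B\menos\{x\}}_y$.

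The verification then splits according to the two options of Example \ref{ExpTempItFund}. If $y\in L_S$, then $\Qnm^B_y=\Sor_y^{V^{\Por\upharpoonright B}}$ and $\Qnm^{B\menos\{x\}}_y=\Sor_y^{V^{\Por\upharpoonright(B\menos\{x\})}}$ for the fixed Suslin correctness-preserving ccc poset $\Sor_y$, and by hypothesis $V^{\Por\upharpoonright(B\menos\{x\})}\models$ ``$\Sor_y$ is $\sqsubset$-good''; Lemma \ref{PresUnbLemma}(a), applied with the small model $V^{\Por\upharpoonright(B\menos\{x\})}$ inside the ambient universe $V^{\Por\upharpoonright B}$, delivers the required preservation. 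If $y\in L_C$, then $\Qnm^B_y$ and $\Qnm^{B\menos\{x\}}_y$ are each either the trivial poset or the $\Por\frestr C_y$-name $\Qnm_y$, with the specific choice depending on whether $C_y$ is contained in the corresponding index set; in each of these configurations both interpretations arise from the \emph{same} $\Por\frestr C_y$-name, so Lemma \ref{PresUnbLemma}(b) applies directly.

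The step I expect to be the main conceptual hurdle is the bookkeeping around the reduction to Lemma \ref{PresUnbrealLimit}(a): one must check carefully that the regular-embedding hypotheses hold not only in the ground model but in the $\Por\frestr B$-extension (using Theorem \ref{TempIt}(1)(i)), and that the goodness assumption on $\Sor_y$ transfers correctly into the intermediate model $V^{\Por\upharpoonright(B\menos\{x\})}$ so that Lemma \ref{PresUnbLemma}(a) is legitimately applicable there. Once this correctness-of-context is in place, the case analysis on $L_S$ vs.\ $L_C$ is routine, and Theorem \ref{unbrealint} immediately yields the conclusion.
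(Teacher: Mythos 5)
Your proposal follows the paper's own proof: verify $(\star)$ of Theorem \ref{unbrealint} by factoring $\Por\frestr(B\cup\{y\})\simeq\Por\frestr B\ast\Qnm^B_y$ and $\Por\frestr((B\cup\{y\})\menos\{x\})\simeq\Por\frestr(B\menos\{x\})\ast\Qnm^{B\menos\{x\}}_y$, and then use Lemma \ref{PresUnbrealLimit}(a) together with Lemma \ref{PresUnbLemma}(a) when $y\in L_S$ and Lemma \ref{PresUnbLemma}(b) when $y\in L_C$. The one imprecision is in your $L_C$ case: if $C_y\subseteq B$ but $x\in C_y$, the two names are \emph{not} the same ($\Qnm^B_y=\Qnm_y$ while $\Qnm^{B\menos\{x\}}_y$ is trivial), so Lemma \ref{PresUnbLemma}(b) does not apply verbatim as you assert; however, that subcase is immediate anyway, since then $V^{\Por\upharpoonright((B\cup\{y\})\menos\{x\})}=V^{\Por\upharpoonright(B\menos\{x\})}$ and the hypothesis that $\Por\frestr B$ forces $\dot{c}$ $\sqsubset$-unbounded over this model, together with upward absoluteness of $\neg\sqsubset$ (each $\sqsubset_n$ is a closed relation coded in the ground model), gives the conclusion for $\Por\frestr(B\cup\{y\})$ — which is exactly the remark the paper makes about "the other case".
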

\begin{proof}
   It is enough to prove $(\star)$ of Theorem \ref{unbrealint}. Assume that $y\in L$, $B\in\hat{\Iwf}_y$ with $\bar{L}_x\subseteq B$ and that $\Por\frestr B$ forces $\dot{c}$ $\sqsubset$-unbounded over $V^{\Por\upharpoonright(B\menos\{x\})}$. It is a direct consequence of Lemma \ref{PresUnbLemma} that $\Por\frestr(B\cup\{y\})$ forces $\dot{c}$ $\sqsubset$-unbounded over $V^{\Por\upharpoonright((B\cup\{y\})\menos\{x\})}$: if $y\in L_S$ use (a) of the Lemma or, if $y\in L_C$, use (b) when $C_y\subseteq B\menos\{x\}$ (the other case is easier because it implies $V^{\Por\upharpoonright((B\cup\{y\})\menos\{x\})}=V^{\Por\upharpoonright(B\menos\{x\})}$).
\end{proof}

\begin{proof}[Proof of Theorem \ref{newrealint}]
   Apply Lemma \ref{Presnewrealccc} and Corollary \ref{PresUnbRealExpTemp} for the relation defined in Example \ref{SubsecNewreal}.
\end{proof}

The following result shows conditions to force a cardinal invariant of the form $\dfrak_\sqsubset$ to be quite big.

\begin{theorem}\label{dfrakbig}
   Let $\theta$ be an uncountable regular cardinal and $\Por\frestr\langle L,\bar{\Iwf}\rangle$ a template iteration as in Example \ref{ExpTempItFund}. Assume:
   \begin{enumerate}[(i)]
    \item If $\dot{x}$ is a $\Por\frestr L$-name for a real, then it is a $\Por\frestr A$-name for some $A\subseteq L$ of size $<\theta$.
    \item For every $x\in L_S$ and $B\in\hat{\Iwf}_x$, $\Por\frestr B$ forces that $\Qnm^B_x$ is $\sqsubset$-good.
    \item $W\subseteq L$ is a cofinal subset of size $\lambda\geq\theta$ such that, for all $z\in W$, $L_z\in\Iwf_z$ and there is a $\Por\frestr(L_z\cup\{z\})$-name $\dot{c}_z$ for a $\sqsubset$-unbounded real over $V^{\Por\upharpoonright L_z}$.
   \end{enumerate}
   Then, $\Por\frestr L$ forces $\dfrak_\sqsubset\geq\lambda$ (as an inequality of ordinal numbers because $\lambda$ may not be a cardinal in the extension\footnote{This does not happen in our applications, though.}, in which case $\dfrak_\sqsubset$ is bigger than or equal to $|\lambda|^+$).
\end{theorem}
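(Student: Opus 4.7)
The plan is, given any $\Por\frestr L$-name for a family of reals of size $<\lambda$, to exhibit some $\dot{c}_z$ from hypothesis (iii) that is forced $\sqsubset$-unbounded over every member of the family. So I fix $\kappa < \lambda$ and a sequence $\langle \dot{h}_\alpha \rangle_{\alpha < \kappa}$ of $\Por\frestr L$-names for reals in $\omega^\omega$, and aim to produce a $\Por\frestr L$-name $\dot{c}$ for a real with $\Por\frestr L \Vdash \dot{c} \not\sqsubset \dot{h}_\alpha$ for every $\alpha$. By (i), each $\dot{h}_\alpha$ is a $\Por\frestr A_\alpha$-name for some $A_\alpha \subseteq L$ of size $<\theta$; set $A := \bigcup_{\alpha < \kappa} A_\alpha$.

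The first step will be to locate $z \in W \menos A$, using only the regularity of $\theta$ to bound $|A|$. If $\theta \leq \kappa$, then $|A| \leq \kappa \cdot \theta = \kappa < \lambda$; if instead $\kappa < \theta$, then $\kappa < \theta = \cf(\theta)$, so $\sup_{\alpha<\kappa} |A_\alpha| < \theta$ and hence $|A| < \theta \leq \lambda$. Since $|W| = \lambda > |A|$, one can pick $z \in W \menos A$. Then $A_\alpha \subseteq L \menos \{z\}$, so by Theorem \ref{TempIt}(e) each $\dot{h}_\alpha$ can be viewed as a $\Por\frestr (L \menos \{z\})$-name for a real.

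The second step will be to apply Corollary \ref{PresUnbRealExpTemp} to $\dot{c}_z$ with $z$ playing the role of $x$. Its hypothesis requires $\bar{L}_z := L_z \cup \{z\} \in \hat{\Iwf}_w$ for every $w > z$ in $L$. This follows from (iii): $L_z \in \Iwf_z \subseteq \Iwf_w \subseteq \hat{\Iwf}_w$, $\{z\} \in [L_w]^{<\omega} \subseteq \hat{\Iwf}_w$, and $\hat{\Iwf}_w$ is an ideal, so $\bar{L}_z \in \hat{\Iwf}_w$. Together with goodness hypothesis (ii), the corollary yields that $\Por\frestr L$ forces $\dot{c}_z$ to be $\sqsubset$-unbounded over $V^{\Por\upharpoonright(L \menos \{z\})}$. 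Since each $\dot{h}_\alpha$ is already interpreted inside this intermediate extension, $\Por\frestr L$ forces $\dot{c}_z \not\sqsubset \dot{h}_\alpha$ for every $\alpha < \kappa$, witnessing that $\langle \dot{h}_\alpha \rangle_{\alpha<\kappa}$ is not $\sqsubset$-dominating.

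There is no serious obstacle here: the argument is a direct combination of (i)--(iii) with Corollary \ref{PresUnbRealExpTemp} and an elementary cardinal arithmetic. The only subtle point to notice is that, in order for $\dot{c}_z$ to be forced unbounded over every intermediate extension containing the $\dot{h}_\alpha$'s, one needs $z$ to \emph{avoid} $A$, not merely to lie above it, and this is precisely what $|W| = \lambda > |A|$ provides.
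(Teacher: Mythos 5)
Your proof is correct and follows essentially the same route as the paper's: use hypothesis (i) to catch all the given names in a set of size $<\lambda$, pick $z\in W$ avoiding it, and apply Corollary \ref{PresUnbRealExpTemp} to conclude $\dot{c}_z$ is forced $\sqsubset$-unbounded over $V^{\Por\upharpoonright(L\menos\{z\})}$. The only difference is that you spell out details the paper leaves implicit (the cardinal arithmetic giving $|A|<\lambda$ and the check that $\bar{L}_z\in\hat{\Iwf}_w$ for $w>z$), and both are verified correctly.
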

\begin{proof}
   Let $\dot{X}=\{\dot{x}_\xi\ / \xi<\delta\}$ be a $\Por\frestr L$-name of a set of reals with $\delta<\lambda$ an ordinal\footnote{Which could be even forced to be equal to $|\lambda|$ by some condition in $\Por\frestr L$.}. By (i), for each $\xi<\delta$ there is a $K_\xi\subseteq L$ of size $<\theta$ such that $\dot{x}_\xi$ is a $\Por\frestr K_\xi$-name. $K=\bigcup_{\xi<\delta}K_\xi$ has size $<\lambda$ and, clearly, $\dot{X}$ is a $\Por\frestr K$-name. Choose $z\in W\menos K$, so $\Por\frestr(L_z\cup\{z\})$ forces $\dot{c}_z$ $\sqsubset$-unbounded over $V^{\Por\upharpoonright L_z}$. Hence, by Corollary \ref{PresUnbRealExpTemp}, $\Por\frestr L$ forces that $\dot{c}_z$ is $\sqsubset$-unbounded over $V^{\Por\upharpoonright L\menos\{z\}}$, in particular, $\dot{X}$ is not a $\sqsubset$-dominating family.
\end{proof}

\begin{remark}
  In the applications of this paper, $\sqsubset$-unbounded reals will actually be given by Cohen reals. Note that, according to Context \ref{ContextUnbd}, any Cohen real over a model $M$ is $\sqsubset$-unbounded over $M$ (actually, this is the only reason why we want $(\sqsubset_n)^g$ to be nwd for any real $g$ and $n<\omega$). We also use Cohen reals to produce $\theta$-$\sqsubset$-unbounded families for $\theta$ uncountable regular.
\end{remark}

\begin{lemma}\label{AddCohen}
   Let $\langle V_\alpha\rangle_{\alpha\leq\theta}$ be an increasing sequence of transitive models of $\thzfc$ such that
   \begin{enumerate}[(i)]
      \item there is a Cohen real $c_\alpha\in \omega^\omega\cap V_{\alpha+1}$ over $V_\alpha$ for any $\alpha<\theta$,
      \item $\langle\omega^\omega\cap V_\alpha\rangle_{\alpha<\theta},\{c_\alpha\}_{\alpha<\theta}\in V_\theta$ and
      \item $\omega^\omega\cap V_\theta=\bigcup_{\alpha<\theta}\omega^\omega\cap V_\alpha$.
   \end{enumerate}
   Then, in $V_\theta$, $\{c_\alpha\ /\ \alpha<\theta\}$ is a $\theta$-$\sqsubset$-unbounded family.
\end{lemma}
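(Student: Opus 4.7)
\medskip

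\textbf{Proof plan.} The argument is a standard reflection argument using the regularity of $\theta$ (which was fixed at the start of Section~\ref{SecPres} as an uncountable regular cardinal) together with the observation in the Remark that Cohen reals are $\sqsubset$-unbounded.

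First I would work in $V_\theta$ and take an arbitrary $X \subseteq \omega^\omega$ with $|X| < \theta$, the goal being to produce some $\alpha < \theta$ such that $c_\alpha$ is $\sqsubset$-unbounded over $X$. Using hypothesis (ii), the sequence $\langle \omega^\omega \cap V_\alpha \rangle_{\alpha < \theta}$ lives in $V_\theta$, so inside $V_\theta$ one may define, for each $g \in X$, the least ordinal $\alpha_g < \theta$ with $g \in V_{\alpha_g}$; here hypothesis (iii) guarantees that such an $\alpha_g$ exists. Since $|X| < \theta$ and $\theta$ is regular, the ordinal $\alpha := \sup_{g \in X} \alpha_g$ is still strictly below $\theta$, and $X \subseteq \omega^\omega \cap V_\alpha$.

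Next I would invoke hypothesis (i): $c_\alpha \in V_{\alpha+1}$ is a Cohen real over $V_\alpha$. By the Remark immediately preceding the statement of the lemma, any Cohen real over a model $M$ is $\sqsubset$-unbounded over $M$ (this is exactly why $(\sqsubset_n)^g$ is required to be nowhere dense for every $g$ and $n$, see Context~\ref{ContextUnbd}). Therefore $c_\alpha \not\sqsubset g$ for every $g \in \omega^\omega \cap V_\alpha$, and in particular for every $g \in X$. Since this is a statement about reals that all lie in $V_\theta$ (both $c_\alpha$ and $X$), it holds in $V_\theta$ as well.

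Finally, since $X$ was arbitrary of size $<\theta$, the family $\{c_\alpha : \alpha < \theta\}$—which belongs to $V_\theta$ by hypothesis (ii)—satisfies the definition of a $\theta$-$\sqsubset$-unbounded family in $V_\theta$. There is no real obstacle in this argument; the only point to be careful about is making sure the ``reflection'' $X \subseteq V_\alpha$ can be carried out inside $V_\theta$, which is exactly what hypotheses (ii) and (iii) provide, combined with the regularity of $\theta$.
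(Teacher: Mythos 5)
Your argument is correct, and it is exactly the standard reflection argument the paper has in mind: the lemma is stated without proof there, being regarded as routine, and your use of (ii) and (iii) together with the regularity of $\theta$ to capture $X$ inside a single $V_\alpha$, followed by the observation that the Cohen real $c_\alpha$ avoids every meager set $(\sqsubset)^g$ coded in $V_\alpha$ (and that $c_\alpha\not\sqsubset g$ is absolute to $V_\theta$), is precisely the intended proof. Nothing further is needed.
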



\section{The groupwise-density number and fsi}\label{SecApplg}

With the fsi techniques of \cite{brendle}, the author constructed in \cite[Sect. 3]{mejia} and
\cite[Thm. 4.1-4.4]{mejia02} models with large continuum where the cardinal invariants defined in the introduction, with the exception of $\gfrak$ and $\afrak$, can take many different values. As the iterations used there can be defined as template iterations (see Example \ref{ExmpFSI}) and the preservation Theorem \ref{PresTemp2} applies, we can also get a value of $\gfrak$. We show how to obtain it in this section.

The following result is useful to determine a value of $\gfrak$ in generic extensions.

\begin{lemma}[Blass {\cite[Thm. 2]{Bl}},  see also {\cite[Lemma 1.17]{Br-Suslin}}]\label{lemmagfrak}
   Let $\theta$ be an uncountable regular cardinal, $\langle V_\alpha\rangle_{\alpha\leq\theta}$ an increasing sequence of transitive models of $\thzfc$ such that
   \begin{enumerate}[(i)]
      \item $[\omega]^\omega\cap(V_{\alpha+1}\menos V_{\alpha})\neq\varnothing$,
      \item $\langle[\omega]^\omega\cap V_\alpha\rangle_{\alpha<\theta}\in V_\theta$ and
      \item $[\omega]^\omega\cap V_\theta=\bigcup_{\alpha<\theta}[\omega]^\omega\cap V_\alpha$.
   \end{enumerate}
   Then, in $V_\theta$, $\gfrak\leq\theta$.
\end{lemma}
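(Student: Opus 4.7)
My plan is to construct in $V_\theta$ a family $\{\Gwf_\alpha\}_{\alpha<\theta}$ of groupwise-dense subsets of $[\omega]^\omega$ with empty intersection; this immediately yields $\gfrak\leq\theta$ in $V_\theta$. By (ii) the parameter $\langle [\omega]^\omega\cap V_\alpha\rangle_{\alpha<\theta}$ lives in $V_\theta$, so any uniform definition of $\Gwf_\alpha$ in terms of $[\omega]^\omega\cap V_\alpha$ produces a sequence of cardinality $\theta$ inside $V_\theta$. The new reals supplied by (i) will be used to verify the groupwise-density clause, and (iii) will give the empty intersection.

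For each $\alpha<\theta$, the natural candidate is
\[
\Gwf_\alpha := \{X\in[\omega]^\omega : Y \not\subseteq^* X \text{ for every } Y\in[\omega]^\omega\cap V_\alpha\}.
\]
Downward closure under $\subseteq^*$ is immediate from transitivity (if $Z\subseteq^* X$ and $Y\subseteq^* Z$ for some $Y\in V_\alpha$, then $Y\subseteq^* X$, contradicting $X\in\Gwf_\alpha$), and no $X\in[\omega]^\omega\cap V_\alpha$ belongs to $\Gwf_\alpha$, as witnessed by $Y=X$. For groupwise-density, given an interval partition $\langle I_n\rangle_{n<\omega}\in V_\theta$, I need an infinite $B\subseteq\omega$ so that $\bigcup_{n\in B}I_n\in\Gwf_\alpha$; equivalently, for every $Y\in[\omega]^\omega\cap V_\alpha$ the residual $Y\cap\bigcup_{n\notin B}I_n$ must remain infinite. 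Using the new infinite set $A_\alpha\in[\omega]^\omega\cap(V_{\alpha+1}\menos V_\alpha)$ from (i) as a generic selector, such a $B\in V_{\alpha+1}$ is extracted by forcing the traces $\{n : Y\cap I_n\neq\varnothing\}$ for $Y\in V_\alpha$ to have infinite intersection with $\omega\setminus B$.

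For the empty intersection, given $X\in[\omega]^\omega\cap V_\theta$, hypothesis (iii) yields the least $\beta<\theta$ with $X\in V_\beta$; then $X\in[\omega]^\omega\cap V_\beta$, so $X\notin\Gwf_\beta$ (taking the witness $Y=X$). Therefore $\bigcap_{\alpha<\theta}\Gwf_\alpha=\varnothing$, which gives $\gfrak\leq\theta$ in $V_\theta$.

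The main obstacle will be the combinatorial verification of groupwise-density: the infinite set $B$ must simultaneously escape almost-containment by every element of $[\omega]^\omega\cap V_\alpha$, a family which in our applications is typically of size $\geq\aleph_1$. This is the combinatorial heart of Blass's original argument in \cite[Thm. 2]{Bl} and is adapted to the present framework in \cite[Lemma 1.17]{Br-Suslin}; I would follow those references for the construction of $B$ from $A_\alpha$ and $\langle I_n\rangle$, exploiting the fact that $A_\alpha\notin V_\alpha$ to prevent any ground-model trace from being almost-swallowed by $B$.
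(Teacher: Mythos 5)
Your overall skeleton is the standard one (the paper itself gives no proof of this lemma, it only cites Blass and Brendle): the sets $\Gwf_\alpha=\{X: \forall Y\in[\omega]^\omega\cap V_\alpha\,(Y\not\subseteq^* X)\}$, downward closure, (ii) to have the sequence in $V_\theta$, and (iii) for the empty intersection are all correct. The genuine gap is in the verification of groupwise density. You propose to witness it with the new set $A_\alpha\in[\omega]^\omega\cap(V_{\alpha+1}\menos V_\alpha)$ and to extract a selector $B\in V_{\alpha+1}$, "exploiting the fact that $A_\alpha\notin V_\alpha$". This is genericity over the wrong model: the interval partition $\langle I_n\rangle$ ranges over all of $V_\theta$ and may only appear at a stage $\beta$ far above $\alpha+1$, and the traces $t(Y)=\{n: Y\cap I_n\neq\varnothing\}$ live only in models containing $\langle I_n\rangle$, not in $V_{\alpha+1}$. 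In fact no single $B$ chosen before seeing the partition (in particular none in $V_{\alpha+1}$ when $\langle I_n\rangle\notin V_{\alpha+1}$) can work: given any infinite co-infinite $B$, take the recursive set $Y=\{2^k: k<\omega\}\in V_\alpha$ and build (definably from $B$ and $Y$, hence inside $V_\theta$) an interval partition in which every maximal run of indices outside $B$ is realized by blocks placed inside a single sufficiently long gap of $Y$, the intervening $B$-indexed blocks being stretched to reach that gap; then every block meeting $Y$ has index in $B$, so $Y\subseteq\bigcup_{n\in B}I_n$ and $\bigcup_{n\in B}I_n\notin\Gwf_\alpha$. So the fact that $A_\alpha$ is new over $V_\alpha$ alone cannot "prevent any ground-model trace from being almost-swallowed by $B$".

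The correct use of the hypotheses — and this is exactly why (i) is demanded at every stage and (iii) is needed — is: given $\langle I_n\rangle\in V_\theta$ and $\alpha<\theta$, use (iii) to pick $\beta<\theta$ with $\beta\geq\alpha$ and $\langle I_n\rangle\in V_\beta$, take $x\in 2^\omega\cap(V_{\beta+1}\menos V_\beta)$ by (i), and let $B$ be the set of codes of the initial segments of $x$ (under a fixed recursive coding of $2^{<\omega}$). No infinite $T\in V_\beta$ satisfies $T\subseteq^* B$: all but finitely many elements of such a $T$ would code pairwise comparable strings of unbounded length, and in $V_\beta$ one can isolate exactly those elements of $T$ coding strings comparable with cofinitely many of the strings coded by $T$, whose union reconstructs $x\in V_\beta$, a contradiction. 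Since $Y\subseteq^*\bigcup_{n\in B}I_n$ iff $t(Y)\subseteq^* B$, and $t(Y)\in V_\beta$ for every $Y\in[\omega]^\omega\cap V_\alpha$ (both $Y$ and the partition lie in $V_\beta$), we get $\bigcup_{n\in B}I_n\in\Gwf_\alpha$, which is groupwise density. This step is the combinatorial heart that your sketch defers entirely to the references; citing it would be acceptable, but the argument as you localized it (at stage $\alpha+1$, with $B\in V_{\alpha+1}$) does not prove the claim and needs to be replaced by the stage-$\beta+1$ argument above.
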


For the results in this section, fix uncountable regular cardinals $\mu_1\leq\mu_2\leq\mu_3\leq\nu$ and a cardinal $\lambda\geq\nu$. In what follows of this paper, $\mu\theta$ always denotes ordinal multiplication of cardinal numbers $\mu$ and $\theta$.

\begin{theorem}\label{gfrakcofmu3}
    If $\lambda^{<\mu_3}=\lambda$ then for each item (a),(b) and (c) there is a ccc poset that forces the corresponding item, $\add(\Nwf)=\mu_1$, $\cov(\Nwf)=\mu_2$, $\pfrak=\sfrak=\gfrak=\mu_3$ and $\non(\Nwf)=\rfrak=\cfrak=\lambda$.
    \begin{enumerate}[(a)]
       \item $\non(\Mwf)=\mu_3$ and $\cov(\Mwf)=\lambda$.
       \item $\add(\Mwf)=\cof(\Mwf)=\nu$.
       \item $\bfrak=\mu_3$, $\non(\Mwf)=\cov(\Mwf)=\nu$ and $\dfrak=\lambda$.
    \end{enumerate}
\end{theorem}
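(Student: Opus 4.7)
The plan is to take the fsi constructions from \cite[Sect.~3]{mejia} and \cite[Sect.~4]{mejia02} that realize (a), (b) and (c) respectively (giving the stated values for every cardinal in Cicho\'n's diagram together with $\pfrak,\sfrak,\rfrak,\cfrak$), view them as template iterations along the fsi-template of Example \ref{ExmpFSI}, and then upgrade each construction to force the extra value $\gfrak=\mu_3$. Since those constructions already force $\pfrak=\mu_3$, the lower bound $\mu_3=\pfrak\leq\gfrak$ is automatic in ZFC and the task reduces to producing the upper bound $\gfrak\leq\mu_3$ in the generic extension.

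For the upper bound I would apply Lemma \ref{lemmagfrak} after tuning the bookkeeping of the fsi so that Cohen reals are added along a distinguished $\mu_3$-chain of intermediate extensions. Concretely, in the iteration (call it $\Por\frestr L$ where $L$ is the ordinal length), I would reserve a strictly increasing continuous cofinal sequence $\langle L_\xi\rangle_{\xi<\mu_3}$ of initial segments with $\cf\bigl(\sup_{\xi<\mu_3}\max L_\xi\bigr)=\mu_3$, such that at each stage $\max L_\xi$ the iterand is Cohen forcing (this causes no harm since $\Cor$ is Suslin $\sigma$-centered and $\sqsubset$-good for all the relations of Example \ref{SubsecUnbd}, so it does not disturb any of the target invariant values). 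Let $L^*:=\bigcup_{\xi<\mu_3}L_\xi$ and set $V_\xi:=V^{\Por\upharpoonright L_\xi}$ for $\xi<\mu_3$, $V_{\mu_3}:=V^{\Por\upharpoonright L^*}$. Condition (i) of Lemma \ref{lemmagfrak} holds by construction, (ii) is immediate, and (iii) follows from the ccc of $\Por\frestr L^*$ together with regularity of $\mu_3=\cf(L^*)>\omega$: any name for an element of $[\omega]^\omega$ is determined by a countable subset of $L^*$, which is bounded below some $L_\xi$. Hence $\gfrak\leq\mu_3$ holds in $V_{\mu_3}$.

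The delicate point is to transfer this bound from $V_{\mu_3}$ to the full extension $V^{\Por\upharpoonright L}$, since groupwise-density is not preserved by forcing in general. I would argue as in Blass \cite{Bl}: the Cohen reals $\{c_\xi\}_{\xi<\mu_3}$ added at the distinguished stages give rise, in the canonical way, to a family of $\mu_3$ groupwise-dense sets with empty intersection in $V_{\mu_3}$, and it suffices to show that this witness persists in $V^{\Por\upharpoonright L}$. Each $c_\xi$ is Cohen over $V^{\Por\upharpoonright L_\xi}$, and by Theorem \ref{newrealint} (applied to each of these stages, whose condition is satisfied by the choice of the $L_\xi$ as initial segments in $\hat{\Iwf}_z$ for every $z$ above them in the fsi-template) together with Corollary \ref{PresUnbRealExpTemp} for the eventually-different relation $\eqcirc$ of Example \ref{SubsecUnbd}, each $c_\xi$ remains a real not in $V^{\Por\upharpoonright(L\menos\{\max L_\xi\})}$, so the associated groupwise-dense set remains groupwise-dense; the empty-intersection condition is absolute. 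Hence $\gfrak\leq\mu_3$ in the final model.

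The main obstacle I expect is precisely this preservation step: one must verify that the canonical groupwise-dense sets derived from the Cohen reals stay groupwise-dense after the remaining (quite arbitrary) portion of the iteration, which requires the goodness assumptions of Theorem \ref{PresTemp2} to be available for every iterand occurring above stage $\max L_\xi$. In the three constructions of \cite{mejia,mejia02} this holds because the forcing notions used there are Suslin $\sigma$-linked or $\sigma$-centered and hence $\theta$-good for the appropriate relations by the results collected in Example \ref{SubsecUnbd}. Once this is in place, the remaining invariant values of (a), (b) or (c) are read off from the verifications of \cite{mejia,mejia02}, now restated inside the template framework via Theorem \ref{PresTemp2} and Remark \ref{RemFsi}.
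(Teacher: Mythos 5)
Your reduction to $\gfrak\leq\mu_3$ and your use of Lemma \ref{lemmagfrak} along a chain of initial segments is fine as far as it goes, but it only yields $\gfrak\leq\mu_3$ in the intermediate model $V^{\Por\upharpoonright L^*}$, and the transfer step you then sketch is a genuine gap. Since $\cf$ of the iteration length is at least $\nu\geq\mu_3$ (and in case (a) is $\cf(\lambda)\geq\mu_3$), your $L^*$ is in general a proper initial part of $L$, and the claim that ``the associated groupwise-dense sets remain groupwise-dense'' in the final extension does not follow from Theorem \ref{newrealint} or Corollary \ref{PresUnbRealExpTemp}: those results say that the Cohen real $c_\xi$ is not added by $\Por\frestr(L\menos\{\max L_\xi\})$, but the groupwise-dense sets in Blass's argument are defined from the models $V_\xi$ (their sets of reals), not from the single reals $c_\xi$, and their groupwise-density must be checked against the \emph{new} interval partitions appearing in the tail of the iteration. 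None of the goodness machinery of Theorem \ref{PresTemp2} addresses this; preservation of groupwise-dense families is precisely the hard point, and your argument as written is a non sequitur at exactly that step.

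The paper avoids the problem rather than solving it, and this is where the template structure is genuinely used: instead of initial segments, take a partition $\{A_\eta\}_{\eta<\mu_3}$ of the index set $\lambda$ into pieces of size $\lambda$, each meeting $L_S$, and set $E_\eta=\bigcup_{\xi<\eta}A_\xi$. These $E_\eta$ are not initial segments, but by Lemma \ref{CondSupp}(b) one has $\Por\frestr L=\limdir_{\eta<\mu_3}\Por\frestr E_\eta$, so the top model of the chain $W_\eta=V[G\cap\Por\frestr E_\eta]$ is the \emph{final} extension and Lemma \ref{lemmagfrak} applies there directly, with no preservation step. Condition (i) of that lemma is exactly what Theorem \ref{newrealint} provides: for $\xi\in(E_{\eta+1}\menos E_\eta)\cap L_S$, the Cohen real added at stage $\xi$ is not in $V^{\Por\upharpoonright E_\eta}$ since $E_\eta\subseteq L\menos\{\xi\}$, while it does lie in $W_{\eta+1}$. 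To repair your proof you should replace your chain of initial segments by such a cofinal chain of arbitrary restrictions; the rest of your outline (bookkeeping from \cite{mejia,mejia02}, $\pfrak\geq\mu_3$ via the small Mathias--Prikry posets, goodness via Theorem \ref{PresTemp2} and Remark \ref{RemFsi}) matches the paper's verification of the remaining invariants.
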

\begin{proof}
\begin{enumerate}[(a)]
\item Consider $\langle\lambda,\bar{\Iwf}\rangle$ the template corresponding to a fsi of length $\lambda$ (see Example \ref{ExmpTemp}). For each $\alpha<\lambda$ enumerate $[\alpha]^{<\mu_3}:=\{C_{\alpha,\beta}\}_{\beta<\lambda}$. Fix a bijection $g:\lambda\to\lambda^3$ such that $g^{-1}(\alpha,\beta,\gamma)\geq\alpha,\beta,\gamma$ for any $\alpha,\beta,\gamma<\lambda$. Consider a template iteration $\Por\frestr\langle\lambda,\bar{\Iwf}\rangle$ as in Example \ref{ExpTempItFund} such that $L_S=\left\{\xi<\lambda\ /\ \exists_{\delta}(\xi=4\delta)\right\}$, $\Sor_\xi=\Cor$ for $\xi\in L_S$ and, for each $\xi\in L_C$, if $\xi=4\delta_\xi+r_\xi$ with $0<r_\xi<4$ and $g(\delta_\xi)=(\alpha,\beta,\gamma)$, then
      \begin{itemize}
      \item $C_\xi:=C_{\alpha,\beta}$.
      \item $\{\Locnm_{\alpha,\beta,\eta}\}_{\eta<\lambda}$ is an enumeration of the $\Por\frestr C_{\alpha,\beta}$-names for \emph{all} the subposets of $\Loc^{V^{\Por\upharpoonright C_{\alpha,\beta}}}$ of size $<\mu_1$.
      \item $\{\Bnm_{\alpha,\beta,\eta}\}_{\eta<\lambda}$ is an enumeration of the $\Por\frestr C_{\alpha,\beta}$-names for \emph{all} the subalgebras of $\Bor^{V^{\Por\upharpoonright C_{\alpha,\beta}}}$ of size $<\mu_2$.
      \item $\{\dot{\Fwf}_{\alpha,\beta,\eta}\}_{\eta<\lambda}$ is an enumeration of the $\Por\frestr C_{\alpha,\beta}$-names for \emph{all} the filter bases of size $<\mu_3$.
      \item If $r_\xi=1$, then $\Qnm_\xi=\Locnm_{\alpha,\beta,\gamma}$.
      \item If $r_\xi=2$, then $\Qnm_\xi=\Bnm_{\alpha,\beta,\gamma}$.
      \item If $r_\xi=3$, then $\Qnm_\xi=\Mor_{\dot{\Fwf}_{\alpha,\beta,\gamma}}$.
   \end{itemize}
   By Lemma \ref{CondSupp}, $\Por\frestr\lambda$ is ccc. We prove that $\Por\frestr\lambda$ forces the following.
   \begin{itemize}
     \item $\cov(\Nwf)=\mu_2$. To force $\geq$, let $\{\dot{N}_\eta\}_{\eta<\mu}$ be a sequence of $\Por\frestr L$-names of Borel null sets with $\mu<\mu_2$. Then, there is an $\alpha<\lambda$ such that all the $\dot{N}_\eta$ ($\eta<\mu$) are $\Por\frestr\alpha$-names (i.e., their Borel codes), so, by Lemma \ref{CondSupp}(c) (applied to $\theta=\mu_3$), we can find a $\beta<\lambda$ such that these are $\Por\frestr C_{\alpha,\beta}$-names. In $V^{\Por\upharpoonright C_{\alpha,\beta}}$, find a model $M$ of a large finite fragment of $\thzfc$ such that $\{N_\eta\ /\ \eta<\mu\}\subseteq M$ and $|M|\leq\mu$. Now, back in $V$, find $\gamma<\lambda$ such that $\Bnm_{\alpha,\beta,\gamma}$ is a $\Por\frestr C_{\alpha,\beta}$-name for $\Bor^{\dot{M}}$. Thus, with $\xi=4g^{-1}(\alpha,\beta,\gamma)+2$, $\Por\frestr(C_{\alpha,\beta}\cup\{\xi\})$ adds a random real over $\dot{M}$, in particular, this real is not in $\bigcup_{\eta<\mu}\dot{N}_\eta$.

     To force the converse inequality, note that $\Por\frestr\mu_2$ adds a $\mu_2$-$\pitchfork$-unbounded family of Cohen reals of size $\mu_2$ by Lemma \ref{AddCohen}. Also, this unbounded family is preserved in the $\Por\frestr\lambda$ extension because $\Por\frestr\lambda/\Por\frestr\mu_2$ is forced by $\Por\frestr\mu_2$ to be $\mu_2$-$\pitchfork$-good by Example \ref{SubsecUnbd}, Lemma  \ref{smallPlus} and Theorem \ref{PresTemp2} (see also Remark \ref{RemFsi}). Therefore, $\Por\frestr\lambda$ forces $\cov(\Nwf)\leq\bfrak_\pitchfork\leq\mu_2$.

     \item $\add(\Nwf)=\mu_1$. A similar argument as before, but use that $\Por\frestr\lambda/\Por\frestr\mu_1$ is $\mu_1$-$\in^*$-good for $\leq$ and, for the converse, use the small subposets of $\Loc$ to force that any family of reals of size $<\mu_1$ is localized by a single slalom in $S(\omega,id)$.

     \item $\pfrak=\non(\Mwf)=\mu_3$. To force $\mu_3\leq\pfrak$ use the small Mathias-Prikry posets and argue like the proof of $\cov(\Nwf)\geq\mu_2$. To force $\non(\Mwf)\leq\mu_3$, use that $\Por\frestr\lambda/\Por\frestr\mu_3$ is $\mu_3$-$\eqcirc$-good.

     \item $\cov(\Mwf)=\cfrak=\lambda$. As $|\Por\frestr\lambda|\leq\lambda$ and $\lambda^\omega=\lambda$, then $\cfrak\leq\lambda$ is clearly forced. To force $\lambda\leq\cov(\Mwf)$, apply Theorem \ref{dfrakbig} to $\theta=\mu_3$, $W=L_S$ and to the relation $\eqcirc$.

     \item $\gfrak=\mu_3$. As $\pfrak\leq\gfrak$, we only need to force $\gfrak\leq\mu_3$. Consider a partition $\{A_\eta\}_{\eta<\mu_3}$ of $\lambda$ such that each $A_\eta$ has size $\lambda$ an intersects $L_S$. For each $\eta<\mu_3$, put $E_\eta=\bigcup_{\xi<\eta}A_\xi$. Now, if $G$ is $\Por\frestr\lambda$-generic over $V$, let $W_{\mu_3}=V[G]$ and $W_\eta=V[G\cap\Por\frestr E_\eta]$ for each $\eta<\mu_3$. It is enough to see that the conditions of Lemma \ref{lemmagfrak} hold for the sequence $\{W_\eta\}_{\eta\leq\mu_3}$. Conditions (ii) and (iii) follow from the fact that, in $V$, $\Por\frestr L=\limdir_{\eta<\mu_3}\Por\frestr E_\eta$, this because of Lemma \ref{CondSupp}(b). To see (i), in $V$, let $\xi\in A_\eta\cap L_S=(E_{\eta+1}\menos E_\eta)\cap L_S$, so $\Por\frestr((E_\eta\cap\xi)\cup\{\xi\})$ adds a Cohen real over $V^{\Por\upharpoonright(E_\eta\cap\xi)}$ and, by Theorem \ref{newrealint}, this new real does not belong to $V^{\Por\upharpoonright E_\eta}=W_\eta$.
   \end{itemize}

   \item Let $\langle L=\lambda\nu,\bar{\Iwf}\rangle$ be the template corresponding to a fsi of length $\lambda\nu$. Fix a bijection $h:\lambda\to\lambda\times\lambda\times3$ and, for each $\alpha<\nu$, $\alpha\neq0$, enumerate $[\lambda\alpha]^{<\mu_3}:=\{C_{\alpha,\beta}\}_{\beta<\lambda}$. Perform a template iteration $\Por\frestr\langle L,\bar{\Iwf}\rangle$ as in Example \ref{ExpTempItFund} such that $L_S=\lambda\cup\{\lambda\alpha\ /\ 0<\alpha<\nu,\}$, $\Sor_\xi=\Dor$ for each $\xi\in L_S$ and, for each $\xi\in L_C$, if $\xi=\lambda\alpha+1+\eta$ for some $0<\alpha<\nu$ and $\eta<\lambda$ with $h(\eta)=(\beta,\gamma,r)$, then
    \begin{itemize}
      \item $C_\xi:=C_{\alpha,\beta}$
      \item $\{\Locnm_{\alpha,\beta,\eta}\}_{\eta<\lambda}$ is an enumeration of the $\Por\frestr C_{\alpha,\beta}$-names for \emph{all} the subposets of $\Loc^{V^{\Por\upharpoonright C_{\alpha,\beta}}}$ of size $<\mu_1$.
      \item $\{\Bnm_{\alpha,\beta,\eta}\}_{\eta<\lambda}$ is an enumeration of the $\Por\frestr C_{\alpha,\beta}$-names for \emph{all} the subalgebras of $\Bor^{V^{\Por\upharpoonright C_{\alpha,\beta}}}$ of size $<\mu_2$.
      \item $\{\dot{\Fwf}_{\alpha,\beta,\eta}\}_{\eta<\lambda}$ is an enumeration of the $\Por\frestr C_{\alpha,\beta}$-names for \emph{all} the filter bases of size $<\mu_3$.
      \item If $r=0$, then $\Qnm_\xi=\Locnm_{\alpha,\beta,\gamma}$.
      \item If $r=1$, then $\Qnm_\xi=\Bnm_{\alpha,\beta,\gamma}$.
      \item if $r=2$, then $\Qnm_\xi=\Mor_{\dot{\Fwf}_{\alpha,\beta,\gamma}}$.
    \end{itemize}
    Like in (a), $\Por\frestr L$ is ccc and forces $\add(\Nwf)=\mu_1$, $\cov(\Nwf)=\mu_2$, $\pfrak=\gfrak=\mu_3$ and $\cfrak\leq\lambda$. We show that $\Por\frestr L$ forces the following.
    \begin{itemize}
      \item $\sfrak\leq\mu_3$. $\Por\frestr\mu_3$ adds a $\mu_3$-$\propto$-unbounded family that is preserved in $\Por\frestr L$ because $\Por\frestr L/\Por\frestr\mu_3$ is $\mu_3$-$\propto$-good. So, in the final extension, $\sfrak=\bfrak_\propto\leq\mu_3$.

      \item $\lambda\leq\non(\Nwf)$. By Theorem \ref{dfrakbig} applied to $\theta=\mu_3$, $W=L_S$ (recall that Hechler forcing adds Cohen reals) and to $\pitchfork$.

      \item $\lambda\leq\rfrak$. Same argument as before, but with the relation $\propto$.

      \item $\add(\Mwf)=\cof(\Mwf)=\nu$. From Lemma \ref{CondSupp}(b), $\Por\frestr L=\limdir_{\alpha<\nu}\Por\frestr\lambda\alpha$ so, by Lemma \ref{AddCohen}, it adds a $\nu$-$\eqcirc$-unbounded family of Cohen reals of size $\nu$, which makes $\non(\Mwf)\leq\nu\leq\cov(\Mwf)$. We are left to prove $\nu\leq\bfrak$ and $\dfrak\leq\nu$. Indeed, $\Por\frestr\lambda(\alpha+1)$ adds a dominating real $\dot{d}_\alpha$ over $V^{\Por\upharpoonright\alpha}$ for any $\alpha<\nu$, so $\Por\frestr L$ forces that $\{\dot{d}_\alpha\ /\ \alpha<\nu\}$ is a dominating family and that any family of reals of size $<\nu$ can be dominated by some $\dot{d}_\alpha$.
    \end{itemize}

    \item Perform the same template iteration of (b), but only change $\Sor_\xi=\Eor$ for $\xi\in L_S$. With the same techniques as before, $\Por\frestr L$ forces the desired statements. Just notice that, to force $\bfrak,\sfrak\leq\mu_3$ and $\lambda\leq\dfrak,\rfrak$, we should use $\mu_3$-$\vartriangleright$-goodness.
   \end{enumerate}
\end{proof}

The same type of argument as in the previous proof leads to the following results.

\begin{theorem}\label{gfrakcofmu2}
   Assume $\lambda^{<\mu_2}=\lambda$. It is consistent with $\thzfc$ that $\add(\Nwf)=\mu_1$, $\pfrak=\bfrak=\sfrak=\gfrak=\mu_2$, $\cov(\Nwf)=\non(\Mwf)=\cov(\Mwf)=\non(\Nwf)=\nu$ and $\dfrak=\rfrak=\cfrak=\lambda$.
\end{theorem}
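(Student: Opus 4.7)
The proof adapts a model from \cite[Sect.~4]{mejia02} that already forces all of $\add(\Nwf)=\mu_1$, $\pfrak=\bfrak=\sfrak=\mu_2$, $\cov(\Nwf)=\non(\Mwf)=\cov(\Mwf)=\non(\Nwf)=\nu$, $\dfrak=\rfrak=\cfrak=\lambda$, and reinterprets it as a template iteration $\Por\frestr\langle\lambda,\bar{\Iwf}\rangle$ along the fsi template of Example \ref{ExmpTemp}(2), in the style of Examples \ref{ExmpFSI} and \ref{ExpTempItFund}. The Suslin part $L_S$ of the iteration holds Cohen, random and $\Eor$-generic reals distributed in the pattern of \cite{mejia02}, with Cohen reals at $\nu$ early positions, random reals at $\nu$ positions for raising $\cov(\Nwf)$, and $\Eor$-type reals at cofinally many stages in $\lambda$ for $\dfrak,\rfrak\geq\lambda$. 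The non-Suslin part $L_C$ is filled, via a bookkeeping bijection $g:\lambda\to\lambda^3$ and enumerations $[\alpha]^{<\mu_2}=\{C_{\alpha,\beta}\}_{\beta<\lambda}$, by $\Por\frestr C_{\alpha,\beta}$-names for subposets of $\Loc$ of size $<\mu_1$ and for Mathias--Prikry posets $\Mor_{\dot{\Fwf}}$ with $|\dot{\Fwf}|<\mu_2$. By Lemma \ref{CondSupp} the iteration is ccc, and the hypothesis $\lambda^{<\mu_2}=\lambda$ guarantees that every condition and every name for a real is absorbed by some $\Por\frestr C$ with $|C|<\mu_2$.

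The verification of the invariants other than $\gfrak$ is the standard preservation machinery. Via Theorem \ref{PresTemp2} (applied as in Remark \ref{RemFsi}), Lemma \ref{smallPlus} and the goodness facts of Example \ref{SubsecUnbd}, the tail quotient is simultaneously $\mu_1$-$\in^*$-good, $\mu_2$-$\leq^*$-good, $\mu_2$-$\propto$-good, $\nu$-$\pitchfork$-good and $\nu$-$\eqcirc$-good, yielding the upper bounds $\add(\Nwf)\leq\mu_1$, $\bfrak,\sfrak\leq\mu_2$ and $\cov(\Nwf),\non(\Mwf),\cov(\Mwf),\non(\Nwf)\leq\nu$ from the corresponding $\mu_i$- and $\nu$-$\sqsubset$-unbounded families of Cohen and random reals introduced at the early $L_S$-positions. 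The matching lower bounds $\add(\Nwf)\geq\mu_1$, $\pfrak\geq\mu_2$ and $\cov(\Nwf)\geq\nu$ follow from the density of the small $\Loc$, $\Mor$ and random-subalgebra names in the bookkeeping, exactly as in the proofs of Theorem \ref{gfrakcofmu3}(a)--(c). The inequalities $\dfrak,\rfrak\geq\lambda$ come from Theorem \ref{dfrakbig} applied to $\sqsubset\in\{\leq^*,\propto\}$ with the cofinal set $W\subseteq L_S$ of Cohen/$\Eor$-positions, and $\cfrak\leq\lambda$ is routine from $|\Por\frestr\lambda|\leq\lambda^{<\mu_2}=\lambda$.

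For the only new value, $\gfrak=\mu_2$, the inequality $\gfrak\geq\pfrak=\mu_2$ is automatic. For $\gfrak\leq\mu_2$, partition $\lambda$ into $\mu_2$ classes $\{A_\eta\}_{\eta<\mu_2}$, each of size $\lambda$ and meeting $L_S$, set $E_\eta:=\bigcup_{\xi<\eta}A_\xi$, and, for a $\Por\frestr\lambda$-generic filter $G$ over $V$, put $W_\eta:=V[G\cap\Por\frestr E_\eta]$ and $W_{\mu_2}:=V[G]$. Conditions (ii) and (iii) of Lemma \ref{lemmagfrak} follow from $\Por\frestr\lambda=\limdir_{\eta<\mu_2}\Por\frestr E_\eta$ (Lemma \ref{CondSupp}(b)) and from Lemma \ref{CondSupp}(c) together with regularity of $\mu_2$, which implies that every name for a real is absorbed by $\Por\frestr E_\eta$ for some $\eta<\mu_2$. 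For condition (i), pick $\xi\in A_\eta\cap L_S$; the Suslin poset at $\xi$ adds a real $\dot{c}_\xi$ over $V^{\Por\frestr L_\xi}$, and Theorem \ref{newrealint} (applied with $x=\xi$, noting $L_\xi\cup\{\xi\}\in\hat{\Iwf}_z=\pts(z)$ for every $z>\xi$ in the fsi template) yields $\dot{c}_\xi\notin V^{\Por\frestr(\lambda\menos\{\xi\})}\supseteq W_\eta$, so a new real appears at each successor stage. Lemma \ref{lemmagfrak} then gives $\gfrak\leq\mu_2$ in $V[G]$.

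The main technical point, and the natural place where this proof is most delicate, is checking that the existing model of \cite{mejia02} really does fit into the template framework of Example \ref{ExpTempItFund}---in particular that every non-Suslin forcing used there has size at most $\mu_2$ so that Lemma \ref{CondSupp} applies with threshold $\mu_2$, and that the distribution of Suslin posets along $L_S$ leaves room in each class $A_\eta$ of the partition for a Suslin stage to which Theorem \ref{newrealint} can be applied; once this is arranged, the $\gfrak$-computation above runs uniformly and the hypothesis $\lambda^{<\mu_2}=\lambda$ suffices throughout.
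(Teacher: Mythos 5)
Your overall strategy (reuse the fsi model, view it as a template iteration as in Examples \ref{ExmpFSI} and \ref{ExpTempItFund}, put small $\Loc$-subposets and small Mathias--Prikry posets into $L_C$ by bookkeeping, and get $\gfrak\leq\mu_2$ from Lemma \ref{lemmagfrak} together with Theorem \ref{newrealint}) is indeed the intended one, and your $\gfrak$-computation and the lower bounds $\add(\Nwf)\geq\mu_1$, $\pfrak\geq\mu_2$ are fine. But the fsi you describe does not force the stated constellation. You iterate along $\langle\lambda,\bar{\Iwf}\rangle$, i.e.\ length $\lambda$, with real-adding stages (your $\Eor$-positions, and in any case fsi limits of countable cofinality) cofinally in $\lambda$. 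Then Cohen reals appear over cofinally many intermediate models and every meager set of the final model is coded at some stage $<\lambda$, so the extension satisfies $\cov(\Mwf)\geq\cf(\lambda)$, and hence also $\non(\Nwf)\geq\cov(\Mwf)\geq\cf(\lambda)$; for regular $\lambda>\nu$ this contradicts $\cov(\Mwf)=\non(\Nwf)=\nu$. Moreover, your $\Eor$-stages, being cofinal, make every family of $<\cf(\lambda)$ reals $\eqcirc$-dominated, forcing $\non(\Mwf)\geq\cf(\lambda)$ as well. This is exactly why the paper's proof (following Theorem \ref{gfrakcofmu3}(b)) takes length $\lambda\nu$, of cofinality $\nu$: with $\Cor$ and $\Bor$ placed cofinally among the positions $\lambda\alpha$, the Lemma \ref{AddCohen}-type arguments (using $\Por\frestr L=\limdir_{\alpha<\nu}\Por\frestr\lambda\alpha$, Lemma \ref{CondSupp}(b)) give $\non(\Mwf)\leq\nu\leq\cov(\Mwf)$ and $\non(\Nwf)\leq\nu\leq\cov(\Nwf)$ outright, and the diagonal inequalities $\cov(\Nwf)\leq\non(\Mwf)$, $\cov(\Mwf)\leq\non(\Nwf)$ then pin all four middle invariants at $\nu$; no preservation theorem enters there.

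Your preservation claims are also not available. Goodness keeps $\bfrak_\sqsubset$ small and $\dfrak_\sqsubset$ large, so it can never deliver the upper bounds $\cov(\Mwf)=\dfrak_\eqcirc\leq\nu$ or $\non(\Nwf)\leq\nu$; and in any case the tail of your iteration is not $\nu$-$\eqcirc$-good ($\Eor$ adds an $\eqcirc$-dominating real), $\Bor$ is not $\pitchfork$-good (a random real $\pitchfork$-dominates all ground-model reals, by Borel--Cantelli applied to the blocks $I_k$), and $\Bor$ is not $\propto$-good (the paper states this explicitly at the end of Section \ref{SecApplg}), so your routes to $\sfrak\leq\mu_2$ and to $\rfrak\geq\lambda$ via $\propto$ collapse once full random forcing is in the iteration. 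The paper avoids all of this by using only $\Bor$ and $\Cor$ on $L_S$ (no $\Dor$, and no $\Eor$ is needed) and running the arguments for $\bfrak,\sfrak\leq\mu_2$ and $\dfrak,\rfrak\geq\lambda$ through the relation $\vartriangleright$, for which $\Bor$ is good and which satisfies $\bfrak_\vartriangleright=\max\{\bfrak,\sfrak\}$ and $\dfrak_\vartriangleright=\min\{\dfrak,\rfrak\}$ (Theorem \ref{PresTemp2} with Remark \ref{RemFsi}, and Theorem \ref{dfrakbig} applied to $\vartriangleright$ with $W$ the Cohen positions); the missing lower bound $\bfrak\geq\mu_2$ is free from $\pfrak\leq\add(\Mwf)\leq\bfrak$ and the Mathias--Prikry bookkeeping. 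If you replace your length-$\lambda$ iteration by this length-$\lambda\nu$ one, your $\gfrak$ argument transfers verbatim (take the $E_\eta$ to be unions of $\mu_2$ many classes of a partition of $\lambda\nu$, each class of size $\lambda$ and meeting $L_S$).
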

\begin{proof}
   As in the proof of Theorem \ref{gfrakcofmu3}(b), perform a fsi (as a template iteration) of length $\lambda\nu$ alternating between: $\Bor$ and $\Cor$ for coordinates in $L_S$, subposets of $\Loc$ of size $<\mu_1$ and $\Mor_\Fwf$ with a filter base $\Fwf$ of size $<\mu_2$ for coordinates in $L_C$ ($L_S$ and $L_C$ are the same as in the proof of Theorem \ref{gfrakcofmu3}(b)).
\end{proof}

\begin{theorem}\label{gfrakcofmu1}
   Assume $\lambda^{<\mu_1}=\lambda$. For each item (a) and (b) there is a ccc poset that forces the corresponding item,
   $\pfrak=\gfrak=\mu_1$, $\cov(\Nwf)=\add(\Mwf)=\cof(\Mwf)=\non(\Nwf)=\nu$ and $\cfrak=\lambda$.
   \begin{enumerate}[(a)]
      \item $\add(\Nwf)=\mu_1$ and $\cof(\Nwf)=\lambda$.
      \item $\add(\Nwf)=\cof(\Nwf)=\nu$.
   \end{enumerate}
\end{theorem}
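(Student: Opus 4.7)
The plan is to construct, in both (a) and (b), a template iteration $\Por\frestr\langle L,\bar\Iwf\rangle$ with $L=\lambda\nu$ carrying the fsi template of Example~\ref{ExmpTemp}(2) and $L_S=\lambda\cup\{\lambda\alpha,\lambda\alpha+1\ /\ 0<\alpha<\nu\}$, following the pattern of Theorems~\ref{gfrakcofmu3}(b,c) and \ref{gfrakcofmu2} but with $\mu_2,\mu_3$ collapsed to $\mu_1$. For each $0<\alpha<\nu$ I fix an enumeration $[\lambda\alpha]^{<\mu_1}=\{C_{\alpha,\beta}\ /\ \beta<\lambda\}$ and a bookkeeping bijection $h\colon\lambda\to\lambda\times\lambda\times 3$. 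In part (a), place $\Sor_\xi=\Dor$ at each $\xi\in L_S\cap\lambda$ and at each $\xi=\lambda\alpha$ with $0<\alpha<\nu$, and $\Sor_\xi=\Bor$ at $\xi=\lambda\alpha+1$; on $L_C$, cycle via $h$ through subposets of $\Loc$ of size $<\mu_1$, subalgebras of $\Bor$ of size $<\nu$, and Mathias-Prikry $\Mor_{\dot\Fwf}$ with filter subbases of size $<\mu_1$, with $C_\xi=C_{\alpha,\beta}$ as dictated. For part (b), replace the column $\Bor$ at the $\lambda\alpha+1$ stages by the Suslin localization $\Loc$ (as in Example~\ref{ExpTempItFund}(i)) and drop the $\Bor$-subalgebras from $L_C$.

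The verification closely mirrors that of Theorems~\ref{gfrakcofmu3}(b,c) and \ref{gfrakcofmu2}. Lemma~\ref{CondSupp} gives that $\Por\frestr L$ is Knaster and that $\Por\frestr L$-names for reals have support of size $<\nu$. The Cicho\'n middle cardinals collapse to $\nu$ via the direct-limit structure $\Por\frestr L=\limdir_{\alpha<\nu}\Por\frestr\lambda\alpha$ together with the $\nu$-many Hechler dominating reals (giving $\bfrak=\dfrak=\nu$) and their Cohen parts (giving $\non(\Mwf)=\cov(\Mwf)=\nu$, whence $\add(\Mwf)=\cof(\Mwf)=\nu$). The equality $\pfrak=\gfrak=\mu_1$ is forced exactly as in Theorem~\ref{gfrakcofmu3}(a): $\pfrak\geq\mu_1$ from the small Mathias-Prikry bookkeeping, and $\gfrak\leq\mu_1$ by partitioning $L$ into $\mu_1$ cofinal pieces each meeting $L_S$ and combining Lemma~\ref{lemmagfrak} with Theorem~\ref{newrealint}, whence $\pfrak\leq\gfrak=\mu_1$. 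For part (a), the small $\Loc$-subposets give $\add(\Nwf)\geq\mu_1$, and a $\mu_1$-$\in^*$-unbounded family of Cohen reals from the first $\mu_1$ Hechler stages is preserved through the rest of the iteration by Theorem~\ref{PresTemp2}, giving $\add(\Nwf)\leq\mu_1$; the small $\Bor$-subalgebras give $\cov(\Nwf)\geq\nu$, the $\nu$ random reals at the $\lambda\alpha+1$ stages form a non-null set of size $\nu$ witnessing $\non(\Nwf)\leq\nu$, and the remaining equalities in the middle column follow from Cicho\'n. For $\cof(\Nwf)\geq\lambda$ apply Theorem~\ref{dfrakbig} with $\theta=\nu$, $W=L_S\cap\lambda$, and $\sqsubset=\in^*$, noting that the Cohen parts of Hechler are $\in^*$-unbounded over the preceding model and that both Hechler ($\sigma$-centered) and random (by Kamburelis, Example~\ref{SubsecUnbd}(6)) are $\in^*$-good. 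Part (b) uses the $\nu$ $\Loc$-slaloms at the $\lambda\alpha+1$ stages to localize every family of $<\nu$ reals (since each such family lies in some $V^{\Por\upharpoonright\lambda\alpha+1}$), forcing $\add(\Nwf)\geq\nu$ and $\cof(\Nwf)\leq\nu$. In both cases $\cfrak=\lambda$ from $|\Por\frestr L|\leq\lambda$.

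The main obstacle is the apparent conflict in part (a) between $\cof(\Nwf)=\lambda$ and $\non(\Nwf)=\nu$: the $\lambda$ Cohen reals coming from Hechler would naively form a $\lambda$-$\pitchfork$-unbounded family, which together with $\dfrak_\pitchfork\leq\non(\Nwf)$ would push $\non(\Nwf)$ up to $\lambda$. The resolution is provided by the $\nu$ random reals at the $\lambda\alpha+1$ stages: by Borel-Cantelli, a random real is $\pitchfork$-above every fixed ground-model real with probability one, so in the final extension these $\nu$ random reals form a $\pitchfork$-dominating family of size $\nu$, collapsing $\dfrak_\pitchfork$ to $\nu$. Meanwhile the $\in^*$-unboundedness witnessing $\cof(\Nwf)\geq\lambda$ survives because neither random forcing nor the $L_C$-posets add an $\in^*$-localizing slalom covering those Cohen reals: random is $\in^*$-good (Kamburelis), Hechler is $\in^*$-good ($\sigma$-centered), and the $L_C$-posets are $\lambda$-$\in^*$-good by Lemma~\ref{smallPlus} since their size is $<\nu\leq\lambda$. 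The correctness diagrams and regular-embedding bookkeeping along the template go through as in Example~\ref{ExpTempItFund} and Corollary~\ref{EmbCor}, using that $\Dor,\Bor$, and $\Loc$ are correctness-preserving (Lemmas~\ref{HechlerCorrPres}, \ref{randomCorrPres}, \ref{LocCorrPres}).
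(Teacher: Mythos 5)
Your construction is correct and is essentially the paper's: the paper likewise performs the fsi-as-template of length $\lambda\nu$ from Theorem~\ref{gfrakcofmu3}(b), alternating $\Bor$ and $\Dor$ (resp.\ $\Loc$ in case (b)) on $L_S$ with small $\Loc$-subposets and small Mathias--Prikry posets on $L_C$, and verifies the values by the same goodness/unboundedness arguments you give (random's failure of $\pitchfork$-goodness together with its $\in^*$-goodness being exactly what reconciles $\non(\Nwf)=\nu$ with $\cof(\Nwf)=\lambda$). Two harmless quibbles: the $\Bor$-subalgebras you add to $L_C$ are redundant, since the full random reals at the cofinal stages already give $\cov(\Nwf)\geq\nu$; and in applying Theorem~\ref{dfrakbig} you should take $W$ to be the cofinal set of Hechler coordinates rather than $L_S\cap\lambda$, as the theorem's hypothesis asks for a cofinal $W$.
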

\begin{proof}
  For each case, perform a fsi (as a template iteration) of length $\lambda\nu$, where $L_S$ and $L_C$ are the same as in the proof of Theorem \ref{gfrakcofmu3}(b), alternating between:
  \begin{enumerate}[(a)]
     \item $\Bor$ and $\Dor$ for coordinates in $L_S$, subposets of $\Loc$ of size $<\mu_1$ and Mathias-Prikry type posets with filter bases of size $<\mu_1$ for coordinates in $L_C$.
     \item $\Loc$ for coordinates in $L_S$ and Mathias-Prikry type posets with filter bases of size $<\mu_1$ for coordinates in $L_C$.
  \end{enumerate}
\end{proof}

In this last theorem, we do not know how to obtain values for $\sfrak$, $\rfrak$ and $\ufrak$. For example, we would like to obtain $\sfrak\leq\mu_1$ and $\lambda\leq\rfrak$ in (a), but the preservation properties related to $\sfrak$ (and $\rfrak$) that we know so far do not work for $\Bor$ and $\Dor$ at the same time, i.e., $\Dor$ is $\propto$-good but $\Bor$ is not (see the paragraph after Lemma 2.18 in \cite{mejia02}) and, although $\Bor$ is $\vartriangleright$-good, $\Dor$ is not because it adds dominating reals.


\section{Proof of the main result}\label{SecAppl}

\begin{theorem}[Main result]\label{AppSplitting}
   Let $\kappa$ be a measurable cardinal, $\theta<\kappa<\mu<\lambda$ all regular uncountable cardinals such that $\theta^{<\theta}=\theta$ and $\lambda^\kappa=\lambda$. Then, there exists a ccc poset forcing that $\sfrak=\theta<\bfrak=\mu<\afrak=\cfrak=\lambda$. Moreover, this poset forces $\cov(\Nwf)\leq\pfrak=\gfrak=\theta$, $\add(\Mwf)=\cof(\Mwf)=\mu$ and $\non(\Nwf)=\rfrak=\lambda$.
\end{theorem}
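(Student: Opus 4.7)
The plan is to iterate Shelah's ultrapower trick $\lambda$ times atop a base template of length $\mu$, using the non-definable template iteration machinery developed in Section \ref{SecItTemp} to control $\sfrak$, $\pfrak$, and $\gfrak$ via Mathias-Prikry forcing with small filter subbases. Starting from a ground model with a suitable fragment of GCH (consistent with the given arithmetic hypotheses $\theta^{<\theta}=\theta$ and $\lambda^\kappa=\lambda$), I will construct by recursion on $\alpha\leq\lambda$ a chain of templates $\langle L^\alpha,\bar{\Iwf}^\alpha\rangle$ together with template iterations $\Por^\alpha\frestr L^\alpha$, in such a way that the final poset $\Por:=\Por^\lambda\frestr L^\lambda$ witnesses the theorem.

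At the base, I take $\langle L^0,\bar{\Iwf}^0\rangle$ to be the fsi template of length $\mu$ from Example \ref{ExmpTemp}(2). Split $L^0=L^0_S\cup L^0_C$ with both sides cofinal of order type $\mu$ and follow Example \ref{ExpTempItFund}: at each $\xi\in L^0_S$ force with Hechler's $\Dor$, and at each $\xi\in L^0_C$ use a bookkeeping (permitted by $\theta^{<\theta}=\theta$ and GCH below $\mu$) on names for filter subbases of size $<\theta$ to set $\Qnm_\xi=\Mor_{\dot{\Fwf}_\xi}$ with support $C_\xi\in[\xi]^{<\theta}$. This base iteration is Knaster, forces $\bfrak=\dfrak=\mu$, adds a scale of length $\mu$, and forces $\pfrak\geq\theta$. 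For the successor step I apply Lemma \ref{UtrapowTemp} to pass from $\langle L^\alpha,\bar{\Iwf}^\alpha\rangle$ to $\langle L^{\alpha+1},\bar{\Iwf}^{\alpha+1}\rangle:=\langle (L^\alpha)^\kappa/\Dwf,\bar{\Iwf}^\dagger\rangle$, arranging $\Por^{\alpha+1}$ to be forcing equivalent to $(\Por^\alpha)^\kappa/\Dwf$; components $\Qnm_{\bar x}^B$ for new coordinates are defined from $\Dwf$-ultrapowers of the earlier sequence, with Lemmas \ref{UltraprodEmb} and \ref{Ultraprodccc} providing regular embeddability and $\sigma$-linkedness. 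At a limit $\delta$ I invoke Lemma \ref{ChainTemp}, preferring the $\bar{\Jwf}$-variant so as to preserve the property $L^\alpha_\xi\in\Iwf^\alpha_\xi$ for $\xi$ in the base copy of $\mu$, and take $\Por^\delta\frestr L^\delta$ as the associated template iteration; Corollary \ref{EmbCor} then yields $\Por^\alpha\lessdot\Por^\delta$ for all $\alpha<\delta$.

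The verifications split as follows. Knaster-ness of $\Por$ follows from Lemma \ref{templateitccc}, since every $\Qnm_x^B$ is forced $\sigma$-linked. The bound $|\Por|\leq\lambda$ is a direct size computation using $\lambda^\kappa=\lambda$. To force $\afrak\geq\lambda$, any $\Por$-name for an a.d.\ family of cardinality $<\lambda$ is captured by $\Por^\alpha$ for some $\alpha<\lambda$ via Lemma \ref{CondSupp}, so Lemma \ref{DestrMad} applied at step $\alpha+1$ produces a new infinite set almost disjoint from each member, killing maximality. The equality $\bfrak=\mu$ follows from the scale added at the base: Lemma \ref{Sigma1-1andUltrapow} preserves it through each ultrapower (as $\cf(\mu)\neq\kappa$) and Corollary \ref{EmbCor} preserves it through limits. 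The base bookkeeping and the $\theta$-innocuity of Lemmas \ref{RestUltrapowTemp} and \ref{RestChainTemp} reflect later names of small filter subbases back to the base, whence $\pfrak\geq\theta$; the upper bound $\pfrak\leq\sfrak\leq\theta$ is obtained by Theorem \ref{PresTemp} applied with $\sqsubset=\propto$, using that Hechler is $\propto$-good (Example \ref{SubsecUnbd}(3)), each $\Mor_{\Fwf}$ with $|\Fwf|<\theta$ is $\theta$-$\propto$-good by Lemma \ref{smallPlus}, goodness is transferred to $\Dwf$-ultrapowers, and the smallness of $|\Iwf^\alpha(X)|$ for $X\in[L^\alpha]^{<\theta}$ is preserved along the chain. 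A $\theta$-$\propto$-unbounded family of Cohen reals at the base (Lemma \ref{AddCohen}) survives in the extension and witnesses $\sfrak\leq\theta$, and $\gfrak=\theta$ follows from Lemma \ref{lemmagfrak} applied to a natural $\theta$-sequence of intermediate submodels derived from the base.

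The main obstacle is handling the non-definable posets coherently through the ultrapowers. Mathias-Prikry forcing with a filter subbase of uncountable size is not Suslin, so Shelah's original template machinery does not apply directly, and one must use the non-definable version from Section \ref{SecItTemp}; in particular, correctness diagrams (Definition \ref{DefCorr}) must be propagated through the $\bar{\Iwf}^\dagger$ and $\bar{\Jwf}$ extensions when the underlying components are not Suslin. The most delicate step is the preservation of $\sfrak\leq\theta$ across the successor ultrapower stages, which requires both the $\theta$-innocuity of Lemma \ref{UtrapowTemp}(c)(d) to pull $\Por^{\alpha+1}$-names of reals back to restrictions $\Por^{\alpha+1}\frestr A$ with $|\Iwf^{\alpha+1}(A)|<\theta$ (hypothesis (i) of Theorem \ref{PresTemp}) and a direct verification that $\theta$-$\propto$-goodness of the small non-definable components is inherited by their $\Dwf$-ultrapowers, which I expect to read off from the explicit description of names in an ultraproduct given just before Lemma \ref{Sigma1-1andUltrapow}.
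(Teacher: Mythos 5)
Your overall architecture (Hechler at the definable coordinates, Mathias--Prikry with filter subbases of size $<\theta$ at non-definable coordinates, a $\lambda$-chain of templates with ultrapowers at successors, the chain lemma at limits, Theorem \ref{PresTemp} with $\propto$ for $\sfrak\leq\theta$, Lemma \ref{DestrMad} for $\afrak\geq\lambda$) is indeed the paper's strategy, but there is a genuine gap at the heart of the argument for $\pfrak\geq\theta$ (and hence $\sfrak\geq\theta$). You do the filter-subbase bookkeeping only on a base template of length $\mu$ and then claim that ``the base bookkeeping and the $\theta$-innocuity of Lemmas \ref{RestUltrapowTemp} and \ref{RestChainTemp} reflect later names of small filter subbases back to the base.'' That is not what those lemmas give: they only bound the size of the trace ideals $\Iwf(X)$ on small sets, and no reflection to the base is available, because every ultrapower step (and many limit steps) adds new reals, so a filter subbase of size $<\theta$ living on coordinates created at stage $\alpha>0$ cannot be pseudo-intersected by a Mathias--Prikry poset indexed in the base. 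This is exactly why the paper maintains condition (XI) of Definition \ref{DefApprTempItSplitting} at \emph{every} stage of the chain: it is inherited through ultrapowers (Lemma \ref{ApprSplUltrapow2}), it follows by reflection at limits of cofinality $\geq\theta$ (Lemma \ref{ApprSplChain}, using that every name of a real has support of size $<\theta$), but at limits of cofinality $<\theta$ it must be re-established by \emph{fresh} bookkeeping; the paper provides room for this by taking the base order to be $\lambda\mu$ (not $\mu$) and reserving $\lambda$-many dormant coordinates $\lambda\xi+\tau_{\alpha,\beta}$ in $L_T$, which are converted into Mathias--Prikry coordinates at the $\alpha$-th small-cofinality limit $\delta_\alpha$. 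Without this device (or an equivalent one) your final poset need not force $\pfrak\geq\theta$, and your base of length $\mu$ does not even have enough slots for the $\lambda$-many relevant names. A related side effect of the paper's organization is that no ``transfer of goodness to $\Dwf$-ultrapowers'' is needed: since each $\Por^{\alpha+1}$ is again an appropriate iteration whose iterands are just Hechler and small posets, Theorem \ref{PresTemp} is applied once to the final iteration, whereas your stage-by-stage preservation plan creates this extra, unverified obligation.

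Two secondary points. For $\bfrak=\mu$ you rely on a base scale preserved through ultrapowers via Lemma \ref{Sigma1-1andUltrapow}; that handles successors, but at limit stages and in the final model you still must argue that no earlier-stage family becomes dominated (Corollary \ref{EmbCor} only gives regular embeddings, not preservation of unboundedness), and the paper avoids this by noting that $\lambda\mu$ stays cofinal in every $L^\alpha$ with Hechler coordinates cofinally, so the Hechler reals along a cofinal set of order type $\mu$ dominate every small family of reals of the final model directly. For $\gfrak\leq\theta$, the increasing $\theta$-sequence of intermediate models in Lemma \ref{lemmagfrak} must exhaust all reals of the final extension, so it has to be built from a filtration of the whole final order $L^\lambda$ (as in the paper, with new Cohen reals between consecutive pieces certified by Theorem \ref{newrealint}), not from submodels ``derived from the base.''
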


We don't get the equality $\cov(\Nwf)=\theta$ in the model constructed for this result, but in Theorem \ref{AppMany} we explain how to modify the construction to force, additionally, $\theta\leq\add(\Nwf)$.

Fix $\Dwf$ a non-principal $\kappa$-complete ultrafilter on $\kappa$.

\begin{definition}[Appropriate template iteration]\label{DefApprTempItSplitting}
A template iteration $\Por\frestr\langle L,\Iwf\rangle$ is \emph{appropriate} (for the proof of Theorem \ref{AppSplitting}) if the following conditions hold.
\begin{enumerate}[(I)]
   \item $\lambda\mu\subseteq L$ is cofinal in $L$, $|L|=\lambda$ and $0=\min(L)$.
   \item Every $x\in L$ has an immediate successor and, for $\xi\in\lambda\mu$, $\xi+1$ is the immediate successor of $\xi$.
   \item If $\gamma\in\lambda\mu$ is a limit ordinal of cofinality $\neq\kappa$, then $\gamma=\sup_L\left\{\alpha\in\lambda\mu\ /\ \alpha<\gamma\right\}$.
   \item $L$ is partitioned into three disjoint sets $L_H$, $L_F$ and $L_T$.
   \item $L_H\cap\lambda\mu$ has size $\lambda$ and it is unbounded in $\lambda\mu$.
   \item For each $\alpha\in\lambda\mu$, $L_\alpha\in\Iwf_\alpha$.
   \item If $X\in[L]^{<\theta}$ then $|\Iwf(X)|<\theta$.
   \item For $x\in L_H$ and $B\in\hat{\Iwf}_x$, $\Qnm_x^B$ is a $\Por\frestr B$-name for $\Dor^{V^{\Por\upharpoonright B}}$.
   \item For $x\in L_F$ there is a fixed $C_x\in\hat{\Iwf}_x$ of size $<\theta$ and $\dot{\Fwf}_x$ a $\Por\frestr C_x$-name for a filter base on $\omega$ of size $<\theta$ such that, for every $B\in\hat{\Iwf}_x$,
       \[\Qnm_x^B=\left\{\begin{array}{ll}
          \Mor_{\dot{\Fwf}_x} & \textrm{if $C_x\subseteq B$,}\\
          \dot{\mathds{1}} & \textrm{otherwise.}
       \end{array}\right.\]
   \item For $x\in L_T$ and $B\in\hat{\Iwf}_x$, $\Qnm_x^B$ is the trivial poset.
   \item Given $\dot{\Fwf}$ a $\Por\frestr L$-name for a filter base on $\omega$ of size $<\theta$, there exists an $x\in L_F$ such that $\Vdash_{\Por\upharpoonright L}\dot{\Fwf}=\dot{\Fwf}_x$.
\end{enumerate}
\end{definition}

Notice that an appropriate template iteration $\Por\frestr\langle L,\Iwf\rangle$ satisfies the hypothesis of Lemma \ref{CondSupp}, so it has the Knaster condition and, whenever $A\subseteq L$, $p\in\Por\frestr A$ and $\dot{h}$ is a $\Por\frestr A$-name for a real, there is a $K\subseteq A$ of size $<\theta$ such that $p\in\Por\frestr K$ and $\dot{h}$ is a $\Por\frestr K$-name. We first prove that any appropriate template iteration forces the statements of Theorem \ref{AppSplitting} with the possible exception of $\afrak=\lambda$. Moreover, it forces:

\begin{itemize}
  \item $\add(\Mwf)=\cof(\Mwf)$. Let $D$ be a cofinal subset of $L_H\cap\lambda\mu$ of size $\mu$. For $\alpha\in D$, $\Por\frestr L_{\alpha+1}$ adds a dominating real $d_\alpha$, as well as a Cohen real $c_\alpha$, over $V^{\Por\upharpoonright L_\alpha}$. As $\Por\frestr L=\limdir_{\alpha\in D}\Por\frestr L_\alpha$, these reals form a dominating family in $V^{\Por\upharpoonright L}$ and any set of reals of size $<\mu$ is bounded by some $d_\alpha$, so $\bfrak=\dfrak=\mu$. On the other hand, $\non(\Mwf)\leq\mu\leq\cov(\Mwf)$ because, by Lemma \ref{AddCohen}, $\{c_\alpha\ /\ \alpha\in D\}$ is a $\mu$-$\eqcirc$-unbounded family.

  \item $\cfrak\leq\lambda$. Because $|L|\leq\lambda$.

  \item $\non(\Nwf)=\rfrak=\lambda$. By Theorem \ref{dfrakbig} applied to $W=L_H\cap\lambda\mu$ and to the relations $\propto$ and $\pitchfork$.

  \item $\theta\leq\pfrak$. Let $\dot{\Fwf}$ be a $\Por\frestr L$-name for a filter base of size $<\theta$. By (XI), there is an $x\in L_F$ such that $\Por\frestr L$ forces $\dot{\Fwf}=\dot{\Fwf}_x$. On the other hand, as $\Por\frestr(C_x\cup\{x\})\simeq\Por\frestr C_x\ast\Mor_{\dot{\Fwf}_x}$, it adds a pseudo-intersection of $\dot{\Fwf}_x$.

  \item $\sfrak\leq\theta$. By (V), find $\alpha<\lambda\mu$ minimal such that $|L_H\cap\alpha|=\theta$. Notice that $\cf(\alpha)=\theta$ and, by (III), $\Por\frestr L_\alpha=\limdir_{\varepsilon\in L_H\cap\alpha}\Por\frestr L_\varepsilon$. Then, by Lemma \ref{AddCohen}, $\Por\frestr L_\alpha$ adds a $\theta$-$\propto$-unbounded family of size $\theta$ that is preserved in $\Por\frestr L$ because $\Por\frestr L/\Por\frestr L_\alpha$ is $\theta$-$\propto$-good by Theorem \ref{PresTemp} and (VII).

  \item $\cov(\Nwf)\leq\theta$. Same argument as before, but with the relation $\pitchfork$. 

  \item $\gfrak\leq\theta$. By (V), find an increasing sequence $\{E_\alpha\}_{\alpha<\theta}$ of subsets of $L$ such that its union is $L$ and, for each $\alpha<\theta$, $L_H\cap\lambda\mu\cap(E_{\alpha+1}\menos E_\alpha)\neq\varnothing$. Let $G$ be $\Por\frestr L$-generic over $V$ and put $W_\theta=V[G]$ and $W_\alpha=V[G\cap\Por\frestr E_\alpha]$ for any $\alpha<\theta$. It is enough to show that $\{W_\alpha\}_{\alpha\leq\theta}$ satisfies the conditions of Lemma \ref{lemmagfrak}. Conditions (ii) and (iii) hold because, in $V$, $\Por\frestr L=\limdir_{\alpha<\theta}\Por\frestr E_\alpha$. To see (i), choose a $\beta\in L_H\cap\lambda\mu\cap(E_{\alpha+1}\menos E_\alpha)$ and note that $\Por\frestr((E_\alpha\cap L_\beta)\cup\{\beta\})$ adds a Cohen real over $V^{\Por\upharpoonright(E_\alpha\cap L_\beta)}$ so, by Theorem \ref{newrealint} and (VI), that real does not belong to $V^{\Por\upharpoonright E_\alpha}=W_\alpha$.
\end{itemize}

Therefore, to prove Theorem \ref{AppSplitting}, it is enough to construct an appropriate template iteration that forces $\afrak\geq\lambda$. For this, by recursion, we construct a chain of appropriate template iterations of length $\lambda$ such that ultrapowers are taken in the successor steps (so we can use Lemma \ref{DestrMad} to destroy mad families of size strictly between $\kappa$ and $\lambda$). Before proceeding with this construction, we explain how to deal with the successor and limit steps in a general way.

Fix an appropriate template iteration $\Por\frestr\langle L,\Iwf\rangle$. Consider, from the context of Lemma \ref{UtrapowTemp}, the templates $\Iwf^*$ and $\Iwf^\dagger$ associated to the ultrapower $L^*$ of the linear order $L$. We show how to construct, in a canonical way, an appropriate template iteration $\Por^\dagger\frestr\langle L^*,\Iwf^\dagger\rangle$ that is forcing equivalent to the ultrapower of $\Por\frestr L$.

As $\cf(\lambda\mu)=\mu>\kappa$, it is easy to note that $\lambda\mu$ is still cofinal in $L^*$. By standard arguments with ultrapowers, conditions (I)-(III) of Definition \ref{DefApprTempItSplitting}
are satisfied by $L^*$. Let $L^*_H:=L^\kappa_H/\Dwf$, $L^*_F$ and $L^*_T$ defined likewise. (IV)-(VII) for $\langle L^*,\Iwf^*\rangle$ and $\langle L^*,\Iwf^\dagger\rangle$ are clear, the last one by Lemma \ref{RestUltrapowTemp}. Notice that $L^*_H\cap L=L_H$, $L^*_F\cap L=L_F$ and $L^*_T\cap L=L_T$.

\begin{lemma}\label{ApprSplUltrapow}
   There is a template iteration $\Por^*\frestr\langle L^*,\Iwf^*\rangle$ such that (VIII)-(X) hold and, for any $\bar{A}=[\{A_\alpha\}_{\alpha<\kappa}]\subseteq L^*$, there is an (onto) dense embedding $F_{\bar{A}}:\prod_{\alpha<\kappa}\Por\frestr A_\alpha/\Dwf\to\Por^*\frestr\bar{A}$ such that, for any
   $\bar{D}=[\{D_\alpha\}_{\alpha<\kappa}]\subseteq\bar{A}$, $F_{\bar{D}}\subseteq F_{\bar{A}}$.
\end{lemma}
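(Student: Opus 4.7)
The construction proceeds by simultaneous recursion on $\mathrm{Dp}^{\bar{\Iwf}^*}(\bar{A})$ for $\bar{A}\subseteq L^*$, defining both the posets $\Por^*\frestr \bar{A}$ and the maps $F_{\bar{A}}$. The template iteration itself will be an instance of Example \ref{ExpTempItFund}: at each $\bar{x}\in L^*$, $\kappa$-completeness of $\Dwf$ selects a unique one of $L^*_H$, $L^*_F$, $L^*_T$ containing $\bar{x}$. For $\bar{x}\in L^*_H$, set $\Sor_{\bar{x}}=\Dor$ (correctness preserving by Lemma \ref{HechlerCorrPres}), yielding (VIII). For $\bar{x}\in L^*_F$, put $C_{\bar{x}}:=[\{C_{x_\alpha}\}_\alpha]\cap L^*$: since each $C_{x_\alpha}$ has size $<\theta<\kappa$, $\kappa$-completeness yields $|C_{\bar{x}}|<\theta$ and $C_{\bar{x}}\in\hat{\Iwf}^*_{\bar{x}}$; define $\dot{\Fwf}_{\bar{x}}:=\langle\dot{\Fwf}_{x_\alpha}\rangle_{\alpha<\kappa}/\Dwf$ as a $\Por^*\frestr C_{\bar{x}}$-name using the ultrapower-of-names construction from Subsection \ref{SubsecUltraP} (applied coordinatewise to the members of the filter subbase, again of size $<\theta<\kappa$), giving (IX). Trivial choices at $\bar{x}\in L^*_T$ give (X). The hypotheses of Theorem \ref{TempIt}(1) for this data follow from Lemmas \ref{2stepitemb}, \ref{Suslin2stepit}, \ref{HechlerCorrPres} and \ref{lemmacorrpres}.

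To construct $F_{\bar{A}}$, take a condition $\bar{p}=[\{p_\alpha\}_\alpha]\in\prod_\alpha\Por\frestr A_\alpha/\Dwf$. By Lemma \ref{CondSupp}(b), each $p_\alpha$ is a finite partial function, and by $\kappa$-completeness we may pass to a $\Dwf$-large set of $\alpha$ on which $|\dom p_\alpha|=n$ is constant; enumerate increasingly $\dom p_\alpha=\{x_{0,\alpha}<\cdots<x_{n-1,\alpha}\}$ and put $\bar{x}_i:=[\{x_{i,\alpha}\}_\alpha]$ in $L^*$ (with $\bar{x}_0<\cdots<\bar{x}_{n-1}$). For each $i<n$ pick witnessing sets $B_{i,\alpha}\in\Iwf_{x_{i,\alpha}}\frestr A_\alpha$ with $p_\alpha\frestr L_{x_{i,\alpha}}\in\Por\frestr B_{i,\alpha}$ and $p_\alpha(x_{i,\alpha})$ a $\Por\frestr B_{i,\alpha}$-name for a condition in $\Qor^{B_{i,\alpha}}_{x_{i,\alpha}}$, and set $\bar{B}_i:=[\{B_{i,\alpha}\}_\alpha]\cap\bar{A}\in\Iwf^*_{\bar{x}_i}\frestr\bar{A}$. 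By recursion on $i$, assemble the value at $\bar{x}_i$ from the ultrapower name $\bar{q}_i:=\langle p_\alpha(x_{i,\alpha})\rangle/\Dwf$, using the induction hypothesis that $F_{\bar{B}_i}:\prod_\alpha\Por\frestr B_{i,\alpha}/\Dwf\to\Por^*\frestr\bar{B}_i$ is already a dense embedding, together with the fact that in the extension $V^{\Por^*\frestr\bar{B}_i}$ (identified with $V^{\prod_\alpha\Por\frestr B_{i,\alpha}/\Dwf}$) the ultraproduct $\prod_\alpha\Qor^{B_{i,\alpha}}_{x_{i,\alpha}}/\Dwf$ densely embeds into $\Qnm^{\bar{B}_i}_{\bar{x}_i}$. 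This last embedding is verified case-by-case: for Hechler, a condition $(s,\dot{f})\in\Dor^{V^{\Por^*\frestr\bar{B}_i}}$ has $s$ a finite sequence, which is fixed $\Dwf$-a.s.\ by $\kappa$-completeness, while the $\Por^*\frestr\bar{B}_i$-name $\dot{f}$ is represented as $\langle\dot{f}_\alpha\rangle/\Dwf$ as in Subsection \ref{SubsecUltraP}; for Mathias--Prikry, the same applies to the stem, while a promise $\dot{A}\in\dot{\Fwf}_{\bar{x}_i}=\langle\dot{\Fwf}_{x_{i,\alpha}}\rangle/\Dwf$ is by construction reflected $\Dwf$-a.s.\ to a $\dot{A}_\alpha\in\dot{\Fwf}_{x_{i,\alpha}}$. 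The direct-limit cases of Theorem \ref{TempIt}(d) are handled via Lemma \ref{dirlimEmb} together with the general fact that $\limdir$ and $\prod_\alpha\cdot/\Dwf$ commute on the chains arising here. The coherence $F_{\bar{D}}\subseteq F_{\bar{A}}$ for $\bar{D}\subseteq\bar{A}$ is immediate from the definition, since $F_{\bar{A}}(\bar{p})$ depends only on the $p_\alpha$'s and not on the ambient $\bar{A}$.

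The two points where I expect real work are the following. First, at the successor step for Mathias--Prikry, checking density of $F$ requires showing that any condition $(s,\dot{A})$ in $\Mor_{\dot{\Fwf}_{\bar{x}}}^{V^{\Por^*\frestr\bar{B}}}$ can be refined to one in the image of the ultraproduct map: the stem is easy by $\kappa$-completeness, but for the promise one must use Lemma \ref{Sigma1-1andUltrapow} together with the fact that $\dot{\Fwf}_{\bar{x}}$ was \emph{defined} as the ultrapower of the $\dot{\Fwf}_{x_\alpha}$, so membership in $\dot{\Fwf}_{\bar{x}}$ reflects along $\Dwf$. Second, the direct-limit case when $\bar{A}$ has a maximum $\bar{x}$ with $\bar{A}_{\bar{x}}\notin\hat{\Iwf}^*_{\bar{x}}$ (and the purely limit case) requires checking that the cofinal family $\{\bar{B}\subseteq\bar{A}:\bar{B}\cap L^*_{\bar{x}}\in\Iwf^*_{\bar{x}}\frestr\bar{A}\}$ is ``represented'' $\Dwf$-a.s.\ by cofinal systems $\{B_\alpha\subseteq A_\alpha:B_\alpha\cap L_{x_\alpha}\in\Iwf_{x_\alpha}\frestr A_\alpha\}$, so that $\Por^*\frestr\bar{A}=\limdir_{\bar{B}}\Por^*\frestr\bar{B}$ corresponds under $F$ to $\prod_\alpha\limdir_{B_\alpha}\Por\frestr B_\alpha/\Dwf=\prod_\alpha\Por\frestr A_\alpha/\Dwf$. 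This is where the interplay between $\Iwf^*$ and ultraproducts (formalized in Lemma \ref{UtrapowTemp}) must be handled with care; everything else is a routine recursive verification along the skeleton of Theorem \ref{TempIt}.
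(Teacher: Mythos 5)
Your construction is essentially the paper's proof: both take $C_{\bar{x}}:=[\{C_{x_\alpha}\}_{\alpha<\kappa}]$ and $\dot{\Fwf}_{\bar{x}}:=\langle\dot{\Fwf}_{x_\alpha}\rangle_{\alpha<\kappa}/\Dwf$ (with $\kappa$-completeness stabilizing the sizes $<\theta$), and build $F_{\bar{A}}$ by simultaneous recursion on $\mathrm{Dp}^{\bar{\Iwf}^*}$, representing the values of conditions as ultrapowers of names and using Lemma \ref{Sigma1-1andUltrapow} (by cases on (VIII)--(X)) to see that they name conditions in $\Qnm^{*\bar{B}}_{\bar{x}}$, with coherence immediate from the construction. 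The only cosmetic differences are that the paper peels off just $\max(\dom p_\alpha)$ and recurses via $F_{\bar{B}}$ on $\langle p_\alpha\frestr L_{x_\alpha}\rangle_{\alpha<\kappa}/\Dwf$ rather than your inner recursion over the whole finite domain, and your concern about the direct-limit cases resolves automatically because every member of $\Iwf^*_{\bar{x}}\frestr\bar{A}$ is by definition an ultraproduct of sets $B_\alpha\in\Iwf_{x_\alpha}\frestr A_\alpha$, so no separate commutation of $\limdir$ with ultraproducts is needed.
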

\begin{proof}
   To define the desired template iteration $\Por^*\frestr\langle L^*,\Iwf^*\rangle$, it is enough to show how $C_{\bar{x}}$ and $\dot{\Fwf}_{\bar{x}}$ are defined for (IX). We put $C_{\bar{x}}:=\bar{C}_{\bar{x}}=[\{C_{x_\alpha}\}_{\alpha<\kappa}]$ which is clearly in $\hat{\Iwf}^*_{\bar{x}}$. By induction on $\mathrm{Dp}^{\bar{\Iwf}^*}(\bar{A})$ for $\bar{A}$ of the form $[\{A_\alpha\}_{\alpha<\kappa}]\subseteq L^*$ we
   \begin{enumerate}[(i)]
    \item define $\dot{\Fwf}_{\bar{x}}$ for those $\bar{x}\in\bar{A}\cap L^*_F$ such that $\bar{C}_{\bar{x}}\subseteq\bar{A}$ (by Lemma \ref{UpsilonTemp}, $\mathrm{Dp}^{\bar{\Iwf}^*}(\bar{C}_{\bar{x}})<\mathrm{Dp}^{\bar{\Iwf}^*}(\bar{A})$),
    \item construct $F_{\bar{A}}$ and
    \item prove $F_{\bar{D}}\subseteq F_{\bar{A}}$ for all $\bar{D}=[\{D_\alpha\}_{\alpha<\kappa}]\subseteq\bar{A}$.
   \end{enumerate}
   It is possible that some $\dot{\Fwf}_{\bar{x}}$ were defined at previous stages of the induction, but, as we illustrate in the following construction, this name only depends on $\bar{C}_{\bar{x}}$, so there are no inconsistencies.

   If $\bar{x}\in\bar{A}\cap L^*_F$ and $\bar{C}_{\bar{x}}\subseteq\bar{A}$, define the $\Por^*\frestr \bar{C}_{\bar{x}}$-name $\dot{\Fwf}^*_{\bar{x}}:=\langle\dot{\Fwf}_{x_\alpha}\rangle_{\alpha<\kappa}/\Dwf$ in the following way. By ccc-ness, for $\Dwf$-many $\alpha$ there is a cardinal $\nu_\alpha<\theta$ such that $\Por\frestr C_{x_\alpha}$ forces $|\dot{\Fwf}_{x_\alpha}|\leq\nu_\alpha$. As $\theta<\kappa$, there is a cardinal $\nu<\theta$ such that $\nu_\alpha=\nu$ for $\Dwf$-many $\alpha$. For those $\alpha$, let $\dot{\Fwf}_{x_\alpha}:=\left\{\dot{U}_{\alpha,\xi}\ /\ \xi<\nu\right\}$ and put, for $\xi<\nu$, $\dot{U}^*_\xi:=\langle\dot{U}_{\alpha,\xi}\rangle_{\alpha<\kappa}/\Dwf$, which is a $\prod_{\alpha<\kappa}\Por\frestr C_{x_\alpha}/\Dwf$-name for a real. But, by induction hypothesis, this ultraproduct is equivalent to $\Por^*\frestr\bar{C}_{\bar{x}}$ by the function $F_{\bar{C}_{\bar{x}}}$, so let $\dot{\Fwf}^*_{\bar{x}}$ be a $\Por^*\frestr\bar{C}_{\bar{x}}$-name for $\{\dot{U}^*_\xi\ /\ \xi<\nu\}$. By Lemma \ref{Sigma1-1andUltrapow} and $\kappa$-completeness of $\Dwf$, $\Por^*\frestr\bar{C}_{\bar{x}}$ forces that $\dot{\Fwf}^*_{\bar{x}}$ is a filter base (i.e., formed by infinite subsets of $\omega$ and closed under finite intersections).

   Now, we construct $F_{\bar{A}}$. Let $\bar{p}\in\prod_{\alpha<\kappa}\Por\frestr A_\alpha/\Dwf$, that is, $p_\alpha\in\Por\frestr A_\alpha$ for $\Dwf$-many $\alpha$. Let $x_\alpha:=\max(\dom(p_\alpha))$, so there exists a $B_\alpha\in\Iwf_{x_\alpha}\frestr A$ such that $p_\alpha\frestr L_{x_\alpha}\in\Por\frestr B_\alpha$ and $p_\alpha(x_\alpha)$ is a $\Por\frestr B_\alpha$-name for a condition in $\Qnm_{x_\alpha}^{B_\alpha}$. Let $\bar{r}:=\langle p_\alpha\frestr L_{x_\alpha}\rangle_{\alpha<\kappa}/\Dwf$ and $p(\bar{x}):=\langle p_\alpha(x_\alpha)\rangle_{\alpha<\kappa}/\Dwf$ which is a $\Por^*\frestr\bar{B}$-name for a real (by induction hypothesis) where $\bar{B}:=[\{B_\alpha\}_{\alpha<\kappa}]\in\Iwf^*_x\frestr\bar{A}$. By Lemma \ref{Sigma1-1andUltrapow}, considering cases on (VIII), (IX) and (X), $p(\bar{x})$ is actually a $\Por^*\frestr\bar{B}$-name for a condition in $\Qnm_{\bar{x}}^{*\bar{B}}$, so define $F_{\bar{A}}(\bar{p})=F_{\bar{B}}(\bar{r})\widehat{\ }\langle p(\bar{x})\rangle_{\bar{x}}$ (by induction hypothesis (iii), this definition does not depend on $\bar{B}$). (iii) follows easily from this construction.
\end{proof}

A template iteration $\Por^\dagger\frestr\langle L^*,\bar{\Iwf}^\dagger\rangle$ can be defined in a similar way so that $\Por^\dagger\frestr\bar{A}$ is forcing equivalent to $\prod_{\alpha<\kappa}\Por\frestr A_\alpha/\Dwf$ for any $\bar{A}=[\{A_\alpha\}_{\alpha<\kappa}]\subseteq L^*$. Notice that $\langle L^*,\bar{\Iwf}^\dagger\rangle$ is a $\theta$-innocuous extension of $\langle L^*,\bar{\Iwf}^*\rangle$ (Lemma \ref{UtrapowTemp}) so, by Lemma \ref{InnEqv}, \emph{$\Por^\dagger\frestr\bar{A}$ is forcing equivalent to $\Por^*\frestr\bar{A}$.}

\begin{lemma}\label{ApprSplUltrapow2}
    $\Por^*\frestr\langle L^*,\bar{\Iwf}^*\rangle$ and $\Por^\dagger\frestr\langle L^*,\bar{\Iwf}^\dagger\rangle$ are appropriate template iterations. Moreover, $\Por\frestr A$ is forcing equivalent to $\Por^*\frestr A$ and $\Por^\dagger\frestr A$ for any $A\subseteq L$.
\end{lemma}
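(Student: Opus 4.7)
The claim has two parts: appropriateness of $\Por^*\frestr\langle L^*,\bar{\Iwf}^*\rangle$ and $\Por^\dagger\frestr\langle L^*,\bar{\Iwf}^\dagger\rangle$, and the equivalence $\Por\frestr A\simeq\Por^*\frestr A\simeq\Por^\dagger\frestr A$ for every $A\subseteq L$ (identifying $L$ with its constant-sequence copy in $L^*$). The paragraph preceding the lemma already records $\Por^\dagger\frestr\bar{A}\simeq\Por^*\frestr\bar{A}$ for all $\bar{A}\subseteq L^*$, so the equivalence claim reduces to $\Por\frestr A\simeq\Por^*\frestr A$. My plan is: verify properties (I)-(X) for $\Por^*$ as routine ultrapower bookkeeping, argue (XI) for $\Por^*$ as the main content, transfer appropriateness to $\Por^\dagger$ via the already-established equivalence, and deduce $\Por\frestr A\simeq\Por^*\frestr A$ using Lemma \ref{InnEqv} and Lemma \ref{UtrapowTemp}(b).

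For appropriateness of $\Por^*$: (I) uses $\cf(\lambda\mu)=\mu>\kappa$, which keeps $\lambda\mu$ cofinal in $L^*$; (II) and (III) are immediate from preservation of the successor/supremum structure on $\lambda\mu$ into $L^*$ (using $\cf(\gamma)\neq\kappa$ for (III)). For (IV)-(V), set $L^*_H:=L_H^\kappa/\Dwf$, $L^*_F:=L_F^\kappa/\Dwf$, $L^*_T:=L_T^\kappa/\Dwf$; these partition $L^*$, and $L^*_H\cap\lambda\mu$ contains the cofinal, $\lambda$-sized copy of $L_H\cap\lambda\mu$. For (VI), $L^*_\alpha=[\{L_\alpha\}_{\beta<\kappa}]\in\Iwf^*_\alpha$ since $L_\alpha\in\Iwf_\alpha$. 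Condition (VII) is exactly Lemma \ref{RestUltrapowTemp}, and (VIII)-(X) hold by construction in Lemma \ref{ApprSplUltrapow}. The heart of the argument is (XI): given a $\Por^*\frestr L^*$-name $\dot{\Fwf}$ for a filter subbase of size $<\theta$, Lemma \ref{CondSupp}(c) yields $K\in[L^*]^{<\theta}$ such that $\dot{\Fwf}$ is already a $\Por^*\frestr K$-name. Writing $K=[\{K_\alpha\}_{\alpha<\kappa}]$ with $|K_\alpha|<\theta$ for $\Dwf$-many $\alpha$, Lemma \ref{ApprSplUltrapow} identifies $\Por^*\frestr K$ with $\prod_{\alpha<\kappa}\Por\frestr K_\alpha/\Dwf$; the standard description of names in ultraproducts (from the discussion preceding Lemma \ref{Sigma1-1andUltrapow}) represents $\dot{\Fwf}$ coordinatewise as $\langle\dot{\Fwf}_\alpha\rangle_{\alpha<\kappa}/\Dwf$, where each $\dot{\Fwf}_\alpha$ is a $\Por\frestr K_\alpha$-name for a filter subbase whose size, by $\kappa$-completeness of $\Dwf$ and $\theta<\kappa$, may be fixed at some common $\nu<\theta$ on a $\Dwf$-large set. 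Applying (XI) for $\Por$ to each $\dot{\Fwf}_\alpha$ (as a $\Por\frestr L$-name) supplies $x_\alpha\in L_F$ with $\dot{\Fwf}_\alpha=\dot{\Fwf}_{x_\alpha}$, and $\bar{x}:=\langle x_\alpha\rangle_{\alpha<\kappa}/\Dwf\in L^*_F$ satisfies $\dot{\Fwf}_{\bar{x}}=\dot{\Fwf}$ by the coordinatewise definition of $\dot{\Fwf}_{\bar{x}}$ in Lemma \ref{ApprSplUltrapow}. Appropriateness of $\Por^\dagger$ transfers: (I)-(VI) use $\Iwf^*_x\subseteq\Iwf^\dagger_x$ and the same partition, (VII) is Lemma \ref{RestUltrapowTemp} again, (VIII)-(X) are by analogous construction, and (XI) follows from (XI) for $\Por^*$ via the forcing equivalence $\Por^\dagger\frestr\bar{A}\simeq\Por^*\frestr\bar{A}$.

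For $A\subseteq L$, the equivalence $\Por\frestr A\simeq\Por^*\frestr A$ follows from Lemma \ref{InnEqv} together with Lemma \ref{UtrapowTemp}(b): the latter asserts that $\langle L,\bar{\Iwf}'\rangle$ (with $\Iwf'_x=\Iwf^*_x\frestr L$) is a strongly $\kappa$-innocuous, hence $\theta$-innocuous (since $\theta<\kappa$), extension of $\langle L,\bar{\Iwf}\rangle$. For $A\subseteq L$ one checks that $\Iwf^*_x\frestr A=\Iwf'_x\frestr A$, so $\Por^*\frestr A$ coincides with the iteration along $\bar{\Iwf}'\frestr A$; moreover, the posets at each $x\in L$ match those of $\Por$ (Hechler for $x\in L_H$, Mathias-Prikry with the same $C_x$ and $\dot{\Fwf}_x$ for $x\in L_F$, trivial for $x\in L_T$), satisfying the hypotheses of Lemma \ref{InnEqv}. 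The resulting dense embedding $\Por^*\frestr L\to\Por\frestr L$ is built recursively on $\mathrm{Dp}$, so its restriction to each $A\subseteq L$ is a dense embedding $\Por^*\frestr A\to\Por\frestr A$, yielding the required forcing equivalence. The main obstacle throughout is (XI) for $\Por^*$, which requires simultaneously controlling the $\theta$-sized support $K$ (via Lemma \ref{CondSupp}), the identification of $\Por^*\frestr K$ with an ultraproduct (via Lemma \ref{ApprSplUltrapow}), and the stabilization of coordinate sizes (via $\kappa$-completeness of $\Dwf$).
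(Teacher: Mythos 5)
Your proposal is correct and takes essentially the paper's route: represent the filter-subbase name coordinatewise in the ultrapower, apply condition (XI) of the original iteration to the coordinate names and take the $\Dwf$-class of the resulting witnesses $x_\alpha\in L_F$, and obtain $\Por\frestr A\simeq\Por^*\frestr A\simeq\Por^\dagger\frestr A$ from Lemma \ref{InnEqv} together with the (strong) $\theta$-innocuity provided by Lemma \ref{UtrapowTemp}. The only step you leave implicit (and the paper makes explicit via Lemma \ref{Sigma1-1andUltrapow} and $\kappa$-completeness) is that each coordinate family $\dot{\Fwf}_\alpha$ is actually forced to be a filter subbase for $\Dwf$-many $\alpha$ — this must be checked before (XI) can be invoked, since the name representation alone only yields names for reals — while your preliminary localization to a support $K$ of size $<\theta$ via Lemma \ref{CondSupp}(c) is harmless but not needed.
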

\begin{proof}
   We prove condition (XI) for both iterations. As every set in $\Iwf^\dagger_{\bar{x}}$ is contained in some set in $\Iwf^*_{\bar{x}}$ for any $\bar{x}\in L^*$, it is enough to consider only the case for $\bar{\Iwf}^*$. Let $\dot{\bar{\Fwf}}$ be a $\Por^*\frestr L^*$-name for a filter base on $\omega$ of size $<\theta$. By ccc-ness, find $\nu<\theta$ such that $\dot{\bar{\Fwf}}$ is forced to have size $\leq\nu$ and let $\dot{\bar{\Fwf}}=\left\{\dot{\bar{U}}_\epsilon\ /\ \epsilon<\nu\right\}$. Each $\dot{\bar{U}}_\epsilon$ is of the form $\langle \dot{U}_{\alpha,\epsilon}\rangle_{\alpha<\kappa}/\Dwf$ where each $\dot{U}_{\alpha,\epsilon}$ is a $\Por\frestr L$-name for an infinite subset of $\omega$.
   As $\nu<\theta$, by Lemma \ref{Sigma1-1andUltrapow}, $\dot{\Fwf}_\alpha:=\left\{\dot{U}_{\alpha,\epsilon}\ /\ \epsilon<\nu\right\}$ is a $\Por\frestr L$-name for a filter base for $\Dwf$-many $\alpha$, so, by (XI), there exists an $x_\alpha\in L_F$ such that $\Vdash_{\Por\upharpoonright L}\dot{\Fwf}_\alpha=\dot{\Fwf}_{x_\alpha}$. Then, $\Vdash_{\Por^*\upharpoonright L^*}\dot{\bar{\Fwf}}=\dot{\Fwf}^*_{\bar{x}}$.

  Note that $\bar{C}_x=C_x$ and $\dot{\Fwf}^*_x=\dot{\Fwf}_x$ for any $x\in L$ by the construction in the proof of Lemma \ref{ApprSplUltrapow}. Therefore, for $A\subseteq L$, $\Por\frestr A\simeq\Por^*\frestr A\simeq\Por^\dagger\frestr A$ follows from Lemma \ref{InnEqv} because, for $x\in A$, $\Iwf^*_x\frestr A=\Iwf^\dagger_x\frestr A$
   and $\langle A,\bar{\Iwf}^*\frestr A\rangle$ is a strongly $\theta$-innocuous extension of $\langle A,\bar{\Iwf}\frestr A\rangle$.
\end{proof}

Now we turn to our approach of the limit step. Let $\delta$ be a limit ordinal and consider a chain $\{\langle L^\alpha,\bar{\Iwf}^\alpha\rangle\}_{\alpha<\delta}$ of templates and appropriate template iterations
$\Por^\alpha\frestr\langle L^\alpha,\bar{\Iwf}^\alpha\rangle$ with the following properties for all $\alpha<\beta<\delta$.

\begin{enumerate}[(1)]
   \item $\langle L^\beta,\bar{\Iwf}^\beta\rangle$ is a strongly $\theta$-innocuous extension of
         $\langle L^\alpha,\bar{\Iwf}^\alpha\rangle$.
   \item For $x\in L^\alpha$, its immediate successor in $L^\alpha$ is the same as in $L^\beta$.
   \item $L^\alpha_H=L^\beta_H\cap L^\alpha$ and $L_F^\alpha\subseteq L_F^\beta$.
   \item If $x\in L^\alpha_F$ then $C^\alpha_x=C^\beta_x$ and $\Vdash_{\Por^\beta\upharpoonright C^\beta_x}\dot{\Fwf}^\alpha_x=\dot{\Fwf}^\beta_x$.
\end{enumerate}

Note that Corollary \ref{EmbCor} implies that $\Por^\alpha\frestr X\lessdot\Por^\beta\frestr X$ for any $X\subseteq L_\alpha$.

Consider $L^\delta$ and the templates $\bar{\Iwf}$ and $\bar{\Jwf}$ as in the context of Lemma \ref{ChainTemp}. Let $L^\delta_H=\bigcup_{\alpha<\delta}L^\alpha_H$, $L^\delta_F$ a set disjoint from $L^\delta_H$ that contains $\bigcup_{\alpha<\delta}L^\alpha_F$ and $L_T^\delta=L^\delta\menos(L^\delta_H\cup L^\delta_F)$. Properties (I)-(V) are straightforward for $L^\delta$, moreover, properties (1)-(3) hold for any $\alpha<\delta$ by replacing $\beta$ by $\delta$ and for both templates $\bar{\Iwf}$ and $\bar{\Jwf}$. (VII) also holds for both templates because of Lemma \ref{RestChainTemp}. Nevertheless, (VI) holds for $\bar{\Jwf}$ but it need not hold for $\bar{\Iwf}$.

We show how to define template iterations $\Por_0^\delta\frestr\langle L^\delta,\bar{\Iwf}\rangle$ and
$\Por_1^\delta\frestr\langle L^\delta,\bar{\Jwf}\rangle$ such that they are close to be appropriate and have nice agreement with the template iterations $\Por^\alpha\frestr\langle L^\alpha,\bar{\Iwf}^\alpha\rangle$ for $\alpha<\delta$. We just need to be specific about (IX) in order to define the iterations. In the case of $\bar{\Iwf}$, for $x\in L^\delta_F$, if there is some $\alpha<\delta$ such that $x\in L^\alpha_F$ put $C^\delta_x:=C^\alpha_x$ and $\dot{\Fwf}^\delta_x=\dot{\Fwf}^\alpha_x$. Otherwise, choose $C^\delta_x$ and $\dot{\Fwf}^\delta_x$ freely.

To define $\Por_0^\delta\frestr\langle L^\delta,\bar{\Iwf}\rangle$ it is necessary to proceed by induction and guarantee that, for each $\alpha<\delta$, $\Por^\alpha\frestr X\lessdot\Por^\delta_0\frestr X$ for any $X\subseteq L^\alpha$, which is justified by Corollary \ref{EmbCor}. Notice that (4) holds by replacing $\beta$ by $\delta$.

$\Por_1^\delta\frestr\langle L^\delta,\bar{\Jwf}\rangle$ is defined in the same way by just ensuring to make the same choices of $C^\delta_x$ and $\dot{\Fwf}^\delta_x$ as for $\bar{\Iwf}$. The conclusions of the previous paragraph hold in the same way. However, it is not always the case that property (XI) holds, moreover, it will depend on the particular ``free'' choices of $C^\delta_x$ and $\dot{\Fwf}^\delta_x$. However, there is one case in which (XI) holds for both template iterations.

\begin{lemma}\label{ApprSplChain}
   Assume $\cf(\delta)\geq\theta$ and that $L^\delta_F=\bigcup_{\alpha<\delta}L^\alpha_F$. Then, both template iterations $\Por_0^\delta\frestr\langle L^\delta,\bar{\Iwf}\rangle$ and
   $\Por_1^\delta\frestr\langle L^\delta,\bar{\Jwf}\rangle$ are forcing equivalent and satisfy (XI). Moreover, $\Por_0^\delta\frestr L^\delta=\limdir_{\alpha<\delta}\Por^\alpha\frestr L^\alpha$ and $\Por_1^\delta\frestr\langle L^\delta,\bar{\Jwf}\rangle$ is appropriate.
\end{lemma}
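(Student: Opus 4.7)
My plan is to establish the forcing equivalence via Lemma \ref{InnEqv}, derive the direct-limit description by a support argument, and verify that $\Por_1^\delta$ is appropriate—with essentially all the real work concentrated in verifying (XI).

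Since $\cf(\delta) \geq \theta$, Lemma \ref{ChainTemp}(b) yields that $\langle L^\delta, \bar{\Jwf}\rangle$ is a strongly $\theta$-innocuous extension of $\langle L^\delta, \bar{\Iwf}\rangle$. By construction, $\Por_0^\delta$ and $\Por_1^\delta$ are defined from identical data at each coordinate of $L^\delta$ (the same partition into $L^\delta_H, L^\delta_F, L^\delta_T$, the same $C^\delta_x$, the same $\dot{\Fwf}^\delta_x$ at each $x \in L^\delta_F$). Thus Lemma \ref{InnEqv} applies and produces a dense embedding $\Por_1^\delta \frestr L^\delta \to \Por_0^\delta \frestr L^\delta$, giving forcing equivalence.

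For the direct-limit description, I would argue by supports: any $p \in \Por_0^\delta \frestr L^\delta$ has, by Lemma \ref{CondSupp}(b), support of size $<\theta$ in $L^\delta$. Since $\cf(\delta) \geq \theta$ and $L^\delta = \bigcup_{\alpha<\delta} L^\alpha$, this support is contained in some $L^\alpha$, so $p \in \Por_0^\delta \frestr L^\alpha$. Applying Lemma \ref{InnEqv} at stage $\alpha$ (using that $\bar{\Iwf} \frestr L^\alpha$ is a strongly $\theta$-innocuous extension of $\bar{\Iwf}^\alpha$ by Lemma \ref{ChainTemp}(a), together with the coherence conditions (1)--(4) that make the forcing data coincide) identifies $\Por_0^\delta \frestr L^\alpha$ with $\Por^\alpha \frestr L^\alpha$ via a canonical dense embedding. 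This yields the claimed description of $\Por_0^\delta \frestr L^\delta$ as the direct limit.

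Appropriateness of $\Por_1^\delta$ then reduces largely to checking (XI). Properties (I)--(V) and (VIII)--(X) follow immediately from the construction and the chain coherence. Property (VI) holds for $\bar{\Jwf}$ by its very definition: for each $\alpha \in \lambda\mu \subseteq L^0$, $L^\delta_\alpha = L^\delta_\alpha \cup \vacio \in \Jwf_\alpha$. Property (VII) follows from Lemma \ref{RestChainTemp}. The main obstacle, and the only place where the hypothesis $L^\delta_F = \bigcup_{\alpha<\delta} L^\alpha_F$ is indispensable, is (XI). Given a $\Por_1^\delta$-name $\dot{\Fwf}$ for a filter subbase of size $<\theta$, ccc-ness bounds its size by some $\nu < \theta$; applying Lemma \ref{CondSupp}(c) to each of its $\nu$ members and using regularity of $\theta$ to take a union localizes $\dot{\Fwf}$ as a $\Por_1^\delta \frestr C'$-name for some $C' \in [L^\delta]^{<\theta}$. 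Since $\cf(\delta) \geq \theta$, pick $\alpha < \delta$ with $C' \subseteq L^\alpha$. The equivalence established above translates $\dot{\Fwf}$ into a $\Por^\alpha \frestr L^\alpha$-name for a filter subbase of size $<\theta$, and (XI) for $\Por^\alpha$ produces $x \in L^\alpha_F \subseteq L^\delta_F$ with $\dot{\Fwf}^\alpha_x = \dot{\Fwf}$ as $\Por^\alpha$-names. Since $x \in L^\alpha_F$, the coherence condition (4) (extended to $\beta = \delta$ in the construction) gives $\dot{\Fwf}^\delta_x = \dot{\Fwf}^\alpha_x$, whence $\dot{\Fwf}^\delta_x = \dot{\Fwf}$ in $\Por_1^\delta$, establishing (XI) for $\Por_1^\delta$. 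By the forcing equivalence above, (XI) also holds for $\Por_0^\delta$.
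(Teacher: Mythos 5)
Your opening step (forcing equivalence of $\Por_0^\delta$ and $\Por_1^\delta$ via Lemma \ref{ChainTemp}(b) and Lemma \ref{InnEqv}) and your reduction of the remaining work to (XI) match the paper. The gap is in your treatment of the direct-limit description: you claim that Lemma \ref{InnEqv} ``identifies $\Por_0^\delta\frestr L^\alpha$ with $\Por^\alpha\frestr L^\alpha$ via a canonical dense embedding''. This is not available, and in general it is false. Lemma \ref{InnEqv} (and Corollary \ref{EmbCor} in the form it requires) demands that the two iterations be built from \emph{identical} data on $L^\alpha$, in particular the same $L_F$-coordinates with the same $C_x$ and $\dot{\Fwf}_x$. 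But points of $L^\alpha$ may lie in $L^\beta_F$ for some $\alpha<\beta<\delta$ (they enter $L_F$ at limit stages of small cofinality) while belonging to $L^\alpha_T$; at such a coordinate $x$ with $C^\delta_x\subseteq L^\alpha$, the restriction $\Por_0^\delta\frestr L^\alpha$ forces with the nontrivial Mathias--Prikry poset $\Mor_{\dot{\Fwf}^\delta_x}$ whereas $\Por^\alpha\frestr L^\alpha$ uses the trivial poset, so the two restrictions are generally not forcing equivalent; the only relation guaranteed by Corollary \ref{EmbCor} (using the ``trivial'' alternative in (2')) is $\Por^\alpha\frestr L^\alpha\lessdot\Por_0^\delta\frestr L^\alpha$. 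Moreover, even a genuine dense embedding would not suffice: the statement $\Por_0^\delta\frestr L^\delta=\limdir_{\alpha<\delta}\Por^\alpha\frestr L^\alpha$ asserts literal membership of each condition in some $\Por^\alpha\frestr L^\alpha$ (the direct limit is a union), and your (XI) argument likewise needs $\dot{\Fwf}$ to literally be a $\Por^\alpha\frestr L^\alpha$-name, not merely a name translated through an equivalence.

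The paper closes exactly this gap by proving, by induction on $\mathrm{Dp}^{\bar{\Iwf}}(A)$ for all $A\subseteq L^\delta$, that $\Por_0^\delta\frestr A=\limdir_{\alpha<\delta}\Por^\alpha\frestr A$: given $p$ with $x=\max(\dom p)$, one uses the induction hypothesis plus ccc-ness (a name for a real involves only countably many conditions, and $\cf(\delta)\geq\theta>\omega$) to capture $p\frestr L^\delta_x$ and the name $p(x)$ at a single stage, and then \emph{increases} $\alpha$ so that $x\in L^\alpha_F$ when $x\in L^\delta_F$, which is where the hypothesis $L^\delta_F=\bigcup_{\alpha<\delta}L^\alpha_F$ is used. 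Once this equality is available, your localization of $\dot{\Fwf}$ via Lemma \ref{CondSupp}(c), the choice of $\alpha$ using $\cf(\delta)\geq\theta$, and the appeal to (XI) at stage $\alpha$ together with coherence condition (4) go through as you describe. So your outline is repairable, but as written the central identification step would fail.
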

\begin{proof}
   By Lemmas \ref{ChainTemp} and \ref{InnEqv}, both template iterations are equivalent.

   We claim that $\Por_0^\delta\frestr A=\limdir_{\alpha<\delta}\Por^\alpha\frestr(A\cap L^\alpha)$ for any $A\subseteq L^\delta$. Proceed by induction on $\mathrm{Dp}^{\bar{\Iwf}}(A)$. Let $p\in\Por^\delta_0\frestr A$ and $x=\max(\dom(p))$, so there exists a $B\in\Iwf_x\frestr A$ such that $p\frestr L^\delta_x\in\Por^\delta_0\frestr B$ and $p(x)$ is a $\Por^\delta_0\frestr B$-name for a real in $\Qnm^{\delta,B}_{0,x}$. By induction hypothesis and ccc-ness, find $\alpha<\delta$ such that $x\in L^\alpha$, $B\in\Iwf^\alpha_x\frestr(A\cap L^\alpha)$, $p\frestr L^\delta_x=p\frestr L^\alpha_x\in\Por^\alpha\frestr B$ and $p(x)$ is a $\Por^\alpha\frestr B$-name for a real. In the case that $x\in L^\delta_H$, then $x\in L^\alpha_H$ and so $p(x)$ is a $\Por^\alpha\frestr B$-name for a condition in Hechler forcing; in the case that $x\in L^\delta_F$, by increasing $\alpha$ if necessary, $x\in L^\alpha_F$, so clearly $p(x)$ is a $\Por^\alpha\frestr B$-name for a condition in $\Mor_{\dot{\Fwf}^\alpha_x}$ if $C^\alpha_x\subseteq A$ (by increasing $B$ such that $C^\alpha_x\subseteq B$), otherwise, $p(x)$ is a name for the trivial condition; and if $x\in L^\delta_T$, then $x\in L^\alpha_T$ and so $p(x)$ is clearly a name for the trivial condition. Therefore, in any case, $p\in\Por^\alpha\frestr(A\cap L^\alpha)$.

   It remains to prove (XI) for the iteration along $\bar{\Iwf}$. Let $\dot{\Fwf}$ be a $\Por_0^\delta\frestr L^\delta$-name for a filter base on $\omega$ of size $<\theta$. As $\cf(\delta)\geq\theta$ and $\Por_0^\delta\frestr L^\delta=\limdir_{\alpha<\delta}\Por^\alpha\frestr L^\alpha$, there is an $\alpha<\delta$ such that $\dot{\Fwf}$ is a $\Por^\alpha\frestr L^\alpha$-name, so there exists an $x\in L^\alpha_F\subseteq L^\delta_F$ such that $\dot{\Fwf}$ is forced to be equal to $\dot{\Fwf}^\alpha_x=\dot{\Fwf}^\delta_x$.
\end{proof}

We use the indexed template $\bar{\Iwf}$ to prove the preceding result but, for the construction of the model of the main result, $\bar{\Jwf}$ is the one used for the limit step.

\begin{proof}[Proof of Theorem \ref{AppSplitting}]
   Fix a bijective enumeration $\lambda\menos\{0\}:=\left\{\tau_{\alpha,\beta}\ /\ \alpha,\beta<\lambda\right\}$, a bijection $g:\lambda\to\lambda\times\theta$ and an increasing enumeration $\langle\delta_\alpha\rangle_{\alpha<\lambda}$ of $0$ and all the limit ordinals below $\lambda$ that have cofinality $<\theta$. For an ordered pair $z=(x,y)$, denote $(z)_0:=x$ and $(z)_1:=y$.

   By recursion on $\gamma\leq\lambda$, define a chain of templates $\{\langle L^\gamma,\bar{\Iwf}^\gamma\rangle\}_{\gamma\leq\lambda}$ such that they satisfy conditions (I)-(VII) and (1)-(3). We also require, for $\gamma<\delta_\alpha$, that $\left\{\lambda\xi+\tau_{\alpha,\beta}\ /\ 0<\xi<\mu\right.$  and $\left.\beta<\lambda\right\}\subseteq L^\gamma_T$
   and, when $\delta_\alpha\leq\gamma$, $\left\{\lambda\xi+\tau_{\alpha,\beta}\ /\ 0<\xi<\mu\right.$ and $\left.\beta<\lambda\right\}\subseteq L^\gamma_F$.

   Let $L^0:=\lambda\mu$ and let $\bar{\Iwf}^0$ be the template corresponding to a fsi of length $\lambda\mu$ (Example \ref{ExmpTemp}(2)). Put $L^0_H:=\lambda\cup\left\{\lambda\xi\ /\ 0<\xi<\mu\right\}$, $L^0_F:=\left\{\lambda\xi+\tau_{0,\beta}\ /\ 0<\xi<\mu\right.$ and $\left.\beta<\lambda\right\}$ and $L^0_T:=\left\{\lambda\xi+\tau_{\alpha,\beta}\ /\ 0<\xi<\mu\right.$, $\alpha,\beta<\lambda$ and $\left.\alpha\neq0\right\}$. Clearly, conditions (I)-(VII) hold for $\langle L^0,\bar{\Iwf}^0\rangle$.

   Given $\langle L^\gamma,\bar{\Iwf}^\gamma\rangle$, let $\langle L^{\gamma+1},\bar{\Iwf}^{\gamma+1}\rangle:=\langle (L^\gamma)^*,(\bar{\Iwf}^\gamma)^\dagger\rangle$
   as in the previous discussion of ultrapowers. Clearly, (I)-(VII) hold and, moreover,
   $L^{\gamma+1}_H\cap L^\gamma=L^\gamma_H$, likewise for $L^{\gamma+1}_F$ and $L^{\gamma+1}_T$, so (2) and (3) hold. For (1), recall that $\langle L^{\gamma+1},\bar{\Iwf}^{\gamma+1}\rangle$ is a strongly $\theta$-innocuous extension of $\langle L^\gamma,\bar{\Iwf}^\gamma\rangle$, so we needed to prove that it is also a strongly $\theta$-innocuous extension of $\langle L^\beta,\bar{\Iwf}^\beta\rangle$ for each
   $\beta<\gamma$. The non-trivial part is to see that, for $x\in L^\beta$, $\Iwf^{\gamma+1}_x\frestr L^\beta\subseteq\Iwf^{\gamma+1}_x$. If $A\in\Iwf^{\gamma+1}_x\frestr L^\beta$ then, as $L^\beta\subseteq L^\gamma$, there exists an $\bar{H}\in(\Iwf^\gamma_x)^*$ such that
   $A=\bar{H}\cap L^\beta$. Thus, $A=[\{H_\alpha\cap L^\beta\}_{\alpha<\kappa}]\in(\Iwf^\gamma_x)^*$ because $\Iwf^\gamma_x\frestr L^\beta\subseteq \Iwf^\gamma_x$.

   If $\delta$ is a limit ordinal, define $L^\delta:=\bigcup_{\beta<\delta}L^\beta$ and $\bar{\Iwf}^\delta:=\bar{\Jwf}$ according to the previous discussion about chains of templates. Hence, it is only needed to define
   $L^\delta_F$. If $\cf(\delta)\geq\theta$ put $L^\delta_F:=\bigcup_{\beta<\delta}L^\beta_F$, otherwise put $L^\delta_F:=\bigcup_{\beta<\delta}L^\beta_F\cup\left\{\lambda\xi+\tau_{\alpha,\beta}\ /\ 0<\xi<\mu\right.$ and $\left.\beta<\lambda\right\}$ where $\alpha<\lambda$ is such that $\delta=\delta_\alpha$. Clearly, (I)-(VII) and (1)-(3) are satisfied.

   By recursion on $\gamma\leq\lambda$, define appropriate template iterations $\Por^\gamma\frestr\langle L^\gamma,\bar{\Iwf}^\gamma\rangle$ such that (4) is satisfied for them.

   For each $\xi<\mu$, $\xi\neq0$, enumerate $[\lambda\xi]^{<\theta}:=\left\{C^0_{\xi,\alpha}\ /\ \alpha<\lambda\right\}$. Define the iteration $\Por^0\frestr\langle L^0,\bar{\Iwf}^0\rangle$ as follows (implicitly, by induction on $\mathrm{Dp}^{\bar{\Iwf}^0}$).

   \begin{itemize}
     \item For each $0<\xi<\mu$ and $\alpha<\lambda$, let $\langle\dot{\Fwf}^0_{\xi,\alpha,\eta}\rangle_{\eta<\theta}$ be an enumeration of \underline{all} the $\Por^0\frestr C^0_{\xi,\alpha}$-names of filter bases on $\omega$ of size $<\theta$. This can be done because $\theta^{<\theta}=\theta$ and $|\Por^0\frestr X|\leq\theta$ when $|X|<\theta$. Indeed, as $|\Iwf^0(X)|<\theta$ and $\cfrak\leq\theta$, by induction on $\mathrm{Dp}^{\bar{\Iwf}^0}(A)$ for $A\subseteq X$ and Theorem \ref{TempIt}(d), it is not difficult to see that $|\Por^0\frestr A|\leq\theta$.
     \item For $x\in L^0$ and $B\in\hat{\Iwf}_x$, $\Qnm^{0,B}_x$ is defined as indicated in (VIII)-(X). For (IX), if $x\in L^0_F$ then $x=\lambda\xi+\tau_{0,\beta}$ for some $0<\xi<\mu$ and $\beta<\lambda$, so put
         $C^0_x:=C^0_{\xi,(g(\beta))_0}$ and $\dot{\Fwf}^0_x:=\dot{\Fwf}^0_{\xi,g(\beta)}$.
   \end{itemize}

   To see that $\Por^0\frestr\langle L^0,\bar{\Iwf}^0\rangle$ is appropriate, it remains to prove (XI). Indeed, if $\dot{\Fwf}$ is a $\Por^0\frestr L^0$-name for a filter of size $<\theta$ then, by Lemma \ref{CondSupp}(c), find $C\in[L^0]^{<\theta}$ such that $\dot{\Fwf}$ is a $\Por^0\frestr C$-name. Clearly, there exist $0<\xi<\mu$ and $\alpha<\lambda$ such that $C=C^0_{\xi,\alpha}$, so
   $\dot{\Fwf}$ is forced to be equal to $\dot{\Fwf}^0_{\xi,\alpha,\eta}=\dot{\Fwf}^0_{\lambda\xi+\tau_{0,g^{-1}(\alpha,\eta)}}$ for some $\eta<\theta$.

   The iteration $\Por^{\gamma+1}\frestr\langle L^{\gamma+1},\bar{\Iwf}^{\gamma+1}\rangle$ is defined from
   $\Por^\gamma\frestr\langle L^\gamma,\bar{\Iwf}^\gamma\rangle$ as explained in the previous discussion of ultrapowers. From the proofs of Lemmas \ref{ApprSplUltrapow} and \ref{ApprSplUltrapow2}, (4) is satisfied.

   For $\delta\leq\lambda$ limit consider two cases. When $\cf(\delta)\geq\theta$, define
   $\Por^\delta\frestr\langle L^\delta,\bar{\Iwf}^\delta\rangle$ as in Lemma \ref{ApprSplChain}. So assume that $\cf(\delta)<\theta$, that is, $\delta=\delta_\epsilon$ for some $\epsilon<\lambda$. For each $\xi<\mu$, $\xi\neq0$, enumerate $[L^\delta_{\lambda\xi}]^{<\theta}:=\left\{C^\delta_{\xi,\alpha}\ /\ \alpha<\lambda\right\}$. As it was done for $\Por^0\frestr L^0$, define the iteration corresponding to $\delta$ as follows.

   \begin{itemize}
      \item For each $0<\xi<\mu$ and $\alpha<\lambda$, let $\langle\dot{\Fwf}^\delta_{\xi,\alpha,\eta}\rangle_{\eta<\theta}$ be an enumeration of \underline{all} the $\Por^\delta\frestr C^\delta_{\xi,\alpha}$-names of filter bases on $\omega$ of size $<\theta$.
     \item For $x\in L^\delta$ and $B\in\hat{\Iwf}_x$, $\Qnm^{0,B}_x$ is defined as indicated in (VIII)-(X). For (IX), if $x\in\bigcup_{\beta<\delta}L^\beta_F$, let $C^\gamma_x:=C^\beta_x$ and $\dot{\Fwf}^\gamma_x:=\dot{\Fwf}^\beta_x$ for some $\beta<\delta$ such that $x\in L^\beta_F$ (this does not depend on the chosen $\beta$ by (4)); if $x\in L^\delta_F\menos\bigcup_{\beta<\delta}L^\beta_F$ then $x=\lambda\xi+\tau_{\epsilon,\beta}$ for some $0<\xi<\mu$ and $\beta<\lambda$, so put $C^\delta_x:=C^\delta_{\xi,(g(\beta))_0}$ and
         $\dot{\Fwf}^\delta_x:=\dot{\Fwf}^\delta_{\xi,g(\beta)}$.
   \end{itemize}

   According to the previous discussion with chains of templates, it remains to show condition (XI) for
   $\Por^\delta\frestr L^\delta$, but this follows from the same argument as in the case of $\Por^0\frestr L^0$.

   From the discussion following Definition \ref{DefApprTempItSplitting}, it remains to prove that $\Por^\lambda\frestr L^\lambda$ forces $\afrak\geq\lambda$. Indeed, let $\dot{\Awf}$ be a $\Por^\lambda\frestr L^\lambda$-name for an a.d. family of size $\nu<\lambda$ with $\nu\geq\kappa$ (we do not need to consider a.d. families of size $<\kappa$ because $\bfrak$ is forced to be equal to $\mu>\kappa$ and $\bfrak\leq\afrak$ is true in $\thzfc$). By Lemma \ref{ApprSplChain}, $\Por^\lambda\frestr L^\lambda=\limdir_{\alpha<\lambda}\Por^\alpha\frestr L^\alpha$, so there exists an $\alpha<\lambda$ such that $\dot{\Awf}$ is a $\Por^\alpha\frestr L^\alpha$-name. As $\Por^{\alpha+1}\frestr L^{\alpha+1}$ is forcing equivalent to the ultrapower of $\Por^\alpha\frestr L^\alpha$ (Lemma \ref{ApprSplUltrapow}), by Lemma \ref{DestrMad} this ultrapower forces that $\dot{\Awf}$ is not mad, and so does $\Por^\lambda\frestr L^\lambda$.
\end{proof}

A small modification of our construction leads to the following result.

\begin{theorem}\label{AppMany}
   With the same hypotheses of Theorem \ref{AppSplitting}, there is a ccc poset that forces $\add(\Nwf)=\cov(\Nwf)=\pfrak=\sfrak=\gfrak=\theta<\add(\Mwf)=\cof(\Mwf)=\mu<\non(\Nwf)
   =\afrak=\rfrak=\cfrak=\lambda$.
\end{theorem}
\begin{proof}
  We modify our definition of an appropriate iteration to include subposets of $\Loc$ of size $<\theta$. Consider a new set $L_A$ in the partition of $L$ where $L_A$ has similar properties as $L_F$, that is, like in (IX), for $x\in L_A$ there is a fixed $C_x\in\hat{\Iwf}_x$ of size $<\theta$ and a $\Por\frestr C_x$-name $\Locnm_x$ for a subposet of $\Loc^{V^{\Por\upharpoonright C_x}}$ of size $<\theta$ such that, for $B\in\hat{\Iwf}_x$, $\Qnm^B_x=\Locnm_x$ if $C_x\subseteq B$, or $\Qnm^B_x$ is the trivial poset otherwise. Also, we consider a property like (XI):
   \begin{enumerate}[(I)]
   \setcounter{enumi}{11}
      \item For any  $\Por\frestr L$-name $\Rnm$ of a subposet of $\Loc^{V^{\Por\upharpoonright L}}$ of size $<\theta$, there exists an $x\in L_A$ such that $\Por\frestr L$ forces $\Rnm=\Locnm_x$.
   \end{enumerate}
   Such an appropriate iteration that satisfies (XII) forces, additionally, $\theta\leq\add(\Nwf)$, so it forces the statements of this theorem with the possible exception of $\afrak=\lambda$. A chain of appropriate iterations that satisfy (XII) can be constructed in the same way as in the proof of Theorem \ref{AppSplitting} to guarantee that there is an appropriate iteration forcing $\afrak=\lambda$.
\end{proof}

\begin{remark}\label{RemSh2}
   Shelah's model discussed in Remark \ref{ShTempMainThm} satisfies $\gfrak=\aleph_1$ by the same argument as for appropriate iterations.
\end{remark}


\section{Questions}\label{SecQ}

\begin{question}
   Can we solve Problem \ref{MainProb}(3) with respect to $\thzfc$, that is, if $\aleph_1<\theta<\mu<\lambda$ are uncountable regular cardinals, is it consistent with $\thzfc$ that $\sfrak=\theta<\mu=\bfrak<\afrak=\lambda$?
\end{question}

As Theorem \ref{AppSplitting} was an extension of Shelah's argument for the consistency of $\dfrak<\afrak$ modulo a measurable, we can try to generalize the isomorphism-of-names argument to our context in order to obtain a proof without the measurable. However, since we need to include Mathias-Prikry type posets with filter bases of size $<\theta$ in many coordinates of the template (as done in Section \ref{SecAppl}), the iteration may not be uniform enough to do an isomorphism-of-names argument.

\begin{question}
   Let $\kappa$ be a measurable cardinal and $\theta_0<\theta_1<\theta<\kappa<\mu<\lambda$ uncountable regular cardinals. Assuming $\mathrm{GCH}$, is there a ccc poset that forces $\add(\Nwf)=\theta_0$, $\cov(\Nwf)=\theta_1$, $\sfrak=\pfrak=\gfrak=\theta$, $\bfrak=\dfrak=\mu$ and $\afrak=\rfrak=\cfrak=\lambda$ are true?
\end{question}

The natural attempt to construct such a model would be to include subposets of $\Loc$ of size $<\theta_0$ and subposets of $\Bor$ of size $<\theta_1$ in the appropriate iterations of Theorem \ref{AppSplitting} (like it was done for $\Loc$ in Theorem \ref{AppMany}). The only problem is that Theorem \ref{PresTemp} does not work anymore to prove that certain quotients of the constructed posets are $\theta_0$-$\in^*$-good and $\theta_1$-$\pitchfork$-good, so $\add(\Nwf)\leq\theta_0$ and $\cov(\Nwf)\leq\theta_1$ are not guaranteed to hold in the final forcing extension. The reason of this is that (c) of Lemma \ref{CondSupp} is not satisfied for $\theta_0$ and $\theta_1$. A way to get the necessary goodness would be to prove, by induction on $\alpha\leq\lambda$, that the hypotheses of Theorem \ref{PresTemp2} hold for the $\alpha$-th template iteration in the chain. This can be done for the basic and the successor steps, but the limit step for $\delta$ with $\cf(\delta)<\theta$ is problematic.

Shelah's proof of the consistency of $\dfrak<\afrak$ and $\ufrak<\afrak$ modulo a measurable involves an easier construction that does not appeal to templates but to iterations (forcing) equivalent to a fsi (\cite{shelah}, see also \cite{brendle3}). In this way, the construction of the chain of iterations in Section \ref{SecAppl} can be simplified, but we do not know whether preservation results, similar to those in Section \ref{SecPres}, can be obtained without looking at the template structure.

\section*{Acknowledgments}

The author is very thankful to professor J. Brendle for all the guidance and support provided during the research that precedes this paper. He kindly taught the author Shelah's theory of iterated forcing along a template and offered a lot of his time for discussions that concluded in the results that are presented in this text. The author is also grateful to him for letting include his proof of Lemma \ref{randomCorrPres} and for his help with proof reading and grammar corrections. In this latter aspect, the author is also thankful to Vera Fischer. The author wants to thank the anonymous referee for his valuable comments that improved the presentation of this paper.



\begin{thebibliography}{00}
   \bibitem{baudor} J. Baumgartner, P. Dordal, Adjoining dominating functions, J. Symbolic Logic 50, no. 1 (1985) 94-101.

   \bibitem{barju} T. Bartoszy\'{n}ski, H. Judah, Set theory: on the structure of the real line, A. K. Peters, Massachusetts, 1995.

   \bibitem{bart} T. Bartoszy\'{n}ski, Invariants of measure and category, in: A. Kanamori, M. Foreman (Eds.), Handbook of Set-Theory, Springer, Heidelberg, 2010, pp. 491-555.

   \bibitem{blsh} A. Blass, S. Shelah, Ultrafilters with small generating sets, Israel J. Math. 65 (1984) 259-271.

   \bibitem{Bl} A. Blass, Applications of superperfect forcing and its relatives, in: J. Steprans, S. Watson (Eds.), Set Theory and Its Applications, in: Lecture Notes in Math. 1401, Springer-Verlag, Berlin, 1989, pp. 18-40.

   \bibitem{blass} A. Blass, Combinatorial cardinal characteristics of the continuum, in: A. Kanamori, M. Foreman (Eds.), Handbook of Set-Theory, Springer, Heidelberg, 2010, pp. 395-490.

   \bibitem{brendle} J. Brendle, Larger cardinals in Cicho\'n's diagram, J. Symb. Logic 56, no. 3 (1991) 795-810.

   \bibitem{breMob} J. Brendle, Mob families and mad families, Arch. Math. Logic 37 (1998) 183-197.

   \bibitem{br} J. Brendle, Mad families and iteration theory, in: Logic and Algebra, Y. Zhang (Ed.), Contemp. Math. 302, Amer. Math. Soc., Providence, RI, 2002, pp. 1-31.


   \bibitem{brendle2} J. Brendle, Templates and iterations, Luminy 2002 lecture notes, Ky\={o}to daigaku s\={u}rikaiseki
    kenky\={u}sho k\={o}ky\={u}roku 1423 (2005) 1-12.

   \bibitem{brendle3} J. Brendle, Mad families and ultrafilters, AUC 49 (2007) 19-35.

   \bibitem{brendlebog} J. Brendle, Forcing and the structure of the real line: the Bogot\'a lectures, lecture notes, 2009.

   \bibitem{Br-Suslin} J. Brendle, Aspects of iterated forcing: the Hejnice lectures, lecture notes, 2010.

   \bibitem{BF} J. Brendle, V. Fischer, Mad families, splitting families
              and large continuum, J. Symb. Logic 76, no. 1 (2011) 198-208.

   \bibitem{BrRa} J. Brendle, D. Raghavan, Bounding, splitting and almost disjointness. Ann. Pure Appl. Logic 165 (2014) 631-651.

   \bibitem{Br-shat} J. Brendle, Shattered iterations, in preparation.

   \bibitem{gold} M. Goldstern, Tools for your forcing construction, in: Haim J. (ed.) Set theory of the reals, pp. 305-360. Israel Mathematical Conference Proceedings, Bar Ilan University (1993)

   \bibitem{jech} T. Jech, Set theory, 3rd millenium edition, Springer, Heidelberg, 2002.

   \bibitem{jushe} H. Judah, S. Shelah, The Kunen-Miller chart (Lebesgue measure, the Baire property, Laver reals and preservation theorems for forcing), J. Symbolic Logic 55, no. 3, 909-927 (1990)

   \bibitem{kamburelis} A. Kamburelis, Iterations of boolean algebras with measure, Arch. Math. Logic 29, (1989) 21-28.

   \bibitem{KaWe-Spl} A. Kamburelis, B. Weglorz, Splittings, Arch. Math. Logic 35 (1996) 263-277.


   \bibitem{kunen} K. Kunen, Set theory: an introduction to independence proofs, North-Holland, Amsterdam, 1980.

   \bibitem{mejia} D. A. Mej\'ia, Matrix iterations and Cichon's diagram, Arch. Math. Logic 52 (2013) 261-278.

   \bibitem{mejia02} D. A. Mej\'ia, Models of some cardinal invariants with large continuum, Ky\={o}to daigaku s\={u}rikaiseki kenky\={u}sho k\={o}ky\={u}roku 1851 (2013) 36--48.

   \bibitem{miller} A. Miller, Some properties of measure and category, Transactions of the American Mathematical Society 266,
                    (1981) 93-114.

    \bibitem{sh84} S. Shelah, On cardinal invariants of the continuum, Contemp. Math. 31 (1984) 184-207.

    \bibitem{shelah} S. Shelah, Two cardinal invariants of the continuum ($\dfrak<\afrak$) and FS linearly ordered iterated forcing, Acta Math. 192 (2004) 187-223 (publication number 700).
\end{thebibliography}
\end{document}